\documentclass[a4paper, 10pt, DIV = 13]{scrartcl}
\usepackage[utf8]{inputenc}
\usepackage[english]{babel}

\usepackage{amsmath,amsfonts,amssymb,amsthm,mathrsfs}
\usepackage{amsfonts}
\usepackage{amssymb}
\usepackage{esint}
\usepackage{wasysym}
\usepackage{lmodern}
\usepackage{hyperref}
\usepackage{xcolor}
\usepackage{anyfontsize}
\usepackage{hyperref}
\hypersetup{breaklinks=true}
\usepackage{tikz-cd}

\usepackage{mathtools}
%\mathtoolsset{showonlyrefs}
\usepackage{bm}

\definecolor{darkred}{rgb}{0.5,0,0}
\definecolor{darkgreen}{rgb}{0,0.5,0}
\definecolor{darkblue}{rgb}{0,0,0.5}
\hypersetup{colorlinks,
linkcolor=darkblue,
filecolor=darkgreen,
urlcolor=darkred,
citecolor=darkblue}

\numberwithin{equation}{section}

\usepackage[shortlabels,inline]{enumitem}
\setlist{nosep}
\setlist{noitemsep}
\setlist{leftmargin=*}

\usepackage{mathtools}

\newtheorem{theorem}{Theorem}[section]
\newtheorem*{theorem*}{Theorem}

\newtheorem{proposition}[theorem]{Proposition}
\newtheorem{lemma}[theorem]{Lemma}
\newtheorem{corollary}[theorem]{Corollary}

\newtheorem{claim}[theorem]{Claim}

\theoremstyle{definition}
\newtheorem{definition}[theorem]{Definition}
\newtheorem{remark}[theorem]{Remark}
\newtheorem*{assumption}{Assumption}

\newcommand{\R}{\mathbb{R}}

\newcommand{\Cc}{\mathtt{C}} %% Constantes génériques
\renewcommand{\O}{\mathcal{O}} % Notation de Landau
\renewcommand{\div}{\mathrm{div}}

\newcommand{\hal}{\frac{1}{2}}

\renewcommand{\epsilon}{\varepsilon}

\newcommand{\z}{\mathsf{0}}
\renewcommand{\u}{\mathsf{1}}
\newcommand{\dd}{\mathtt{d}}
\newcommand{\id}{\mathrm{Id}}

%%% Dashint
\def\Xint#1{\mathchoice
   {\XXint\displaystyle\textstyle{#1}}%
   {\XXint\textstyle\scriptstyle{#1}}%
   {\XXint\scriptstyle\scriptscriptstyle{#1}}%
   {\XXint\scriptscriptstyle\scriptscriptstyle{#1}}%
   \!\int}
\def\XXint#1#2#3{{\setbox0=\hbox{$#1{#2#3}{\int}$}
     \vcenter{\hbox{$#2#3$}}\kern-.5\wd0}}
\def\dashint{\Xint-}
%%%%

%%% Stat mech 

\newcommand{\sig}{\bx}

%%%% Wasserstein, transport
\newcommand{\Wn}{\mathcal{W}_n}

%%% Processus
\renewcommand{\P}{\mathrm{P}}
\newcommand{\p}{\mathrm{p}}

%%% Lattice
\renewcommand{\d}{\mathsf{d}}
\newcommand{\Zd}{\mathbb{Z}^{\d}}
\newcommand{\La}{\Lambda}
\newcommand{\Lan}{{\La_n}}

%%% Spins
\newcommand{\M}{M}

\newcommand{\Conf}{\mathsf{Conf}}
\newcommand{\Confn}{{\mathsf{Conf}_n}}

\newcommand{\tu}{\theta_u}
\newcommand{\tus}{\left(\tu\right)_\star}
\newcommand{\HLa}{\mathsf{H}_\La}

\newcommand{\HLan}{\mathsf{H}_{\Lan}}

\newcommand{\HLam}{\mathsf{H}_{\La_m}}

\newcommand{\HH}{\mathsf{H}}
\newcommand{\Pp}{\mathscr{P}}
\newcommand{\Ppn}{\mathscr{P}_{n}}
\newcommand{\Ppi}{\Pp(\Conf)}
\newcommand{\Esp}{\mathbb{E}}

\newcommand{\Ppis}{\Pp^{\mathrm{s}}}

\newcommand{\sigz}{\sig^\z}
\newcommand{\sigu}{\sig^\u}
\newcommand{\di}{\mathsf{d}}
\newcommand{\dn}{\di_n}
\newcommand{\Pz}{\P^\z}
\newcommand{\Pu}{\P^\u}
\newcommand{\Wi}{\mathcal{W}}
\newcommand{\Ee}{\mathcal{E}}
\newcommand{\Een}{\mathcal{E}_n}
\newcommand{\Eei}{\mathcal{E}}
\newcommand{\Hh}{\mathcal{H}}
\newcommand{\Hhn}{\mathcal{H}_n}
\newcommand{\Hhi}{\mathcal{H}}

\newcommand{\Stat}{\mathtt{Stat}}

\newcommand{\Error}{\mathsf{Error}}

\newcommand{\gL}{g^{\Lambda}}

\DeclareMathOperator*{\argmin}{arg\,min}

%%Commandes Ronan
\usepackage{xargs}          %multiple arguments in command
\usepackage{xparse}         %provides \DeclareDocumentCommand
\usepackage{csquotes}       %nice quotes \enquote{}
\usepackage{microtype}

%differential calculus
%geometry
\DeclareMathOperator{\vol}{\mathsf{vol}}
%other

\newcommand{\given}[1][]{%
  \nonscript\:#1\vert
  \allowbreak
  \nonscript\:
\mathopen{}}

% symbols for probability, expectation, variance
\DeclareMathOperator{\prob}{\mathbb{P}}

\DeclareMathOperator{\var}{\mathbb{Var}}
\DeclarePairedDelimiterXPP{\Prob}[1]{\prob}[]{}{#1}
\DeclarePairedDelimiterXPP{\Var}[1]{\var}[]{}{\renewcommand\given{\nonscript\:\delimsize\vert\nonscript\:\mathopen{}} #1}
\DeclarePairedDelimiter{\paren}{\lparen}{\rparen}
\DeclarePairedDelimiter{\bracket}{\lbrack}{\rbrack}
\DeclarePairedDelimiter{\set}{\lbrace}{\rbrace}
\DeclarePairedDelimiter{\abs}{\lvert}{\rvert}
\DeclarePairedDelimiter{\norm}{\lVert}{\rVert}
\DeclarePairedDelimiterX{\hsp}[2]{\langle}{\rangle}{#1, #2}
\DeclarePairedDelimiterX{\pairing}[2]{\langle}{\rangle}{#1 \mid #2}
\DeclarePairedDelimiterX{\hpairing}[2]{\lcurvyangle}{\rcurvyangle}{#1 \mid #2}
\DeclarePairedDelimiterXPP{\Inf}[1]{\inf}{\lbrace}{\rbrace}{}{#1}
\DeclarePairedDelimiterXPP{\Exp}[1]{\exp}{\lparen}{\rparen}{}{#1}

\usepackage[capitalise,sort]{cleveref}

\newcommand{\Fbeta}{\mathcal{F}^\beta}
\newcommand{\Fn}{\mathcal{F}^\beta_n}
\newcommand{\fbeta}{\Fbeta}

\newcommand{\nn}{\mathsf{n}}
\newcommand{\Ricc}{\mathrm{Ricc}_M}
\newcommand{\V}{\mathrm{U}}
\newcommand{\W}{\Psi}
\newcommand{\tHLaD}{\widetilde{\mathsf{H}}_\La}
\newcommand{\tHLanD}{\widetilde{\mathsf{H}}_{\Lan}}
\newcommand{\dist}{\mathrm{dist}}
\newcommand{\T}{\mathrm{T}}

\newcommand{\bx}{\bm{x}}
\renewcommand{\epsilon}{\varepsilon}

% functional inequalities

\begin{document}
\author{\large{Ronan Herry\thanks{IRMAR, Université de Rennes,
263 avenue du Général Leclerc, 35042 Rennes Cedex, France \texttt{ronan.herry@univ-rennes.fr}} , Thomas Leblé\thanks{ Université de Paris-Cité, CNRS, MAP5 UMR 8145, F-75006 Paris, France \texttt{thomas.leble@math.cnrs.fr} } }}
\title{\LARGE{Gradient flow of the infinite-volume free energy for lattice systems of continuous spins}}
\date{\small{\today}}
\maketitle
\vspace{-1cm}

\begin{abstract}
We consider an infinite lattice system of interacting spins living on a smooth compact manifold, with short- but not necessarily finite-range pairwise interactions. We construct the gradient flow of the infinite-volume free energy on the space of translation-invariant spin measures, using an adaptation of the variational approach in Wasserstein space pioneered by Jordan, Kinderlehrer, and Otto in \cite{Jordan_1998}.
We also construct the infinite-volume diffusion corresponding to the so-called overdamped Langevin dynamics of the spins under the effect of the interactions and of thermal agitation.

We show that the trajectories of the gradient flow and of the law of the spins under this diffusion both satisfy, in a weak sense, the same hierarchy of coupled parabolic PDE's, which we interpret as an infinite-volume Fokker--Planck--Kolmogorov equation. We prove regularity of weak solutions and derive an Evolution Variational Inequality for regular solutions, which implies uniqueness. Thus, in particular, the trajectories of the gradient flow coincide with those obtained from the Langevin dynamics.

Concerning the long-time evolution, we check that the free energy is always non-increasing along the flow and that moreover, if the Ricci curvature of the spin space is uniformly positive, then at high enough temperature the dynamics converges exponentially, in free energy and in specific Wasserstein distance, to the unique minimizer of the infinite-volume free energy.
\end{abstract}

\section{Introduction}

\emph{Gradient flows in the Wasserstein space} provide powerful tools to study asymptotic properties of dynamical systems arising from physics.
Initially developed by the seminal contribution \cite{Jordan_1998} for the \emph{Fokker--Planck equation}, this approach is now implemented for numerous dynamics, notably the \emph{porous media equation} \cite{OttoPorous2}, \emph{granular flows} \cite{Carrillo_2003}, and the \emph{Boltzmann equation} \cite{ErbarBoltzmann}.
See, for instance \cite[Chap.~15]{VillaniOldNew}, or \cite[Chap.~18]{ABSOptimal}, for broader introductions to this topic.

Recent works \cite{ErbarHuesmannConfiguration,DSHSRicciPoisson,erbar2023optimal} have further extended this approach to \emph{infinite-volume non-interacting particle systems}.
In the present work, we develop a similar approach for another class of models coming from statistical physics: \emph{infinite-volume interacting spin systems}, and show that their natural evolution under the presence of thermal agitation coincides with the \emph{gradient flow} of the corresponding infinite-volume \emph{free energy} functional.

\subsection{General setting}
Let $\d \geq 1$ (the “lattice dimension”), let $\nn \geq 1$ (the “spin dimension”) and let the \emph{single-spin state space} $M$ be any smooth compact connected manifold of dimension $\nn$ without boundary, e.g. the $\nn$-dimensional sphere. We define an \emph{infinite spin configuration} as a vector $(\bx_i)_{i\in \Zd} \in M^{\Zd}$, and an \emph{infinite spin measure} as the law of a random infinite spin configuration. 

As usual in statistical mechanics, we consider a certain interaction energy between the spins, and we fix the value of the inverse temperature parameter $\beta \in (0, + \infty)$, which represents thermal agitation: this forms a \emph{spin system}. 

The corresponding infinite-volume \emph{free energy} $\fbeta$ is a functional defined on the space of translation-invariant infinite spin measures. Studying this functional is relevant because, under normal physical evolution of such a spin system, one expects (see e.g. \cite[§ 15]{landau2013statistical}) that the free energy \emph{decreases with time} and that the random state of the spins converges to a \emph{minimizer of the free energy}.

In this paper, we use ideas and techniques coming from optimal transportation of measures and the theory of gradient flows in metric spaces to study three objects related to the evolution of those systems: 
\begin{itemize}
  \item The \emph{gradient flow} of the infinite-volume free energy functional $\fbeta$, which forms a trajectory in the space of translation-invariant infinite spin measures.

  \item The \emph{infinite-volume diffusion} associated to the Langevin dynamics of the spin system, which gives rise to random paths $t \mapsto \bx(t)$ among spin configurations. At each time $t$, the law of $\bx(t)$ is a spin measure.

  \item The \emph{infinite-volume Fokker-Planck-Kolmogorov equations}, which describe a time evolution in the space of infinite spin measures. Solutions can be understood in several senses (dual, weak or strong).
\end{itemize}

\subsection{Summary of the results}
We give here an informal presentation of our results, with references to the precise statements.

\paragraph{Definition of the objects.} The first task consists in giving a meaning to the gradient flow, and in ensuring that the infinite-volume diffusion is well-defined.
\begin{enumerate}
  \item The gradient flow is constructed by adapting the discrete “JKO scheme” of \cite{Jordan_1998} to the infinite-volume setting. We show that our infinite-volume adaptation of the JKO scheme (presented in Section \ref{sec:discrete_scheme}) has a limit (as the discretization step-size tends to $0$), which we call the gradient flow of the free energy (Theorem \ref{prop:Convergence}). 
  \item We show, in Theorem \ref{th:diffusion:solution}, that the infinite-volume diffusion associated to the infinite spin system is well-defined.
    To do so, we embed the configuration space $\mathsf{Conf}$ in a suitable Hilbert space, and verify that the gradient of the interaction energy is Lipschitz for this structure.
    This diffusion can also be understood as the limit of finite-volume diffusions (Theorem \ref{th:diffusion:cauchy}).
\end{enumerate}

\paragraph{Link between gradient flow and dynamics via Fokker-Planck equations.} Our main goal is then to justify that these two objects coincide. The connection is established through the infinite Fokker-Planck-Kolmogorov equations. We prove that:
\begin{enumerate}
  \item The trajectories of the gradient flow and the law of the infinite diffusion are both solutions to the infinite-volume Fokker-Planck-Kolmogorov equations (introduced in Section \ref{sec:IFP}) in a dual sense (Theorem \ref{prop:plalFP} and Theorem \ref{th:diffusion:fokker-planck}).
  \item All dual solutions have local densities which are strong solutions (Theorem \ref{theo:regul}).
  \item Strong solutions are unique (Corollary \ref{th:uniqueness}), which follows from the EVI characterization of the flow (see below). 
\end{enumerate}
As a consequence, the law of the infinite-volume diffusion and the trajectories of the gradient flow coincide.

\paragraph{EVI characterization of the gradient flow.} 
Next, to support the idea that our trajectories are indeed gradient flows and to understand fine properties of such dynamics, we prove, in Theorem \ref{th:evi}, that every solution to the Fokker--Planck--Kolmogorov equation satisfies an \emph{Evolution Variational Inequality (EVI)}. In an abstract metric setting (see e.g. \cite{ambrosio2005gradient}), EVI's are used to characterize gradient flows and to obtain structural properties of the dynamics.

We refer to Section \ref{sec:evi} for a precise definition of EVI gradient flows. Let us simply point out here that this inequality involves: the dynamics, the free energy, the \emph{specific Wasserstein distance} (see Section \ref{sec:Wasserstein}), and a convexity / curvature constant $K_{\beta}$ which depends explicitly on $\beta$, the interaction, and the Ricci curvature of $M$.

\paragraph{Long-time behavior.} 
It is physically expected, and mathematically proven for a large class of models, that the free energy is always non-increasing along the trajectories. However, the long time behavior is not clear in general, as is often the case for gradient descents within complex “energy landscapes”.

Using the theory of displacement convexity together with the EVI formulation of the gradient flow, we prove that if the spin space $M$ has Ricci curvature bounded from below by a positive constant, and if the inverse temperature $\beta$ is small enough, then:
\begin{enumerate}
  \item The free energy functional $\mathcal{F}_{\beta}$ has a unique minimizer $\mathsf{P}^{*}$ (Theorem \ref{theo:uniqueness}).
  \item The solution to the Fokker--Planck--Kolmogorov equations converges exponentially fast to $\mathsf{P}^{*}$ as $t \to \infty$ (Corollary \ref{th:long-time-behaviour}).
    This exponential convergence holds with respect to both the specific Wasserstein distance and the free energy.
\end{enumerate}

\paragraph{Comments on our results.} 
 The construction of the gradient flow for an infinite-volume free energy functional is new. The only related example that we are aware of is the recent identification in \cite[Thm. 1.5.]{erbar2023optimal} of the gradient flow for the specific (i.e. infinite-volume) relative entropy with respect to the Poisson point process, with techniques which can be considered similar in spirit as the ones we use here. Point processes are technically more challenging than spin systems, but the free energy that we consider contains an interaction term in addition to the specific entropy.

Infinite-volume Fokker-Planck-Kolmogorov equations have been considered before, see e.g. \cite[Chap.~10]{Bogachev_2015} and the references therein. Our regularity result (Theorem \ref{theo:regul}) is new and might be of independent interest. The few existing uniqueness results do not apply to our case, and we obtain uniqueness of solutions thanks to our infinite-volume EVI formulation, which is also new in this context.

Uniqueness of the free energy minimizer at high temperature follows, \emph{without any curvature assumption}, from Dobrushin's uniqueness criterion combined with the Gibbs variational principle (see \cite[Cor. 6.37 \& Thm. 6.82]{FriedliVelenik}), so this part of our statement is weaker than existing ones. However, we give here a new proof using a form of \emph{infinite-volume displacement convexity}. 

Monotonicity of the free energy is not surprising (see for instance \cite[Theorem 4.25]{HolleyStroockDiffusionTorus} or \cite{Wick_1982}) and exponential convergence was known in certain finite-range models (\cite[Thm. 1, item 2]{Wick_1981}) in a weak sense: the law $\P(t)$ of the system at time $t$ converges exponentially fast (as $t\to \infty$) to $\mathsf{P}^{*}$ \emph{in a certain dual norm}. We provide here a unified treatment in greater generality. Moreover, our notion of convergence \emph{in specific Wasserstein distance} (and in free energy) is, by essence, uniform and thus stronger than the dual one from \cite{Wick_1981}. 

\newcommand{\CC}{\mathscr{C}}
\subsection{Choice of the interactions}
Recall that we model an infinite configuration of spins as an element $\bx = (\bx_i)_{i \in \Zd}$ of $\Conf := M^{\Zd}$. In order to encode the interaction energy of the spins, we fix:
\begin{itemize}
  \item A \emph{spin-spin interaction potential} $\W \in \CC^3(\M \times \M, \R)$, symmetric with respect to the two variables.
  \item A collection of \emph{spin-spin coupling coefficients}: for each $(i,j) \in \Zd \times \Zd$ we choose a real number $J_{i,j}$. 
\end{itemize}
Our assumptions on the coupling coefficients are as follows:
\begin{itemize}
  \item (Symmetry) For all $i,j$ we have $J_{i,j} = J_{j,i}$.
  \item (Translation-invariance) For all $i,j$, we have $J_{i,j} = J_{0, j-i}$.  
  \item (Short-range) The spin-spin couplings have \emph{short-range} in the following sense:
  \begin{equation}
  \label{eq:short-range}
  \|J\|_{\ell^1} := \sum_{i \in \Zd} |J_{0, i}| < + \infty.
  \end{equation}
\end{itemize}
In particular, $\|J\|_{\ell^\infty} := \sup_{i \in \Zd} |J_{0, i}|$ is also finite and, by translation invariance
\begin{equation*}
\sup_{j \in \Zd} \sum_{i \in \Zd} |J_{i,j}| < \infty.
\end{equation*}

\paragraph{Energy interaction and gradient of the energy} \newcommand{\TT}{\mathrm{T}}
One would like to define the energy of a given spin configuration $\bx$ as $\HH(\bx) \overset{??}{:=} \sum_{i,j \in \Zd} J_{i,j} \W(\bx_i, \bx_j)$, but this quantity is typically infinite. We explain in Section \ref{sec:freeenergy} how to define an energy per unit volume, see \eqref{def:HLa} and \eqref{finiteH}.

On the other hand, thanks to the short-range assumption \eqref{eq:short-range}, for any spin configuration $\bx$ we can define the \emph{gradient of the energy} $\nabla \HH(\bx)$ as:
\begin{equation*}
\nabla \HH(\bx) := \left(\sum_{j \in \Zd} J_{i,j} \partial_{1} \W(\bx_{i}, \bx_{j}) \right)_{i \in \Zd}
\end{equation*}
where $\partial_{1} \W$ is the gradient of $\W$ with respect to the first coordinate. For $\bx \in M^{\Zd}$, $\nabla \HH(\bx)$ belongs to $\Pi_{i \in \Zd} \left(\TT_{\bx_i} M \right)$, where $\TT_x M$ is the tangent space of $M$ at $x$.
Each component of the vector $\nabla \HH$ is always finite and bounded by $\|J\|_{\ell^1} \times \|\partial_1 \W\|_{\mathscr{L}^{\infty}}$.

\subsection{Discussion of the model and examples}
Imposing symmetry and translation-invariance on the couplings coefficients is a standard assumption, especially when dealing with stationary processes.
Many models of lattice spin systems simply set $J_{i,j} = 1$, if $i$ and $j$ are neighbors in $\Zd$, and $0$ otherwise. However, there is a significant interest in considering non-nearest neighbor situations. Two natural examples of \eqref{eq:short-range} are:
\begin{itemize}
	\item When the spin-spin couplings have \emph{finite range} i.e. when $i \mapsto J_{0,i}$ is compactly supported in $\Zd$. Many results in the literature are stated under this assumption.
	\item When the spin-spin couplings have a power law of the form $\left|J_{0,i}\right| \leq \frac{C}{(1+|i|)^{\d + s}}$ with $s > 0$. This has been considered in the physics literature since the 1960's, see e.g. \cite{joyce1966spherical,sak1973recursion,fisher1972critical} or \cite{aizenman1988critical} for a mathematical treatment. In the physics literature, this case is rather called “long-range” by opposition to “finite range”, but we prefer the terminology \emph{short-range}, in accordance to what would be used for interacting particle systems.
\end{itemize}

Regarding the nature of the interactions themselves, the case of a pure one-body interaction, that is setting $J_{i,j} = 0$ for all $i \neq j$ is much simpler to analyze.
In this case, the model has a product structure, and all the analysis boils down to understanding the dynamics (or the gradient flow) for a single spin.
On the other hand, the techniques of the present paper should extend readily to the case of a $k$-body interaction with $k \geq 2$, as long as the coupling coefficients satisfy some form of summability in the spirit of \eqref{eq:short-range} --- we stick to $k = 2$ for simplicity and because this is the main case of interest in the literature.

\subparagraph{$O(\nn)$ and Stochastic Heisenberg model.}
The $O(\nn)$ models form a well-studied class of lattice spin systems, for which $M$ is the $\nn$-dimensional unit sphere.
The interactions are given by a scalar product $\W(x, x') := x \cdot x'$, where $x$ and $x'$ are seen as vectors of $\mathbb{R}^{\mathsf{n}+1}$, and the spin-spin coupling is set to $J_{i,j} \coloneq 1$  if $i, j$ are lattice neighbors and $0$ otherwise.
This model is a continuous spin generalization of the celebrated Ising model, where $M \coloneq \{\pm1\}$, which informally corresponds to setting $\nn \coloneq 0$.
When $\nn \coloneq 1$, it is called the $XY$ model, and for $\nn \coloneq 2$ the Heisenberg model, we refer to \cite[Chap. 9]{FriedliVelenik} or \cite{peled2019lectures} for a presentation. 

The corresponding infinite-volume diffusion is introduced under the name “Stochastic Heisenberg model” in \cite{Faris_1979} and is further studied in \cite{Wick_1981,Wick_1982}. The setting there is actually more general: $M$ can be a general smooth compact manifold, and interactions are finite range but not necessarily nearest-neighbor.

\subparagraph{Non-compact spin space.} Despite an important part of the literature being devoted to compact spin spaces, the case of unbounded spin systems also receive some attention, for instance \cite{BodineauHelffer,LedouxUnbounded}, as well as the more recent works \cite{BauerschmidtBodineauPolchinski,BauerschmidtDagallier} in relation to Euclidean field theory.

In order to streamline the arguments, we limit our presentation to compact Riemannian manifolds.
We stress, however, that most of our reasoning applies verbatim to complete non-compact manifolds endowed with a sufficiently log-concave probability measure and whose geometry is suitably controlled. We discuss the non-compact case further in Section \ref{s:non-compact}.

\subsection{Overview of the three dynamics}
\label{sec:overview}
We informally present the three dynamics on spin configurations or spin measures mentioned above.

\paragraph{The infinite-volume diffusion.}
In Section \ref{sec:sde}, we construct the \emph{infinite-volume overdamped Langevin dynamics}, which is a stochastic process $t \mapsto \bx_t$ on the space $\Conf$ of spin configurations, corresponding to a Brownian motion on $M^{\Zd}$ with drift $- \beta \nabla \HH$. Informally, we consider the following SDE:
\begin{equation}
\label{eq:langevin}
  \dd X_{t} = \sqrt{2} \dd B_{t} - \beta \nabla \HH(X_{t}) \dd t,
\end{equation}
where $B_{t} = (B^{i}_{t})_{i \in \Zd}$ is a family of independent Brownian motions on $M$, possibly with a drift coming from $U$, indexed by $\Zd$. Since $\Conf$ is not a manifold, the well-posedness of \eqref{eq:langevin} does not follow from the standard theory.

A solution of \eqref{eq:langevin} is a random trajectory in the space of spin \emph{configurations}, and its law follows the corresponding trajectory in the space of spin \emph{measures}.

\paragraph{Infinite-volume Fokker--Planck--Kolmogorov equations.} \newcommand{\oL}{\omega_\La}
We start with the Kolmogorov equation, which is an equation describing a trajectory $t \mapsto \P(t)$ in the space of infinite spin \emph{measures}. It is expressed by saying that for all smooth functions $(t, \bx) \mapsto f(t, \bx)$ depending only on finitely many coordinates of $\bx$:
\begin{equation}
\label{Kolmo1}
\partial_t \Esp_{\P(t)}[f] = \Esp_{\P(t)} \left[ \partial_t f + \Delta f - \beta \nabla \HH \cdot \nabla f \right].
\end{equation}
We refer to \eqref{Kolmo1} as the \emph{dual} formulation, see Section \ref{sec:formFP}. Next, when the restriction of $\P(t)$ to each finite subset $\La \Subset \Zd$ has a density $\p_\La$ with respect to a certain reference measure $\oL$ on $M^\La$, we can recast \eqref{Kolmo1} as an evolution equation satisfied by $\p_\La$.
We obtain a family (indexed by $\La \Subset \Zd$) of Fokker--Planck equations, which can be considered either in a weak form:
\begin{equation*}
\partial_t \int_{M^\La} f \p_\La \dd \omega_\La = \int_{M^\La} \left( \partial_t f + \Delta f - \beta \nabla f \cdot \Esp_{\P(t)}\left[\nabla \HH | \La \right] \right) \p_\La  \dd \omega_\La,
\end{equation*}
where $f$ is a test function as above, or in a strong sense, when the densities are smooth enough:
\begin{equation*}
\partial_t \p_\La = \Delta \p_\La + \beta \div\left( \p_\La \cdot \Esp_{\P(t)} \left[\nabla \HH | \La \right]  \right).
\end{equation*}
In these last two formulations, the conditional expectation $\Esp_{\P(t)} \left[\nabla \HH | \La \right]$ of $\nabla \HH$ with respect to the configuration in $\La$ (see Section \ref{sec:formFP} for a precise definition) involves the entire spin measure $\P(t)$ and not just its restriction to $\La$. The Fokker--Planck equations are thus coupled together.

% as e.g. in \cite[Chap.~10.2]{Bogachev_2015}.

\paragraph{Gradient flow.}
\newcommand{\F}{\mathcal{F}}
\newcommand{\tP}{\tilde{\P}}
The infinite-volume free energy functional, denoted by $\Fbeta$, is obtained (see Section \ref{sec:freeenergy}) as a limit as $n \to \infty$ of finite-volume functionals $\Fn$ defined for configurations living on large boxes $\Lan := \{-n, \dots, n\}^\d \Subset \Zd$.
To construct a \emph{Minimizing Movement Scheme}, we fix a step-size $h > 0$ and proceed as follows:
\begin{enumerate}
	\item Start with some initial condition $\Pz$ chosen among stationary spin measures.
	\item Assume that $\P^{k}$ has been constructed for $k \geq 0$, then:
	\begin{enumerate}
	\item (Variational scheme). Take $n$ large and solve:
	\begin{equation}
	\label{MMS2}
  \bar{\mathsf{P}}^{k+1}_n = \argmin_{\P \text{ spin measure on $\Lan$}} \left(\frac{1}{2} \Wn^2(\P^k, \P) + h\Fn(\P)\right),
	\end{equation}
	where $\Wn$ is the $2$-Wasserstein distance between spin measures on $\Lan$ (see Section \ref{sec:Wasserstein} for reminders).
  This variational step yields a \emph{finite-volume} spin measure.
\item (Compatibilization step).
  Turn the finite-volume spin measure $\bar{\mathsf{P}}^{k+1}_n$ into an infinite-volume, \emph{stationary} spin measure using the construction given in Section \ref{sec:stationarisation}, and choose this as the next iterate $\P^{k+1}$.
\end{enumerate}
\end{enumerate}
The variational step is as in \cite{Jordan_1998} but the compatibilization step is new and specific to our context. We then define a \emph{discrete-time gradient flow} with step-size $h$ by setting:
\begin{equation*}
  \mathrm{P}_{h}(t) := \mathrm{P}^{k} \text{ for } t \in [kh, (k+1)h),
\end{equation*}
and we send $h \to 0$ to obtain the trajectory of the \emph{gradient flow} of $\Fbeta$ starting from $\Pz$.

\subsection{Connection with the literature}
\label{sec:literature}

\paragraph{Gradient flows.}
The link between solutions to the Fokker--Planck equation and gradient flows of the free energy starts with the seminal work \cite{Jordan_1998} by Jordan, Kinderlehrer, and Otto (JKO). They consider the usual Fokker--Planck equation on $\mathbb{R}^\nn$, namely:
\begin{equation*}
  \partial_{t} \rho = \Delta \rho + \nabla \cdot (\rho \nabla \V),
\end{equation*}
which describes the evolution of the law of overdamped Langevin dynamics on $\mathbb{R}^{\nn}$, given by:
\begin{equation*}
  \mathtt{d} X_{t} = \sqrt{2} \mathtt{d} B_{t} - \nabla \V(X_{t}) \mathtt{d} t,
\end{equation*}
where $(B_{t})$ is the Brownian motion on $\mathbb{R}^{\nn}$ and $\V$ is some smooth potential. They connect these equations to a gradient descent on the space of probability measures endowed with the $2$-Wasserstein distance, by showing that the law of $X_t$ follows a steepest descent with respect to the \emph{free energy} functional
\begin{equation*}
\mu \mapsto  \mathcal{F}(\mu) :=  \int \mu \log \mu \ \mathtt{d} x + \int \V \mathtt{d} \mu,
\end{equation*}
Note that this free energy can also be written as a \emph{relative entropy} with respect to $\nu := \mathrm{e}^{-U} \mathtt{d} x$: 
\begin{equation}
\label{def:Fmu}
\mathcal{F}(\mu) = \int \log \frac{\mathtt{d} \mu}{\mathtt{d} \nu} \mathtt{d} \mu, 
\end{equation}
Taking $\V \equiv 0$ gives the heat equation, which can thus be interpreted as the gradient flow of the Boltzmann entropy $\mu \mapsto \int \mu \log \mu \mathtt{d} x$.

Continuing this approach in order to handle non-linear PDEs, Otto \cite{OttoPorous,OttoPorous2} has developed a Riemannian-like structure on the Wasserstein space over $\mathbb{R}^{\nn}$. This Riemannian interpretation makes the notion of gradient and gradient flows on the Wasserstein space rigorous. 

Since then, a considerable literature has been devoted to the study of gradient flows in Wasserstein space and in more general metric spaces, and which we do not attempt to review it  here --- see the monographs \cite{AGSGradient,santambrogio2017euclidean,FigalliGlaudoInvitation,ABSOptimal}, as well as \cite{VillaniOldNew} for details.
Regarding manifolds let us simply mention that an adaptation of the scheme for the heat flow on, possibly non-compact, Riemannian manifolds can be found in \cite{Zhang_2007}, and that \cite{Erbar_2010} shows that the heat flow satisfies an \emph{Evolution Variational Inequality} with an explicit constant connected to the \emph{Ricci curvature} of the manifold. 

Our work relates to this line of research by showing that the infinite-volume Langevin dynamics on spins is indeed the gradient flow of the free energy $\fbeta$.
Contrarily to the case of a Riemannian manifold, we are working with infinitely many coordinates, these coordinates are interacting, and the free-energy is not a relative entropy, but a limit of relative entropies per volume.
Other recent works have studied gradient flows for infinite-volume systems coming from statistical physics:
\begin{itemize}
	\item \cite{ErbarHuesmannConfiguration} shows that, on a possibly non-compact Riemannian manifold with Ricci curvature bounded from below, the gradient flow, for the Wasserstein distance, of the relative entropy with respect to the Poisson point process is given by a family of non-interacting Brownian motions.
	\item \cite{DSHSRicciPoisson} shows that the infinite-volume birth-and-death process corresponds to the gradient flow, for a non-local transport distance, of the relative entropy with respect to the Poisson point process.
  \item \cite{erbar2023optimal} shows that the gradient flow, for the \emph{specific} Wasserstein distance, of the \emph{specific entropy} with respect to the Poisson point process is given by a family of non-interacting Brownian motions on $\mathbb{R}^{\nn}$.
  \item \cite{SuzukiCurvatureDyson} shows that the Dyson Brownian motion is the gradient flow, for the Wasserstein distance, of the relative entropy with respect to the $\mathsf{Sine}_{\beta}$ point process of Valkó-Virág. It also derives a curvature lower bound which is $0$ for all $\beta$.
\end{itemize}
The works \cite{ErbarHuesmannConfiguration,DSHSRicciPoisson,erbar2023optimal} consider \emph{non-interacting} evolutions on point processes, whereas \cite{SuzukiCurvatureDyson} deals with a long-range interaction coming from the one-dimensional logarithmic potential.

The references \cite{ErbarHuesmannConfiguration,DSHSRicciPoisson,SuzukiCurvatureDyson} work in a non-stationary setting and do not consider specific quantities. In particular, the starting point of gradient flow must belong to the domain of the relative entropy, which does not contain any stationary measure other than the reference measure. In contrast, \cite{erbar2023optimal} deals with stationary objects, as in the present work.

\paragraph{The Bakry--Émery condition.}
The ergodic and contractive properties of a gradient flow are often related to the convexity of the free energy functional.
Using the notion of \emph{displacement convexity} introduced by McCann \cite{McCann_1997}, \cite{cordero2001riemannian,vonRenesseSturm} show that convexity of the relative entropy connects to Ricci curvature lower bound on the manifold (see also \cite[Part II]{VillaniOldNew}).
Namely, on the Riemannian manifold $M$, the functional $\mathcal{F}$, from \eqref{def:Fmu}, is $\kappa$-convex along geodesics, in the sense of optimal transport, if and only if the \emph{Bakry--Émery condition} $\mathrm{Ric}_{M} + \nabla^{2} \V \geq \kappa$ holds, where $\mathrm{Ric}_{M}$ is the Ricci tensor.

This condition, introduced in \cite{BakryEmery} to study diffusions, is equivalent to certain functional inequalities, such as the celebrated \emph{logarithmic Sobolev inequality} or the \emph{$\Gamma_{2}$-criterion}.
These inequalities can typically be considered without referring explicitly to the Riemannian structure of the base space, using only the diffusion, and as consequence the point of view introduced in \cite{BakryEmery} is well-suited to investigate diffusions on abstract non-Riemannian spaces, such as infinite-dimensional diffusions. Numerous works have followed this path:
\begin{itemize}
  \item \cite{CarlenStroock} is the first paper to give a sufficient condition for infinite-volume spin systems to satisfy the Bakry--Émery condition;
  \item \cite{StroockZegarlinskiLogSob,StroockZegarlinskiMixing,Laroche} where the equivalence of the Bakry--Émery condition with the \emph{Dobrushin--Sloshman mixing} condition is established;
  \item \cite{HelfferUnbounded,BodineauHelffer,LedouxUnbounded} study the case of infinite-volume spin systems with non-compact continuous state space (see also \cite{Yoshida} for the discrete case);
  \item \cite{BartheMilman} has revisited this literature to provide effective bounds on the constant in the logarithmic Sobolev inequality for spin systems;
  \item \cite{BauerschmidtBodineau} derives a new spectral condition to obtain logarithmic Sobolev inequalities for spin models.
\end{itemize}
All these results provide, at sufficiently high temperature, a control uniform in the size of the box $\Lambda$, on the constant in the logarithmic Sobolev inequality.
Such a control is equivalent to the Dobrushin--Sloshman condition, which in particular implies:
\begin{enumerate}[(i)]
  \item the uniqueness of the Gibbs measure;
  \item exponential stabilisation in relative entropy or in $L^{2}$ with respect to the Gibbs measures.
\end{enumerate}
We refer to the monographs \cite{Royer,GuionnetZegarlinski} for a broader introduction on the subject.

All the above works provide information about the model only at high enough temperature.
Moreover, the results about stabilisation hold for \emph{near-equilibrium} initial conditions, that is for measures that are absolutely continuous with respect to the Gibbs measure.
For instance, \cite[Thm., p.~346]{CarlenStroock} gives exponential convergence in $L^{2}$ of a Gibbs measure and thus allows to only handle convergence of measures having a density with respect to the said Gibbs measure, which is a strong limitation especially in the stationary framework.
It is not clear whether this approach based on logarithmic Sobolev inequalities could yield results for non absolutely continuous measures, using the specific entropy or another suitable notion.
At least informally, dividing by $|\Lambda|$ and taking the limit as $\Lambda \to \Zd$ in this family of logarithmic Sobolev inequalities should yield a \emph{specific logarithmic Sobolev inequality}.
However, turning this intuition into precise results is beyond the scope of our paper.

In this article, we take another route and work from the point view of gradient flows of the specific entropy.
Apart from providing exponential rates of convergence in specific entropy in the high temperature regime, this alternative approach also shows that the free energy decays along the dynamics for \emph{every} temperature.

\paragraph{Infinite-volume FPK equation.}  The infinite-volume Fokker--Planck--Kolmogorov equation is discussed in \cite[Chap.~10]{Bogachev_2015} and the references therein.
The theory regarding existence, uniqueness, and regularity of solutions is not as fully developed as in the finite-volume case --- in particular, as explained in \cite[Chap.~10]{Bogachev_2015}, uniqueness is significantly harder to prove for infinite-volume Fokker--Planck--Kolmogorov equations.
Some conditions, related to the existence of good finite-dimensional approximations, appear in the literature, for instance \cite{bogachev2013analytic}, but they do not apply to our case.

Existence of solutions is typically proven by constructing the infinite-volume diffusion, as done for general \emph{finite-range} interactions in \cite{Faris_1979,royer1979processus,HolleyStroockDiffusionTorus,fritz1982infinite}.
Some of those papers include considerations on the regularity of the restriction to finite boxes $\Lambda \Subset \Zd$ of the infinite-volume solutions, through Malliavin calculus.

Monotonicity of the free energy along those diffusions is known \cite{Wick_1981,HolleyStroockDiffusionTorus}.
\cite{Wick_1981} also proves exponential convergence to equilibrium in a weak sense (against each fixed test function) when $M$ is a \emph{homogeneous} manifold, which allows for Fourier-analytic techniques to be used. Our notion of convergence in specific Wasserstein distance, and in free energy, is much stronger, and holds for general compact manifolds.

Existence of weak solutions to the infinite-volume Fokker-Planck equation is proven (by stochastic analysis methods)  in \cite{lemle2013uniqueness} for lattice systems of continuous spins living on a compact manifold, with \emph{finite-range} interaction. They also claim to prove uniqueness of solutions, however we cannot follow\footnote{They seem to use the fact that the spectrum of an elliptic operator of the type $f \mapsto - \Delta f + \nabla \HH \cdot \nabla f$
(say $\HH$ is smooth) is always $\subset (-\infty, 0]$, which is clearly false. The upper bound on the spectrum will depend on $\HH$ and thus, in their context, on the box $\Lambda_n$ that they consider, hence it does not seem possible to choose $\lambda$ independent of $n$, which is crucial for them. They also rely on some total variation bound for the solution that is never truly explained.} the proof of their \cite[Lemma 3.1]{lemle2013uniqueness}. 

Our method provides a new approach to proving existence of solutions through the (limit of the) JKO scheme. Our uniqueness result is new, and derives from the EVI characterization. 

\paragraph{Infinite-volume dynamics.}
Our construction of the dynamics is valid for interactions satisfying the short-range assumption \eqref{eq:short-range}, which are not necessarily finite-range interactions. On the other hand, it is fair to note that several works from this period deal with $k$-body interactions for arbitrary $k$, whereas for simplicity we stick to $k =2$.

Concerning the construction of the dynamics, and the existence and uniqueness of solutions of the associated SDE, we closely follow the method developed by \cite{HolleyStroockDiffusionTorus} on the torus with finite-range interaction, and by \cite{LehaRitter} on $\mathbb{R}^{\mathsf{n}}$ with interaction satisfying \eqref{eq:short-range}. These two references proceed by first constructing a solution to the infinite-volume stochastic differential equation, and then using the martingale method.

\begin{remark}
Following \cite{LehaRitter}, \cite{ADK97,ADK03} develops a comprehensive theory for stochastic differential equations on infinite products of compact manifolds, and use it to derive abstract existence result for general diffusions.
The existence of our Langevin dynamic should follow from their analysis, but for completeness, we prefer to give a self-contained argument in our simpler setting.
\end{remark}

\paragraph{Optimal transportation for infinite-volume objects.} The recent paper \cite{erbar2023optimal} develops a framework for optimal transportation and gradient flows for stationary point processes directly at the infinite-volume level.
In short, translating their ideas from the point process setting --- which is arguably more challenging --- to spin systems, they proceed as follows:
\begin{enumerate}
  \item Define a \emph{cost per unit volume} directly at the level of infinite spin configurations, by setting:
  \begin{equation*}
    \di_\infty^2(\bx^0, \bx^1) := \limsup_{n \to \infty} \frac{1}{|\Lambda_{n}|} \dn^2\left(\bx^0_{|\Lambda_{n}}, \bx^1_{|\Lambda_{n}}\right), \qquad \bx^{0}, \bx^{1} \in \mathsf{Conf},
  \end{equation*}
  where $\dn(\cdot, \cdot)$ is the Riemannian distance on $M^{\Lambda_{n}}$.
   The limit might not exist in general, while, by compactness, the $\limsup$ is always finite.
\item Given two stationary spin measures $\mathrm{P}_{0}, \mathrm{P}_{1}$, define the \emph{stationary Wasserstein distance} by minimizing the associated transportation cost.
  Namely, define
  \begin{equation*}
    \mathcal{W}_{\infty}(\mathrm{P}_{0}, \mathrm{P}_{1}) := \inf_{\Pi} \paren*{\iint_{\mathsf{Conf} \times \mathsf{Conf}} \di_\infty^2(\bx^0, \bx^1) \mathtt{d} \Pi(\bx^0, \bx^1)}^{1/2},
  \end{equation*}
  where the infimum runs over all couplings $\Pi$ between $\mathrm{P}_{0}, \mathrm{P}_{1}$ with good invariance properties with respect to lattice shifts
\item Show that optimal couplings are actually \emph{matchings}, \emph{i.e.} are induced by an optimal transportation map $T \colon \mathsf{Conf} \to \mathsf{Conf}$. Define the corresponding notion of displacement interpolation.
  \item Introduce the relevant notion of displacement convexity, show that the specific relative entropy is displacement convex, etc.
\end{enumerate}
This program could likely be implemented for spin measures, presumably with fewer technical difficulties than in \cite{erbar2023optimal}. We believe that $\mathcal{W}_{\infty}$ would coincide with our specific Wasserstein distance $\Wi$, defined in \eqref{def:Wi}.

One could then seek to construct the gradient flow of $\fbeta$ by considering the corresponding minimizing movement variational scheme, that is, choosing for each iterate (cf \eqref{MMS2}):
\begin{equation}
\label{MMS3}
\mathrm{P}^{k+1} \in \arg\min \left( \hal \mathcal{W}^{2}_{\infty}(\mathrm{P}_k, \cdot) + h \fbeta(\cdot)  \right),
\end{equation}
the minimization being among stationary spin measures. A first difficulty here is that now the minimizer might not be unique.

This approach would avoid going through the stationarization step described in Section \ref{sec:JKO}.
However, deriving the Fokker--Planck equation from such as scheme requires to study the variational problem associated to the minimization in \eqref{MMS3} --- which involves perturbing infinitesimally the minimizers to derive Euler--Lagrange equations, see Section \ref{sec:JKO}.
It is not easy to proceed directly at the level of stationary spin measures, since one would need to perform global perturbations in a stationary way.
We would probably have to work in finite boxes and undergo some stationarization procedure.

\subparagraph{Uniqueness of minimizers and displacement convexity.}
The “infinite-volume displacement convexity” argument used here to prove uniqueness of minimizers of the free energy at high temperature in the case of positive curvature (Theorem \ref{theo:uniqueness}) is similar in spirit to the one of \cite{erbar2021one}, with two differences: 1) \cite{erbar2021one} deals with a \emph{point process} and 2) in \cite{erbar2021one}, the convexity comes from the interaction term in the free energy, whereas we get convexity from the entropy term.

\section{Setting and preliminaries}

\subsection{Spin configurations and spin measures}
\label{sec:single_spin}
At each site of the lattice $\Zd$, we place a spin with value in $M$. The dimensions $\nn$ of $M$ and $\d$ of the lattice play almost no role.

\paragraph{Single-spin space.}
We choose our single-spin space $M$ to be a connected $\nn$-dimensional smooth compact Riemannian manifold without boundary. We endow $M$ with the topology associated with its Riemannian metric and with the corresponding Borel $\sigma$-algebra. We let $\vol$ be the corresponding volume measure on $\M$, normalized to be a probability measure. We denote by $\di(\cdot, \cdot)$ the Riemannian distance on $M$.

We fix a smooth \emph{single-spin potential} $\V$ on $M$ such that $\int_M e^{-\V} \dd \omega = 1$ and we consider the measure $\omega$ on $M$ with density 
\begin{equation}
\label{eq:defomega}
\dd \omega := e^{-\V} \dd \vol.
\end{equation}
We can think of $e^{-\V}$ as a weight. Taking $\V \equiv 0$ gives back $\omega = \vol$.

We let $\nabla$ be the \emph{Levi--Civita connection}, $\Delta$ be the \emph{Laplace--Beltrami operator}, $\operatorname{div}_{\V} := (\operatorname{div} - \nabla \V \cdot)$ be the adjoint of $\nabla$ in $\mathscr{L}^{2}(\omega)$, where $\operatorname{div}$ is the usual divergence, and $\Delta_{\V} := - \operatorname{div}_{\V} \nabla = \Delta - \nabla \V \cdot \nabla$ for the weigthed Laplace--Beltrami operator.

By construction, $-\Delta_{\V}$ is a non-negative self-adjoint operator on $\mathscr{L}^{2}(\omega)$.

\subparagraph{Gradients and derivatives.}
For $x \in M$, we denote the tangent space at $x$ by $\mathrm{T}_{x}M$.

If $f : M \to \R$ is a $\CC^1$ map, for $x \in M$ we denote by $\nabla f(x)$ the gradient of $f$ at $x$, which is a vector in the tangent space $\T_x M$, and we write $\|\nabla f(x)\|^2 \coloneq g_{x}(\nabla f(x), \nabla f(x))$ for its norm computed in the tangent space through the inner product corresponding to the Riemannian structure on $M$.

If $\La \Subset \Zd$ is a finite subset and $f: M^\La \to \R$ is a $\CC^1$ function, at each point $\bx \in M^\La$ we see the gradient $\nabla f(\sig)$ as a family of tangent vectors indexed by $\La$, with 
\begin{equation*}
  \left(\nabla f(\sig)\right)_i = \nabla_i f(\sig) \in \T_{\sig_i}M, \qquad i \in \Lambda,
\end{equation*}
where $\nabla_i f(\sig)$ is the partial derivative of $f$ at $\sig$ with respect to the $i$-coordinate. We define $\|\nabla f(\sig)\|$ as the $\ell^2$ norm of this family, namely:
\begin{equation*}
\|\nabla f(\sig)\| := \left(\sum_{i \in \La} \|\nabla_i f(\sig)\|^2 \right)^\hal,
\end{equation*}
where each individual norm is computed in the tangent space of a single copy of $M$. 

If $f: M^\La \to \R$ is a $\CC^2$ function, we write:
\begin{equation}
\label{secondpartial}
\|\nabla^2 f(\sig)\| := \left(\sum_{i,j \in \La} \|\nabla^2_{ij} f(\sig)\|^2 \right)^\hal,
\end{equation}
Here $\nabla^2_{ij} f(\sig)$ corresponds to the second partial derivative of $f$ at $\sig$, which is a linear map from $\T_j M \to \T_i M$.

\subparagraph{Curvature.}
We denote by $\Ricc$ the Ricci curvature tensor of $M$: at every point of $M$, it gives a quadratic form on the tangent space, see \cite[Chap.~14]{VillaniOldNew}.
Following \cite{BakryEmery}, we will use the following “Bakry--Émery curvature lower bound” on the weighted manifold $(M, \omega)$:
\begin{equation}
\tag{Bakry-Émery}
\label{PosCurv}
\kappa \coloneq \sup \set*{ \kappa' \in \mathbb{R} : \mathrm{Ricc}_{M} + \nabla^{2} \mathrm{U} \geq \kappa g }.
% \text{There exists $\kappa > 0$ such that $\Ricc + \nabla^2 \V \geq \kappa \Id$ everywhere on $M$.}
\end{equation}
Here, $\nabla^2 \V$ denotes the Hessian of $\V$ as in \eqref{eq:defomega}.
By compactness, the constant $\kappa$ is finite. Its exact value of $\kappa$ plays no role in the definition of the gradient flow or for its connexion with Langevin dynamics, but the assumption $\kappa > 0$ will be crucial in order to analyse the long-time behavior of the trajectories.

\newcommand{\by}{\bm{y}}

\paragraph{Spin configurations.}
When $\La$ is a subset of the lattice $\Zd$, we let $\Conf(\La) \coloneq M^{\Lambda}$ be the set of \emph{spin configurations on $\La$}, namely vectors $\bx = (\bm{x}_{i})_{i \in \Lambda} \in M^{\Lambda}$. We identify $\Conf(\La)$ to the (at most countable) product $\M^\La$ as a topological and measurable space.

When $\La = \Zd$ itself, we simply write $\Conf := \Conf(\Zd)$.
Every configuration on $\Zd$ yields a configuration on $\La \subset \Zd$ for all $\La$ by restriction. If $\bx$ is a spin configuration and $i$ a lattice point, we denote the value of the spin at $i$ by $\bx_i$. 

The lattice $\Zd$ acts on $\Conf$ by translation: for $u \in \Zd$ and $\sig \in \Conf$ we denote by $\tu \cdot \sig$ the spin configuration such that $\left(\tu \cdot \sig\right)_i := \sig_{i+u}$ for all $i \in \Zd$. More generally, we use $\tu \cdot$ to denote a translation by $u$.

\subparagraph{Structure on $\M^\La$.}
When $\La$ is finite (we write $\La \Subset \Zd$) the product $\M^\La$ inherits a Riemannian manifold structure, as well as the corresponding distance $\d_\La^2(\bx, \by) := \sum_{i \in \La} \d^2(\bx_i, \by_i)$. It also inherits the product measure $\oL$:
\begin{equation}
\label{def:oL}
\dd \oL(\bx) =  e^{- \sum_{i \in \La} \V(\bx_i)} \prod_{i \in \La} \dd \vol(\bx_i)
\end{equation}

However, $\Conf = \Conf(\Zd)$ is not a manifold --- not even modelled on a infinite-dimensional vector space. Nevertheless, we define its “tangent space” at $\bx \in \Conf$ as:
\begin{equation*}
  \mathrm{T}_{\bx}\mathsf{Conf} := \prod_{i \in \Zd} \mathrm{T}_{\bx_{i}} M,
\end{equation*}
and we let $\mathrm{T} \mathsf{Conf}$ be the associated “\emph{tangent bundle}”. We then call a \emph{vector field} on $\mathsf{Conf}$ a section of $\mathrm{T}\mathsf{Conf}$.

If a function $\varphi \colon \Conf \to \mathbb{R}$ is differentiable at $\bx \in \mathsf{Conf}$ with respect to the site $i \in \Zd$, we write $\nabla_{i} \varphi(\bx) \in \mathrm{T}_{\bx_{i}} M$ for its gradient with respect to $i$. When $\varphi$ is differentiable at $\bx \in \Conf$ with respect to every site $i \in \Zd$, we write
\begin{equation*}
  \nabla \varphi(\bx) := \set*{ \nabla_{i} \varphi(\bx) : i \in \Zd } \in \mathrm{T}_{\bx}\mathsf{Conf}.
\end{equation*}
Finally, if $\varphi$ is differentiable at every $\bx \in \Conf$, we can consider its gradient vector field given by $\bx \mapsto \nabla \varphi(\bx)$.

We can give similar definitions for vector fields or tensor fields.
In particular, provided it makes sense, the second derivative $\nabla^{2}_{ij}\varphi(\bx) := \nabla_{j} \nabla_{i} \varphi(\bx)$ is an element of $\mathrm{T}_{\bx_{i}}M \otimes \mathrm{T}_{\bx_{j}} M$.
We also set $\Delta_{i}\varphi := \operatorname{Tr} \nabla^{2}_{ii}\varphi$, and $\Delta := \sum_{i \in \Zd} \Delta_{i}$. 
The infinite-volume version of the weighted Laplace--Beltrami operator is given by:
\begin{equation*}
  \Delta_{\V}f(\bm{x}) \coloneq \sum_{i \in \Zd} (\Delta_{i}f(\bm{x}) - \nabla \V(\bm{x}_{i}) \cdot \nabla_{i} f(\bm{x})).
\end{equation*}
 
\paragraph{Finite lattice boxes.}
For all $n \geq 1$, we introduce the box $\La_n := \{-n, \dots, n\}^{\d}$. 
We often use the subscript $n$ for objects restricted to $\La_n$, for example we write $\mathsf{Conf}_{n} := \mathsf{Conf}(\Lambda_{n})$, $\omega_{n} := \omega_{\Lambda_{n}}$, $\mathrm{dist}_{n} := \mathrm{dist}_{\Lambda_{n}}$, and so on.
 We also extend the definitions of $\Delta_{\V}$ and $\operatorname{div}_{\V}$ to $\mathsf{Conf}_{n}$ by acting coordinate-wise.

We say that a function $f \colon \mathsf{Conf} \to \mathbb{R}$, is \emph{$\Lan$-local} provided $f(\bx) = f(\bx')$ for all $\bx$ and $\bx' \in \mathsf{Conf}$ coinciding on $\Lambda_{n}$.
In particular $f$ can then be seen as a function on $\mathsf{Conf}_{n}$. More generally, we say that $f$ is \emph{local} provided it is $\Lan$-local for some $n \geq 1$.

\paragraph{Spin measures.} 
We define \emph{infinite spin measures} on $\Zd$ as elements of $\Pp(\Conf)$, the space of probability measures on $\Conf$, and for $n \geq 1$, we let $\Ppn := \Pp(\Confn)$. We call elements of $\Ppn$ \emph{finite} spin measures. There are two natural topologies on $\Pp(\Conf)$:
\begin{itemize}
   \item The weak topology, defined as as the coarsest topology such that $\P \mapsto \Esp_{\P}[f]$ is continuous for all functions $f$ which are continuous on $\Conf$ with respect to the product topology.
   \item The local topology, defined as the coarsest topology such that $\P \mapsto \Esp_{\P}[f]$ is continuous for all functions $f$ which are bounded and \emph{local}.
 \end{itemize} 
We introduce below another topology, induced by our specific Wasserstein distance, see \eqref{def:Wi}.

\subparagraph{Stationary measures.}
We let $\Ppis$ be the set of infinite spin measures that are invariant under all lattice shifts, namely:
 \begin{equation}
 \label{def:stati}
\Ppis := \left\lbrace \P \in \Ppi, \ \tus \P = \P \text{ for all } u \in \Zd \right\rbrace,
\end{equation} 
where $\tus$ is the push-forward by the translation $\tu$. When $\P \in \Ppis$, we say that $\P$ is a \emph{stationary} spin measure.

\subsection{The free energy functionals.} \label{sec:freeenergy}
\newcommand{\Pn}{\P_n}
\newcommand{\pn}{\p_n}
\paragraph{Interaction energy within a finite box.}
For any finite subset $\La \Subset \Zd$ and for $\bx \in \Conf(\La)$ we define \emph{the interaction energy $\HLa(\bx)$ of $\bx$ in $\La$} as:
\begin{equation}
\label{def:HLa}
\HLa(\bx) := \sum_{i \in \La, j \in \La} J_{i,j} \W(\bx_i, \bx_j).
\end{equation}

From the short-range assumption \eqref{eq:short-range} and the fact that $\W$ is a continuous function on a compact manifold and thus bounded, there exists a constant $\Cc$, depending on $M$ and $\W$, such that for all $\La \Subset \Zd$ and all $\bx$ in $\Conf$:
\begin{equation}
\label{eq:HLaStable}
- \Cc  \leq \frac{\HLa(\bx)}{|\La|} \leq \Cc.
\end{equation}

\begin{remark}[Two conventions for the spin-spin interaction in $\La$]
\label{rem:tHLaD}
In our definition \eqref{def:HLa} of the spin-spin interaction in $\La$, we only consider couples of spins that are \emph{both contained in $\La$}. This differs from the usual choice (see e.g. \cite[Section 6]{FriedliVelenik} or \cite[Chapter 2]{GeorgiiGibbs}), which consists in setting
\begin{equation}
\label{def:tH}
\tHLaD (\sig) := \sum_{i \in \La, j \in \Zd} J_{i,j} \W(\sig_i, \sig_j).
\end{equation}
Our convention turns out to be convenient later. In fact, under the short-range assumption, one would get the same infinite-volume functional. Indeed one has, as $n \to \infty$:
\begin{equation}
\label{HLaDvstHLaD}
\lim_{n \to + \infty} \sup_{\sig \in \Conf} \frac{1}{|\Lan|} \left| \HH_{\Lan}(\sig) - \tHLanD (\sig) \right| = 0.
\end{equation}
To prove \eqref{HLaDvstHLaD}, let $R_L := \|\W\|_{\mathscr{L}^\infty(M^\La \times M^\La)} \times \sum_{|i| \geq L} |J_{0,i}|$, and observe that this tends to $0$ as $L \to \infty$ by the short-range assumption \eqref{eq:short-range}. We can directly bound $\frac{1}{|\Lan|} \left| \HH_{\Lan}(\sig) - \tHLanD (\sig) \right|$ by $\frac{1}{|\Lan|} \sum_{i \in \Lan} R_{\dist(i, \partial \Lan)}$, which tends to $0$ as $n \to \infty$, since most points in $\Lan$ are far from $\partial \Lan$.
\end{remark}

\paragraph{Interaction of a finite box with the exterior.}
For $\La \Subset \Zd$, and $\bx \in \Conf$, we let $\HH(\Zd \rightarrow \La)(\bx)$ be 
\begin{equation}
\label{finiteH}
\HH(\Zd \rightarrow \La)(\bx) := \sum_{i \in \La, j \in \Zd} J_{i,j} \W(\bx_i, \bx_j),
\end{equation}
which is always bounded by $\Cc \times |\La|$, and we define $\nabla \HH(\Zd \rightarrow \La)(\bx)$ as the following family of tangent vectors, indexed by $\La$: 
\begin{equation}
\label{nablaHZdLa}
\nabla \HH(\Zd \rightarrow \La)(\bx) := \left(\sum_{j \in \Zd} J_{i,j} \partial_1 \W(\bx_i, \bx_j) \right)_{i \in \La}.
\end{equation}
Each component of $\nabla \HH(\Zd \rightarrow \La)$ is bounded, uniformly in $\La$, by a constant depending only the parameters of the model (here $\|J\|_{\ell^1}$ and $\|\partial_1 \W\|_{\mathscr{L}^\infty}$).

\paragraph{Finite-volume free energy.} For $n \geq 1$ and $\Pn \in  \Ppn$, we introduce two quantities $\Ee_n(\Pn)$ and $\Hh_n(\Pn)$.
\begin{itemize}
\item The \emph{relative entropy}: if $\pn$ is the Radon-Nykodym density of $\Pn$ with respect to $\omega_n$, we let:
\begin{equation*}
\Een(\Pn) := \int_{\M^{\Lan}} \log \pn \ \dd \pn.
\end{equation*} 
If $\Pn$ is not absolutely continuous with respect to $\omega_n$, we set $\Een(\Pn) = + \infty$.
Since $\omega_{n}$ is a probability measure, by Jensen's inequality, $\Een(\Pn)$ is always $\geq 0$, and vanishes if and only if $\Pn$ coincides with $\omega_n$.

\item The \emph{average interaction energy}, where $\HLan$ is as in \eqref{def:HLa}:
\begin{equation*}
\Hhn(\Pn) := \Esp_{\Pn} \left[ \HLan(\sig) \right].
\end{equation*}
\end{itemize}
We then define the finite volume free energy $\Fn$ as: 
\begin{equation}
\label{def:Fn}
\Fn(\Pn) := \Een(\Pn) + \beta \Hhn(\Pn).
\end{equation}

\begin{lemma}
\label{finitevolLSC}
Both $\Een$ and $\Hhn$ are lower semi-continuous on $\Ppn$, and thus so is $\Fn$.
\end{lemma}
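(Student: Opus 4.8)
The statement has three parts: lower semi-continuity of $\Een$, of $\Hhn$, and then of $\Fn$ as a consequence. The plan is to handle the two pieces separately and then combine them, being careful about which topology on $\Ppn$ is meant --- presumably the weak topology (equivalently, for finite boxes, the local topology, since a function on $\Confn = M^{\Lan}$ is automatically local). The last claim is immediate: a sum of two lower semi-continuous functions is lower semi-continuous, and $\beta > 0$, so $\beta \Hhn$ is lower semi-continuous whenever $\Hhn$ is. So the real content is the first two bullet points.

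\textbf{Lower semi-continuity of $\Hhn$.} First I would observe that $\HLan$ is a fixed \emph{continuous} function on the compact space $\Confn = M^{\Lan}$: indeed $\HLan(\bx) = \sum_{i,j \in \Lan} J_{i,j} \W(\bx_i, \bx_j)$ is a finite sum of continuous functions by \eqref{def:HLa}, hence bounded by \eqref{eq:HLaStable}. Therefore $\Pn \mapsto \Hhn(\Pn) = \Esp_{\Pn}[\HLan]$ is the integral of a bounded continuous function against $\Pn$, which is in fact \emph{continuous} (not merely lower semi-continuous) for the weak topology on $\Ppn$. So this bullet is easy and actually gives more than claimed.

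\textbf{Lower semi-continuity of $\Een$.} This is the standard lower semi-continuity of relative entropy. The cleanest route is the Donsker--Varadhan variational formula:
\begin{equation*}
  \Een(\Pn) = \sup \set*{ \Esp_{\Pn}[\varphi] - \log \int_{M^{\Lan}} e^{\varphi} \dd \omega_n \ : \ \varphi \in \CC_b(M^{\Lan}, \R) },
\end{equation*}
valid because $\omega_n$ is a probability measure on the compact (hence Polish) space $M^{\Lan}$. For each fixed continuous bounded $\varphi$, the map $\Pn \mapsto \Esp_{\Pn}[\varphi] - \log \int e^{\varphi} \dd \omega_n$ is continuous for the weak topology (the second term does not depend on $\Pn$), so $\Een$, being a supremum of continuous functions, is lower semi-continuous. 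Alternatively, if one prefers to avoid quoting Donsker--Varadhan, one can use that $\Een(\Pn) = \int H(\pn) \dd\omega_n$ with $H(s) = s \log s$ convex and bounded below, and invoke the general lower semi-continuity of such convex integral functionals of the density along weakly convergent sequences of measures (e.g.\ \cite[Ch.~1]{AGSGradient} or a Fatou-type argument after passing to a subsequence along which $\pn \dd\omega_n \rightharpoonup \p \dd\omega_n$ restricted to the a.c.\ part). I would use the Donsker--Varadhan argument, as it is shortest and self-contained.

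\textbf{Conclusion and main obstacle.} Combining: $\Een$ is lower semi-continuous, $\Hhn$ is continuous hence lower semi-continuous, so $\Fn = \Een + \beta\Hhn$ is lower semi-continuous. There is no serious obstacle here; the only point requiring a line of care is recalling/justifying the variational representation of relative entropy on the compact space $M^{\Lan}$ (which is classical precisely because $M^{\Lan}$ is Polish and $\omega_n$ is a probability measure), and noting that on a finite box the weak and local topologies coincide so the statement is unambiguous.
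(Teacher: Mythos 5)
Your proof is correct and follows essentially the same route as the paper: the paper likewise notes that $\Hhn$ is in fact continuous (being the integral of the bounded continuous function $\HLan$ against $\Pn$) and simply cites lower semi-continuity of the relative entropy as classical, which you spell out via the Donsker--Varadhan variational formula. No issues.
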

\begin{proof}
Lower semi-continuity of the relative entropy is a classical fact. Moreover $\HLan$ is continuous and bounded on $M^\La$, thus $\Hhn = \Esp_{\Pn} \left[ \HLan(\sig) \right]$ is in fact continuous on $\Ppn$. 
\end{proof}

\paragraph{Infinite-volume free energy.} Next, when $\P$ is a stationary spin measure, we define, with $\P_{|\Lan}$ denoting the restriction to $\Lan$:
\begin{itemize}
	\item The \emph{specific relative entropy:}
	\begin{equation}
	\label{eq:Eei}
 \Eei(\P) := \lim_{n + \infty}  \frac{1}{|\Lan|}\ \Een(\P_{|\Lan}) = \sup_{n \geq 1}\ \frac{1}{|\Lan|} \ \Een(\P_{|\Lan}).
	\end{equation}
\item The \emph{interaction energy density:}
	\begin{equation}
	\label{eq:Hhi}
	\Hhi(\P) := \lim_{n + \infty} \frac{1}{|\Lan|}\  \Hhn(\P_{|\Lan}) = \lim_{n + \infty} \frac{1}{|\Lan|}\ \Esp_{\P_{|\Lan}} \left[ \HLan(\sig) \right].
	\end{equation}
\end{itemize}
The fact that the limit in \eqref{eq:Eei} exists and is non-decreasing (hence coincides with the $\sup$) is proven e.g. in \cite[Prop~6.75]{FriedliVelenik} (the quantity $s$ there is in fact the \emph{opposite of} our $\Eei$). Similarly, existence of a limit in \eqref{eq:Hhi} is shown in \cite[Prop~6.78]{FriedliVelenik} using a different convention ($\tHLanD$ instead of $\HLan$) for the spin-spin interactions, as mentioned in Remark \ref{rem:tHLaD}, but in view of \eqref{HLaDvstHLaD} this does not matter. Both results are classical and rely on some kind of super-additive behavior and Fekete's lemma.

We then define the infinite-volume energy functional as:
\begin{equation}
\label{def:Fbeta}
\Fbeta(\P) := \Eei(\P) + \beta \Hhi(\P) = \lim_{n \to \infty} \frac{1}{|\Lan|}\ \Fn(\P_{|\Lan})
\end{equation}

\begin{lemma}
\label{lem:PropertiesOfTheLimits}
The following holds:
\begin{enumerate}
  \item Both $\Eei$ and $\Hhi$ are lower semi-continuous on $\Ppis$, and thus so is $\Fbeta$.
  \item The limit $\frac{1}{|\Lan|} \ \Een \to \Eei$ is non-decreasing.
  \item The limit $\frac{1}{|\Lan|} \ \Hhn \to \Hhi$ is uniform.
\end{enumerate}
In particular, there exists a sequence $n \mapsto h(n)$ such that $h(n) \to 0$ as $n \to \infty$ and for all $\P$ in $\Ppis$ we have:
\begin{equation}
\label{error-free-energy}
\Fbeta(\P) \geq \frac{1}{|\Lan|} \ \Fn(\P) + h(n).
\end{equation}
\end{lemma}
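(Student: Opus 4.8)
The plan is to treat the three items in turn, extracting what is needed for the final inequality \eqref{error-free-energy} at the end. For item (1), lower semi-continuity of $\Eei$ on $\Ppis$ is a standard fact (it is the supremum over $n$ of the continuous-up-to-lsc maps $\P \mapsto \frac{1}{|\Lan|}\Een(\P_{|\Lan})$, each of which is lsc for the local topology since $\Een$ is lsc on $\Ppn$ by Lemma \ref{finitevolLSC} and the restriction map is continuous; a supremum of lsc functions is lsc). For $\Hhi$, I would argue that it is in fact \emph{continuous} on $\Ppis$ for the local topology: by item (3) it is a uniform limit of the maps $\P \mapsto \frac{1}{|\Lan|}\Hhn(\P_{|\Lan})$, each of which is continuous (again by Lemma \ref{finitevolLSC}, $\Hhn$ being continuous and bounded on $\Ppn$), and a uniform limit of continuous functions is continuous. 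Hence $\Fbeta = \Eei + \beta\Hhi$ is lsc, being a sum of an lsc and a continuous function.

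Item (2) is cited from \cite[Prop.~6.75]{FriedliVelenik}: the sequence $n \mapsto \frac{1}{|\Lan|}\Een(\P_{|\Lan})$ is non-decreasing by a subadditivity/superadditivity argument for relative entropy, so $\Eei(\P) = \sup_n \frac{1}{|\Lan|}\Een(\P_{|\Lan})$ and in particular $\Fbeta(\P) \geq \frac{1}{|\Lan|}\Een(\P_{|\Lan})$ for every $n$. For item (3), the point is to upgrade the pointwise convergence in \eqref{eq:Hhi} to uniformity over $\P \in \Ppis$. Here the key observation is that the dependence on $\P$ is entirely through $\Esp_\P$ of a bounded function, and the error between $\frac{1}{|\Lan|}\HLan$ and the per-volume energy density is controlled \emph{uniformly in the configuration}. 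Concretely, I would write $\frac{1}{|\Lan|}\Hhn(\P_{|\Lan}) = \frac{1}{|\Lan|}\Esp_\P[\HLan]$ and compare $\HLan$ with $\tHLanD$ using the uniform estimate \eqref{HLaDvstHLaD} from Remark \ref{rem:tHLaD}; for $\tHLanD$, stationarity of $\P$ gives $\Esp_\P[\tHLanD] = \sum_{i \in \Lan} \Esp_\P[\sum_{j} J_{i,j}\W(\sig_i,\sig_j)] = |\Lan|\, \Esp_\P[\sum_{j \in \Zd} J_{0,j}\W(\sig_0,\sig_j)]$, so that $\frac{1}{|\Lan|}\Esp_\P[\tHLanD]$ is in fact \emph{exactly} equal to $\Hhi(\P) := \Esp_\P[\sum_{j} J_{0,j}\W(\sig_0,\sig_j)]$, independently of $n$. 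Combining with \eqref{HLaDvstHLaD} yields $\bigl|\frac{1}{|\Lan|}\Hhn(\P_{|\Lan}) - \Hhi(\P)\bigr| \leq \sup_{\sig}\frac{1}{|\Lan|}|\HLan(\sig) - \tHLanD(\sig)| =: \epsilon(n) \to 0$, with $\epsilon(n)$ not depending on $\P$. This proves item (3) and simultaneously identifies $\Hhi$.

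For the final statement, set $h(n) := -\beta\,\epsilon(n)$, which tends to $0$. Item (2) gives $\Eei(\P) \geq \frac{1}{|\Lan|}\Een(\P_{|\Lan})$, and item (3) gives $\Hhi(\P) \geq \frac{1}{|\Lan|}\Hhn(\P_{|\Lan}) - \epsilon(n)$; adding the first to $\beta$ times the second yields $\Fbeta(\P) = \Eei(\P) + \beta\Hhi(\P) \geq \frac{1}{|\Lan|}\Fn(\P_{|\Lan}) - \beta\epsilon(n) = \frac{1}{|\Lan|}\Fn(\P) + h(n)$ (using the slight abuse $\Fn(\P) = \Fn(\P_{|\Lan})$), as desired. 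The one genuinely substantive point is item (3) — turning the pointwise limit into a uniform one — but this is essentially immediate once one notices that $\tHLanD$ has an exact per-site decomposition under stationarity and that the discrepancy $\HLan - \tHLanD$ is $o(|\Lan|)$ \emph{uniformly in the configuration} by the boundary estimate already recorded in Remark \ref{rem:tHLaD}; everything else is bookkeeping plus citations to \cite{FriedliVelenik}.
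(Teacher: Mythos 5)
Your proposal is correct, and for the one substantive item — the uniformity in item (3) — you take a genuinely different route from the paper. The paper proves uniformity by a tiling argument: it paves $\La_m$ by disjoint translates of $\Lan$, uses stationarity to say each copy contributes exactly $\Hhn(\P)$, bounds the inter-copy interactions by $o(n^{\d})$ per copy uniformly via the short-range assumption, and then sends $m \to \infty$ to compare $\frac{1}{|\Lan|}\Hhn$ with its limit. You instead exploit the exact per-site decomposition of the \emph{other} convention $\tHLanD$: under a stationary $\P$, translation invariance of the couplings gives $\frac{1}{|\Lan|}\Esp_{\P}[\tHLanD] = \Esp_{\P}\bigl[\sum_{j \in \Zd} J_{0,j}\W(\sig_0,\sig_j)\bigr]$, a quantity independent of $n$, and the uniform-in-$\sig$ estimate \eqref{HLaDvstHLaD} already recorded in Remark \ref{rem:tHLaD} converts this into uniform convergence of $\frac{1}{|\Lan|}\Hhn$. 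Both arguments rest on the same two ingredients (stationarity plus short range), but yours is more economical: it identifies $\Hhi(\P)$ by an explicit closed formula and thereby also proves existence of the limit in \eqref{eq:Hhi} directly, whereas the paper cites \cite[Prop.~6.78]{FriedliVelenik} for that existence and then runs a separate paving argument for uniformity. Your treatment of items (1) and (2) and the derivation of \eqref{error-free-energy} with $h(n) = -\beta\epsilon(n)$ match the paper's (terse) argument; the only point worth making explicit is that the lower semi-continuity of each $\P \mapsto \Een(\P_{|\Lan})$ on $\Ppis$ uses that the restriction map is continuous from the local (or weak) topology on $\Ppis$ to the weak topology on $\Ppn$, after which the supremum-of-lsc and uniform-limit-of-continuous arguments go through as you state.
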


\begin{proof}
The first item follows from the last two together with the semi-continuity of the finite-volume functionals stated in Lemma \ref{finitevolLSC}. The fact that $\frac{1}{|\Lan|} \ \Een$ is non-decreasing is also mentioned above. It remains to prove that $\frac{1}{|\Lan|} \ \Hhn $ converges to its limit as $n \to \infty$ uniformly on $\Ppis$. For this, let $n \geq 100$ and let $m \geq 100 n$ be a multiple of $n$. Paving $\La_m$ by disjoint copies of $\La_n$, we write:
\begin{equation*}
\Hh_m(\P) = \frac{|\La_m|}{|\La_n|} \Hhn(\P) + \frac{|\La_m|}{|\La_n|} \times  o(n^\d),
\end{equation*}
where the first term in the right-hand side corresponds to the interactions within each copy, and the second error term corresponds to the interaction of each copy with the rest of $\La_m$, which is $o\left( n^{\d} \right)$ uniformly on $\Ppis$ by the short-range assumption (see Remark \ref{rem:tHLaD}).
We thus have:
\begin{equation*}
\frac{1}{|\La_m|} \Hh_m(\P) = \frac{1}{|\La_n|} \Hhn(\P) + o_n(1).
\end{equation*}
Sending $m \to \infty$ we get $\left|\Hhi(\P) - \frac{1}{|\La_n|} \Hhn(\P)\right| = o_n(1)$ uniformly on $\Ppis$.
\end{proof}

\subsection{Stationary version of a finite-volume measure}
\newcommand{\Lal}{{\La_\ell}}
\label{sec:stationarisation}
Let $n \geq 1$ and let $\P_n$ be a probability measure on $\Confn$. We define the \emph{stationarized version of $\P_n$ with averages in $\Lan$}, denoted by $\Stat_n(\P_n)$, as follows: we pave $\Zd$ by disjoint copies of $\Lan$, namely we identify $\Zd$ with $\bigsqcup_{u \in n\Zd} \tu \cdot \Lan$, and on each copy of $\Lan$ we place an independent sample of $\P_n$, and finally we average the resulting random spin configuration over translations in~$\Lan$. Formally speaking:
	\begin{enumerate}
		\item We take a family $\left(\sig^{(u)}\right)_{u \in n \Zd}$ of i.i.d. $\Confn$-valued random variables with common distribution $\P_n$.
		\item We define a random spin configuration $\bar{\sig}$ by setting (for any given $i \in \Zd$) $\bar{\sig}_i := \sig^{(u)}_{i-u}$ for the unique $u \in \Zd$ such that $i \in \tu \cdot \Lan$.
		\item We define another random spin configuration $\hat{\sig}$ by introducing a uniform random variable $v$ on $\Lan$, independent from $\bar{\sig}$ and letting $\hat{\sig} := \theta_v \cdot \bar{\sig}$. 
    \item We let $\Stat_n(\P_n)$ be the law of $\hat{\sig}$.
	\end{enumerate}
This stationarization serves as a compatibilization procedure, allowing us to work in large but finite boxes and yet still produce a stationary object in the end.
The following lemma shows that the stationarization procedure preserves, at least approximately, some important properties.

\begin{lemma}[Properties of the stationarization]
\label{lem:ppy_stationary}
Let $n \geq 1$, let $\P_n$ and $\Stat_n(\P_n)$ be as above. 
\begin{enumerate}
	\item $\Stat_n(\P_n)$ is indeed a stationary spin measure.
	\item If $f : \Conf \to \R$ is a bounded function which is $\Lal$-local with $\ell < n$, then:
	\begin{equation}
	\label{fLalLanStat}
	\Esp_{\Stat_n(\P_n)}[f] = \frac{1}{|\La_{n-\ell}|} \int_{\La_{n-\ell}} \Esp_{\P_n}[ f \circ \theta_u ] \dd u + \O\left( \frac{\ell}{n}  \right) \|f\|_\infty.
	\end{equation}
	\item The specific relative entropy of $\Stat_n(\P_n)$ satisfies:
	\begin{equation}
	\label{stat_specific}
	\Eei(\Stat_n(\P_n)) \leq \frac{1}{|\Lan|} \ \Een(\P_n).  %\quad 	\Eei(\Stat_n(\P_n)) = \frac{1}{|\Lan|} \ \Een(\P_n) 
	\end{equation}
	\item The energy density of $\Stat_n(\P_n)$ satisfies:
	\begin{equation}
	\label{stat_energy}
	\Hhi(\Stat_n(\P_n)) = \frac{1}{|\Lan|} \ \Hhn(\P_n) + o_n(1),
	\end{equation}
	with an error term that is uniform with respect to $\P_n$.
\end{enumerate}
In particular, there exists a sequence $n \mapsto h(n)$ such that $h(n) \to 0$ as $n \to \infty$ and for all $\P_n$ we have:
\begin{equation}
\label{hnP}
\Fbeta(\Stat_n(\P_n)) \leq \frac{1}{|\Lan|} \ \Fn(\P_n) + h(n).
\end{equation}
\end{lemma}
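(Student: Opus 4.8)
The statement to prove is Lemma~\ref{lem:ppy_stationary}, which bundles four assertions about the stationarization $\Stat_n(\P_n)$ together with the concluding inequality \eqref{hnP}. I would prove the four items roughly in order, as items 3 and 4 combine directly into \eqref{hnP} via the definition \eqref{def:Fbeta} of $\Fbeta$.

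\emph{Item 1 (stationarity).} The law $\bar\sigma$ of the paved configuration is periodic under the sublattice $n\Zd$ by construction (i.i.d.\ blocks on each translate of $\Lan$). Averaging over a uniform shift $v \in \Lan$ — which is a complete set of coset representatives for $n\Zd$ in $\Zd$ — turns this $n\Zd$-periodicity into full $\Zd$-invariance: for any $u \in \Zd$, writing $u = u' + w$ with $u' \in n\Zd$ and $w$ ranging suitably, the shifted uniform variable $v$ stays uniform on the coset, so $(\theta_u)_\star \Stat_n(\P_n) = \Stat_n(\P_n)$. This is a short direct computation on test functions.

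\emph{Item 2 (local test functions).} For a bounded $\Lal$-local $f$ with $\ell < n$, conditionally on $v$, the value $f(\hat\sigma) = f(\theta_v \bar\sigma)$ depends only on the spins of $\bar\sigma$ in $\theta_v \cdot \Lal$. If the shifted box $v + \Lal$ is entirely contained in one paving block $u + \Lan$, then $f(\theta_v\bar\sigma)$ equals $f \circ \theta_{v-u}$ evaluated on a single sample of $\P_n$; this happens for all $v$ in a sub-box of $\Lan$ of side comparable to $n - \ell$, i.e.\ for a fraction $1 - \O(\ell/n)$ of the values of $v$. The remaining boundary values of $v$ contribute at most $\O(\ell/n)\|f\|_\infty$. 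Averaging the good part over $v$, and reindexing $u = v - (\text{block origin})$ so that $u$ ranges over $\La_{n-\ell}$, gives exactly \eqref{fLalLanStat}. This is the one place requiring care with the combinatorics of which shifts keep the local window inside a block, but it is elementary counting.

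\emph{Items 3 and 4, and the conclusion.} For the specific entropy bound \eqref{stat_specific}: $\bar\sigma$ restricted to $\La_{mn}$ (a union of $(2m{+}1)^\d$ i.i.d.\ blocks) has relative entropy exactly $(2m{+}1)^\d \Een(\P_n)$ with respect to $\omega_{mn}$ by additivity of entropy under products; passing to $\hat\sigma$ can only decrease relative entropy (data-processing / convexity under the averaging over $v$, since mixing reduces entropy); dividing by $|\La_{mn}|$ and letting $m \to \infty$ (using that $\frac1{|\Lan|}\Een$ is a $\sup$ over $n$, per item 2 of Lemma~\ref{lem:PropertiesOfTheLimits}) yields $\Eei(\Stat_n(\P_n)) \le \frac1{|\Lan|}\Een(\P_n)$. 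For the energy density \eqref{stat_energy}: apply \eqref{fLalLanStat} with $f = \HLan$ (which is $\Lan$-local, so $\ell = n$ — here one should instead pave and use the same "interaction within a block versus across blocks" split as in the proof of Lemma~\ref{lem:PropertiesOfTheLimits}, where the cross-block interactions are $o_n(1)$ per unit volume uniformly by short range \eqref{eq:short-range}), getting $\Hhi(\Stat_n(\P_n)) = \frac1{|\Lan|}\Hhn(\P_n) + o_n(1)$ uniformly in $\P_n$. Finally, adding \eqref{stat_specific} and $\beta$ times \eqref{stat_energy} and invoking \eqref{def:Fbeta} gives \eqref{hnP} with $h(n)$ the ($\beta$-scaled) uniform error from \eqref{stat_energy}, which tends to $0$.

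\emph{Main obstacle.} The genuinely delicate point is the uniformity (in $\P_n$) of the $o_n(1)$ error in item 4 — equivalently, handling the interactions of each paving block with its exterior when the local window for $\HLan$ has size comparable to the block itself. The resolution is not to use \eqref{fLalLanStat} as a black box for $f = \HLan$, but to redo the paving estimate from the proof of Lemma~\ref{lem:PropertiesOfTheLimits}: write the energy of $\hat\sigma$ over a large box as (sum of within-block energies) $+$ (cross-block terms), bound the latter by $\sum_{i} R_{\dist(i,\partial\text{block})}$ with $R_L = \|\W\|_\infty \sum_{|k|\ge L}|J_{0,k}|$ independent of $\P_n$, and note this is $o(n^\d)$ per block uniformly. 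The averaging over $v$ contributes only a further $\O(1/n)$ relative error. Everything else is bookkeeping.
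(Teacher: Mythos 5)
Your proposal is correct and follows essentially the same route as the paper's proof: stationarity via the measure-preserving shift of the averaging variable on $\Lambda_n$, locality via counting which shifts keep the local window inside a single paving block, entropy via convexity of the relative entropy under the mixture over $v$ plus additivity over the product blocks, and energy via the within-block/cross-block split with the short-range tail bound uniform in $\mathrm{P}_n$. The one place the paper is more careful is item 3: "mixing reduces entropy" should be read as "the entropy of the mixture is at most the average over $v$ of the entropies of the \emph{shifted} paved measures, each of which equals $\frac{|\Lambda_m|}{|\Lambda_n|}\,\mathcal{E}_n(\mathrm{P}_n) + o(|\Lambda_m|)$ by its own (shifted) block-product structure" --- an approximate rather than exact inequality, but the discrepancy vanishes after dividing by $|\Lambda_m|$ and letting $m \to \infty$, exactly as in your sketch.
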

\begin{proof}[Proof of Lemma \ref{lem:ppy_stationary}]
\newcommand{\hP}{\hat{\P}}
\newcommand{\bP}{\bar{\P}}
Denote by $\bP_n$ the law of the spin configuration $\bar{\sig}$ obtained after the second step of the construction of $\Stat_n(\P_n)$. Also, for convenience, let us write $\hP_n$ instead of $\Stat_n(\P_n)$, which is the law of $\hat{\sig}$.
By definition, in particular the third step of the construction, $\hP_n$ is given by the mixture:
\begin{equation}
\label{hpasmixture}
\hP_n :=  \dashint_{\La_n} \left( \left(\theta_v\right)_\star \bP_n \right) \dd v
\end{equation}
of the push-forward of $\bP_n$ by translations $\theta_v$, for $v$ in $\La_n$.
\medskip

\emph{1. Stationarity.} For all bounded measurable functions $f$ on $\Conf$ and all $u$ in $\Zd$ we have:
\begin{equation*}
\Esp_{\hP_n}[f(\theta_u \cdot \sig)] = \dashint_{\La_n} \Esp_{\bP_n} [ f(\theta_u \cdot (\theta_v \cdot \sig))] \dd v = \dashint_{\La_n} \Esp_{\bP_n} [ f(\theta_{u+v} \cdot \sig)] \dd v = \dashint_{\La_n} \Esp_{\bP_n} [ f(\theta_{v} \cdot \sig)] \dd v,
\end{equation*}
because $v \mapsto u+v \mod \La_n$ is a measure-preserving bijection of $\La_n$, thus $\Esp_{\hP_n}[f(\theta_u \cdot \sig)] = \Esp_{\hP_n}[f(\sig)]$ for all $f$ and $u$, so $\hP_n$ is indeed stationary.

\medskip

\emph{2. Local statistics.} Assume that $f$ is $\Lal$-local. We have:
\begin{equation*}
\Esp_{\hP_n}[f(\sig)] = \dashint_{\La_n} \Esp_{\bP_n} [ f \circ \theta_v] \dd v = \dashint_{\La_{n-\ell}} \Esp_{\bP_n} [ f \circ \theta_v ] \dd v + \O\left( \frac{\ell}{n}  \right) \|f\|_\infty,
\end{equation*}
first by definition \eqref{hpasmixture} and then because averages over $\Lan$ or over $\La_{n-\ell}$ differ by $\O\left( \frac{\ell}{n}  \right)$. If $v \in \La_{n-\ell}$, then $f \circ \theta_v$ remains $\Lan$-local, and thus by  construction $\Esp_{\bP_n} [ f \circ \theta_v ] = \Esp_{\P_n}[ f \circ \theta_v]$, which yields \eqref{fLalLanStat}.

\medskip

\emph{3. Specific entropy.}
Take $m \geq 100 n$. By \eqref{hpasmixture} and convexity of the relative entropy we have: 
\begin{equation*}
\Ee_m [ \hP_n ] \leq  \dashint_{\La_n} \Ee_m [ \left(\theta_v\right)_\star \bP_n ] \dd v.
\end{equation*}
Recall that by construction, the measure $\bP_n$ has a product structure with respect to the decomposition $\Zd = \prod_{u \in n\Zd} \tu \cdot \Lan$, and thus for all fixed $v \in \La_n$, the restriction of $\left(\theta_v\right)_\star \bP_n$ to $\La_m$ has a product structure with respect to $\left(\prod_{u \in n\Zd} \left(\theta_{u+v} \cdot \Lan\right) \cap \La_m  \right)$. By additivity of the relative entropy for product measures:
\begin{equation*}
\Ee_m [ \left(\theta_v\right)_\star \bP_n ] = \sum_{u \in n\Zd, \left(\theta_{u+v} \cdot \Lan \right) \cap \La_m \neq \emptyset} \Ee_{\left(\theta_{u+v} \cdot \Lan\right) \cap \La_m} [\left(\theta_{u+v} \right)_\star \P_n],
\end{equation*}
with a slight abuse of notation.

There are $\frac{|\La_m|}{|\La_n|} - o(|\La_m|)$ indices $u$ in the sum above for which the corresponding copy of $\Lan$ is inside $\La_m$, i.e. $\theta_{u+v} \cdot \Lan \subset \La_m$, and for each of them the entropy contribution is exactly given by 
\begin{equation*}
\Ee_{\left(\theta_{u+v} \cdot \Lan\right) \cap \La_m} [\left(\theta_{u+v} \right)_\star \P_n] = \Ee_{\Lan} [\P_n].
\end{equation*} 
The contribution of the remaining terms, corresponding to the copies of $\Lan$ that intersect the boundary of the large box $\La_m$, is bounded (by $\Ee_{\Lan}(\P)$) independently of $u$ and $m$. We obtain:
\begin{equation*}
\Ee_m [ \left(\theta_v\right)_\star \bP_n ] = \frac{|\La_m|}{|\La_n|} \times \Ee_{\Lan} [\P_n] + o_m(|\La_m|).
\end{equation*}
We thus have $\frac{1}{|\La_m|} \Ee_m [ \hP_n ] \leq \frac{1}{|\La_n|} \Ee_{\Lan} [\P_n] + o_m(1)$, and sending $m \to \infty$ yields \eqref{stat_specific}.

\medskip
\newcommand{\HLaDL}{\mathsf{H}_{\La, L}}
\newcommand{\HLamDL}{\mathsf{H}_{\La_m, L}}
\newcommand{\simv}{\sim_v}

\emph{4. Energy density.} Let $\delta > 0$ be fixed. By the short-range assumption \eqref{eq:short-range} we know that for some $L \geq 1$ large enough, depending only on $\delta$ and $\W$, we have $\sum_{\|i\| > L} |J_{0,i}| \times \|\W\|_\infty \leq \delta$ and thus also, by translation-invariance:
\begin{equation}
\label{EffectTrunc}
\sup_{\bx \in \Conf} \sup_{i \in \Zd} \sum_{j \in \Zd, \|i-j\| > L} \left| J_{i,j} \W(\bx_i, \bx_j) \right| \leq \delta.
\end{equation}
We may thus consider a truncated version of $\HLa$ (see \eqref{def:HLa}) defined by 
\begin{equation*}
\HLaDL : \bx \mapsto \sum_{i \in \La, j \in \La, |i-j| \leq L} J_{i,j} \W(\bx_i,\bx_j),
\end{equation*}
and by \eqref{EffectTrunc} we have $\sup_{\bx \in \Conf} \left|\HLa(\bx) - \HLaDL(\bx)\right| \leq \delta |\La|$. In particular, for $m \geq n$ we can write:
\begin{equation*}
\left| \Esp_{\hP_n} [\HLam] - \Esp_{\hP_n} [\HLamDL] \right| \leq \delta |\La_m|.
\end{equation*}
Next, by construction (see \eqref{hpasmixture}) we have: 
\begin{equation*}
\Esp_{\hP_n}[\HLamDL] = \dashint_{\Lan} \Esp_{\bP_n}[\HLamDL \circ \theta_v ] \dd v.
\end{equation*}
We now fix some $v \in \La_n$. For $i,j$ in $\Zd$ we write $i \simv j$ when there exists $u \in n\Zd$ such that $i, j \in \theta_{u+v} \Lan$, which means that $i$ and $j$ belong to the same copy of $\Lan$ shifted by $v$. We have, for all $\sig \in \Conf$:
\begin{equation*}
\HLamDL(\sig) = \sum_{i \in \La_m, j \in \La_m, |i-j| \leq L} J_{i,j} \W(\bx_i,\bx_j) 
= \sum_{i,j \in \La_m, |i-j| \leq L, i \simv j} J_{i,j} \W(\bx_i,\bx_j) \pm \Cc  \frac{|\La_m|}{|\La_n|} L^\d n^{\d-1},
\end{equation*}
where $\Cc$ depends on the parameters of the model.
Indeed, between the first and the second sum we are throwing away interactions between sites $i$ and $j$ for $i \in \La_m$, $j \in \La_m$, with $|i-j| \leq L$, and $i \nsim_v j$. There are $\O(L^\d n^{\d-1})$ such couples for each translated copy of $\La_n$ and $\frac{|\La_m|}{|\La_n|}(1 + o_m(1))$ copies (to see this, fix such a copy: for $i$ in this copy, since $L$ is fixed but $n$ and $m$ are large, if there exists $j \in \Lambda_{m}$ such that $i \not\sim j$ and $|i-j| \leq L$, then necessarily $i$ is at distance less than $L$ from $\partial \Lambda_{n}$. Thus, there is $\O(n^{\d-1} L)$ possibilities for $i$, and for any fixed $i$ there is $\O(L^{\d})$ indices $j$ at distance $\leq L$). 

Taking the expectation, we obtain:
\begin{multline*}
\Esp_{\bP_n}[\HLamDL \circ \theta_v ] = \Esp_{\bP_n} \sum_{i \in \La_m, j \in \La_m, |i-j| \leq L, i \simv j} J_{i,j} \W(\bx_{i+v},\bx_{j+v}) \pm \Cc   \frac{|\La_m|}{|\La_n|} L^\d n^{\d-1} \\
= \Esp_{\bP_n} \sum_{i \in \theta_v \cdot \La_m , j \in \theta_v \cdot \La_m, |i-j| \leq L, i \sim j} J_{i,j} \W(\bx_{i},\bx_{j})  \pm \Cc  \frac{|\La_m|}{|\La_n|} L^\d n^{\d-1}.
\end{multline*}
We can then write, using \eqref{EffectTrunc} again in the second line:
\begin{multline*}
\Esp_{\bP_n} \sum_{i \in \theta_v \cdot \La_m , j \in \theta_v \cdot \La_m, |i-j| \leq L, i \sim j} J_{i,j} \W(\bx_{i},\bx_{j})  =  \frac{|\La_m|}{|\La_n|}\left(1 + o_m(1)\right) \Esp_{\P_n} \sum_{i \in \La_n , j \in \La_n,  |i-j| \leq L} J_{i,j} \W(\bx_{i},\bx_{j}) \\
= \frac{|\La_m|}{|\La_n|}\left(1 + o_m(1)\right) \left( \Esp_{\P_n} \sum_{i \in \La_n , j \in \La_n} J_{i,j} \W(\bx_{i},\bx_{j}) \pm |\La_n| \delta \right).
\end{multline*}

Combining all those estimates, dividing by $|\La_m|$ and sending $m \to \infty$, we obtain:
\begin{equation*}
\Hhi(\Stat_n(\P_n)) = \frac{1}{|\Lan|} \ \Hhn(\P_n) \pm \Cc  L^\d \times o_n(1) + \delta.
\end{equation*}
Since $\delta$ is arbitrary and $L$ depends only on $\delta$, we do get \eqref{stat_energy} with an error term uniform with respect to the choice of the measure $\P_n$.
\end{proof}

\begin{remark}
If $\P$ is stationary, if $f$ is a bounded function which is $\Lal$-local, we have by \eqref{fLalLanStat}:
	\begin{equation}
	\label{fPnStatPn}
	\Esp_{\Stat_n(\P_{|\La_n})}[f] = \Esp_{\P}[f] + \O\left( \frac{\ell}{n}  \right) \|f\|_\infty.
	\end{equation}
Thus, if $\P$ is stationary, then $\Stat_n(\P_{|\La_n})$ converges to $\P$ in the local topology as $n \to \infty$. The error term is however not zero for finite $n$, and if $\ell = n$, it might be large. In other words, the stationarization with respect to $\Lan$ of a stationary measure $\P$ does not exactly coincide with $\P$ in general.
\end{remark}

\subsection{Wasserstein distance between spin measures}
\label{sec:Wasserstein}
\newcommand{\Cpl}{\mathsf{Cpl}}

\subsubsection*{Wasserstein distance and optimal transport in finite volume} 
As a finite product of Riemannian manifold, $\mathsf{Conf}_{n}$ is a Riemannian manifold and its Riemannian distance $\dn$ satisfies
\begin{equation*}
  \dn^{2}(\bx, \by) = \sum_{i \in \Lambda_{n}} \di^{2}(\bx_{i}, \by_{i}), \qquad \bx,\by \in \mathsf{Conf},
\end{equation*}
where we recall that $\di$ stands for the distance on $\M$.
	
Next, if $\Pz_n, \Pu_n$ are two spin measures in $\Ppn$, we define:
	\begin{equation}
	\label{def:Wn}
\Wn^2(\Pz_n, \Pu_n) := \inf_{\Pi \in \Cpl(\Pz_n, \Pu_n)} \iint_{\Confn \times \Confn} \dn^2(\sigz, \sigu)\  \dd \Pi(\sigz, \sigu),
	\end{equation}
	where the infimum is taken over all possible couplings $\Pi$ of $(\Pz_n, \Pu_n)$. $\Wn$ is the Wasserstein distance on $\Ppn = \Pp\left(M^{\Lan}\right)$ associated to the distance square cost function.
  We refer to \cite[Sec.~6]{VillaniOldNew} for elementary properties. 
  The quantity $\mathcal{W}_{n}$ defines a complete geodesic distance on $\Ppn$. By compactness of $\M$, both $\dn^{2}$ and $\Wn^{2}$ are bounded from above by $\operatorname{diam}(M)^{2} \cdot |\Lambda_{n}|$.

\subparagraph{Some reminders.} The study of the Monge--Kantorovich minimization problem appearing in \eqref{def:Wn}, namely understanding the optimal coupling $\Pi$ and its associated cost, belongs to the theory of optimal transportation of measures --- here on a smooth compact manifold. 

Let $\mathrm{P}_n$ and $\mathrm{Q}_n \in \Ppn$, and assume that $\mathrm{P}_n$ is absolutely continuous with respect to the measure $\omega_{n}$. Then by the Brenier--McCann theorem \cite[Thm.\ 9]{McCannPolar}, there exists a Lipschitz map $\theta \colon \mathsf{Conf}_{n} \to \mathbb{R}$, called the Kantorovitch potential, such that:
\begin{itemize}
  \item the map $\bx \mapsto T(\bx) := \exp_{\bx} \paren*{-\nabla \theta(\bx)}$ pushes forward $\mathrm{P}_n$ to $\mathrm{Q}_n$;
  \item $\theta$ is $\frac{1}{2} \dn^{2}$-concave in the sense that $\theta^{\mathsf{c} \mathsf{c}} = \theta$, where
    \begin{equation*}
      h^{\mathsf{c}}(\by) := \Inf*{ \frac{\dn^{2}(\bx,\by)}{2} - h(\bx) : \bx \in \mathsf{Conf}_{n}};
    \end{equation*}
  \item the unique $\mathcal{W}_{n}$-geodesic between $\mathrm{P}_n$ and $\mathrm{Q}_n$ is given by
    \begin{equation*}
      [0,1] \ni t \mapsto \exp_{\bx}\left(- t \nabla \theta(\bx)\right)_{\sharp} \mathrm{P}_n.
    \end{equation*}
  \item for $\mathrm{P}_n$-almost every $\bx$, there exists a unique Riemannian geodesic between $\bx$ and $T(\bx)$ \cite[Thm.~4.2]{cordero2001riemannian}. In other words, the optimal transportation map almost surely avoids the cut locus.
\end{itemize}

\begin{remark}
The solution of Monge's problem, i.e. the construction of an optimal transport \emph{map}, is due to McCann \cite{McCannPolar} in the case of a manifold. The optimal map also solves Kantorovich's formulation, i.e. it provides an optimal transportation \emph{plan} --- this is made clear e.g. in \cite[Thm 1.10]{Gigli_2011}.
\end{remark}

\subsubsection*{Wasserstein distance in infinite-volume}
\newcommand{\Pzn}{\Pz_{|n}}
\newcommand{\Pun}{\Pu_{|n}}
\newcommand{\Q}{\mathrm{Q}}
For $\P$ and $\Q \in \mathscr{P}(\mathsf{Conf})$, we simply write $\Wn(\P, \Q)$ for the Wasserstein distance between the restrictions of $\mathrm{P}, \mathrm{Q}$ to $\Lan$. 
 
\begin{lemma}
\label{def:WassInfinite}
If $\P, \Q$ are stationary spin measures, then the map $n \mapsto \Wn^2\left(\P, \Q \right)$ is non-decreasing, the limit $\lim_{n \geq 1} \frac{1}{|\Lan|} \Wn^2\left(\P, \Q\right)$ exists and we have:
\begin{equation}
\label{def:Wi}
	\Wi^2(\P, \Q) := \lim_{n \geq 1} \frac{1}{|\Lan|} \Wn^2\left(\P, \Q\right) = \sup_{n \geq 1} \frac{1}{|\Lan|} \Wn^2\left(\P, \Q\right).
\end{equation}
\end{lemma}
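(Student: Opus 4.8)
The plan is to prove the two assertions---monotonicity of $n \mapsto \Wn^2(\P,\Q)$ and existence of the limit as a supremum---via a superadditivity argument followed by Fekete's lemma, exactly in the spirit of the proofs cited for \eqref{eq:Eei} and \eqref{eq:Hhi}. The key structural fact is that, since $\dn^2(\bx,\by) = \sum_{i \in \Lan} \di^2(\bx_i,\by_i)$ splits as a sum over sites, the transport cost on a box is subadditive with respect to paving the box by sub-boxes, while stationarity lets us average optimal couplings back into a translation-invariant coupling. Concretely, I would first establish monotonicity: given $n \leq m$, take an optimal coupling $\Pi_m$ of $(\P_{|\La_m}, \Q_{|\La_m})$ and push it forward by the restriction map $\La_m \to \La_n$; this yields a coupling of $(\P_{|\La_n}, \Q_{|\La_n})$ whose cost is $\Esp_{\Pi_m}\bigl[\sum_{i \in \La_n}\di^2(\bx_i,\by_i)\bigr] \leq \Esp_{\Pi_m}\bigl[\sum_{i \in \La_m}\di^2(\bx_i,\by_i)\bigr] = \Wm^2(\P,\Q)$. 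Hence $\Wn^2(\P,\Q) \leq \Wm^2(\P,\Q)$. (Here I only need $\La_n \subset \La_m$, which holds for the nested boxes $\La_n = \{-n,\dots,n\}^{\d}$; no stationarity is needed for monotonicity.)

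For the limit, I would show that $m \mapsto \Wm^2(\P,\Q)$ is \emph{superadditive} along the sequence obtained by paving, and then conclude by Fekete. Fix $n$ and let $m$ be a large multiple of $2n+1$, so that $\La_m$ is tiled by $N := |\La_m|/|\La_n|$ disjoint translates $\tu \cdot \La_n$, $u \in S$. Given an optimal coupling $\Pi_m$ of $(\P_{|\La_m},\Q_{|\La_m})$, its marginal on the sites of each translate $\tu\cdot\La_n$ is a coupling of $(\P_{|\tu\cdot\La_n}, \Q_{|\tu\cdot\La_n})$, which by stationarity equals a coupling of $(\P_{|\La_n},\Q_{|\La_n})$ after translating by $-u$; hence its cost is at least $\Wn^2(\P,\Q)$. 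Summing over the $N$ translates and using that $\dn[m]^2$ decomposes site-wise gives $\Wm^2(\P,\Q) = \Esp_{\Pi_m}\bigl[\sum_{i\in\La_m}\di^2(\bx_i,\by_i)\bigr] \geq N \cdot \Wn^2(\P,\Q) = \tfrac{|\La_m|}{|\La_n|}\Wn^2(\P,\Q)$, i.e. $\tfrac{1}{|\La_m|}\Wm^2(\P,\Q) \geq \tfrac{1}{|\La_n|}\Wn^2(\P,\Q)$. Combined with the uniform upper bound $\tfrac{1}{|\La_n|}\Wn^2(\P,\Q) \leq \operatorname{diam}(M)^2$ (noted just before the lemma), the standard Fekete-type argument gives that $\tfrac{1}{|\La_n|}\Wn^2(\P,\Q)$ converges, and that its limit equals its supremum.

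The main technical point to be careful about is that $\La_m$ is not \emph{exactly} tiled by translates of $\La_n$ unless $2m+1$ is a multiple of $2n+1$; for general $m$ one gets a remainder region of $O(m^{\d-1} n)$ sites along the seams, contributing at most $\operatorname{diam}(M)^2 \cdot O(m^{\d-1}n) = o(|\La_m|)$ to the cost. So the superadditivity holds up to a $o(|\La_m|)$ error, which is harmless for Fekete's lemma (one can either restrict to the subsequence of multiples and then use monotonicity to upgrade to all $n$, or invoke the version of Fekete for approximately superadditive sequences). A second subtlety is the passage from "optimal coupling on $\La_m$" to "coupling on the sites of a translate": one must check that the projection of $\Pi_m$ onto the $\tu\cdot\La_n$-coordinates is indeed a coupling of the correct marginals---this is immediate since marginals of restrictions are restrictions of marginals---and that stationarity of $\P$ and $\Q$ identifies $\P_{|\tu\cdot\La_n}$ with the pushforward of $\P_{|\La_n}$ under $\theta_{-u}$, so the cost lower bound $\Wn^2$ applies verbatim. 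I expect no serious obstacle beyond this bookkeeping; the argument is a direct transcription of the entropy/energy-density superadditivity proofs already invoked in the text.
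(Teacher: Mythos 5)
Your proposal is correct and follows essentially the same route as the paper: both rest on the superadditivity of $\Lambda \mapsto \mathcal{W}_\Lambda^2(\P,\Q)$ over disjoint sets (obtained by taking marginals of an optimal coupling and using the site-wise decomposition of $\di_\Lambda^2$) together with translation invariance from stationarity, followed by a Fekete-type conclusion. The only difference is presentational: the paper verifies superadditivity for arbitrary disjoint finite $\Lambda, \Lambda'$ and delegates the tiling/boundary bookkeeping to \cite[Lem.~15.11]{GeorgiiGibbs}, whereas you carry out the paving of $\La_m$ by translates of $\Lan$ explicitly; both are fine.
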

\begin{proof}
The fact the limit exists and is given by a supremum follows from a standard sub-additive argument found e.g. in \cite[Lem.\ 15.11]{GeorgiiGibbs}, with a different sign convention there.
Let us check that the assumptions of this lemma are satisfied.

For convenience, instead of working on a box $\Lambda_{n}$, we shall work on arbitrary finite subsets of $\Zd$, with a slight abuse of notation.
Take $\Lambda$ and $\Lambda' \Subset \Zd$, disjoint.
First of all, by stationarity, for all $u \in \Zd$, we have
\begin{equation*}
  \mathcal{W}_{\theta_{u} \Lambda}(\mathrm{P}, \mathrm{Q}) = \mathcal{W}_{\Lambda}(\theta_{-u}\mathrm{P}, \theta_{-u}\mathrm{Q}) = \mathcal{W}_{\Lambda}(\mathrm{P}, \mathrm{Q}).
\end{equation*}
Next, by the existence of an optimal coupling there exist random spin configurations $\bx,\by$ of law $\P$, $\Q$ on $\Lambda \cup \Lambda'$ such that:
\begin{equation*}
  \mathcal{W}_{\Lambda \cup \Lambda'}^{2}(\mathrm{P}, \mathrm{Q}) = \Esp\left[\di^{2}_{\Lambda \cup \Lambda'}(\bx, \by) \right] \geq \Esp \left[ \di^{2}_{\Lambda}(\bx_{\Lambda}, \by_{\Lambda}) \right] + \Esp\left[ \di^{2}_{\Lambda'}(\bx_{\Lambda'}, \by_{\Lambda'}) \right] \geq \mathcal{W}_{\Lambda}^{2}(\mathrm{P}, \mathrm{Q}) + \mathcal{W}_{\Lambda'}^{2}(\mathrm{P}, \mathrm{Q}).
\end{equation*}
The first inequality holds because, using that $\Lambda \cap \Lambda' = \emptyset$, we have
  $\di_{\Lambda \cup \Lambda'}^{2} = \di^{2}_{\Lambda} + \di^{2}_{\Lambda'}$. The second inequality is by definition of the Wasserstein distances.
  Thus, the assumptions from \cite[Lem.~15.11]{GeorgiiGibbs} are satisfied, which completes the proof.
\end{proof}

Since for each $n$ the distance $\Wn$ satisfies the triangle inequality, so does $\Wi$.
Moreover, since it is obtained a supremum, $\Wi$ controls all the finite-volume Wasserstein distances.
\begin{corollary}
$\Wi$ defines a distance on $\Ppis$, which we call the specific Wasserstein distance.
\end{corollary}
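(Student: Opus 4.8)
The plan is to verify the three metric axioms for $\Wi$ on $\Ppis$, after noting that $\Wi$ is finite-valued there; all of this will be inherited from the corresponding properties of the finite-volume distances $\Wn$ together with Lemma~\ref{def:WassInfinite}, which tells us that $\Wi^2(\P,\Q)$ is simultaneously the (increasing) limit and the supremum over $n$ of $\frac{1}{|\Lan|}\Wn^2(\P,\Q)$. For finiteness, I will use compactness of $M$: since $\Wn^2(\P,\Q)\le \diam(M)^2\,|\Lan|$, dividing by $|\Lan|$ and passing to the limit gives $\Wi(\P,\Q)\le \diam(M)<\infty$ for all stationary $\P,\Q$, so $\Wi$ indeed takes values in $[0,\infty)$.

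Symmetry of $\Wi$ is immediate from the symmetry of each $\Wn$. For the triangle inequality I will fix three stationary measures $\P,\Q,\mathrm R$ and use that, for every $n$, $\Wn(\P,\Q)\le \Wn(\P,\mathrm R)+\Wn(\mathrm R,\Q)$; dividing by $|\Lan|^{1/2}$ and letting $n\to\infty$ (each of the three limits exists by Lemma~\ref{def:WassInfinite}) yields $\Wi(\P,\Q)\le \Wi(\P,\mathrm R)+\Wi(\mathrm R,\Q)$. (Alternatively one can take suprema, using $\sup(a_n+b_n)\le \sup a_n+\sup b_n$.)

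The only step with genuine content is separation: I must show $\Wi(\P,\Q)=0\implies \P=\Q$, the converse being trivial. Writing $\Wi^2(\P,\Q)=\sup_{n}\frac{1}{|\Lan|}\Wn^2(\P,\Q)$, the hypothesis forces $\Wn(\P,\Q)=0$ for every $n$; since $\Wn$ is a bona fide distance on $\Ppn$ (as recalled in Section~\ref{sec:Wasserstein}), this means the restrictions of $\P$ and $\Q$ to each box $\Lan$ coincide. Because the boxes $\Lan$ exhaust $\Zd$ and a probability measure on $\Conf=M^{\Zd}$ is determined by its finite-dimensional marginals, I conclude $\P=\Q$. I do not expect any real obstacle here: once Lemma~\ref{def:WassInfinite} is in hand, the corollary is essentially a formal consequence of the metric properties of the $\Wn$, the uniform bound from compactness, and the fact that finite-dimensional marginals separate points of $\mathscr P(\Conf)$.
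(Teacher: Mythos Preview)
Your proof is correct and follows exactly the approach sketched in the paper: inherit the triangle inequality from the $\Wn$ by dividing and passing to the limit, and deduce separation from the supremum characterization $\Wi^2=\sup_n\frac{1}{|\Lan|}\Wn^2$, which forces all finite marginals to agree. You have simply filled in the details (finiteness via compactness, the marginal-uniqueness argument) that the paper leaves implicit.
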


\begin{remark}
\label{rem:difference_top}
  The specific Wasserstein induces on $\mathscr{P}^{\mathrm{s}}$ a topology strictly finer than the weak topology, or than the local topology.
  For instance, take $\d = 1$, and $M$ to be the unit circle.
  Let $\bx_{n}$ the random variable in $\mathsf{Conf}_{2n}$ with independent spins distributed as follows: the spins at site $\{-n, \dots, -1\}$ are sampled uniformly on the upper semi-circle $\{ \mathrm{e}^{\mathrm{i} \theta} : \theta \in [0,\pi]\}$, while the spins at $\{0,\dots,n\}$ are uniform on the lower semi-circle.

  Write $\mathrm{Q}^{n}$ for the law of $\bx_{n}$, and let $\mathrm{P}^{n} = \mathsf{Stat}_{2n}(\mathrm{Q}^{n})$.
  For the local topology, this sequence of stationary measure converges to the mixture $\mathrm{P}^{\pm} = \frac{1}{2} (\mathrm{P}^{+} + \mathrm{P}^{-})$, where under $\mathrm{P}^{+}$ or $\mathrm{P}^{-}$ all the spins are distributed independently and uniformly on the upper or lower semi-circle.
  However, $\mathrm{P}^{n}$ remains at positive specific Wasserstein distance of both $\mathrm{P}^{+}$ and $\mathrm{P}^{-}$, and thus of $\mathrm{P}^{\pm}$ because $\P^+, \P^-$ have essentially disjoint supports.
\end{remark}

\begin{remark}
\label{Stat_Conv_Pas}
  If $\mathrm{P} \in \mathscr{P}_{n}$ and $\mathrm{Q} \in \mathscr{P}^{\mathrm{s}}$, then in general $\mathcal{W}(\mathsf{Stat}_{n} \mathrm{P}, \mathrm{Q})$ is not close to ${|\Lambda_{n}|}^{-1} \mathcal{W}_{n}(\mathrm{P}, \mathrm{Q}_{n})$. This can be seen by taking $\mathrm{Q} := \mathrm{P}^{\pm}$ from Remark \ref{rem:difference_top} and $\mathrm{P} := \mathrm{Q}_{n}$.
\end{remark}

\subsection{Convexity properties of the functionals}
\paragraph{Usual convexity.}
First, we endow $\Ppn$ with its usual linear structure, and we recall some elementary convexity facts.
\begin{lemma}[Convexity properties - linear interpolation]
\label{lem:convexity_usual}
For all $n \geq 1$:
\begin{enumerate}
	\item $\Een$ is strictly convex on $\Ppn$.
	\item $\Hhn$ is linear (hence convex, but not strictly convex) on $\Ppn$.
	\item For all $\P_n \in \Ppn$, the map $\Wn^2(\P_n, \cdot)$ is convex on $\Ppn$.
\end{enumerate}
\end{lemma}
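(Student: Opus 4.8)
The three claims in Lemma \ref{lem:convexity_usual} are all standard facts about relative entropy and Wasserstein distance restricted to finite-dimensional spaces, so the plan is simply to invoke the classical statements after checking they apply verbatim to $\Ppn = \Pp(M^{\Lan})$.

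For item (1), I would argue as follows. Let $\P_n^0, \P_n^1 \in \Ppn$ with $\Een(\P_n^0), \Een(\P_n^1) < \infty$ (otherwise convexity is trivial since $\Een \geq 0$), so both are absolutely continuous with respect to $\omega_n$ with densities $\p^0, \p^1$. For $t \in (0,1)$ the linear interpolation $\P_n^t = (1-t)\P_n^0 + t\P_n^1$ has density $\p^t = (1-t)\p^0 + t\p^1$. Strict convexity then follows from the strict convexity of the scalar function $s \mapsto s \log s$ on $(0,\infty)$: writing $\Een(\P_n^t) = \int_{M^{\Lan}} \varphi(\p^t)\,\dd\omega_n$ with $\varphi(s) = s\log s$, one has $\varphi(\p^t) \leq (1-t)\varphi(\p^0) + t\varphi(\p^1)$ pointwise, with equality on a given point only if $\p^0 = \p^1$ there; integrating and noting that $\p^0 \neq \p^1$ on a set of positive $\omega_n$-measure unless $\P_n^0 = \P_n^1$, we get strict inequality. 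One must be slightly careful about integrability of the negative part of $\varphi(\p^t)$, but since $\varphi \geq -1/e$ and $\omega_n$ is a probability measure this causes no issue; the statement is in any case classical, see e.g. \cite[Sec.~9.4]{AGSGradient}.

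For item (2), this is immediate from the definition $\Hhn(\P_n) = \Esp_{\P_n}[\HLan]$: since $\HLan$ is a fixed bounded measurable function on $M^{\Lan}$ (bounded by \eqref{eq:HLaStable}), the map $\P_n \mapsto \int_{M^{\Lan}} \HLan\,\dd\P_n$ is affine in $\P_n$, hence convex (and, being affine, also concave — in particular not strictly convex). For item (3), fix $\P_n \in \Ppn$. Given $\Q^0, \Q^1 \in \Ppn$ and $t \in (0,1)$, pick optimal couplings $\Pi^0 \in \Cpl(\P_n, \Q^0)$ and $\Pi^1 \in \Cpl(\P_n, \Q^1)$ realizing $\Wn^2(\P_n, \Q^0)$ and $\Wn^2(\P_n, \Q^1)$; then $\Pi^t := (1-t)\Pi^0 + t\Pi^1$ is a coupling of $\P_n$ with $(1-t)\Q^0 + t\Q^1$ (the first marginal is still $\P_n$ precisely because both couplings share that marginal, and linearity of push-forward handles the second marginal), so by definition of $\Wn^2$ as an infimum,
\begin{equation*}
\Wn^2\bigl(\P_n, (1-t)\Q^0 + t\Q^1\bigr) \leq \iint \dn^2\,\dd\Pi^t = (1-t)\Wn^2(\P_n, \Q^0) + t\,\Wn^2(\P_n, \Q^1).
\end{equation*}

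None of the three items presents a genuine obstacle; the only point requiring a little care is the integrability subtlety in item (1) and making explicit that the strict inequality there truly is strict (i.e. that $\P_n^0 \neq \P_n^1$ forces $\p^0 \neq \p^1$ on a positive-measure set, which is clear), but both are routine. I would keep the proof to a few lines, citing \cite{AGSGradient} or \cite{VillaniOldNew} for the classical strict convexity of relative entropy and for the convexity of $\Q \mapsto \Wn^2(\P_n, \Q)$, and spelling out only the coupling-mixing argument for item (3) since it is the one most specific to the Wasserstein setting.
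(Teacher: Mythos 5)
Your proof is correct and matches the paper's approach: the paper simply cites the classical references (\cite[Prop.~B.66]{FriedliVelenik} for strict convexity of the relative entropy and \cite[Thm.~4.8]{VillaniOldNew} for convexity of the transport cost), and the arguments you spell out — pointwise strict convexity of $s \mapsto s\log s$, linearity of $\P_n \mapsto \Esp_{\P_n}[\HLan]$, and mixing the two optimal couplings — are exactly the standard proofs behind those citations. No gap.
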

\begin{proof}[Proof of Lemma \ref{lem:convexity_usual}]
The first statement is classical \cite[Prop. B.66]{FriedliVelenik}.
The second one is straightforward.
The third one is also well-known \cite[Thm.~4.8]{VillaniOldNew}.
% , we recall a short proof for completeness.
% Take $\mathrm{P}$ and $\tilde{\mathrm{P}} \in \mathscr{P}_{n}$, and $t \in [0,1]$.
%   Let $(\varphi_{t}, \psi_{t})$ be Kantorovich potentials \cT{TODORef} for transporting $(1-t)\mathrm{P} + t \tilde{\mathrm{P}}$ to $\mathrm{Q}$.
%   Then, by Kantorovich duality:
%   \begin{equation*}
%     \mathcal{W}_{n}^{2}((1-t)\mathrm{P} + t\tilde{\mathrm{P}}, \mathrm{Q}) = (1-t) (\mathbb{E}_{\mathrm{P}} \varphi_{t} - \mathbb{E}_{\mathrm{Q}} \psi) + t (\mathbb{E}_{\tilde{\mathrm{P}}} \varphi_{t} - \mathbb{E}_{\mathrm{Q}} \psi) \leq (1-t) \mathcal{W}_{n}^{2}(\mathrm{P}, \mathrm{Q}) + t \mathcal{W}_{n}^{2}(\tilde{\mathrm{P}},\mathrm{Q}).
%   \end{equation*}  
\end{proof}

\begin{corollary}
\label{cor:convexity}
For all $n \geq 1$, $\P$ in $\Ppn$ and $h > 0$, the functional defined on $\Ppn$ by:
\begin{equation*}
\Q \mapsto \frac{1}{2} \Wn^2(\P, \Q) + h \Fn(\Q)
\end{equation*}
is strictly convex, and thus has a unique minimizer.
\end{corollary}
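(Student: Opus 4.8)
The plan is to decompose the functional according to the definition \eqref{def:Fn} of $\Fn$ and read off convexity and existence from the facts already collected in Lemmas \ref{finitevolLSC} and \ref{lem:convexity_usual}. Write
\[
  G(\Q) := \tfrac12 \Wn^2(\P, \Q) + h \Fn(\Q) = \tfrac12 \Wn^2(\P, \Q) + h\beta\, \Hhn(\Q) + h\, \Een(\Q),
\]
and regard $\Ppn = \Pp(\M^{\Lan})$ as a convex subset of the space of finite signed measures with its usual linear structure. By Lemma \ref{lem:convexity_usual}, the map $\Q \mapsto \tfrac12 \Wn^2(\P,\Q)$ is convex and $\Q \mapsto \Hhn(\Q)$ is linear, hence convex; since $h\beta \geq 0$, the term $h\beta\,\Hhn$ is convex as well. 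Again by Lemma \ref{lem:convexity_usual}, $\Een$ is strictly convex on $\Ppn$ (as a $[0,+\infty]$-valued functional), and multiplying by the constant $h>0$ preserves this. A sum of a convex functional and a strictly convex one is strictly convex, so $G$ is strictly convex on $\Ppn$.

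For existence of a minimizer, note first that $G$ is proper: taking $\Q = \omega_{n}$ gives $\Een(\omega_n) = 0$, while $\Hhn$ is bounded by \eqref{eq:HLaStable} and $\Wn^2(\P, \cdot)$ is bounded by $\operatorname{diam}(M)^2 |\Lan|$, so $G(\omega_n) < \infty$. Next, $\Ppn = \Pp(\M^{\Lan})$ is compact for the weak topology by Prokhorov's theorem, since $\M^{\Lan}$ is compact. Finally $G$ is lower semicontinuous on $\Ppn$ for the weak topology: $\Fn$ is lower semicontinuous by Lemma \ref{finitevolLSC}, and $\Q \mapsto \Wn^2(\P,\Q)$ is lower semicontinuous — in fact continuous, as the cost $\dn^2$ is bounded and continuous on the compact set $\M^{\Lan} \times \M^{\Lan}$. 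A proper lower semicontinuous extended-real-valued function on a compact space attains its infimum, so a minimizer $\Q_\star$ exists and $G(\Q_\star) < \infty$.

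For uniqueness, suppose $\Q_0 \neq \Q_1$ are both minimizers, with common minimal value $m = G(\Q_0) = G(\Q_1)$, which is finite by the previous paragraph. Then $\tfrac12\Q_0 + \tfrac12\Q_1 \in \Ppn$ and, applying strict convexity of $G$ at the two finite-valued points $\Q_0, \Q_1$,
\[
  G\!\left(\tfrac12\Q_0 + \tfrac12\Q_1\right) < \tfrac12 G(\Q_0) + \tfrac12 G(\Q_1) = m,
\]
contradicting the minimality of $m$. Hence the minimizer is unique. The argument is essentially routine given the preceding lemmas; the only point requiring a little care is handling the $+\infty$ values of $\Een$ correctly, which is why we first secure a minimizer with finite value before invoking strict convexity — for that, it suffices to observe that $\omega_n$ is an admissible competitor with finite free energy.
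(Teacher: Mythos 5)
Your proof is correct and follows essentially the same route as the paper: strict convexity from the three items of Lemma \ref{lem:convexity_usual}, and existence from compactness of $\Ppn$ together with lower semi-continuity of $\Wn^2(\P,\cdot)$ and $\Fn$ (Lemma \ref{finitevolLSC}). The extra care you take with the $+\infty$ values of $\Een$ (securing a finite-valued competitor $\omega_n$ before invoking strict convexity) is a sound refinement of the same argument, not a different one.
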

\begin{proof}
  The strict convexity follows from Lemma \ref{lem:convexity_usual}, hence there is at most one minimizer. Existence of a minimizer follows from: \begin{enumerate*}[(i)]
    \item compactness of $\mathscr{P}_{n}$, since $M$ itself is compact;
    \item lower semi-continuity of both $\mathcal{W}_{n}$ \cite[Cor.~6.11]{VillaniOldNew} and $\Fn$ (Lemma \ref{finitevolLSC}).
  \end{enumerate*}
\end{proof}

\paragraph{Displacement convexity.}
On the space of probability measures, besides the usual convex structure, the work of McCann is the first to put forward the existence of a different notion of interpolation, named \emph{displacement interpolation}.
We refer to \cite{McCann_1997} for the Euclidean case, while the case of manifolds is treated in \cite{cordero2001riemannian,CMCS06}, see also \cite[Chaps.~16 \& 17]{VillaniOldNew}.

A functional $\mathcal{G} \colon \mathscr{P}_{n} \to [0,\infty]$ is said to be \emph{$\lambda$-displacement convex} for some $\lambda \in \mathbb{R}$ when for every $\mathrm{P}^{0}$ and $\mathrm{P}^{1}$, and for every Wasserstein geodesic $(\mathrm{P}_{t})_{t \in [0,1]}$ joining $\mathrm{P}^{0}$ to $\mathrm{P}^{1}$, we have for $t \in [0,1]$:
\begin{equation*}
  \mathcal{G}(\mathrm{P}_{t}) \leq (1-t) \mathcal{G}(\mathrm{P}^{0}) + t \mathcal{G}(\mathrm{P}^{1}) - \frac{\lambda}{2} t (1-t) \mathcal{W}_{n}^{2}(\mathrm{P}^{0}, \mathrm{P}^{1}).
\end{equation*}

% A functional $\mathcal{G} \colon \mathscr{P}_{n} \to [0,\infty]$ such that $\mathcal{G}(\mathrm{P}) = +\infty$ as soon as $\mathrm{P}$ is not absolutely continuous with respect to $\omega_{n}$, is \emph{$\lambda$-displacement convex} for some $\lambda \in \mathbb{R}$ provided for every $\mathrm{P}^{0}$ and $\mathrm{P}^{1}$ (both in $\{\mathcal{G} < \infty\}$):
% \begin{equation*}
%   \mathcal{G}(\mathrm{P}_{t}) \leq (1-t) \mathcal{G}(\mathrm{P}^{0}) + t \mathcal{G}(\mathrm{P}^{1}) - \frac{\lambda}{2} t (1-t) \mathcal{W}_{n}^{2}(\mathrm{P}^{0}, \mathrm{P}^{1}),
% \end{equation*}
% where $(\mathrm{P}_{t})$ is the unique Wasserstein geodesic joining $\mathrm{P}^{0}$ to $\mathrm{P}^{1}$. Note that since $\mathcal{G}(\mathrm{P}^{0}) < \infty$, $\mathrm{P}^{0} \ll \omega_{n}$, and thus there exists a unique Wasserstein geodesic given by the Brenier--McCann theorem.

For our functionals, we have the following properties regarding displacement interpolation.
\begin{lemma}[Convexity properties - displacement interpolation] 
\label{lem:dis_conv_finite}
For all $n \geq 1$:
\begin{enumerate}
 	\item Let $\kappa$ be Ricci curvature lower bound from by \eqref{PosCurv}, then $\Een$ is $\kappa$-displacement convex on $\Ppn$.
	\item $\Hhn$ is $- 2 \|J\|_{\ell^1} \times \|\nabla^2 \W\|_\infty$-displacement convex on $\Ppn$. 
\end{enumerate}
\end{lemma}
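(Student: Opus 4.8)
The plan is to reduce both items to pointwise convexity along geodesics of the finite-dimensional compact product manifold $\M^{\Lan}$, and then invoke the standard correspondence between geodesic convexity of an integrand and displacement convexity of the induced functional on $\Ppn = \Pp(\M^{\Lan})$, in the form recalled (for the optimal transport structure) in Section~\ref{sec:Wasserstein}.

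\textbf{Item 1.} I would first observe that $\Een$ is exactly the relative entropy on $\M^{\Lan}$ with respect to the reference measure $\omega_n$ of \eqref{def:oL}, whose Bakry--Émery weight is $W_n(\bx) := \sum_{i \in \Lan} \V(\bx_i)$. Since $\M^{\Lan}$ is a Riemannian product, its Ricci tensor is the orthogonal direct sum of one copy of the Ricci tensor of $\M$ per factor, and similarly $\nabla^2 W_n = \bigoplus_{i \in \Lan} \nabla^2 \V$. Hence, on $\M^{\Lan}$,
\[
  \mathrm{Ricc}_{\M^{\Lan}} + \nabla^2 W_n \;=\; \bigoplus_{i \in \Lan} \bigl( \Ricc + \nabla^2 \V \bigr) \;\geq\; \kappa\, g
\]
by the very definition \eqref{PosCurv} of $\kappa$; in other words the weighted manifold $(\M^{\Lan}, \omega_n)$ satisfies the Bakry--Émery curvature lower bound $\kappa$, i.e.\ the condition $\mathrm{CD}(\kappa, \infty)$. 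The classical equivalence between this condition and $\kappa$-displacement convexity of the relative entropy (\cite{cordero2001riemannian, vonRenesseSturm}, see also \cite[Part~II]{VillaniOldNew}), applied on $\M^{\Lan}$, then gives precisely the desired statement. One may assume that both endpoints $\P^0, \P^1$ are absolutely continuous with respect to $\omega_n$, since otherwise $\Een(\P^0)$ or $\Een(\P^1)$ equals $+\infty$ and the inequality is vacuous; in that case the $\Wn$-geodesic joining them is the genuine displacement interpolation recalled in Section~\ref{sec:Wasserstein}, to which the cited theorem directly applies.

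\textbf{Item 2.} Since $\Hhn(\P) = \Esp_{\P}[\HLan] = \int_{\M^{\Lan}} \HLan \, \di\P$ is a pure potential-energy functional, it suffices to bound below the second derivative of $t \mapsto \HLan(\gamma(t))$ along any constant-speed minimizing geodesic $\gamma = (\gamma_i)_{i \in \Lan}$ of $\M^{\Lan}$, and then to integrate the resulting one-dimensional convexity inequality against an optimal dynamical plan $\Pi$ representing the $\Wn$-geodesic $(\P_t)$, using $\P_t = (e_t)_\sharp \Pi$ and $\int |\dot\gamma|^2 \, \di\Pi = \Wn^2(\P^0, \P^1)$. Each $\gamma_i$ is a geodesic of $\M$ and $(\gamma_i, \gamma_j)$ is a geodesic of $\M \times \M$, so
\[
  \tfrac{\di^2}{\di t^2} \W(\gamma_i(t), \gamma_j(t)) = \Hess_{\M \times \M}\W\bigl((\dot\gamma_i, \dot\gamma_j), (\dot\gamma_i, \dot\gamma_j)\bigr) \;\geq\; -\|\nabla^2\W\|_\infty \bigl( |\dot\gamma_i|^2 + |\dot\gamma_j|^2 \bigr),
\]
using that $\W$ is three times continuously differentiable, hence has bounded Hessian on the compact $\M \times \M$. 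Summing against the $J_{i,j}$ — which may change sign, whence the absolute values — and using symmetry and translation-invariance of the couplings, so that $\sum_{j} |J_{i,j}| = \|J\|_{\ell^1}$ for every $i$, yields
\[
  \tfrac{\di^2}{\di t^2} \HLan(\gamma(t)) \;\geq\; -\|\nabla^2\W\|_\infty \sum_{i,j \in \Lan} |J_{i,j}|\bigl( |\dot\gamma_i|^2 + |\dot\gamma_j|^2 \bigr) \;\geq\; -2 \|J\|_{\ell^1} \|\nabla^2\W\|_\infty \, |\dot\gamma|^2 .
\]
Integrating this against $\Pi$ gives the claimed $\bigl(-2\|J\|_{\ell^1}\|\nabla^2\W\|_\infty\bigr)$-displacement convexity of $\Hhn$.

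I do not expect a genuine obstacle: both arguments are essentially routine once set up correctly. The two points that require a little care are: (i) applying the entropy/curvature theorem to the \emph{weighted} product manifold with the correct Bakry--Émery tensor, which is where the single-spin potential $\V$ enters; and (ii) handling non-absolutely-continuous endpoints and the optimal-transport structure in that generality — which is harmless, since every Wasserstein geodesic on the compact manifold $\M^{\Lan}$ is a displacement interpolation carried by a measure on minimizing geodesics. The only real subtlety is the bookkeeping with the signs of the $J_{i,j}$ and with the norm conventions, so that the displacement-convexity constant in Item~2 comes out exactly as $-2\|J\|_{\ell^1}\|\nabla^2\W\|_\infty$.
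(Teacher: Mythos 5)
Your proof is correct and follows essentially the same route as the paper: Item 1 is obtained exactly as in the paper by observing that Ricci (Bakry--Émery) lower bounds are stable under products and invoking the curvature/displacement-convexity equivalence of \cite{cordero2001riemannian,vonRenesseSturm,SturmActa} on the weighted product $(M^{\Lan},\omega_n)$. For Item 2 the paper simply cites \cite[Thm.~15.19]{ABSOptimal} for the geodesic semi-convexity of the interaction term; your explicit second-derivative computation along geodesics, with the factor $2$ coming from the double sum $\sum_{i,j}|J_{i,j}|(|\dot\gamma_i|^2+|\dot\gamma_j|^2)\leq 2\|J\|_{\ell^1}|\dot\gamma|^2$, is precisely the computation behind that citation and yields the stated constant.
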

\begin{proof}
We recall that Ricci lower bounds are stable under taking products of manifolds. For the first item, see \cite[Thm.~1.1]{vonRenesseSturm} (when $\V \coloneq 0$) or \cite[Thm.~4.9]{SturmActa} in the general case. This is an instance of the celebrated link between curvature and displacement convexity.
See also \cite[Chap.~17]{VillaniOldNew}.

For the second item, it is well-know that interaction terms like $\Hhn$ inherit displacement convexity from the  convexity of the interaction potential $\W$, which is $- 2 \|\nabla^2 \W\|_\infty$-geodesically convex on $\M \times \M$, see for instance \cite[Thm.~15.19]{ABSOptimal} for explicit computations. The factor $2$ comes from the fact that $\W$ depends on two variables.
\end{proof}

% \begin{remark}
% It is a well-known peculiarity of the optimal transportation framework that the square Wasserstein distance is in general \emph{not} convex with respect to displacement interpolation, see e.g. \cite[Theorem 7.3.2]{ambrosio2005gradient}. There exists yet another notion of interpolation along so-called “generalized geodesics” (see \cite[Section 9.2]{ambrosio2005gradient}) which in the Euclidean case allows one to recover quantitative convexity results, however as mentioned in \cite[Chap. 17]{VillaniOldNew} these “have not yet been adapted to a Riemannian context”.
% \end{remark}

\paragraph{Application: uniqueness at high temperature, positive curvature case.}
It is clear that $\Hhi(\P)$ (see \eqref{eq:Hhi}) is linear in $\P$.
It is also a fact \cite[Prop.~6.75]{FriedliVelenik} that the specific relative entropy $\Ee$ is \emph{affine} on the space of stationary spin measures, and thus loses the strict convexity that the relative entropy enjoys in finite volume.
Hence, we see the importance of using displacement interpolations to recover some form of strict convexity.

Compared to \cite{erbar2023optimal}, we do not develop here a full-fletched notion of optimal transportation and displacement interpolation convexity at the infinite-volume level. Had one done so, one would presumably find that $\Eei$ is $\kappa$-displacement convex under \eqref{PosCurv} and that $\Hhi$ is $- 2 \|\nabla^2 \W\|_\infty$-displacement convex. Using finite-volume approximations, we are still able to prove the following result.

\begin{theorem}
\label{theo:uniqueness}
With $\kappa$ as in \eqref{PosCurv}, if $\kappa > 0$ and $\beta < \beta_c := \hal \kappa \left(\|J\|_{\ell^1}  \|\nabla^2 \W\|_\infty\right)^{-1}$ then the free energy $\fbeta$ has a unique minimizer.
\end{theorem}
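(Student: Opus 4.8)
The plan is to combine displacement convexity with a finite-volume approximation argument. The key observation is that under the hypotheses $\kappa>0$ and $\beta<\beta_c$, Lemma~\ref{lem:dis_conv_finite} shows that each finite-volume functional $\Fn = \Een + \beta\Hhn$ is $\lambda_n$-displacement convex with $\lambda_n \geq \kappa - 2\beta\|J\|_{\ell^1}\|\nabla^2\W\|_\infty = 2(\beta_c - \beta)\|J\|_{\ell^1}\|\nabla^2\W\|_\infty > 0$, uniformly in $n$. Call this uniform positive constant $\lambda := 2(\beta_c-\beta)\|J\|_{\ell^1}\|\nabla^2\W\|_\infty$.

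First I would establish existence of a minimizer of $\fbeta$ on $\Ppis$: this follows from lower semi-continuity of $\fbeta$ (Lemma~\ref{lem:PropertiesOfTheLimits}) together with compactness of $\Ppis$ for the weak topology, since $M$ is compact. Then, for uniqueness, I would argue by contradiction: suppose $\P^0 \neq \P^1$ are both minimizers. Since $\Wi$ is a genuine distance on $\Ppis$ (the Corollary after Lemma~\ref{def:WassInfinite}) and separates points there, $\Wi(\P^0,\P^1) > 0$, so $\Wn(\P^0,\P^1) > 0$ for some (hence all large) $n$. The strategy is then to produce, for each large $n$, a finite-volume Wasserstein geodesic between the restrictions $\P^0_{|\Lan}$ and $\P^1_{|\Lan}$, use displacement convexity of $\Fn$ to get a measure with strictly lower finite-volume free energy at the midpoint, stationarize it via $\Stat_n$, and use the estimate \eqref{hnP} of Lemma~\ref{lem:ppy_stationary} ($\fbeta(\Stat_n(\P_n)) \leq \frac{1}{|\Lan|}\Fn(\P_n) + h(n)$) together with the reverse estimate \eqref{error-free-energy} ($\fbeta(\P) \geq \frac{1}{|\Lan|}\Fn(\P) + h(n)$) to transfer the strict gain back to the infinite-volume level, contradicting minimality once $n$ is large enough.

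More precisely: let $\P^{1/2}_n$ be the midpoint of the $\Wn$-geodesic from $\P^0_{|\Lan}$ to $\P^1_{|\Lan}$. By displacement convexity,
\begin{equation*}
\frac{1}{|\Lan|}\Fn(\P^{1/2}_n) \leq \frac{1}{2}\cdot\frac{\Fn(\P^0_{|\Lan})}{|\Lan|} + \frac{1}{2}\cdot\frac{\Fn(\P^1_{|\Lan})}{|\Lan|} - \frac{\lambda}{8}\cdot\frac{\Wn^2(\P^0,\P^1)}{|\Lan|}.
\end{equation*}
Applying \eqref{error-free-energy} to $\P^0$ and $\P^1$ (both equal to the minimum value $m^* := \min\fbeta$ on the left) gives $\frac{1}{|\Lan|}\Fn(\P^i_{|\Lan}) \leq m^* - h(n)$, and applying \eqref{hnP} to $\P^{1/2}_n$ gives $\fbeta(\Stat_n(\P^{1/2}_n)) \leq \frac{1}{|\Lan|}\Fn(\P^{1/2}_n) + h(n)$. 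Chaining these,
\begin{equation*}
\fbeta(\Stat_n(\P^{1/2}_n)) \leq m^* - h(n) - \frac{\lambda}{8}\cdot\frac{\Wn^2(\P^0,\P^1)}{|\Lan|} + h(n) = m^* - \frac{\lambda}{8}\cdot\frac{\Wn^2(\P^0,\P^1)}{|\Lan|}.
\end{equation*}
Now $\frac{1}{|\Lan|}\Wn^2(\P^0,\P^1) \to \Wi^2(\P^0,\P^1) > 0$ by \eqref{def:Wi}, so for $n$ large the right side is strictly below $m^*$, contradicting that $m^*$ is the minimum of $\fbeta$. Hence the minimizer is unique.

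The main obstacle I anticipate is the bookkeeping with the error terms: one must be careful that $h(n)$ in \eqref{error-free-energy} and in \eqref{hnP} can be taken to be the \emph{same} vanishing sequence (or at least that both go to zero), so that they genuinely cancel in the chain above and do not swamp the negative $-\frac{\lambda}{8}\Wi^2$ term. Since $\frac{1}{|\Lan|}\Wn^2(\P^0,\P^1)$ converges to a fixed positive limit while $h(n) \to 0$, this works out, but it requires checking that the $o_n(1)$ in the stationarization (Lemma~\ref{lem:ppy_stationary}) is genuinely uniform over the relevant family of midpoint measures $\P^{1/2}_n$ — which it is, since the error in \eqref{stat_energy} is stated to be uniform in $\P_n$ and the entropy estimate \eqref{stat_specific} is an inequality in the favorable direction with no error. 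A secondary subtlety is ensuring the finite-volume geodesic midpoint $\P^{1/2}_n$ is well-defined and that the displacement-convexity inequality of Lemma~\ref{lem:dis_conv_finite} applies to it; this is fine because $\Wn$ is a complete geodesic distance on $\Ppn$ and the lemma is stated for arbitrary endpoints.
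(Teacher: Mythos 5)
Your proposal is correct and follows essentially the same route as the paper: argue by contradiction with two distinct minimizers, take the $\Wn$-geodesic midpoint of their restrictions to $\Lan$, apply the uniform $(\kappa - 2\beta\|J\|_{\ell^1}\|\nabla^2\W\|_\infty)$-displacement convexity of $\Fn$ from Lemma \ref{lem:dis_conv_finite}, stationarize via $\Stat_n$, and transfer the strict gain to $\fbeta$ using \eqref{error-free-energy} and \eqref{hnP}, noting that the $o_n(1)$ errors are dominated by the fixed positive deficit $\tfrac{\lambda}{8}\Wi^2(\P^0,\P^1)$. The only differences from the paper are cosmetic (your $h(n)\to 0$ bookkeeping versus the paper's explicit $\epsilon\delta/100$ thresholds), plus your added remark on existence, which the paper leaves implicit.
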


\newcommand{\Phal}{\P^{\hal}}
\begin{proof}[Proof of Theorem \ref{theo:uniqueness}]
Argue by contradiction and assume that $\Pz, \Pu$ are two distinct minimizers of $\fbeta$. Let $\epsilon := \kappa - 2 \beta \|J\|_{\ell^1} \|\nabla^2 \W\|_\infty > 0$ and let $\delta := \Wi^2(\Pz, \Pu) > 0$. We choose $n$ large enough such that:
\begin{enumerate}
	\item $\frac{1}{|\Lan|} \Wn^2(\Pz, \Pu) > \frac{\delta}{2}$ (possible by definition of $\Wi$, see \eqref{def:Wi}).
	\item $\frac{1}{|\Lan|} \Fn(\Pz) \leq \Fbeta(\Pz) + \epsilon \frac{\delta}{100}$ and $\frac{1}{|\Lan|} \Fn(\Pu) \leq \Fbeta(\Pu) + \epsilon \frac{\delta}{100}$ (possible by definition of $\Fbeta$, see \eqref{def:Fbeta}).
	\item The error term in \eqref{stat_energy} is smaller than $\epsilon \frac{\delta}{100}$.
\end{enumerate}

Consider the restrictions of $\Pz, \Pu$ to $\Lan$, and let $\Phal_n$ be their midpoint in the sense of optimal transport. The displacement convexity statements of Lemma \ref{lem:dis_conv_finite} imply that:
\begin{equation*}
\Fn(\Phal_n) \leq \hal \left( \Fn(\Pz) + \Fn(\Pu) \right) - \frac{1}{8} \left( \kappa - 2 \beta \|\nabla^2 \W\|_\infty \right) \Wn^2(\Pz, \Pu) \\
\leq |\Lan| \times \left(\min \Fbeta + \epsilon \frac{\delta}{100} - \epsilon \frac{\delta}{16}\right).
\end{equation*}
Finally, consider the spin measure $\P := \Stat_n[\Phal_n]$. By Lemma \ref{lem:ppy_stationary} and our choice of $n$ we know that:
\begin{equation*}
\Fbeta(\P) \leq \frac{1}{|\Lan|}\Fn(\Phal_n) + \epsilon \frac{\delta}{100} \leq \min \Fbeta + 2 \epsilon \frac{\delta}{100} - \epsilon \frac{\delta}{16}  < \min \Fbeta, 
\end{equation*}
but $\P$ is stationary, which yields a contradiction.
\end{proof}

\begin{remark}[Link with DLR equations and Dobrushin's uniqueness criterion]
The celebrated Dobrushin's uniqueness criterion \cite{dobruschin1968description} can be used to prove uniqueness of Gibbs states for $\beta$ small enough, see e.g. a presentation in \cite[Sec. 6.5.]{FriedliVelenik}. This is very general and does not rely on curvature assumptions. By the Gibbs variational principle, infinite-volume Gibbs states are exactly minimizers of $\Fbeta$ on $\Ppis$, and thus uniqueness at high temperature holds in a more general context. Our result provides a different point of view, which might be useful in situations for which Dobrushin's criterion is more difficult to state, for instance for point processes.
\end{remark}

\section{Fokker--Planck--Kolmogorov equations in infinite-volume}
\label{sec:IFP}
\subsection{Preliminaries}
\subsubsection*{Short reminder on the finite-dimensional case}
Let $\Phi$ be a scalar field on $\M$ with sufficient regularity --- it would also be possible to work on $\mathbb{R}^{\nn}$ by imposing some decay conditions on $\Phi$. We consider the following stochastic differential equation (SDE):
\begin{equation}
\label{MLangevin}
  \mathtt{d} X_{t} = \sqrt{2} \mathtt{d} B_{t} - \nabla \Phi(X_{t}) \mathtt{d} t.
\end{equation}
Solving this SDE from an initial condition $X_{0}$ of law $\mu_{0}$ gives rise to a semi-group of measures $(\mu(t))_{t \geq 0}$ where $\mu_t$ is the law of $X_{t}$. By Itō's formula, we see that the curve $(\mu(t))_{t \geq 0}$ solves the \emph{Kolmogorov equation}, namely for all $f$ in $\CC^{1,2}((0,+\infty), M)$ and for all $t > 0$ we have:
\begin{equation*}
  \frac{\mathtt{d}}{\mathtt{d}t} \mathbb{E}_{\mu(t)}[f] = \mathbb{E}_{\mu(t)}\bracket*{ \partial_{t} f + \Delta f - \nabla \Phi \cdot \nabla f}.
\end{equation*}

Provided that $\mu(t)$ admits a density $\rho(t)$ with respect to the volume measure on $M$, we expect that the curve of probability densities $(\rho(t))_{t \geq 0}$ solves the \emph{Fokker--Planck} equation:
\begin{equation*}
  \partial_{t} \rho = \Delta \rho + \operatorname{div}(\rho \nabla \Phi),
\end{equation*}
in a weak or strong sense depending on the regularity of $\rho$. It is important to keep in mind the slight difference between the Kolmogorov equation --- an equation on measures --- and the Fokker--Planck equation --- describing the evolution of their hypothetical densities.

Under reasonable conditions on $\Phi$, solutions of the Kolmogorov equations have smooth densities and are unique, we refer to \cite[Chap. 6]{Bogachev_2015} for an overview of relevant results. There are different frameworks to study such equations:
\begin{itemize}
 \item Existence of solutions can be obtained by abstract arguments, or by explicit constructions such as proving existence of the underlying SDE \eqref{MLangevin}, or by building a gradient flow \cite{Jordan_1998}.
  \item Regularity of solutions to the Fokker--Planck equation follows from a bootstrap argument, as in \cite{Jordan_1998}. An \emph{ad hoc} argument also yields uniqueness.
  \item An easy instance of Hörmander's condition \cite{Hormander} ensures that, if $\Phi$ is smooth, then solutions to the Fokker--Planck equation are also smooth.
  \item Malliavin calculus \cite{Malliavin} can be used to derive regularity through the study of the associated stochastic differential equation.
 \end{itemize} 

\subsubsection*{Infinite-dimensional case: lack of global density and non-locality}
\newcommand{\Om}{\mathrm{\Omega}}
The Fokker--Planck--Kolmogorov equation in infinite-volume presents two major differences:
\begin{enumerate}
	\item In general, the spin measures $\P$ that we work with are \emph{not absolutely continuous} with respect to some common reference spin measure, e.g. the countable product measure $\Om := \omega^{\otimes \Zd}$, which means that we cannot consider a global density $\p \overset{??}{:=} \frac{\dd \P}{\dd \Om}$ and write down an equation that would govern the time evolution of $\p$.

	Nonetheless, we show below that, in our setting, the finite-dimensional marginal of $\P$ in every $\La \Subset \Zd$ is absolutely continuous with respect to the reference measure $\oL$. We may thus consider its density $\p_\La$ and try to write an equation for the evolution $t \mapsto \p_\La(t)$ \emph{for all finite $\La \Subset \Zd$}.

	\item However, for any given finite box $\La \Subset \Zd$, spins in $\La$ interact with spins outside $\La$, and thus the equation describing the time evolution of the spin measure in $\La$ is not closed: it contains a term related to the time evolution of the system \emph{outside} $\La$. 

	In particular, this means that even when looking at the evolution within a finite box, one cannot apply the general regularity results because the corresponding drift $\Phi$ depends on the whole $\P(t)$, and so does its regularity.

	Another consequence is that, to quote \cite[Chap. 10]{Bogachev_2015} \enquote{the problem of uniqueness of solutions to (...) parabolic Fokker--Planck--Kolmogorov equations in infinite-dimensional spaces is much more complicated than in finite-dimensional ones}.
\end{enumerate}
For those reasons, instead of a single equation, the infinite-volume Fokker--Planck equation consists in a family of equations (a hierarchy) coupling the densities $\p_\La$ for all $\La \Subset \Zd$, and we need to prove both regularity and uniqueness by hand.

\subsection{Formulations of infinite-volume Fokker--Planck--Kolmogorov equations}
\label{sec:formFP}

For $\La \Subset \Zd$, denote by $\CC^\infty_c([0, + \infty), M^\La)$ the set of functions from $\R_{+} \times M^\La$ to $\R$ which are smooth, and compactly supported with respect to the first variable (recall that $M$ is compact). In what follows, unless specified otherwise, objects depending on the time variable $t$ are defined on $[0, + \infty)$.

% Recall that $\HH_{\Lambda} \colon \mathsf{Conf}_{\Lambda} \to \mathbb{R}$/  Its gradient is the vector field
% \begin{equation*}
%   \nabla \HH_{\Lambda}(x) = \paren*{\sum_{y \in \Lambda} J_{i,j} \partial_{1} \W(x_{i}, x_{j})}_{i \in \Lambda}, \qquad x \in \mathsf{Conf}.
% \end{equation*}
% This vector field does not take into account the interactions of the spins in $\La$
% We write (cf \eqref{nablaHZdLa}:
% \begin{equation*}
%   \nabla \HH(\Zd \to \Lambda)(x) := \paren*{ \sum_{j \in \Zd} J_{i,j} \partial_{1} \W(x_{i}, x_{j}) }_{i \in \Lambda}, \qquad x \in \mathsf{Conf};
% \end{equation*}
% and more generally, for $\Lambda' \subset \Zd$ (not necessarily finite)
% \begin{equation*}
%   \nabla \HH(\Lambda' \to \Lambda)(x) := \paren*{ \sum_{y \in \Lambda'} J_{ij} \partial_{1} \W(x_{i}, x_{j}) }_{i \in \Lambda}, \qquad x \in \mathsf{Conf}.
% \end{equation*}

\paragraph{Some notations.} Recall the notation $\nabla \HH(\Zd \to \Lambda)$ introduced in \eqref{nablaHZdLa}. For $\mathrm{P} \in \mathscr{P}(\mathsf{Conf})$ and $F \in \mathscr{L}^{1}(\mathrm{P})$, let us abbreviate  $\mathbb{E}_{\mathrm{P}}[F \given \Lambda]$ for the conditional expectation, under $\P$, with respect to the projection on $\mathsf{Conf}_{\Lambda}$. We also consider the conditional expectation of vector fields by taking conditional expectations coordinate-wise.
In particular, for $\bx \in \mathsf{Conf}$, we write:
\begin{equation}
\label{espCondnablaHZdLa}
  \mathbb{E}_{\mathrm{P}} \bracket*{ \nabla \HH(\Zd \to \Lambda) \given \Lambda }(\bx) =  \paren*{ \sum_{j \in \Zd} J_{i,j} \mathbb{E}_{\mathrm{P}} \bracket*{ \partial_{1} \W(\bx_{i}, \bx_{j}) \given \Lambda } }_{i \in \Lambda}.
\end{equation}
When $i$ and $j$ are both in $\La$, then $\mathbb{E}_{\mathrm{P}} \bracket*{ \partial_{1} \W(\bx_{i}, \bx_{j}) \given \Lambda } = \partial_{1} \W(\bx_{i}, \bx_{j})$, which is $\CC^2$ with respect to both variables.
However, when $j \not \in \Lambda$, the conditional expectation takes a more complicated form, and then $\mathbb{E}_{\mathrm{P}} \bracket*{ \nabla \HH(\Zd \to \Lambda) \given \Lambda}$ could fail to be merely differentiable even for a smooth $\W$.

In the rest of the section, we often use quantities of the form $\nabla f \cdot \nabla \HH(\Zd \rightarrow \La)$, where $\La \Subset \Zd$ and $f$ is a smooth $\La$-local function. This is to be understood as:
\begin{equation*}
\nabla f \cdot \nabla \HH(\Zd \rightarrow \La)(\bx) = \sum_{i \in \Lambda} \partial_i f(\bx)  \cdot \left(\sum_{j \in \Zd} J_{i,j} \partial_1 \W(\bx_i, \bx_j) \right),
\end{equation*}
where each summand corresponds a scalar product between two tangent vectors in $\TT_{\bx_i} M$.

\subsubsection*{Dual formulation: the Kolmogorov equation for measures}
\newcommand{\DeltaU}{\Delta_\V}
We say that a measurable curve $t \mapsto \P(t)$ of spin measures satisfies the infinite-volume Fokker--Planck--Kolmogorov equation in the \emph{dual sense} when for all $\La \Subset \Zd$ and for all $f$ in $\CC^\infty_c([0, + \infty), M^\La)$:
\begin{equation}
\tag{Dual}
\label{def:VWEAK}
- \Esp_{\P(0)} [f(0, \cdot)] = \int_{0}^{\infty} \Esp_{\P(t)}\left[\partial_t f(t, \cdot) + \DeltaU f(t, \cdot) - \nabla f(t, \cdot) \cdot \nabla \HH(\Zd \rightarrow \La)   \right] \dd t.
\end{equation}
We use the following consequence of \eqref{def:VWEAK}: for all $0 \leq t_0 < t_1$ we have:
\begin{equation}
\label{conseqVWeak}
\Esp_{\P(t_1)} [f(t_1, \cdot)] - \Esp_{\P(t_0)} [f(t_0, \cdot)] \\
= \int_{t_0}^{t_1} \Esp_{\P(t)}\left[\partial_t f(t, \cdot) + \DeltaU f(t, \cdot) - \nabla f(t, \cdot) \cdot \nabla \HH(\Zd \rightarrow \La)  \right] \dd t.
\end{equation}

\subsubsection*{Weak formulation, for local densities}
Let $t \mapsto \P(t)$ be a measurable curve in the space of infinite spin measures, and assume that for all $\La \Subset \Zd$, the restriction of $\P$ to $\La$ admits a density $\p_\La$ with respect to the reference measure $\oL$ on $M^\La$. 

We say that $t \mapsto \P(t)$ \emph{is a weak solution to the infinite-volume Fokker--Planck equation} when for all $\La \Subset \Zd$ and for all test function $f \in \CC^{\infty}_c([0, + \infty), M^\La)$, the following identity holds (cf \eqref{def:VWEAK}):
\begin{multline}
\label{WeakFP}
\tag{Weak}
 -\int_{M^\La} \p_\La(0, \bx) f(0, \bx) \dd \oL(\bx) \\
= \int_{0}^{+\infty} \int_{M^\La} \p_\La(t, \bx) \left(  \partial_t f(t, \bx) + \DeltaU f(t, \bx) -  \nabla f(t, \bx) \cdot \Esp_{\P(t)} \left[ \nabla \HH(\Zd \rightarrow \La) | \La \right](\bx) \right) \dd \oL(\bx) \dd t.
\end{multline}
We use the following consequence of \eqref{WeakFP}: for all $0 \leq t_0 < t_1$, we have (cf \eqref{conseqVWeak}):
\begin{multline}
\label{WeakFP2}
\int_{M^\La} \p_\La(t_1, \bx) f(t_1, \bx) \dd \oL(\bx) - \int_{M^\La} \p_\La(t_0, \bx) f(t_0, \bx)  \dd \oL(\bx)  \\
= \int_{t_0}^{t_1} \int_{M^\La} \p_\La(t, \bx) \left(  \partial_t f(t, \bx) + \DeltaU f(t, \bx) \right) \dd \oL(\bx) \dd t  \\
-  \int_{t_0}^{t_1} \int_{M^\La} \p_\La(t, \bx) \nabla f(t, \bx) \cdot \Esp_{\P(t)} \left[ \nabla \HH(\Zd \rightarrow \La) | \La \right](\bx) \dd \oL(\bx) \dd t.
\end{multline}
%One can obtain \eqref{WeakFP2} from \eqref{WeakFP} by multiplying $f_\La$ by a smooth approximation of $(t, X) \mapsto \1_{[t_0, t_1]}(t)$.

\subsubsection*{Strong formulation, for local densities}
Let $t \mapsto \P(t)$  be a measurable curve in the space of infinite spin measures. Assume that for all $\La \Subset \Zd$, $\P$ admits a density $\p_\La$ (with respect to the reference measure $\oL$ on $M^\La$, see \eqref{def:oL}) which is of class $\CC^1$ with respect to the time variable and of class $\CC^2$ with respect to the spatial variable on $(0, + \infty) \times M^\La$. 

We say that $t \mapsto \P(t)$ \emph{is a strong solution to the infinite-volume Fokker--Planck equation} with initial condition $\P(0)$ when for all $\La \Subset \Zd$ and for all $t \in (0, + \infty)$:
\begin{equation}
\label{def:STRONG}
\tag{Strong}
\partial_t \p_\La = \DeltaU \p_\La(t) + \div_{\V} \left(\p_\La(t) \cdot \Esp_{\P(t)} \left[ \nabla \HH(\Zd \rightarrow \La) | \La \right] \right),
\end{equation}
and $\P(t) \to \P(0)$ in the local topology as $t \to 0^+$.

\begin{remark}
  As mentioned above the vector field $\Esp_{\P(t)} \bracket*{ \nabla \HH(\Zd \to \Lambda) \given \Lambda }$ is a priori not $\CC^{1}$. It is implicitly part of the definition of \eqref{def:STRONG} that the divergence of this vector field is well-defined at all times.
\end{remark}

\subsection{The main regularity result}
\newcommand{\Cud}{\CC^{1,2}}
For $\La \Subset \Zd$, we denote by $\Cud((0, +\infty) \times M^\La)$ the space of functions $\rho$ which are of class $\CC^1$ with respect to the time variable and of class $\CC^2$ with respect to the spatial variable, and such that for all $0 < \delta < T$:
\begin{equation}
\label{C0C2borneStrong}
\sup_{t \in [\delta, T]} \left(\|\rho(t, \cdot)\|_\infty +  \|\nabla^2 \rho(t, \cdot)\|_\infty + \|\partial_t \rho(t, \cdot)\|_\infty \right) < +\infty.
\end{equation}

\paragraph{The main regularity result.}
The following theorem, which is one of the main result of this paper, states that all solutions to our Fokker--Planck--Kolmogorov are in fact \emph{strong} solutions.
\begin{theorem}[Regularity of the solutions] \
\label{theo:regul}
\begin{itemize}
  \item Every solution of \eqref{def:VWEAK} has densities with respect to $\oL$ for all $\La \Subset \Zd$ which are solutions of \eqref{WeakFP}.
  \item Every solution of \eqref{WeakFP} has local densities in $\Cud((0, +\infty) \times M^\La)$ which are solutions of \eqref{def:STRONG}.
\end{itemize}
\end{theorem}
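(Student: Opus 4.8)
My plan is to reduce everything to a fixed finite box $\La\Subset\Zd$ and to exploit the key structural fact that, because the test functions in \eqref{def:VWEAK} and \eqref{WeakFP} are $\La$-local, the marginal $\mu_\La(t):=\P(t)_{|\La}$ on the compact manifold $M^\La$ obeys a linear parabolic equation \emph{whose drift is globally bounded}. Indeed $\nabla f$ is then $\La$-local, so $\Esp_{\P(t)}[\nabla f\cdot\nabla\HH(\Zd\to\La)]=\Esp_{\P(t)}[\nabla f\cdot b_\La(t)]$ with $b_\La(t):=\Esp_{\P(t)}[\nabla\HH(\Zd\to\La)\mid\La]$, and by \eqref{nablaHZdLa} each component of $b_\La(t,\cdot)$ is bounded by $C_0:=\|J\|_{\ell^1}\,\|\partial_1\W\|_{\mathscr{L}^\infty}$, uniformly in $t$ and in the shape of $\La$ at fixed cardinality; thus $t\mapsto\mu_\La(t)$ solves, in the dual sense, $\partial_t\mu_\La=\Delta_\V\mu_\La+\div_\V(\mu_\La\,b_\La(t))$.

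For the first bullet, I would derive from \eqref{def:VWEAK} the Duhamel identity
\begin{equation*}
  \mu_\La(t)=e^{(t-t_0)\Delta_\V}\mu_\La(t_0)+\int_{t_0}^{t}e^{(t-r)\Delta_\V}\,\div_\V\!\left(\mu_\La(r)\,b_\La(r)\right)\,\dd r,\qquad 0<t_0<t,
\end{equation*}
by testing against $e^{s\Delta_\V}g$ for smooth $g$ on $M^\La$. The first term has a smooth density, since the heat semigroup of the compact manifold $M^\La$ regularizes any finite measure; and the smoothing bound $\|e^{s\Delta_\V}\div_\V V\|_{\mathscr{L}^1(\oL)}\lesssim s^{-1/2}\|V\|_{\mathscr{L}^\infty}$, together with $\|\mu_\La(r)\|_{\mathrm{TV}}=1$ and $\|b_\La(r)\|_\infty\le C_0$, makes the integrand integrable near $r=t$. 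Hence $\mu_\La(t)$ has an $\mathscr{L}^1$ density $\p_\La(t,\cdot)$ for every $t>0$, and the family $(\p_\La)_\La$ is automatically consistent under marginals. These densities satisfy \eqref{WeakFP} on $(0,+\infty)$, i.e. \eqref{WeakFP2} for $0<t_0<t_1$; \eqref{WeakFP} itself follows by letting $t_0\to0$ and using that $t\mapsto\P(t)$ is weakly continuous at $0$ (the right-hand side of \eqref{conseqVWeak} being bounded).

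For the second bullet, I would run a bootstrap on the box size. Starting from a weak solution, the drift $b_\La(t)$ is bounded by $C_0$, so De Giorgi--Nash--Moser parabolic regularity applied to \eqref{WeakFP} --- a uniformly elliptic divergence-form equation with bounded measurable coefficients --- shows that $\p_\La$ is bounded and parabolic-Hölder continuous on $[\delta,T]\times M^\La$, with constants depending only on $C_0$ and on the dimension $\nn|\La|$, hence uniform in the shape of $\La$ at fixed cardinality; the parabolic Harnack inequality moreover gives $\p_\La(t,\cdot)\ge c(\delta,T)>0$ there. Using marginal consistency, $\p_\La(t,\bx)=\int_M\p_{\La\cup\{j\}}(t,\bx,y)\,\dd\omega(y)$, I would then rewrite, for $i\in\La$ (with the usual abuse of notation for the configuration equal to $\bx$ on $\La$ and to $y$ at $j$),
\begin{equation*}
  b_\La(t,\bx)_i=\sum_{j\in\La}J_{i,j}\,\partial_1\W(\bx_i,\bx_j)+\sum_{j\notin\La}J_{i,j}\,\frac{\int_M\partial_1\W(\bx_i,y)\,\p_{\La\cup\{j\}}(t,\bx,y)\,\dd\omega(y)}{\p_\La(t,\bx)}.
\end{equation*}
Each term of the series is parabolic-Hölder on $[\delta,T]\times M^\La$ with seminorm $\lesssim|J_{i,j}|$ times a constant independent of $j$ (the numerator is Hölder and bounded uniformly in $j$ by the previous step, the denominator is bounded below), so the series converges in parabolic-Hölder norm by the short-range assumption \eqref{eq:short-range}: $b_\La$ is locally parabolic-Hölder on $(0,+\infty)\times M^\La$. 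Parabolic Schauder estimates, in the spirit of the bootstrap of \cite{Jordan_1998}, then upgrade $\p_\La$ to $\Cud((0,+\infty)\times M^\La)$ in the sense of \eqref{C0C2borneStrong}, and integrating \eqref{WeakFP} by parts yields \eqref{def:STRONG} classically; finally, for bounded $\La$-local $f$ independent of time, \eqref{WeakFP2} gives $\bigl|\Esp_{\P(t_1)}[f]-\Esp_{\P(0)}[f]\bigr|=O(t_1)$, so $\P(t)\to\P(0)$ in the local topology as $t\to0^+$.

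The hard part is the \emph{non-closed hierarchy}: the regularity of the drift $b_\La$ in the $\La$-equation is controlled by the regularity \emph{and} the strict positivity of the densities $\p_{\La\cup\{j\}}$ on all one-site enlargements of $\La$, so one has to verify that the Hölder and lower bounds involved are uniform in the enlarging site $j$ and summable against $|J_{i,j}|$. This apparent circularity is broken by the \emph{a priori} boundedness of the drift, \eqref{nablaHZdLa}: it already yields one full round of Hölder regularity and positivity for \emph{every} box at once, after which each Schauder round improves all boxes simultaneously. A secondary technical point is to make sure a quantitative lower bound for $\p_\La$ is available for a divergence-form operator with merely bounded drift --- which the parabolic Harnack inequality provides --- so that dividing by $\p_\La$ above is legitimate.
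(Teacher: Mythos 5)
Your overall architecture is sound and rests on the same structural observation as the paper's proof: because the test functions are $\La$-local, the marginal $\p_\La$ solves a divergence-form parabolic equation whose drift is \emph{a priori} bounded by $\|J\|_{\ell^1}\|\partial_1\W\|_{\mathscr{L}^\infty}$, and this bound is what breaks the apparent circularity of the coupled hierarchy --- one then improves all boxes simultaneously and sums the gains against $|J_{i,j}|$. The Duhamel representation for the first bullet is also the paper's Step 0. Where you genuinely diverge is in the regularity bootstrap. You work in H\"older spaces via De Giorgi--Nash--Moser, the parabolic Harnack inequality (to get $\p_\La \geq c(\delta,T)>0$ so that you may divide by the density when expressing $\Esp_{\P(t)}[\nabla\HH(\Zd\to\La)\mid\La]$ as a ratio), and then two rounds of Schauder estimates. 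The paper instead never divides by $\p_\La$: it keeps the product $\p_\La\cdot\Esp_{\P(t)}[\nabla\HH(\Zd\to\La)\mid\La]=v_\La\p_\La+\sum_{j\notin\La}\int w_{\La\cup\{j\}}\,\p_{\La\cup\{j\}}$ intact inside Duhamel's formula, and bootstraps in $\mathscr{L}^p$ and fractional Sobolev spaces $\mathscr{W}^{s,p}$, gaining $1-\epsilon$ derivatives per step using only pointwise heat-kernel bounds. Your route buys a cleaner statement at each stage (genuine H\"older norms, a quantitative positive lower bound on the densities, which is of independent interest and is in any case needed to interpret the conditional expectation pointwise); the paper's route buys independence from Harnack and from any lower bound on the density, and avoids importing the divergence-form Schauder machinery on manifolds.

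One step needs shoring up. De Giorgi--Nash--Moser (and Moser's local boundedness, and Harnack) are theorems about weak solutions in the energy class $\mathscr{L}^\infty_t\mathscr{L}^2_x\cap\mathscr{L}^2_t\mathscr{W}^{1,2}_x$; after your Duhamel step you only know that $\p_\La(t)\in\mathscr{L}^1(\oL)$ and satisfies the \emph{very weak} formulation \eqref{WeakFP} against smooth test functions, which is not enough to start the iteration. You must first boost the integrability --- exactly what the paper's Steps 1--2 do by iterating Duhamel with the bounds $\|\nabla\mathsf{G}_t f\|_{\mathscr{L}^{q}}\lesssim t^{-\frac{\nn|\La|}{2}(1-\frac1p)}(1+t^{-1/2})\|f\|_{\mathscr{L}^{p'}}$, or what Aronson's Gaussian bounds on the fundamental solution of the bounded-drift operator would give --- before any De Giorgi--Nash--Moser or Harnack statement applies. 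This is fillable with the ingredients you already have on the page, but as written the jump from ``$\mathscr{L}^1$ density'' to ``bounded, H\"older, and bounded below'' is a gap. A second, smaller point: for the initial condition, the estimate $|\Esp_{\P(t_1)}[f]-\Esp_{\P(0)}[f]|=\O(t_1)$ should be read off from the dual formulation \eqref{conseqVWeak} with $t_0=0$ (where no density at time $0$ is needed), not from \eqref{WeakFP2}, since $\P(0)$ need not be absolutely continuous.
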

We postpone the proof of Theorem \ref{theo:regul} to Section \ref{s:proof-regularity}. The proof uses a bootstrap argument which is similar in spirit to the one of \cite{Jordan_1998} (see also \cite{mei2018mean}), with several important modifications.

\begin{remark}
If we were to choose the spin-spin interaction potential $\W \in \mathscr{C}^{\infty}(M \times M)$, our method would  yield that the local densities $\mathrm{p}_{\Lambda}$ are space-time smooth.
Note that under this assumption, one could also use Malliavin calculus methods to show that the densities are smooth in space, see \cite{HolleyStroockDiffusionTorus} for such computations on the circle when the interactions have finite range.
\end{remark}

\paragraph{Finiteness of the Fisher information.}
Recall the following elementary result: if $g : \R^\nn \to \R$ is a $\CC^2_c$ map such that $g \geq 0$ everywhere on $\R^\nn$, then we have the pointwise bound:
\begin{equation*}
  |\nabla g(x)|^{2} \leq 2 g(x) \|\nabla^2 g\|_\infty.
\end{equation*}
Indeed, we have by a Taylor's expansion, with $u$ a unit vector and $\varepsilon > 0$
\begin{equation*}
0 \leq g(x+\varepsilon u) \leq g(x) + \varepsilon \abs{\nabla g(x)} + \hal \varepsilon^{2} \|\nabla^2 g\|_\infty.
\end{equation*}
Non-positivity of the discriminant of this degree $2$ equation in $\varepsilon$ is exactly the result.
In particular, the quantity $\frac{\|\nabla g\|^2(x)}{g(x)}$ is always well-defined and bounded.

By a similar argument, we see that if $\rho$ is a non-negative function in $\Cud((0, +\infty) \times M^\La)$, \eqref{C0C2borneStrong} holds, and then for all $0 < \delta < T$:
\begin{equation}
\label{FiniteFisher1}
\sup_{t \in [\delta, T]} \sup_{\bx \in M^\La} \frac{\|\nabla \rho(t, \bx)\|^2}{\rho(t, \bx)} < + \infty.
\end{equation}
Hence, the Fisher information of $\rho$ is bounded locally uniformly in time, namely we have:
\begin{equation}
\label{FiniteFisher2}
\sup_{t \in [\delta, T]} \int_{M^{\Lambda}} \|\nabla \log \rho(t, \bx)\|^2 \rho(t,\bx)  \dd \oL (\bx) = \sup_{t \in [\delta, T]} \int_{M^\La} \frac{\|\nabla \rho(t, \bx)\|^2}{\rho(t, \bx)} \dd \omega_\La(\bx) \dd t < + \infty.
\end{equation}
In particular, we obtain:
\begin{corollary}[Finiteness of Fisher information]
\label{th:fisher}
  Let $\P$ be a spin measure whose restriction to $\Lambda \Subset \Zd$ has a density $\mathrm{p}_{\Lambda}$ in $\Cud((0, +\infty) \times M^\La)$. Then for all $0 < \delta < T < \infty$ we have:
  \begin{equation}
  \label{FiniteFisher}
    \sup_{t \in [\delta, T]} \int_{M^{\Lambda}} \frac{\|\nabla \mathrm{p}_{\Lambda}(t, \bx)\|^2}{\mathrm{p}_{\Lambda}(t,\bx)} \dd \oL (\bx) < + \infty.
  \end{equation}
\end{corollary}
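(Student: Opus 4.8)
The plan is to reduce the statement to a pointwise bound on the integrand and then integrate against the finite measure $\oL$; concretely, this is what \eqref{FiniteFisher1}--\eqref{FiniteFisher2} already record, so the argument is short. First I would fix $0<\delta<T<\infty$ and observe that, for each $t\in[\delta,T]$, the function $g:=\mathrm{p}_{\Lambda}(t,\cdot)$ is nonnegative (it is a continuous density) and of class $\CC^{2}$ in the space variable, since $\mathrm{p}_{\Lambda}\in\Cud((0,+\infty)\times M^{\Lambda})$. I would then apply to $g$ the elementary inequality recalled above, in the form $\|\nabla g\|^{2}\leq 2\,g\,\|\nabla^{2}g\|_{\infty}$, valid pointwise on $M^{\Lambda}$.

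The only step requiring a word of justification --- and the closest thing to an obstacle here --- is that this inequality, stated in the excerpt for compactly supported functions on $\R^{\nn}$, holds verbatim on the compact product manifold $M^{\Lambda}$. I would prove it exactly as in the Euclidean case, replacing the segment $\varepsilon\mapsto x+\varepsilon u$ by the geodesic $\varepsilon\mapsto\gamma(\varepsilon):=\exp_{\bx}(\varepsilon u)$ issued from $\bx\in M^{\Lambda}$ in a unit direction $u$: since $\gamma$ is a geodesic, the map $\varepsilon\mapsto g(\gamma(\varepsilon))$ has second derivative $\nabla^{2}g(\gamma(\varepsilon))(\dot\gamma,\dot\gamma)$, of modulus at most $\|\nabla^{2}g\|_{\infty}$ (the norm of \eqref{secondpartial} dominates the relevant operator norm and $|\dot\gamma|\equiv1$), so that $0\leq g(\gamma(\varepsilon))\leq g(\bx)+\varepsilon\,\langle\nabla g(\bx),u\rangle+\hal\varepsilon^{2}\|\nabla^{2}g\|_{\infty}$ for every $\varepsilon\in\R$; non-positivity of the discriminant of this quadratic in $\varepsilon$ gives $\langle\nabla g(\bx),u\rangle^{2}\leq 2\,g(\bx)\,\|\nabla^{2}g\|_{\infty}$, and a supremum over unit $u$ yields the claim. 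In particular $\nabla g$ vanishes wherever $g$ does, so $\|\nabla g\|^{2}/g$ extends by $0$ to a function bounded on all of $M^{\Lambda}$ by $2\|\nabla^{2}g\|_{\infty}$.

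Finally I would conclude by combining this with \eqref{C0C2borneStrong}: putting $C:=\sup_{t\in[\delta,T]}\|\nabla^{2}\mathrm{p}_{\Lambda}(t,\cdot)\|_{\infty}<\infty$, one gets $\|\nabla\mathrm{p}_{\Lambda}(t,\bx)\|^{2}\leq 2C\,\mathrm{p}_{\Lambda}(t,\bx)$ for all $(t,\bx)\in[\delta,T]\times M^{\Lambda}$, and then integrating against $\oL$ --- which is a probability measure on $M^{\Lambda}$, since each factor has mass $\int_{M}e^{-\V}\,\dd\vol=1$ by \eqref{eq:defomega} --- produces $\int_{M^{\Lambda}}\frac{\|\nabla\mathrm{p}_{\Lambda}(t,\bx)\|^{2}}{\mathrm{p}_{\Lambda}(t,\bx)}\,\dd\oL(\bx)\leq 2C$ uniformly in $t\in[\delta,T]$, which is \eqref{FiniteFisher}. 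I do not expect any genuine difficulty: apart from the geodesic Taylor expansion transferring the $\R^{\nn}$ inequality to $M^{\Lambda}$, everything is a routine combination of the already-established bound \eqref{C0C2borneStrong} with the finiteness of $\oL$.
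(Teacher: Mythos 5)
Your proof is correct and follows essentially the same route as the paper, which derives the corollary from the pointwise bound $\|\nabla g\|^{2}\leq 2g\|\nabla^{2}g\|_{\infty}$ for nonnegative $\CC^{2}$ functions combined with \eqref{C0C2borneStrong} and the finiteness of $\oL$. Your geodesic Taylor-expansion justification of that inequality on $M^{\Lambda}$ merely spells out what the paper dismisses with ``by a similar argument.''
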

% Note that the quantity appearing in \eqref{FiniteFisher}
% \begin{equation*}
% \int_{M^{\Lambda}} \frac{\|\nabla \mathrm{p}_{\Lambda}(t, \bx)\|^2}{\mathrm{p}_{\Lambda}(t,\bx)} \dd \oL (\bx)  = \int_{M^{\Lambda}} \frac{\|\nabla \mathrm{p}_{\Lambda}(t, \bx)\|^2}{\mathrm{p}_{\Lambda}^2(t,\bx)} \mathrm{p}_{\Lambda}(t,\bx) \dd \oL (\bx) = 
% \end{equation*}
% is the Fisher information of $\mathrm{p}_{\Lambda}$.

\subsection{Evolution Variational Inequality and uniqueness}

\subsubsection*{Reminders on EVI-gradient flows}
\label{sec:evi}
We work here on the metric space $(\mathscr{P}^{\mathrm{s}}, \mathcal{W})$ of stationary spin measures endowed with the specific Wasserstein distance intoduced in Section \ref{sec:Wasserstein}, and with the free energy functional $\fbeta\colon \mathscr{P}^{\mathrm{s}} \to [0,\infty]$, which is lower semi-continuous by Lemma \ref{lem:PropertiesOfTheLimits}. 

Recall that a curve $(\mathrm{P}(t))_{t \in I}$ defined on an interval $I \subset \mathbb{R}$ and with values in $\mathscr{P}^{\mathrm{s}}$ is said to be \emph{absolutely continuous} provided there exists $g \in \mathscr{L}^{1}(I)$ such that
\begin{equation*}
  \mathcal{W}(\mathrm{P}(s), \mathrm{P}(t)) \leq \int_{s}^{t} g(u) \mathtt{d} u \text{ for all } s,t \in I.
\end{equation*}
As a consequence, for all $\mathrm{R} \in \mathscr{P}^{\mathrm{s}}$, the map $t \mapsto \mathcal{W}(\mathrm{P}(t), \mathrm{R})$ is Lipschitz on $I$, hence differentiable for almost every $t \in I$. We recall the following important definition.

\begin{definition}[EVI-gradient flow]
  For $K \in \mathbb{R}$ we say that the curve $(\mathrm{P}(t))$ is a $\mathsf{EVI}(K)$-gradient flow for $\fbeta$ provided $\fbeta(\mathrm{P}(0)) < \infty$ and one of the following equivalent condition holds.
\begin{enumerate}
  \item The curve $(\mathrm{P}(t))_{t \in I}$ is locally absolutely continuous and satisfies the differential inequality
    \begin{equation}\label{def:evi:differential}
      \frac{1}{2} \frac{\mathtt{d}}{\mathtt{d} t} \mathcal{W}^{2}(\mathrm{P}(t), \mathrm{R}) + \frac{K}{2} \mathcal{W}^{2}(\mathrm{P}(t), \mathrm{R}) + \fbeta(\mathrm{P}(t)) \leq \fbeta(\mathrm{R}), \text{ for all } \mathrm{R} \in \mathscr{P}^{\mathrm{s}} \text{ and a.e.}\ t \in I.
    \end{equation}
  \item The curve $(\mathrm{P}(t))_{t \in I}$ satisfies, for all $\mathrm{R} \in \mathscr{P}^{\mathrm{s}}$ and all $s < t \in I$,  the integral inequality:
  \begin{equation}\label{def:evi:integral}
    \int_{s}^{t} \mathrm{e}^{Ku} \fbeta(\mathrm{P}(u)) \mathtt{d} u + \frac{1}{2} \mathrm{e}^{Kt} \mathcal{W}^{2}(\mathrm{P}(t), \mathrm{R}) - \frac{1}{2} \mathrm{e}^{Ks} \mathcal{W}^{2}(\mathrm{P}(s), \mathrm{R}) \leq \left(\int_{s}^{t} \mathrm{e}^{Ku} \mathtt{d} u\right) \fbeta(\mathrm{R}).
  \end{equation}
\end{enumerate}
\end{definition}
Let us briefly explain the equivalence.
\eqref{def:evi:integral} follows from \eqref{def:evi:differential} by multiplying by $\mathrm{e}^{Kt}$ and integrating.
Conversely, choosing $\mathrm{R} = \mathrm{P}(s)$ in \eqref{def:evi:integral} yields
\begin{equation*}
  \frac{1}{2} \mathcal{W}^{2}(\mathrm{P}(t), \mathrm{P}(s)) \leq \left(\int_{s}^{t} \mathrm{e}^{Ku} \mathtt{d} u\right) (\fbeta(\mathrm{P}(s) - \inf \fbeta)),
\end{equation*}
thus $(\mathrm{P}(t))_t$ is absolutely continuous, and we obtain \eqref{def:evi:differential} from \eqref{def:evi:integral} by differentiating.

\subparagraph{Properties of EVI gradient flows.} We recall here without proofs some well-known consequences of EVI-gradient flows, see \cite{DaneriSavare} for details and more general statements.
\begin{proposition}\label{th:properties-evi}
  Let $(\mathrm{P}(t))_{t \geq 0}$ be a $\mathsf{EVI}(K)$-gradient flow for $\fbeta$.
  Then:
  \begin{enumerate}
    \item $\fbeta(\mathrm{P}(t)) < \infty$ for all $t > 0$.
    \item $t \mapsto \fbeta(\mathrm{P}(t))$ is non-increasing.
    \item For all $t > 0$ and all $\mathrm{R} \in \mathscr{P}^{\mathrm{s}}$:
      \begin{equation*}
        \fbeta(\mathrm{P}(t)) \leq \fbeta(\mathrm{R}) + \frac{1}{2 \int_{0}^{t} \mathrm{e}^{Ku} \mathtt{d} u} \mathcal{W}^{2}(\mathrm{P}(0), \mathrm{R}).
      \end{equation*}
    \item If $(\mathrm{Q}(t))_{t \geq 0}$ is another $\mathsf{EVI}(K)$-gradient flow, we have the following contractivity estimate:
      \begin{equation*}
         \mathcal{W}(\mathrm{P}(t), \mathrm{Q}(t)) \leq \mathrm{e}^{Kt} \, \mathcal{W}(\mathrm{P}(0), \mathrm{Q}(0)), \qquad t \geq 0,
       \end{equation*}
    \item To a given initial condition there exists at most one corresponding $\mathsf{EVI}(K)$-gradient flow.
    \item If $K > 0$, then $\fbeta$ admits a unique minimizer $\bar{\mathrm{P}}$, and for all $t > 0$ we have:
      \begin{equation}
      \label{expofast}
      \mathcal{W}(\mathrm{P}(t), \bar{\mathrm{P}}) \leq \mathrm{e}^{-Kt} \mathcal{W}(\mathrm{P}(0), \bar{\mathrm{P}}),  \quad \text{and} \quad \fbeta(\mathrm{P}(t)) - \min \fbeta \leq \frac{1}{2} \frac{K}{\mathrm{e}^{Kt}-1} \mathcal{W}^{2}(\mathrm{P}(0), \bar{\mathrm{P}}).
      \end{equation}
  \end{enumerate}
\end{proposition}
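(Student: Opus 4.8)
The plan is to derive all six assertions directly from the two equivalent forms \eqref{def:evi:differential}--\eqref{def:evi:integral} of the $\mathsf{EVI}(K)$ condition, together with the lower semi-continuity of $\fbeta$ (Lemma~\ref{lem:PropertiesOfTheLimits}), exactly as in the abstract metric theory of \cite{DaneriSavare} (see also \cite[Chap.~4]{ambrosio2005gradient}); apart from the existence of flows, no feature of $(\mathscr{P}^{\mathrm{s}},\mathcal{W})$ other than its metric structure is used. I would establish the items in the order (4)--(5), then (2)--(1), then (3), then (6), each relying on the previous ones.

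First I would prove the contractivity estimate (4). Given two $\mathsf{EVI}(K)$-gradient flows $(\mathrm{P}(t))$ and $(\mathrm{Q}(t))$, which are locally absolutely continuous, I would write \eqref{def:evi:differential} for $\mathrm{P}$ with $\mathrm{R}=\mathrm{Q}(t)$ and for $\mathrm{Q}$ with $\mathrm{R}=\mathrm{P}(t)$ and add the two inequalities; the free-energy contributions cancel, and after identifying the sum of the two ``partial'' time-derivatives of the squared distance with $\tfrac{\mathtt{d}}{\mathtt{d}t}\mathcal{W}^{2}(\mathrm{P}(t),\mathrm{Q}(t))$ one is left with
\begin{equation*}
  \frac{\mathtt{d}}{\mathtt{d}t}\mathcal{W}^{2}(\mathrm{P}(t),\mathrm{Q}(t)) + 2K\,\mathcal{W}^{2}(\mathrm{P}(t),\mathrm{Q}(t)) \leq 0 \qquad \text{for a.e. } t .
\end{equation*}
Gr\"onwall's lemma then gives (4), and specializing to $\mathrm{Q}(0)=\mathrm{P}(0)$ forces $\mathrm{P}\equiv\mathrm{Q}$, which is the uniqueness statement (5). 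I expect the one genuinely delicate point to be precisely this identification of the sum of the two ``frozen'' derivatives with the total derivative of $t\mapsto\mathcal{W}^{2}(\mathrm{P}(t),\mathrm{Q}(t))$: it is a metric ``chain rule'' valid for a.e.\ $t$ along two absolutely continuous curves, proved by a doubling-of-variables argument, which I would import from \cite{DaneriSavare} (or \cite{ambrosio2005gradient}) since nothing specific to spin measures enters it.

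Next, for monotonicity (2) I would again rely on the abstract theory: testing \eqref{def:evi:differential} with $\mathrm{R}=\mathrm{P}(t_{1})$, integrating over $[t_{0},t_{1}]$, and using the lower semi-continuity of $\fbeta$ along the continuous curve $t\mapsto\mathrm{P}(t)$ yields $\fbeta(\mathrm{P}(t_{1}))\leq\fbeta(\mathrm{P}(t_{0}))$; since $\fbeta(\mathrm{P}(0))<\infty$ is part of the definition of an $\mathsf{EVI}(K)$-flow, monotonicity gives $\fbeta(\mathrm{P}(t))<\infty$ for every $t$, which is (1). For the a priori bound (3) I would set $s=0$ in the integral form \eqref{def:evi:integral}, discard the non-negative term $\tfrac12\mathrm{e}^{Kt}\mathcal{W}^{2}(\mathrm{P}(t),\mathrm{R})$, bound $\int_{0}^{t}\mathrm{e}^{Ku}\fbeta(\mathrm{P}(u))\,\mathtt{d}u \geq \fbeta(\mathrm{P}(t))\int_{0}^{t}\mathrm{e}^{Ku}\,\mathtt{d}u$ using the monotonicity just proved, and divide by $\int_{0}^{t}\mathrm{e}^{Ku}\,\mathtt{d}u>0$.

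Finally, for the long-time behaviour (6), assume $K>0$. The abstract theory of \cite{DaneriSavare} then provides a unique minimizer $\bar{\mathrm{P}}$ of $\fbeta$ (in our concrete setting this is also Theorem~\ref{theo:uniqueness}) together with the quantitative gap $\fbeta(\mathrm{R})-\min\fbeta\geq\tfrac K2\mathcal{W}^{2}(\bar{\mathrm{P}},\mathrm{R})$ for every $\mathrm{R}$ --- equivalently, the constant curve at $\bar{\mathrm{P}}$ is itself an $\mathsf{EVI}(K)$-flow. Combining \eqref{def:evi:differential} at $\mathrm{R}=\bar{\mathrm{P}}$ with this gap (and $\fbeta(\bar{\mathrm{P}})=\min\fbeta$) gives $\tfrac{\mathtt{d}}{\mathtt{d}t}\bigl(\mathrm{e}^{2Kt}\mathcal{W}^{2}(\mathrm{P}(t),\bar{\mathrm{P}})\bigr)\leq 0$, from which the first inequality in \eqref{expofast} follows by integration; the second one is exactly (3) applied with $\mathrm{R}=\bar{\mathrm{P}}$, using $\int_{0}^{t}\mathrm{e}^{Ku}\,\mathtt{d}u=(\mathrm{e}^{Kt}-1)/K$.
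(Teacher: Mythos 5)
The paper does not actually prove this proposition: it records the items as ``well-known consequences'' and refers to \cite{DaneriSavare}. Your reconstruction follows the standard abstract route (contraction via doubling of variables, then monotonicity, then the regularization bound, then the long-time estimates), and correctly isolates the one genuinely delicate analytic point, namely the identification of the total derivative of $t\mapsto\mathcal{W}^2(\mathrm{P}(t),\mathrm{Q}(t))$ with the sum of the two ``frozen'' derivatives. Two remarks on accuracy. Your Gr\"onwall step in item (4) yields $\mathcal{W}(\mathrm{P}(t),\mathrm{Q}(t))\le \mathrm{e}^{-Kt}\mathcal{W}(\mathrm{P}(0),\mathrm{Q}(0))$, which is the correct standard contraction; the exponent $\mathrm{e}^{Kt}$ printed in item (4) is evidently a sign slip in the statement (item (6) carries the consistent $\mathrm{e}^{-Kt}$), so do not try to match it. And in item (6), the existence and uniqueness of the minimizer and the gap inequality $\fbeta(\mathrm{R})-\min\fbeta\ge\frac{K}{2}\mathcal{W}^2(\bar{\mathrm{P}},\mathrm{R})$ cannot be read off from the single flow $(\mathrm{P}(t))$: the abstract argument needs $\mathsf{EVI}(K)$-flows from other initial data (or completeness of the space), which in this paper is supplied by the JKO construction together with Theorem \ref{th:evi}, or one short-circuits it via Theorem \ref{theo:uniqueness} as you suggest.

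There is one concrete misstep, in item (2). Testing \eqref{def:evi:differential} with $\mathrm{R}=\mathrm{P}(t_1)$ and integrating over $[t_0,t_1]$ gives, since $\mathcal{W}^2(\mathrm{P}(t_1),\mathrm{P}(t_1))=0$,
\begin{equation*}
  -\tfrac12\,\mathcal{W}^2(\mathrm{P}(t_0),\mathrm{P}(t_1)) + \tfrac{K}{2}\int_{t_0}^{t_1}\mathcal{W}^2(\mathrm{P}(u),\mathrm{P}(t_1))\,\mathtt{d}u + \int_{t_0}^{t_1}\fbeta(\mathrm{P}(u))\,\mathtt{d}u \;\le\; (t_1-t_0)\,\fbeta(\mathrm{P}(t_1)),
\end{equation*}
i.e.\ a \emph{lower} bound on $\fbeta(\mathrm{P}(t_1))$ — the wrong direction. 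You must instead take $\mathrm{R}=\mathrm{P}(t_0)$, which yields that the $\mathrm{e}^{Ku}$-weighted average of $\fbeta(\mathrm{P}(\cdot))$ over $[t_0,t_1]$ is at most $\fbeta(\mathrm{P}(t_0))$. This is only an averaged inequality, and upgrading it to $\fbeta(\mathrm{P}(t_1))\le\fbeta(\mathrm{P}(t_0))$ requires an extra step: either note that it forces $t\mapsto\int_0^t\mathrm{e}^{Ku}\fbeta(\mathrm{P}(u))\,\mathtt{d}u$ to be concave-like, so that $\fbeta(\mathrm{P}(\cdot))$ is a.e.\ non-increasing, and then conclude with lower semi-continuity along the continuous curve; or prove item (3) first and apply it to the flow restarted at time $t_0$ (legitimate once one knows $\fbeta(\mathrm{P}(t_0))<\infty$, which for $t_0>0$ follows from the averaged inequality with $t_0=0$). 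With that reordering the argument closes.
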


\subsubsection*{Solutions to the infinite-volume Fokker--Planck equations form an EVI-gradient flow}
We now show that strong solutions to the Fokker--Planck equation form an EVI-gradient flow for $\fbeta$.
\begin{theorem}[Specific EVI]\label{th:evi}
  On the metric space $(\mathscr{P}^{\mathrm{s}}, \mathcal{W})$, every solution $(\mathrm{P}(t))_t$ to \eqref{def:STRONG} such that $\fbeta(\mathrm{P}(0)) < \infty$ is an $\mathsf{EVI}(K_{\beta})$-gradient flow $\fbeta$, with $K_\beta$ given by:
  \begin{equation}
  \label{def:Kbeta}
K_\beta := \kappa - 2 \beta \|J\|_{\ell^1} \|\W\|_{\mathscr{L}^\infty},
  \end{equation}
  where $\kappa$ is a not necessarily positive, uniform lower bound on $\Ricc + \nabla^2 \V$ as in \eqref{PosCurv}.
\end{theorem}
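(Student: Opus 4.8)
The plan is to prove the integral form \eqref{def:evi:integral} of the $\mathsf{EVI}(K_\beta)$ inequality by first establishing a \emph{finite-volume} version of it, with a boundary error, then dividing by $|\Lan|$ and letting $n \to \infty$. We may assume $\fbeta(\mathrm R) < \infty$ (otherwise \eqref{def:evi:integral} is trivial); then $\Een(\mathrm R_{|\Lan}) \le |\Lan|\,\Eei(\mathrm R) < \infty$ for every $n$, and since we work on $(\mathscr{P}^{\mathrm{s}}, \mathcal{W})$ both $\mathrm P(t)$ and $\mathrm R$ are stationary, so Lemma \ref{def:WassInfinite} applies to the pair $(\mathrm P(t), \mathrm R)$. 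By Theorem \ref{theo:regul}, for each $n$ the restriction $t \mapsto \mathrm P(t)_{|\Lan}$ has a density $\p_{\Lan}(t)$ with respect to $\oL$, lying in $\Cud((0,+\infty) \times M^{\Lan})$, strictly positive for $t>0$ by the parabolic maximum principle, and solving \eqref{def:STRONG}. Rewriting \eqref{def:STRONG} as a continuity equation for $\p_{\Lan}$, the curve $t \mapsto \mathrm P(t)_{|\Lan}$ has velocity field $v_{\Lan}(t) := -\bigl(\nabla \log \p_{\Lan}(t) + \beta\, b_{\Lan}(t)\bigr)$, where $b_{\Lan}(t) := \Esp_{\mathrm P(t)}[\nabla\HH(\Zd \to \Lan) \mid \Lan]$ is the drift of \eqref{def:STRONG}, each coordinate of which is bounded by $\|J\|_{\ell^1}\|\partial_1\W\|_{\mathscr{L}^\infty}$. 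Combining this bound with the local finiteness of the Fisher information of $\p_{\Lan}$ (Corollary \ref{th:fisher}) gives $\int_{M^{\Lan}} \|v_{\Lan}(t)\|^2 \p_{\Lan}(t)\,\dd\oL < +\infty$ locally uniformly in $t > 0$, so $t \mapsto \mathrm P(t)_{|\Lan}$ is a locally absolutely continuous curve in $(\mathscr{P}(M^{\Lan}), \Wn)$ on $(0,+\infty)$, with velocity field $v_{\Lan}$.

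Fix $n$ and $u > 0$, and let $(\mu_s)_{s \in [0,1]}$ be the unique $\Wn$-geodesic from $\mu_0 := \mathrm P(u)_{|\Lan}$ (absolutely continuous, so Brenier--McCann applies) to $\mu_1 := \mathrm R_{|\Lan}$, with Kantorovich potential $\phi_u$, so that $\bx \mapsto \exp_{\bx}(-\nabla\phi_u(\bx))$ pushes $\mu_0$ onto $\mu_1$. By the standard formula for the derivative of the squared Wasserstein distance along a solution of a continuity equation (see e.g.\ \cite{AGSGradient,VillaniOldNew}), for a.e.\ $u>0$,
\[
  \tfrac12 \tfrac{\dd}{\dd u} \Wn^2\bigl(\mathrm P(u)_{|\Lan}, \mathrm R_{|\Lan}\bigr) = \int_{M^{\Lan}} \bigl\langle v_{\Lan}(u), \nabla\phi_u \bigr\rangle\, \p_{\Lan}(u)\,\dd\oL .
\]
Write $b_{\Lan} = b_{\Lan}^{\mathrm{in}} + r_{\Lan}$, where $b_{\Lan}^{\mathrm{in}}$ is the part of $\nabla\HH(\Zd \to \Lan)$ in \eqref{nablaHZdLa} coming from indices $j \in \Lan$, and $r_{\Lan}$ gathers the interactions of $\Lan$ with $\Zd \setminus \Lan$. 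Using the first-variation identities $-\int \langle \nabla\log\p_{\Lan}, \nabla\phi_u\rangle \p_{\Lan}\,\dd\oL = \tfrac{\dd}{\dd s}\big|_{s=0}\Een(\mu_s)$ and $-\int \langle b_{\Lan}^{\mathrm{in}}, \nabla\phi_u\rangle \p_{\Lan}\,\dd\oL = \tfrac{\dd}{\dd s}\big|_{s=0}\Hhn(\mu_s)$ (the latter because $\Hhn$ is the expectation of $\HLan$ and by the definition of $\HLan$), the right-hand side equals $\tfrac{\dd}{\dd s}\big|_{s=0}\Fn(\mu_s) - \beta\int \langle r_{\Lan}, \nabla\phi_u\rangle \p_{\Lan}\,\dd\oL$. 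The $K_\beta$-displacement convexity of $\Fn$ furnished by Lemma \ref{lem:dis_conv_finite} (the entropy part contributing $\kappa$, the interaction part $-2\beta\|J\|_{\ell^1}\|\nabla^2\W\|_{\mathscr{L}^\infty}$) gives $\tfrac{\dd}{\dd s}\big|_{s=0}\Fn(\mu_s) \le \Fn(\mathrm R_{|\Lan}) - \Fn(\mathrm P(u)_{|\Lan}) - \tfrac{K_\beta}{2}\Wn^2(\mathrm P(u)_{|\Lan},\mathrm R_{|\Lan})$, while the remainder is controlled by $\|(r_{\Lan})_i\| \le \|\partial_1\W\|_{\mathscr{L}^\infty}\sum_{j \notin \Lan}|J_{i,j}|$ and $\|(\nabla\phi_u)_i\| \le \diam(M)$ (valid $\mu_0$-a.e., since the optimal map avoids the cut locus). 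Altogether, for a.e.\ $u > 0$,
\[
  \tfrac12 \tfrac{\dd}{\dd u} \Wn^2\bigl(\mathrm P(u)_{|\Lan}, \mathrm R_{|\Lan}\bigr) + \tfrac{K_\beta}{2} \Wn^2\bigl(\mathrm P(u)_{|\Lan}, \mathrm R_{|\Lan}\bigr) + \Fn(\mathrm P(u)_{|\Lan}) \le \Fn(\mathrm R_{|\Lan}) + \varepsilon_{\Lan},
\]
with $\varepsilon_{\Lan} := \beta\diam(M)\,\|\partial_1\W\|_{\mathscr{L}^\infty}\sum_{i \in \Lan}\sum_{j \notin \Lan}|J_{i,j}|$, which is $o(|\Lan|)$ by the short-range assumption \eqref{eq:short-range} --- this is the same boundary estimate used in Remark \ref{rem:tHLaD}.

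It remains to integrate and pass to the limit. Multiplying the last inequality by $\mathrm{e}^{K_\beta u}$ and integrating over $[s,t]$ with $0 < s < t$ is legitimate since $u \mapsto \Wn^2(\mathrm P(u)_{|\Lan},\mathrm R_{|\Lan})$ is locally absolutely continuous and $u \mapsto \Fn(\mathrm P(u)_{|\Lan})$ is continuous on $(0,+\infty)$ (because $\p_{\Lan} \in \Cud$); dividing by $|\Lan|$ then yields the same inequality with $\tfrac1{|\Lan|}\Wn^2$, $\tfrac1{|\Lan|}\Fn$ and error $\varepsilon_{\Lan}/|\Lan| \to 0$. Letting $n \to \infty$: the two Wasserstein terms converge to $\tfrac12 \mathrm{e}^{K_\beta t}\Wi^2(\mathrm P(t),\mathrm R)$ and $-\tfrac12 \mathrm{e}^{K_\beta s}\Wi^2(\mathrm P(s),\mathrm R)$ by Lemma \ref{def:WassInfinite}; $\tfrac1{|\Lan|}\Fn(\mathrm R_{|\Lan}) \to \fbeta(\mathrm R)$ by \eqref{def:Fbeta}; and since $\tfrac1{|\Lan|}\Fn(\mathrm P(u)_{|\Lan}) \to \fbeta(\mathrm P(u))$ pointwise in $u$ while remaining bounded below uniformly (by $-\beta\Cc$, using $\Een \ge 0$ and \eqref{eq:HLaStable}), Fatou's lemma bounds $\liminf_n \int_s^t \mathrm{e}^{K_\beta u}\tfrac1{|\Lan|}\Fn(\mathrm P(u)_{|\Lan})\,\dd u$ below by $\int_s^t \mathrm{e}^{K_\beta u}\fbeta(\mathrm P(u))\,\dd u$. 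This gives \eqref{def:evi:integral} with $K = K_\beta$ for all $0 < s < t$. To include $s = 0$, note that \eqref{def:STRONG} forces $\mathrm P(t)_{|\Lan} \to \mathrm P(0)_{|\Lan}$ weakly, hence in $\Wn$ (as $M^{\Lan}$ is compact): one sends $s \downarrow 0$ in the finite-volume integral inequality --- the integral $\int_0^t \mathrm{e}^{K_\beta u}\Fn(\mathrm P(u)_{|\Lan})\,\dd u$ being finite, by monotone convergence, from the uniform lower bound $\Fn \ge -\beta\Cc|\Lan|$ and the bound $\Wn^2 \le \diam(M)^2|\Lan|$ --- and then lets $n \to \infty$ as above. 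This yields \eqref{def:evi:integral} for all $0 \le s < t$; together with $\fbeta(\mathrm P(0)) < \infty$, this is precisely the statement that $(\mathrm P(t))_t$ is an $\mathsf{EVI}(K_\beta)$-gradient flow for $\fbeta$ (the local absolute continuity of $(\mathrm P(t))$ in $(\mathscr{P}^{\mathrm{s}},\mathcal{W})$ required by the definition follows a posteriori by taking $\mathrm R = \mathrm P(s)$ in \eqref{def:evi:integral}).

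The main obstacle is the non-locality of the drift: unlike in a finite-dimensional gradient flow, the drift $\beta\,\Esp_{\mathrm P(t)}[\nabla\HH(\Zd\to\Lan)\mid\Lan]$ of \eqref{def:STRONG} is \emph{not} the Wasserstein gradient of $\Fn$, differing from the ``box'' drift $b_{\Lan}^{\mathrm{in}}$ by the exterior-interaction field $r_{\Lan}$. The whole scheme works only because this discrepancy is a boundary effect, contributing $\O\bigl(\diam(M) \sum_{i \in \Lan}\sum_{j \notin \Lan}|J_{i,j}|\bigr) = o(|\Lan|)$ to the derivative of $\Wn^2$, hence vanishing after normalization by $|\Lan|$; this is exactly where the short-range hypothesis \eqref{eq:short-range} (and compactness of $M$, to bound $\nabla\phi_u$) is indispensable. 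A secondary technical point is to justify the Otto-calculus identity for $\tfrac{\dd}{\dd u}\Wn^2$, which relies on the regularity (Theorem \ref{theo:regul}) and finite Fisher information (Corollary \ref{th:fisher}) of the local densities to ensure that $t \mapsto \mathrm P(t)_{|\Lan}$ is absolutely continuous with $\mathscr{L}^2$ velocity field, and then to take the limit $n \to \infty$ only in the direction compatible with the \emph{monotone} (rather than uniform) convergence $\tfrac1{|\Lan|}\Een \to \Eei$.
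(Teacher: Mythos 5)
Your proposal is correct and follows essentially the same route as the paper: a finite-volume ``almost EVI'' obtained by combining the derivative of $u \mapsto \Wn^2(\mathrm P(u),\mathrm R)$ along the continuity equation with the $K_\beta$-displacement convexity of $\Fn$, an $o(|\Lan|)$ error coming from the exterior part of the conditional-expectation drift, and then normalization by $|\Lan|$ and passage to the limit using the monotone convergence of the entropy and uniform convergence of the energy. The only substantive differences are cosmetic — you bound the boundary error coordinatewise via $\|\nabla_i\phi_u\|\le\diam(M)$ where the paper uses Cauchy--Schwarz, and you invoke the standard AGS/Villani derivative formula for $\Wn^2$ where the paper reproves it via Kantorovich duality precisely because the locally Lipschitz hypothesis of \cite[Thm.~23.9]{VillaniOldNew} is not available for the conditional-expectation drift (your absolute-continuity-plus-$\mathscr{L}^2$-velocity setup does suffice for the AGS version, so this is a matter of citation rather than a gap).
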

 
Before proving Theorem \ref{th:evi}, let us state some immediate consequences obtained from the general properties of EVI-gradient flows recalled in Proposition \ref{th:properties-evi}.
\begin{corollary}[Uniqueness of the gradient flow]
\label{th:uniqueness}
To a given initial solution in $\mathscr{P}^{\mathrm{s}}$, there corresponds at most one \eqref{def:STRONG} solution.
\end{corollary}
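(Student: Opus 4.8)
The plan is to read off uniqueness directly from the $\mathsf{EVI}$ characterization of strong solutions proved in Theorem \ref{th:evi}. Let $(\P(t))_{t\geq 0}$ and $(\mathrm{Q}(t))_{t\geq 0}$ be two solutions of \eqref{def:STRONG} in $\Ppis$ with the same initial datum $\P(0)=\mathrm{Q}(0)=:\P_0$, and first treat the main case $\fbeta(\P_0)<\infty$. Then Theorem \ref{th:evi} says that each of the two curves is an $\mathsf{EVI}(K_\beta)$-gradient flow for $\fbeta$ on $(\Ppis,\Wi)$, issued from the same point, so I would simply invoke Proposition \ref{th:properties-evi}: the contraction estimate of item 4, applied to the pair $(\P,\mathrm{Q})$, gives $\Wi(\P(t),\mathrm{Q}(t))\leq \mathrm{e}^{K_\beta t}\,\Wi(\P_0,\P_0)=0$ for every $t\geq 0$, and since $\Wi$ is a genuine distance on $\Ppis$ (see the Corollary following Lemma \ref{def:WassInfinite}) this forces $\P(t)=\mathrm{Q}(t)$ for all $t$; alternatively one quotes item 5 of Proposition \ref{th:properties-evi} verbatim.

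To remove the hypothesis $\fbeta(\P_0)<\infty$, I would exploit the fact that a solution of \eqref{def:STRONG} regularizes instantaneously: by definition its local densities $\mathrm{p}_\La(t,\cdot)$ lie in $\Cud((0,+\infty)\times M^\La)$, hence are bounded and have bounded Fisher information on $[\delta,T]\times M^\La$ for every $0<\delta<T$ (Corollary \ref{th:fisher}), while $\Hhi$ is always finite by \eqref{eq:HLaStable}. From this I expect $\fbeta(\P(\delta))<\infty$ for every $\delta>0$, the one point to verify being a bound on $\tfrac{1}{|\Lan|}\Een(\P(\delta)_{|\Lan})$ uniform in $n$, which should follow from the uniform (product) logarithmic Sobolev constant of the $M^{\Lan}$ together with the Fisher information bound. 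Granting this, for each fixed $\delta>0$ the time-shifted curves $(\P(\delta+s))_{s\geq 0}$ and $(\mathrm{Q}(\delta+s))_{s\geq 0}$ are $\mathsf{EVI}(K_\beta)$-gradient flows (Theorem \ref{th:evi} applied with starting time $\delta$), so item 4 of Proposition \ref{th:properties-evi} yields $\Wi(\P(\delta+s),\mathrm{Q}(\delta+s))\leq \mathrm{e}^{K_\beta s}\,\Wi(\P(\delta),\mathrm{Q}(\delta))$.

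It then remains to let $\delta\to 0^+$ and show $\Wi(\P(\delta),\mathrm{Q}(\delta))\to 0$, which I expect to be the main obstacle. Both $\P(\delta)$ and $\mathrm{Q}(\delta)$ converge to $\P_0$ in the \emph{local} topology by definition of a strong solution, but $\Wi$ induces a strictly finer topology (Remark \ref{rem:difference_top}), so this does not follow for free. I would try to upgrade local to $\Wi$-convergence using the a priori bounds attached to an $\mathsf{EVI}$ flow on $(0,\infty)$ — local absolute continuity of $\delta\mapsto \P(\delta)$ for $\delta>0$, together with the energy-dissipation control $\int_0^t\mathrm{e}^{K_\beta u}\fbeta(\P(u))\,\mathrm{d}u<\infty$ extracted from \eqref{def:evi:integral} and the lower semicontinuity of $\fbeta$ from Lemma \ref{lem:PropertiesOfTheLimits} — to conclude $\Wi(\P(\delta),\P_0)\to 0$, and hence $\Wi(\P(\delta),\mathrm{Q}(\delta))\to 0$ by the triangle inequality. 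If one is content to state the corollary for finite-free-energy initial data, which is anyway the natural domain of the gradient flow, the first paragraph already suffices. Finally, combining with the regularity Theorem \ref{theo:regul}, the same conclusion transfers to dual (hence weak) solutions: two dual solutions with a common initial measure have local densities that solve \eqref{def:STRONG} with that initial condition, so the uniqueness just obtained makes them coincide.
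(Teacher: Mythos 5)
Your first paragraph is exactly the paper's argument: Corollary \ref{th:uniqueness} is obtained by combining Theorem \ref{th:evi} with item 5 (equivalently the contraction estimate of item 4) of Proposition \ref{th:properties-evi}, and the paper gives no further detail. Your remaining two paragraphs treat an initial datum with $\fbeta(\mathrm{P}(0))=\infty$, a case the paper does not address (its notion of $\mathsf{EVI}$-gradient flow is only defined when $\fbeta(\mathrm{P}(0))<\infty$, so the corollary is implicitly restricted to that domain); that extension is not needed to match the paper, and, as you acknowledge yourself, the passage $\delta \to 0^{+}$ in the specific Wasserstein distance is left unresolved there.
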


\begin{corollary}[Long-time behavior of the flow]\label{th:long-time-behaviour} The quantity $t \mapsto \fbeta(\mathrm{P}(t))$ is always non-increasing. Moreover, if $K_{\beta} > 0$, with $K_\beta$ as in \eqref{def:Kbeta}, then $\fbeta$ admits a unique minimizer $\bar{\mathrm{P}}$ and we have:
  \begin{enumerate}
    \item $\fbeta(\mathrm{P}(t)) \to \fbeta(\bar{\P}) = \min \fbeta$ as $t \to \infty$, exponentially fast as in \eqref{expofast}.
    \item There is exponential convergence of the flow to $\bar{\P}$ \emph{in specific Wasserstein distance}, namely
      \begin{equation*}
        \mathcal{W}^{2}(\mathrm{P}(t), \bar{\mathrm{P}}) \leq \mathrm{e}^{-K_{\beta}t} \mathcal{W}^{2}(\mathrm{P}(0), \bar{\mathrm{P}}).
      \end{equation*}
  \end{enumerate}
\end{corollary}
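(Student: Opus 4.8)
The plan is to read this off Theorem \ref{th:evi} and the abstract Proposition \ref{th:properties-evi}, essentially as a corollary. First, by Theorem \ref{th:evi}, any solution $(\mathrm{P}(t))_{t\ge 0}$ to \eqref{def:STRONG} valued in $\mathscr{P}^{\mathrm{s}}$ with $\fbeta(\mathrm{P}(0))<\infty$ is an $\mathsf{EVI}(K_\beta)$-gradient flow for $\fbeta$ on the metric space $(\mathscr{P}^{\mathrm{s}},\mathcal{W})$, with $K_\beta$ as in \eqref{def:Kbeta} --- and this holds for \emph{every} inverse temperature $\beta$, with $K_\beta$ allowed to be $\le 0$. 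The \enquote{always non-increasing} clause is then precisely item~(2) of Proposition \ref{th:properties-evi}, which imposes no sign restriction on $K$. When $\fbeta(\mathrm{P}(0))=+\infty$ there is nothing to prove at $t=0$, and one recovers monotonicity on $(0,\infty)$ by restarting the flow at an arbitrarily small positive time, using that solutions regularize instantaneously (Theorem \ref{theo:regul}, together with $|\Hhi|\le\Cc$ from \eqref{eq:HLaStable} to keep the energy part finite).

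For the high-temperature part, assume $K_\beta>0$. Since $(\mathrm{P}(t))$ is itself an $\mathsf{EVI}(K_\beta)$-gradient flow, item~(6) of Proposition \ref{th:properties-evi} applies verbatim: $\fbeta$ admits a unique minimizer $\bar{\mathrm{P}}$ (an alternative route to this uniqueness, under the possibly different threshold of Theorem \ref{theo:uniqueness}, is also available), and both bounds of \eqref{expofast} hold. The entropic bound there gives $\fbeta(\mathrm{P}(t))-\min\fbeta\le \tfrac12\,\tfrac{K_\beta}{\mathrm{e}^{K_\beta t}-1}\,\mathcal{W}^2(\mathrm{P}(0),\bar{\mathrm{P}})$, whose right-hand side decays like $K_\beta\mathrm{e}^{-K_\beta t}$ as $t\to\infty$, and this is item~1 (with $\mathcal{W}(\mathrm{P}(0),\bar{\mathrm{P}})$ automatically finite, being $\le\diam(M)$ by the bound recalled in Section \ref{sec:Wasserstein}). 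The metric bound $\mathcal{W}(\mathrm{P}(t),\bar{\mathrm{P}})\le\mathrm{e}^{-K_\beta t}\mathcal{W}(\mathrm{P}(0),\bar{\mathrm{P}})$, once squared and combined with $\mathrm{e}^{-2K_\beta t}\le\mathrm{e}^{-K_\beta t}$ for $t\ge 0$, yields item~2 as stated --- in fact with the sharper exponent $2K_\beta$.

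I do not expect any genuine obstacle inside this corollary: all the substance is upstream, in Theorem \ref{th:evi} (the specific $\mathsf{EVI}$, which in turn rests on the displacement-convexity estimates of Lemma \ref{lem:dis_conv_finite}, the stationarization Lemma \ref{lem:ppy_stationary}, and the regularity Theorem \ref{theo:regul}) and in the purely metric Proposition \ref{th:properties-evi} imported from \cite{DaneriSavare}. The only points needing care are bookkeeping ones: checking that the solution is genuinely stationary so that the $(\mathscr{P}^{\mathrm{s}},\mathcal{W})$-framework is legitimate, and the passage from $\fbeta(\mathrm{P}(0))=+\infty$ to the \enquote{always non-increasing} statement via the instantaneous regularization described above.
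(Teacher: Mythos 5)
Your proposal is correct and matches the paper's route exactly: the paper gives no separate proof of this corollary, presenting it as an immediate consequence of Theorem \ref{th:evi} combined with items (2) and (6) of Proposition \ref{th:properties-evi}, which is precisely what you do (including the observation that squaring the contraction estimate actually yields the sharper exponent $2K_\beta$). The extra care you take about the case $\fbeta(\mathrm{P}(0))=+\infty$ is not needed in the paper's framing, since Theorem \ref{th:evi} already assumes $\fbeta(\mathrm{P}(0))<\infty$, but it does no harm.
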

To prove Theorem \ref{th:evi}, we consider a solution $(\mathrm{P}(t))_{t\geq 0}$ and pretend that we want to prove that its restriction $(\mathrm{P}_{n}(t))_t$ to the finite box $\Lan$ is the EVI-gradient flow of the finite-volume free energy functional $\Fn$.
Indeed, it is well-known that solutions to finite-volume Fokker--Planck equations on $\mathsf{Conf}_{n}$ with drift $-\beta \nabla \HH_n$ form an $\mathsf{EVI}(K_{\beta})$-gradient flow for the finite-volume free energy functional $\Fn$ with respect to the usual Wasserstein distance $\mathcal{W}_{n}$, see for instance \cite[Prop~4.4]{Erbar_2010} (for the heat equation).

Since $(\mathrm{P}_{n}(t))_t$ does \emph{not} exactly solve the finite-volume Fokker--Planck equation, an error term arises, and we show how to control it in the $n \to \infty$ limit.

\begin{proof}[Proof of Theorem \ref{th:evi}]Let $t \mapsto \P(t)$ be a strong solution, and let $\p_n$ be its local density in $\Lan$ for $n \geq 1$. 

\subsubsection*{Step 1. Time derivative of the Wasserstein distance.}
\begin{lemma}
\label{th:evi:wasserstein-derivative}
  Let $n \geq 1$ and let $\mathrm{Q} \in \mathscr{P}^{\mathrm{s}}$.
  The map $t \mapsto \mathcal{W}_{n}^{2}(\mathrm{P}(t), \mathrm{Q})$ is differentiable almost everywhere on $[0, + \infty)$.
  Moreover, at any differentiability point $t$, denoting by $\theta_{n}(t)$ the Kantorovich potential for the optimal transport from $\mathrm{P}(t)$ to $\mathrm{Q}$ in $\Lan$ (with respect to $\mathcal{W}_{n}$), we have:
  \begin{equation}
  \label{WassDeriv}
    \frac{1}{2} \frac{\mathtt{d}}{\mathtt{d} t} \Wn^{2}(\mathrm{P}(t), \mathrm{Q}) = -  \mathbb{E}_{\mathrm{P}(t)} \bracket*{ \nabla \theta_{n}(t) \cdot \paren*{ \frac{\nabla \mathrm{p}_{n}(t)}{\mathrm{p}_{n}(t)} + \beta \nabla \HH_{n} } } + o(|\Lambda_{n}|),
\end{equation}
where $o(|\Lambda_{n}|)$ depends only on the parameter of the model, and in particular is uniform in $t, \mathrm{P}, \mathrm{Q}$.
\end{lemma}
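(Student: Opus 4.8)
The plan is to read off from \eqref{def:STRONG} an explicit continuity equation for the finite-dimensional curve $t\mapsto\mathrm{P}(t)_{|\Lan}$ in $(\Pp(M^{\Lan}),\Wn)$, to use the standard regularity theory of such curves to get a.e.\ differentiability, to apply the classical first-variation formula for the squared Wasserstein distance, and finally to discard a boundary term of size $o(|\Lan|)$.

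First I would check absolute continuity of $t\mapsto\mathrm{P}(t)_{|\Lan}$. By \eqref{def:STRONG}, the local density $\mathrm{p}_n$ solves, for $t>0$, the continuity equation $\partial_t\mathrm{p}_n+\div_\V(\mathrm{p}_n v_t)=0$ on $M^{\Lan}$ with $v_t:=-\bigl(\frac{\nabla\mathrm{p}_n(t)}{\mathrm{p}_n(t)}+\beta\,\Esp_{\mathrm{P}(t)}[\nabla\HH(\Zd\to\Lan)\mid\Lan]\bigr)$; the first summand is well-defined and lies in $\mathscr{L}^2(\mathrm{P}(t))$ thanks to the pointwise bound recalled just before \eqref{FiniteFisher1}. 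Corollary~\ref{th:fisher} bounds the Fisher information of $\mathrm{p}_n(t)$ on every $[\delta,T]\subset(0,\infty)$, and each coordinate of $\Esp_{\mathrm{P}(t)}[\nabla\HH(\Zd\to\Lan)\mid\Lan]$ is bounded by $\|J\|_{\ell^1}\|\partial_1\W\|_{\mathscr{L}^\infty}$, so $\sup_{t\in[\delta,T]}\|v_t\|_{\mathscr{L}^2(\mathrm{P}(t))}<\infty$. By the standard theory of continuity equations in Wasserstein space (see e.g.\ \cite{ambrosio2005gradient,VillaniOldNew}), $t\mapsto\mathrm{P}(t)_{|\Lan}$ is then locally absolutely continuous on $(0,\infty)$ with metric speed at most $\|v_t\|_{\mathscr{L}^2(\mathrm{P}(t))}$; hence, by the triangle inequality, $t\mapsto\Wn(\mathrm{P}(t),\mathrm{Q})$ is locally Lipschitz on $(0,\infty)$ and in particular differentiable for a.e.\ $t\in[0,\infty)$.

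Next, at an a.e.\ differentiability point $t>0$ — where $\mathrm{p}_n(t)$ is moreover of class $\CC^{2}$ in space — I would apply the first-variation formula for the squared Wasserstein distance along a solution of the continuity equation. Writing $\theta_n(t)$ for the Kantorovich potential of the transport from $\mathrm{P}(t)$ to $\mathrm{Q}$ in $\Lan$, which is differentiable $\mathrm{P}(t)$-a.e.\ with gradient avoiding the cut locus (Section~\ref{sec:Wasserstein}), this gives $\frac12\frac{\dd}{\dd t}\Wn^2(\mathrm{P}(t),\mathrm{Q})=\Esp_{\mathrm{P}(t)}[\nabla\theta_n(t)\cdot v_t]$, i.e.\ exactly the right-hand side of \eqref{WassDeriv} but with $\nabla\HH_n$ replaced by $\Esp_{\mathrm{P}(t)}[\nabla\HH(\Zd\to\Lan)\mid\Lan]$. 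The statement to invoke is the adaptation to the compact manifold $M^{\Lan}$ of the first-variation identity in \cite{ambrosio2005gradient,VillaniOldNew}, relying on the Riemannian optimal-transport facts collected in Section~\ref{sec:Wasserstein} (existence of the Brenier--McCann map, $\mathrm{P}(t)$-a.e.\ avoidance of the cut locus, see also \cite{cordero2001riemannian,Gigli_2011}).

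It then remains to replace $\Esp_{\mathrm{P}(t)}[\nabla\HH(\Zd\to\Lan)\mid\Lan]$ by the finite-volume drift $\nabla\HH_n$, obtained from \eqref{nablaHZdLa} by restricting the inner sum to $j\in\Lan$. Before conditioning, the two vector fields differ by $\bigl(\sum_{j\in\Zd\setminus\Lan}J_{i,j}\partial_1\W(\bx_i,\bx_j)\bigr)_{i\in\Lan}$, whose $\ell^2(\Lan)$-norm is at most $\|\partial_1\W\|_{\mathscr{L}^\infty}\bigl(\sum_{i\in\Lan}(\sum_{j\in\Zd\setminus\Lan}|J_{i,j}|)^2\bigr)^{1/2}$ uniformly in $\bx$, and this quantity is $o(|\Lan|^{1/2})$ by the short-range assumption \eqref{eq:short-range}, exactly as in the estimate of Remark~\ref{rem:tHLaD} (split $\Lan$ according to the distance to $\partial\Lan$). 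Combining Jensen's inequality for the conditional expectation, the Cauchy--Schwarz inequality, and $\Esp_{\mathrm{P}(t)}[\|\nabla\theta_n(t)\|^2]=\Wn^2(\mathrm{P}(t),\mathrm{Q})\leq\diam(M)^2|\Lan|$, the substitution error is at most $\beta\,\diam(M)\,|\Lan|^{1/2}\cdot o(|\Lan|^{1/2})=o(|\Lan|)$, with a rate depending only on $\W$ and $J$ and so uniform in $t$, $\mathrm{P}$ and $\mathrm{Q}$, which yields \eqref{WassDeriv}. I expect the only real subtlety to be the first step: checking that the regularity from Theorem~\ref{theo:regul} and the Fisher-information bound of Corollary~\ref{th:fisher} are precisely what is needed to place $(\mathrm{P}(t)_{|\Lan})_t$ within the standard theory of absolutely continuous curves in Wasserstein space and to legitimize the first-variation formula on $M^{\Lan}$ (a.e.\ differentiability of $\theta_n(t)$, a.e.\ avoidance of the cut locus); granting that, the boundary correction above is routine.
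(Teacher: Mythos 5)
Your outline follows the same route as the paper: extract the continuity equation for $\mathrm{P}_n(t)$ from \eqref{def:STRONG}, deduce local Lipschitz continuity of $t \mapsto \Wn^2(\mathrm{P}(t),\mathrm{Q})$ from the boundedness of the velocity field (the paper does this via the Benamou--Brenier inequality, you via the metric-speed bound for absolutely continuous curves --- these are equivalent), compute the derivative via the first variation of the transport cost, and absorb the exterior interaction term by Cauchy--Schwarz using $\sup_{\bx}\|\nabla\HH(\Zd\setminus\Lan\to\Lan)(\bx)\|^2=o(|\Lan|)$ and $\Esp_{\mathrm{P}(t)}[\|\nabla\theta_n(t)\|^2]=\Wn^2(\mathrm{P}(t),\mathrm{Q})=\O(|\Lan|)$. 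That last estimate, including the use of Jensen for the conditional expectation, matches the paper's Step \eqref{Epnnthetasmall} exactly.

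The one genuine gap is precisely at the step you flag as "the only real subtlety" and then do not resolve: you invoke the first-variation identity $\frac12\frac{\dd}{\dd t}\Wn^2(\mathrm{P}(t),\mathrm{Q})=\Esp_{\mathrm{P}(t)}[\nabla\theta_n(t)\cdot v_t]$ as a citation to \cite{ambrosio2005gradient,VillaniOldNew}, but the statement there (e.g.\ \cite[Thm.~23.9]{VillaniOldNew}) assumes a locally Lipschitz velocity field, whereas your $v_t$ contains the conditional expectation $\Esp_{\mathrm{P}(t)}[\nabla\HH(\Zd\to\Lan)\mid\Lan]$, which is bounded but not known to be even continuous (this is stressed in Section~\ref{sec:formFP}). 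The paper explicitly refuses to cite that theorem for this reason and instead reproves the identity by hand: from the Kantorovich dual formulation one gets a one-sided inequality $\Wn^2(\mathrm{P}(t+h),\mathrm{Q})-\Wn^2(\mathrm{P}(t),\mathrm{Q})\geq \Esp_{\mathrm{P}_n(t+h)}[\theta_n(t)]-\Esp_{\mathrm{P}_n(t)}[\theta_n(t)]$, the right-hand side is computed exactly via the continuity equation and integration by parts, and letting $h\to 0^{\pm}$ at a differentiability point turns the two one-sided bounds into the equality \eqref{WassDeriv}. This duality argument only needs $v_t\in\mathscr{L}^2(\mathrm{P}_n(t))$, which is what Corollary~\ref{th:fisher} and the short-range assumption provide. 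To make your proof complete you would need either to supply this argument or to verify that the proof (not just the statement) of the cited theorem goes through for merely bounded measurable drifts.
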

\begin{proof}
  This is closely related to \cite[Thm.~23.9]{VillaniOldNew}, however, the result there is stated for locally Lipschitz vector fields, yet, the regularity of the conditional expectation appearing in \eqref{def:STRONG} is not clear.
  From the proof of \cite[Thm.~23.9]{VillaniOldNew}, one could observe that the Lipschitz property is not needed for our purposes. We give a short different proof for completeness.

By Corollary \ref{th:fisher}, $\nabla \log \mathrm{p}_{n}(t)$ is well-defined for all $t \geq 0$.
  For $\bx \in M^\Lan$, define the vector field
  \begin{equation*}
    v_{n}(t,\bx) := \nabla \log \mathrm{p}_{n}(t,\bx) + \beta \nabla \HH_n(\bx) + \beta \mathbb{E}_{\mathrm{P}(t)} \bracket*{ \nabla \HH(\Zd \setminus \Lambda_{n} \to \Lambda_{n}) \given \Lambda_{n} }(\bx).
  \end{equation*}
  The last two terms in the right-hand side sum up to $\beta \mathbb{E}_{\mathrm{P}(t)} \bracket*{ \nabla \HH(\Zd \to \Lambda) \given \Lambda }$ but we split up that quantity into a local part and a non-local part.
  The following \emph{continuity equation} is an immediate consequence of \eqref{def:STRONG} (which implies \eqref{def:VWEAK}):
  \begin{equation}\label{eq:continuity-equation:local}
    \partial_{t} \mathrm{P}_{n}(t) = \operatorname{div}(v_{n}(t, \cdot) \mathrm{P}_{n}(t)), \quad \text{in the sense of distributions}.
  \end{equation}
Note that, although it does not appear explicitly in the definition, the term $\nabla \V_{n}$ is contained in the vector field $v_{n}$ through $\mathrm{p}_{n}$, which is the density with respect to the weighted measure $\omega_{n}$, defined in \eqref{def:oL}. 

The Benamou--Brenier formula on compact manifolds \cite[Prop.~2.30]{AGUser} guarantees that
  \begin{equation*}
    \mathcal{W}_{n}^{2}(\mathrm{P}_{n}(t), \mathrm{P}_{n}(s)) \leq (t-s) \int_{s}^{t} \int_{M^\Lan} \|v_{n}(u,\bx)\|^{2} \mathrm{p}_{n}(u,\bx) \dd \omega_{n}(\bx) \mathtt{d} u.
  \end{equation*}
  Thanks to the short-range assumption \eqref{eq:short-range}, $\nabla \HH_{n} + \beta \mathbb{E}_{\mathrm{P}(t)} \bracket*{ \nabla \HH(\mathbb{Z}^{\d} \to \Lambda) \given \Lambda }$ is bounded. Combining this with the boundedness of Fisher information (Corollary \ref{th:fisher}), we deduce that the map $t \mapsto \mathcal{W}_{n}^{2}(\mathrm{P}(t), \mathrm{Q})$ is locally Lipschitz and, thus, by Rademacher's theorem, differentiable almost everywhere.

  Now, choose a differentiability point $t$ and take $h > 0$. The Kantorovich dual formulation of optimal transport yields:
  \begin{equation*}
    \mathcal{W}_{n}^{2}(\mathrm{P}(t+h), \mathrm{Q}) - \mathcal{W}_{n}^{2}(\mathrm{P}(t), \mathrm{Q}) \geq \mathbb{E}_{\mathrm{P}_{n}(t+h)}[\theta_{n}(t)] - \mathbb{E}_{\mathrm{Q}_{n}}[\theta_{n}(t)^{c}] - \mathbb{E}_{\mathrm{P}_{n}(t)}[\theta_{n}(t)] + \mathbb{E}_{\mathrm{Q}_{n}}[\theta_{n}(t)^{c}].
  \end{equation*}
  Using \eqref{eq:continuity-equation:local} and integrating by parts, we obtain
  \begin{equation*}
    \mathbb{E}_{\mathrm{P}_{n}(t+h)} \bracket*{ \theta_{n}(t)} - \mathbb{E}_{\mathrm{P}_{n}(t)} \bracket*{ \theta_{n}(t)} = - \int_{t}^{t+h} \mathbb{E}_{\mathrm{P}_{n}(s)} \bracket*{ \nabla \theta_{n}(t) \cdot v_{n}(s) } \mathtt{d} s.
  \end{equation*}
  Combining the two previous equations, we find
  \begin{equation*}
    \mathcal{W}_{n}^{2}(\mathrm{P}(t+h), \mathrm{Q}) - \mathcal{W}_{n}^{2}(\mathrm{P}(t), \mathrm{Q}) \geq -h \mathbb{E}_{\mathrm{P}_{n}(t)} \bracket*{  v_{n}(t) \cdot \nabla \theta_{n}(t) } + o(h).
  \end{equation*}
  Dividing by $h$ and letting $h \to 0$, then repeating with $h < 0$, we get \eqref{WassDeriv} provided we show
  \begin{equation}
  \label{Epnnthetasmall}
    \mathbb{E}_{\mathrm{P}_{n}(t)} \bracket*{ \nabla \theta_{n}(t) \cdot \mathbb{E}_{\mathrm{P}(t)} \bracket*{ \nabla \HH(\mathbb{Z}^{\d} \setminus \Lambda_{n} \to \Lambda_{n}) \given \Lambda_{n} } } = o (|\Lambda_{n}|).
  \end{equation}
  From the short-range range assumption, we know that $\sup_{\bx \in \Conf} \norm*{ \nabla \HH(\mathbb{Z}^{\d} \setminus \Lambda_{n} \to \Lambda_{n}) \given \Lambda_{n} }(\bx)^{2} = o(|\Lambda_{n}|)$, where $o(|\Lambda_{n}|)$ depends only on the parameters of the model. Thus by the Cauchy--Schwarz inequality
  \begin{equation*}
    \mathbb{E}_{\mathrm{P}_{n}(t)} \bracket*{ \nabla \theta_{n}(t) \cdot \mathbb{E}_{\mathrm{P}(t)} \bracket*{ \nabla \HH(\mathbb{Z}^{\d} \setminus \Lambda_{n} \to \Lambda_{n}) \given \Lambda_{n} } } \leq o(|\Lambda_{n}|)^{1/2} \paren*{\mathbb{E}_{\mathrm{P}_{n}(t)} \bracket*{ \|\nabla \theta_{n}(t)\|^{2} }}^{1/2}.
  \end{equation*}
  Since we have chosen $\theta_n$ as the Kantorovitch potential between $\P$ and $\Q$ in $\Lan$, the Brenier--McCann theorem \cite[Thm.~9]{McCannPolar} ensures that the expectation on the right-hand side is exactly $\mathcal{W}^2_{n}(\mathrm{P}(t), \mathrm{Q})$, which is $\O(|\Lambda_{n}|)$ because $M$ is compact. We thus get \eqref{Epnnthetasmall}.
\end{proof}

\paragraph{Step 2. An \emph{almost} EVI in finite volume.}
\begin{lemma}\label{th:evi:finite-volume}
  Let $n \geq 1$ and $\mathrm{Q} \in \mathscr{P}^{\mathrm{s}}$.
  Then, at a differentiability point,
  \begin{equation}\label{eq:evi:finite-volume}
    \frac{1}{2} \frac{\mathtt{d}}{\mathtt{d} t} \mathcal{W}_{n}^{2}(\mathrm{P}(t), \mathrm{Q}) + \frac{K_{\beta}}{2} \mathcal{W}_{n}^{2}(\mathrm{P}(t), \mathrm{Q}) + \Fn(\mathrm{P}(t)) \leq \Fn(\mathrm{Q}) + o(|\Lambda_{n}|),
  \end{equation}
  where the error term $o(|\Lambda_{n}|)$ is as in Lemma \ref{th:evi:wasserstein-derivative} --- in particular, it is uniform in $t$.
\end{lemma}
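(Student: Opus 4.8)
The plan is to run, in the finite box $\Lan$, the standard argument by which a gradient-flow solution satisfies an Evolution Variational Inequality: combine the formula for $\tfrac12\tfrac{\mathtt{d}}{\mathtt{d} t}\Wn^{2}(\mathrm{P}(t),\mathrm{Q})$ given by Lemma~\ref{th:evi:wasserstein-derivative} with the $K_{\beta}$-displacement convexity of $\Fn$ provided by Lemma~\ref{lem:dis_conv_finite}. The crucial point is that Lemma~\ref{th:evi:wasserstein-derivative} has already done the delicate part: it computes the Wasserstein derivative and has absorbed into the $o(|\Lan|)$ error the non-local drift $\beta\,\Esp_{\mathrm{P}(t)}\bracket*{\nabla\HH(\Zd\setminus\Lan\to\Lan)\given\Lan}$, which is precisely the reason $(\mathrm{P}_{n}(t))_{t}$ fails to be a genuine finite-volume Fokker--Planck solution on $\mathsf{Conf}_{n}$. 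What remains is essentially measure-geometric bookkeeping.

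Concretely, I would fix $n$, $\mathrm{Q}\in\Ppis$ and a point $t$ at which $t\mapsto\Wn^{2}(\mathrm{P}(t),\mathrm{Q})$ is differentiable (a.e.\ such $t$, by Lemma~\ref{th:evi:wasserstein-derivative}); one may assume $\Fn(\mathrm{Q})<\infty$. Let $\theta_{n}=\theta_{n}(t)$ be the Kantorovich potential from $\mathrm{P}(t)$ to $\mathrm{Q}$ in $\Lan$. Since $\mathrm{P}_{n}(t)$ has density $\mathrm{p}_{n}(t)$ with respect to $\omega_{n}$, the Brenier--McCann description recalled in Section~\ref{sec:Wasserstein} provides the unique $\Wn$-geodesic $\mathrm{P}^{s}:=\bigl(\exp(-s\nabla\theta_{n})\bigr)_{\sharp}\mathrm{P}_{n}(t)$, $s\in[0,1]$, from $\mathrm{P}^{0}=\mathrm{P}_{n}(t)$ to $\mathrm{P}^{1}=\mathrm{Q}_{n}$, and for $\mathrm{P}_{n}(t)$-a.e.\ $\bx$ the curve $s\mapsto\exp_{\bx}(-s\nabla\theta_{n}(\bx))$ is a minimizing geodesic of $M^{\Lan}$ (the optimal map avoids the cut locus). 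The heart of the matter is to identify the bracket on the right-hand side of \eqref{WassDeriv}, namely $-\Esp_{\mathrm{P}(t)}\bracket*{\nabla\theta_{n}\cdot\bigl(\tfrac{\nabla\mathrm{p}_{n}(t)}{\mathrm{p}_{n}(t)}+\beta\nabla\HH_{n}\bigr)}$, with the initial slope of $\Fn$ along $(\mathrm{P}^{s})$. For the interaction term this is easy: $\Hhn$ is linear in the measure and $s\mapsto\HLan(\exp_{\bx}(-s\nabla\theta_{n}(\bx)))$ is smooth along a minimizing geodesic, so dominated convergence gives $\tfrac{\mathtt{d}}{\mathtt{d} s}\big|_{s=0}\Hhn(\mathrm{P}^{s})=-\Esp_{\mathrm{P}(t)}\bracket*{\nabla\HH_{n}\cdot\nabla\theta_{n}}$. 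For the entropy term, the classical first-variation estimate along displacement geodesics on a compact weighted manifold yields $\liminf_{s\to0^{+}}s^{-1}\bigl(\Een(\mathrm{P}^{s})-\Een(\mathrm{P}^{0})\bigr)\ge-\Esp_{\mathrm{P}(t)}\bracket*{\nabla\theta_{n}\cdot\tfrac{\nabla\mathrm{p}_{n}(t)}{\mathrm{p}_{n}(t)}}$; this is the same computation already invoked in the proof of Lemma~\ref{th:evi:wasserstein-derivative} (see \cite[Thm.~23.9]{VillaniOldNew}), and it is legitimate here because the Fisher information of $\mathrm{p}_{n}(t)$ is finite (Corollary~\ref{th:fisher}) — which also keeps the left-hand slope finite — together with the cut-locus-avoiding property. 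Note that $\nabla\V_{n}$ is automatically accounted for, since $\mathrm{p}_{n}$ is the density with respect to the weighted measure $\omega_{n}$.

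Granting this, I would invoke Lemma~\ref{lem:dis_conv_finite}: summing the $\kappa$-displacement convexity of $\Een$ and the displacement convexity of $\Hhn$ recorded there shows $\Fn=\Een+\beta\Hhn$ is $K_{\beta}$-displacement convex, with $K_{\beta}$ as in \eqref{def:Kbeta}. Applying this inequality along $(\mathrm{P}^{s})$, subtracting $\Fn(\mathrm{P}^{0})$, dividing by $s>0$ and letting $s\to0^{+}$ gives $\liminf_{s\to0^{+}}s^{-1}\bigl(\Fn(\mathrm{P}^{s})-\Fn(\mathrm{P}^{0})\bigr)\le\Fn(\mathrm{Q})-\Fn(\mathrm{P}(t))-\tfrac{K_{\beta}}{2}\Wn^{2}(\mathrm{P}(t),\mathrm{Q})$. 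Since the interaction difference quotients converge, $\liminf_{s\to0^{+}}s^{-1}(\Fn(\mathrm{P}^{s})-\Fn(\mathrm{P}^{0}))$ equals the sum of the entropy and interaction slopes above, hence is $\ge-\Esp_{\mathrm{P}(t)}\bracket*{\nabla\theta_{n}\cdot\bigl(\tfrac{\nabla\mathrm{p}_{n}(t)}{\mathrm{p}_{n}(t)}+\beta\nabla\HH_{n}\bigr)}$. Feeding this and \eqref{WassDeriv} into the previous display yields $\tfrac12\tfrac{\mathtt{d}}{\mathtt{d} t}\Wn^{2}(\mathrm{P}(t),\mathrm{Q})\le\Fn(\mathrm{Q})-\Fn(\mathrm{P}(t))-\tfrac{K_{\beta}}{2}\Wn^{2}(\mathrm{P}(t),\mathrm{Q})+o(|\Lan|)$, which is \eqref{eq:evi:finite-volume} after rearrangement; the error term is literally the one from Lemma~\ref{th:evi:wasserstein-derivative}, hence uniform in $t$, $\mathrm{P}$ and $\mathrm{Q}$.

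The step I expect to be the main obstacle is the entropy first-variation inequality, because $\theta_{n}$ is only Lipschitz — merely $\mathrm{P}_{n}(t)$-a.e.\ twice differentiable by Alexandrov — so one cannot differentiate $\Een(\mathrm{P}^{s})$ under the integral sign or integrate by parts against $\Delta\theta_{n}$ without controlling the (non-positive) singular part of its distributional Hessian. The resolution is exactly as in the proof of Lemma~\ref{th:evi:wasserstein-derivative}: the cut-locus-avoiding property of the optimal map together with the locally uniform Fisher information bound (Corollary~\ref{th:fisher}) make the linearized slope finite and supply the one-sided inequality in the direction needed; everything else is the routine passage from displacement convexity to the EVI.
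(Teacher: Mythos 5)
Your proposal is correct and follows essentially the same route as the paper: both feed the above-tangent-line formulation of the $K_{\beta}$-displacement convexity of $\Fn$, evaluated against the Kantorovich potential $\theta_{n}(t)$, into the Wasserstein-derivative formula \eqref{WassDeriv}. The only difference is packaging: the paper obtains the above-tangent inequality in one stroke by applying \cite[Prop.~4.2]{CMCS06} to the tilted reference measure $\mu \propto \mathrm{e}^{-\beta \HH_{n}}\omega_{n}$ (so that $\Fn$ becomes a single relative entropy), whereas you split $\Fn$ into $\Een$ and $\Hhn$, compute their first variations separately, and sum the convexity constants from Lemma~\ref{lem:dis_conv_finite}. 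One small correction: the entropy first-variation inequality you flag as the main obstacle is not resolved "as in the proof of Lemma~\ref{th:evi:wasserstein-derivative}" (that proof is a Kantorovich-duality argument for the distance derivative, not an entropy slope computation); the fact you need — the one-sided differentiability of the entropy along a displacement geodesic with a merely Lipschitz, cut-locus-avoiding potential and finite Fisher information — is precisely the content of \cite[Prop.~4.2]{CMCS06}, which is the reference the paper invokes.
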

\begin{proof}
  Using a de Bruijn's type argument, for instance \cite[Prop.~5.2.2]{BakryGentilLedoux}, together with \eqref{eq:continuity-equation:local}, we find that $\fbeta(\mathrm{P}(t))$, and thus also $\Fn(\P(t))$, is finite for all $t \geq 0$. We may therefore assume that $\Fn(\mathrm{Q}) < \infty$, otherwise there is nothing to prove.
  In particular, $\mathrm{Q}_{n}$ admits a density with respect to $\omega_n$.

  Let again $\theta_{n}(t)$ be the Kantorovich potential for the transport from $\mathrm{P}_{n}(t)$ to $\mathrm{Q}_{n}$ with respect to $\mathcal{W}_{n}$.
  By stability of the Ricci curvature under products, on $M^\Lan$ we have, with $K_\beta$ as in \eqref{def:Kbeta}:
  \begin{equation*}
    \mathrm{Ric}_{n} + \nabla^{2} \V_{n} + \beta \nabla^{2} \HH_n \geq K_{\beta}.
  \end{equation*}
 We can then apply \cite[Prop.~4.2]{CMCS06} (note the different sign convention for Kantorovich potentials), with
  \begin{equation*}
    \mu := \frac{1}{\int \mathrm{e}^{-\mathsf{H}_{n}} \mathtt{d} \omega_{n}} \mathrm{e}^{-\beta \HH_n} \omega_{n}, \quad \text{and} \quad g := \frac{\mathtt{d}\mathrm{Q}_{n}}{\mathtt{d} \mu}, \quad \text{and} \quad f := \frac{\mathtt{d} \mathrm{P}_{n}(t)}{\mathtt{d} \mu} = \mathrm{p}_{n}(t) \mathrm{e}^{\V_{n}} \mathrm{e}^{\beta \HH_n},
  \end{equation*}
  which gives:
  \begin{equation*}
    \Fn(\mathrm{Q}) - \Fn(\mathrm{P}(t)) \geq - \int_{M^\Lan} \nabla \theta_{n}(t) \cdot \nabla f \mathtt{d} \mu + \frac{K_{\beta}}{2} \mathcal{W}_{n}^{2}(\mathrm{P}(t), \mathrm{Q}).
  \end{equation*}
  By the chain rule, we have:
  \begin{equation*}
    \nabla f \mathtt{d} \mu = \nabla \mathrm{p}_{n}(t) \mathtt{d} \omega_{n} + \beta \nabla \HH_n \mathtt{d} \mathrm{P}_{n}(t),
  \end{equation*}
  and thus we get:
  \begin{equation*}
 \int_{M^\Lan} - \nabla \theta_{n}(t) \cdot \left( \frac{\nabla \mathrm{p}_{n}(t)}{\mathrm{p}_{n}(t)} + \beta \nabla \HH_n \right) \mathrm{p}_{n}(t) \mathtt{d} \omega_{n}    + \beta \nabla \HH_n \mathtt{d} \mathrm{P}_{n}(t) + \Fn(\mathrm{P}(t)) \leq  \Fn(\mathrm{Q}).
  \end{equation*}
 Inserting this into \eqref{WassDeriv} gives \eqref{eq:evi:finite-volume}.
\end{proof}

\paragraph{Step 3. Conclusion: proof of Theorem \ref{th:evi}}
  Formally, we divide by $|\Lambda_{n}|$ in \eqref{th:evi:finite-volume} and let $n \to \infty$.
  However, the limit and the time derivative might fail to commute.
  Thus, we write instead the integral version of \eqref{eq:evi:finite-volume}, noting that it holds for almost every $t$ by \cref{th:evi:wasserstein-derivative},
  \begin{equation*}
    \int_{s}^{t} \mathrm{e}^{uK_{\beta}} \Fn(\mathrm{P}(u)) \mathtt{d} u + \frac{1}{2} \mathrm{e}^{t K_{\beta}} \mathcal{W}_{n}^{2}(\mathrm{P}(t), \mathrm{Q}) - \frac{1}{2} \mathcal{W}_{n}^{2}(\mathrm{P}(s), \mathrm{Q}) \leq \paren*{\Fn(\mathrm{Q}) - o(|\Lambda_{n}|)} \int_{s}^{t} \mathrm{e}^{u K_{\beta}} \mathtt{d} u.
  \end{equation*}
  Integrating the $o(|\Lambda_{n}|)$ in \eqref{eq:evi:finite-volume} and taking it out of the integral is allowed since we have shown that it is uniform in time.
  Now we divide the above inequality by $|\Lambda_{n}|$ and let $n \to \infty$.
  
  In the convergence $\Fn \to \fbeta$, the convergence of the entropy term is monotonous and the convergence of the energy term is uniform, by Lemma \ref{lem:PropertiesOfTheLimits}. We may thus pass to the limit in the integral:
  \begin{equation*} 
    \frac{1}{|\Lambda_{n}|} \int_{s}^{t} \mathrm{e}^{uK_{\beta}} \Fn(\mathrm{P}(u)) \mathtt{d} u \xrightarrow[n \to \infty]{} \int_{s}^{t} \mathrm{e}^{u K_{\beta}} \fbeta(\mathrm{P}(u)).
\end{equation*}
We obtain the integral formulation \eqref{def:evi:integral} of the EVI.
\end{proof}

\subsection{Regularity of solutions: Proof of \texorpdfstring{Theorem \ref{theo:regul}}{the regularity}}
\label{s:proof-regularity}
\newcommand{\loc}{\mathrm{loc}}
\newcommand{\Liloc}{\mathscr{L}^\infty_{\loc}}
\subsubsection*{Strategy of proof and comparison with the classical case}
We use a bootstrap procedure as sketched in \cite[pp. 17-19]{Jordan_1998} and implemented in \cite[Lemma~10.7]{mei2018mean} for the Fokker--Planck equation $\partial_t \rho = \Delta \rho + \div(\rho \V)$ in Euclidean space and with a \emph{local} drift.
Our setting presents several important differences:
\begin{itemize}
\item The drift term in our Fokker--Planck equation differs from the one in \cite{Jordan_1998,mei2018mean} in two ways: it depends on time (this is the case in \cite{mei2018mean} but not in \cite{Jordan_1998}) and is is \emph{non-local}.
This does not pose a significant issue; however, during the bootstrap procedure, one needs to carefully track the dependency of the regularity estimates on the size of the box.
  \item Our spins are living on a compact manifold and not on the Euclidean space. On the one hand, this means we can avoid introducing spatial cut-off functions. On the other hand, the heat kernel is not as nice as in $\R^\nn$, since it takes into account the curvature.
    Both of these proofs rely on \enquote{potential estimates}, namely they control the $\mathscr{L}^p \to \mathscr{L}^p$ operator norm for space-time convolution with the heat kernel, as well as with its first and second derivative.
    In the Euclidean case, such controls for $p = 2$ proceed from fairly simple Fourier analysis, and for $p \in (1, + \infty)$ from more advanced tools (\cite{Jordan_1998,mei2018mean} refer to \cite[Chap.~4, Sec.~3]{zbMATH03277871}, whose ideas are also surveyed in \cite{SalamonParabolic}).
    In the case of a compact manifold, we are not aware of such such bounds --- and their proof would presumably requires different techniques.
\end{itemize} 
Let us summarize the main steps of the regularity argument in \cite{Jordan_1998,mei2018mean}.
\begin{enumerate}
  \item \cite{Jordan_1998} \& \cite{mei2018mean} prove a $\Liloc(0, + \infty)$ bound in time on the $\mathscr{L}^p$ norm in space of the density $\rho$ for $p > 1$ close enough to $1$, using pointwise upper bounds on the heat kernel and its first derivative plus elementary estimates.
  \item 
 \begin{itemize}
  \item \cite{Jordan_1998}: in particular, one has a $\mathscr{L}^p_{\loc}$ bound in space-time on the density $\rho$. Then “usual bootstrap arguments”, using the fine upper bound on the $\mathscr{L}^p \to \mathscr{L}^p$ operator norm mentioned above, allow them to show that the first, second derivatives etc. of $\rho$ are in $\mathscr{L}^p$.

  \item \cite{mei2018mean}: upgrade, still by fairly elementary estimates, the $\Liloc$ bound on $\|\rho\|_{\mathscr{L}^p}$ for $p$ close to $1$ into an $\Liloc$ bound on $\|\rho\|_{\mathscr{L}^p}$ for all $p > 1$. They claim it eventually gives an $\Liloc$ bound on $\|\rho\|_{\mathscr{L}^\infty}$, which is unclear to us, but working with $p$ arbitrarily large is fine.
  \end{itemize}
  \item \cite{mei2018mean}: use the $\mathscr{L}^p \to \mathscr{L}^p$ controls (they use it with $p = \infty$, which is  puzzling, as this does not seem to be covered by the literature, but $p$ large works well) to “bootstrap” the regularity of $\rho$, one derivative at each step.\end{enumerate}
It is unclear how one would improve regularity by one degree at each step of the bootstrap without the fine potential estimates used in \cite{Jordan_1998,mei2018mean} --- which do \emph{not} follow simply from upper bounds on the heat kernel.
We proceed in a similar spirit, but gaining only $1-\epsilon$ degree of regularity at each step --- to do that, it suffices to know pointwise bounds on the heat kernel and its derivatives, such as the ones listed in Appendix \ref{s:heat-kernel}, valid on a general smooth compact manifold.
Our strategy extends readily to the Euclidean case and provides an alternative argument to prove regularity of weak solutions to the usual Fokker--Planck equation without using those potential estimates.

\medskip 

Let us turn to the proof of our regularity result. For $\Lambda \Subset \Zd$, we use properties of the heat semi-group $(\mathsf{G}^{\Lambda}_{t})_{t \geq 0}$ on $M^{\Lambda}$, recalled in Appendix \ref{s:heat-kernel}. For the sake of readibility, we drop the dependency in $\Lambda$ wherever it would not raise confusion.

\subsubsection*{Step 0. Duhamel's principle}
Let $\La \Subset \Zd$. We know from the dual formulation \eqref{conseqVWeak} that for all $F \in \CC^{\infty}( (0, + \infty), M^\La)$ and all $0 \leq t_0 < t_1$:
\begin{multline}
\label{conseqVWeak_recall}
\Esp_{\P(t_1)} [F(t_1, \cdot)] - \Esp_{\P(t_0)} [F(t_0, \cdot)] \\
= \int_{t_0}^{t_1} \Esp_{\P(t)}\left[\partial_t F(t, \cdot) + \Delta_\V F(t, \cdot) - \nabla F(t, \cdot) \cdot \Esp_{\P(t)} \left[ \nabla \HH(\Zd \rightarrow \La) | \La \right]  \right] \dd t.
\end{multline}
Given a function $f \in \mathscr{C}^{\infty}(M^{\Lambda})$, and $\delta > 0$, define $F_\delta$ as:
\begin{equation}\label{eq:f-regul}
 (t, \bx) \mapsto F_\delta(t, \bx) := \mathsf{G}_{t_{1} - t + \delta} f(\bx) \times \chi_{\delta}(t)
\end{equation}
where $\chi_{\delta}$ is a smooth cut-off function equal to $1$ on $[0, t_1 + \frac{\delta}{2}]$ and to $0$ on $[t_1 + \delta, + \infty)$.
By construction and by definition of the heat kernel, $F_\delta$ is smooth and solves
\begin{equation}\label{eq:heat:f-delta}
  \begin{dcases}
  & \partial_{t} F_\delta(t,x) + \Delta_\V F_\delta(t,x) = 0, \qquad (t,x) \in (t_{0},t_{1}) \times M^{\Lambda};
\\& F_\delta(t_{1},\cdot) = \mathsf{G}_{\delta} f.
  \end{dcases}
\end{equation}
Using $F_\delta$ as the test function in \eqref{conseqVWeak_recall}, we obtain:
\begin{equation}\label{eq:duhamelA}
  \mathbb{E}_{\mathrm{P}_{\Lambda}(t_{1})}\bracket*{ \mathsf{G}_{\delta} f } = \mathbb{E}_{\mathrm{P}_{\Lambda}(t_{0})}\bracket*{\mathsf{G}_{t_{1} - t_{0} + \delta} f} - \int_{t_0}^{t_{1}} \mathbb{E}_{\mathrm{P}_{\Lambda}(t)} \bracket*{ \paren*{\nabla \mathsf{G}_{t_{1}-t + \delta} f} \cdot  \mathbb{E}_{\mathrm{P}(t)} \bracket*{ \nabla \HH(\mathbb{Z}^{\d} \to \Lambda) \given \Lambda } }  \mathtt{d} t.
  \end{equation}
  Finally, sending $\delta \to 0$, we obtain the following expression known as \emph{Duhamel's principle}:
\begin{equation}\label{eq:duhamel}
  \mathbb{E}_{\mathrm{P}_{\Lambda}(t_{1})}\bracket*{ f } = \mathbb{E}_{\mathrm{P}_{\Lambda}(t_{0})}\bracket*{\mathsf{G}_{t_{1} - t_{0}} f} - \int_{t_0}^{t_{1}} \mathbb{E}_{\mathrm{P}_{\Lambda}(t)} \bracket*{ \paren*{\nabla \mathsf{G}_{t_{1}-t} f} \cdot \mathbb{E}_{\mathrm{P}(t)} \bracket*{ \nabla \HH(\mathbb{Z}^{\d} \to \Lambda) \given \Lambda }  } \mathtt{d} t.
  \end{equation}

\begin{remark} We deduce from \eqref{eq:duhamel} that if $t \mapsto \P(t)$ is a dual solution, then it particular $\P(t_1)$ converges to $\P(0)$ in the local topology as $t_1 \to 0$, which is consistent with our notion of “initial condition” for strong solutions, see \eqref{def:STRONG}. Indeed, taking $t_0 = 0$ in \eqref{eq:duhamel} one gets, for any bounded local function $f$:
\begin{equation*}
  \mathbb{E}_{\mathrm{P}_{\Lambda}(t_{1})}\bracket*{ f }  =   \mathbb{E}_{\mathrm{P}_{\Lambda}(0)}\bracket*{\mathsf{G}_{t_{1}}f } - \int_{0}^{t_{1}} \| \nabla \mathsf{G}_{t} f \|_{\infty} \mathtt{d} t  \times \|f\|_{\infty} \times \O_\La(1), 
\end{equation*}
the first term in the right-hand side tends to $\mathbb{E}_{\mathrm{P}_{\Lambda}(0)}\bracket*{\mathsf{G}_{t_{1} f}}$ as $t_1 \to 0$ while the second one goes to $0$ because, as recalled in Section \ref{s:heat-kernel}, $t \mapsto \| \nabla \mathsf{G}_{t} f \|_{\infty}$ blows up as $t^{-\hal}$ near $0$ and is thus integrable.
\end{remark}

\subsubsection*{Step 1. Existence and some integrability for the local densities}
\begin{claim}
For all $\Lambda \Subset \mathbb{Z}^{\d}$, and for all $t > 0$, the measure $\mathrm{P}_{\Lambda}(t)$ is absolutely continuous with respect to $\oL$, and its density $\mathrm{p}_{\Lambda}(t)$ is in $\mathscr{L}^{p}(M^\La)$ for all $p \in \paren*{1, \frac{\nn|\Lambda|}{\nn|\Lambda|-1}}$.
Moreover, for all $0 < \delta < T$, we have:
\begin{equation}
\label{Lpcontrol}
  \sup_{t \in [\delta,T]} \norm{\mathrm{p}_{\Lambda}(t)}_{\mathscr{L}^{p}(\oL)} \leq \Cc(\delta, T, p, |\Lambda|).
\end{equation}
Note that the constant $\Cc(\delta, T, p, |\Lambda|)$ depends on $\La$ only through its size.
\end{claim}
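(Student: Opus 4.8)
The plan is a duality argument. Write $p'$ for the conjugate exponent of $p$, so that $p \in \bigl(1, \tfrac{\nn|\La|}{\nn|\La|-1}\bigr)$ corresponds to $p' \in (\nn|\La|, +\infty)$. Since $\mathscr{C}^\infty(M^\La)$ is dense in $\mathscr{L}^{p'}(\oL)$ and Duhamel's principle \eqref{eq:duhamel} is available for smooth test functions, it suffices to establish
\begin{equation*}
  \abs*{\Esp_{\P_\La(t)}[f]} \le \Cc(\delta,T,p,|\La|)\,\norm{f}_{\mathscr{L}^{p'}(\oL)}, \qquad f \in \mathscr{C}^\infty(M^\La),\ t \in [\delta,T].
\end{equation*}
Granting this, the functional $f \mapsto \Esp_{\P_\La(t)}[f]$ extends continuously to $\mathscr{L}^{p'}(\oL)$ and, as $1 < p' < +\infty$, is represented by an element $\mathrm{p}_\La(t) \in \mathscr{L}^p(\oL) = (\mathscr{L}^{p'}(\oL))^{*}$ with $\norm{\mathrm{p}_\La(t)}_{\mathscr{L}^p(\oL)} \le \Cc(\delta,T,p,|\La|)$; testing against non-negative continuous functions then gives $\mathrm{p}_\La(t) \ge 0$ and $\P_\La(t) = \mathrm{p}_\La(t)\,\oL$, which is \eqref{Lpcontrol}.

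The first step is to convert the heat kernel estimates of Appendix \ref{s:heat-kernel} into operator bounds. Put $N := \nn|\La|$ for the dimension of $M^\La$, and note that $M^\La$ is isometric to $M^{|\La|}$, so all constants below depend on $\La$ only through $|\La|$. From the on-diagonal bound $\mathsf{G}^\La_s(\bx,\by) \lesssim \max(1, s^{-N/2})$ and stochastic completeness $\int_{M^\La}\mathsf{G}^\La_s(\bx,\by)\,\dd\oL(\by) = 1$, Hölder's inequality gives $\norm{\mathsf{G}^\La_s}_{\mathscr{L}^{p'}(\oL)\to\mathscr{L}^\infty} \lesssim \max(1,s^{-N/(2p')})$. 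From the Gaussian gradient bound $\abs{\nabla_\bx\mathsf{G}^\La_s(\bx,\by)} \lesssim \max(1,s^{-(N+1)/2})\exp(-c\,\di_\La^2(\bx,\by)/s)$ and the volume estimate $\int_{M^\La}\exp(-c\,\di_\La^2(\bx,\by)/s)\,\dd\oL(\by) \lesssim \max(1, s^{N/2})$ (polynomial volume growth of a compact manifold), Hölder gives
\begin{equation*}
  \norm{\nabla\mathsf{G}^\La_s}_{\mathscr{L}^{p'}(\oL)\to\mathscr{L}^\infty} \lesssim \max\paren*{1,\ s^{-\frac{N+1}{2}+\frac{N}{2p}}}.
\end{equation*}
The exponent $-\tfrac{N+1}{2}+\tfrac{N}{2p}$ exceeds $-1$ precisely when $p < \tfrac{N}{N-1} = \tfrac{\nn|\La|}{\nn|\La|-1}$; hence for $p$ in the stated range $s \mapsto \norm{\nabla\mathsf{G}^\La_s}_{\mathscr{L}^{p'}\to\mathscr{L}^\infty}$ is integrable near $0$, and we may set $\Cc_1(T,p,|\La|) := \int_0^T\norm{\nabla\mathsf{G}^\La_s}_{\mathscr{L}^{p'}\to\mathscr{L}^\infty}\,\dd s < +\infty$.

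Now apply \eqref{eq:duhamel} with $t_0 = 0$ and $t_1 = t \in [\delta,T]$. The first term obeys $\abs{\Esp_{\P_\La(0)}[\mathsf{G}^\La_t f]} \le \norm{\mathsf{G}^\La_t f}_\infty \lesssim \max(1,\delta^{-N/(2p')})\,\norm{f}_{\mathscr{L}^{p'}(\oL)}$. For the second term, each component of $\Esp_{\P(s)}\bracket*{\nabla\HH(\Zd\to\La)\given\La}$ has norm at most $\|J\|_{\ell^1}\|\partial_1\W\|_{\mathscr{L}^\infty}$ (by \eqref{nablaHZdLa} and conditional Jensen), so the $\ell^2$-norm over $\La$ of this vector field is $\le |\La|^{1/2}\|J\|_{\ell^1}\|\partial_1\W\|_{\mathscr{L}^\infty}$; since $\P_\La(s)$ is a probability measure, the pointwise Cauchy--Schwarz inequality yields
\begin{equation*}
  \abs*{\int_0^t\Esp_{\P_\La(s)}\bracket*{(\nabla\mathsf{G}^\La_{t-s}f)\cdot\Esp_{\P(s)}\bracket*{\nabla\HH(\Zd\to\La)\given\La}}\dd s} \le |\La|^{1/2}\,\|J\|_{\ell^1}\,\|\partial_1\W\|_{\mathscr{L}^\infty}\,\norm{f}_{\mathscr{L}^{p'}(\oL)}\,\Cc_1(T,p,|\La|).
\end{equation*}
Summing the two estimates yields the displayed bound of the first paragraph with $\Cc(\delta,T,p,|\La|) = C\max(1,\delta^{-N/(2p')}) + |\La|^{1/2}\|J\|_{\ell^1}\|\partial_1\W\|_{\mathscr{L}^\infty}\Cc_1(T,p,|\La|)$, which depends on $\La$ only through $|\La|$; this completes the reduction, hence the claim.

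The argument has no single hard step; the delicate point is the bookkeeping. One must read off the critical exponent $\tfrac{\nn|\La|}{\nn|\La|-1}$ from the integrability near $0$ of $s \mapsto s^{-\frac{N+1}{2}+\frac{N}{2p}}$, and verify that every constant entering the estimate — the heat kernel constants on $M^\La$, the volume-growth constant, and the sup-bound on the drift $\Esp_{\P(s)}\bracket*{\nabla\HH(\Zd\to\La)\given\La}$ — is controlled by $|\La|$ alone (this is why $M^\La \cong M^{|\La|}$ and the short-range assumption are used). The analytic inputs (on-diagonal heat kernel decay, Gaussian gradient decay, stochastic completeness, polynomial volume growth) are classical on a compact Riemannian manifold and are recorded in Appendix \ref{s:heat-kernel}. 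A last minor point: Duhamel's principle is stated for smooth $f$, so the $\mathscr{L}^{p'}$-boundedness is first obtained on $\mathscr{C}^\infty(M^\La)$ and then extended by density before invoking $(\mathscr{L}^{p'})^{*} = \mathscr{L}^p$.
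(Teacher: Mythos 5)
Your argument is correct and follows essentially the same route as the paper: duality against $\mathscr{L}^{p'}(\oL)$, Duhamel's principle \eqref{eq:duhamel} with $t_0=0$, a crude $\sup$-bound $\Cc(|\La|)$ on the conditional drift, and the $\mathscr{L}^{p'}\to\mathscr{L}^\infty$ smoothing bounds for $\mathsf{G}_s$ and $\nabla\mathsf{G}_s$, whose integrability near $s=0$ pins down the same critical exponent $\frac{\nn|\La|}{\nn|\La|-1}$. The only difference is presentational: you re-derive those operator bounds from the pointwise kernel estimates, whereas the paper invokes them directly via \eqref{eq:heat-kernel:bound:derivative:lp} and Young's inequality \eqref{eq:heat-semigroup:sobolev}.
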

\begin{proof}
We work first with $t_{0} := 0$. We write $p'$ for the Hölder conjugate of $p$. Applying Duhamel's principle \eqref{eq:duhamel} and using a rough bound for the conditional expectation we find, for all $f \in \mathscr{C}^{\infty}(M^{\Lambda})$:
  \begin{equation*}
  \abs*{\mathbb{E}_{\mathrm{P}_{\Lambda}(t_{1})}\bracket*{f}} \leq \|\mathsf{G}_{t_{1}}f\|_\infty +  \int_{0}^{t_{1}} \sup_{\bx \in \Conf} \|\nabla \HH(\mathbb{Z}^{\d} \to \Lambda)(\bx) \| \times \|\mathbb{E}_{\mathrm{P}(t)}\left[ \nabla \mathsf{G}_{t_{1}-t} f \right] \| \ \mathtt{d} t.
  \end{equation*}
  Thanks to our short-range assumption, and the fact that the interaction potential $\Psi$ is $\CC^1$ on a compact manifold, we know that for some constant $\Cc$ depending only on the model and on $|\La|$,
  \begin{equation*}
\sup_{\bx \in \Conf} \|\nabla \HH(\mathbb{Z}^{\d} \to \Lambda)(\bx) \| \leq \Cc(|\La|).
  \end{equation*}

  On the one hand, applying Young’s inequality \eqref{eq:heat-semigroup:sobolev} with $k := 0$ and $q := \infty$, we know that:
  \begin{equation}
  \label{ApplyYoung1}
\|\mathsf{G}_{t_{1}}f\|_\infty \leq \Cc(|\La|) \times \norm{f}_{\mathscr{L}^{p'}} \times  t_{1}^{-\frac{\nn|\Lambda|}{2}\paren*{1-\frac{1}{p}}},
  \end{equation}
with a constant which is locally uniform in time, so may write:
\begin{equation}
\label{eq:PremierBound}
  \abs*{\mathbb{E}_{\mathrm{P}_{\Lambda}(t_{1})}\bracket*{f}} \leq \Cc(|\La|) \times \norm{f}_{\mathscr{L}^{p'}} t_{1}^{-\frac{\nn|\Lambda|}{2}\paren*{1-\frac{1}{p}}}  + \Cc(|\La|) \times \int_{0}^{t_{1}}  \|\mathbb{E}_{\mathrm{P}(t)}\left[ \nabla \mathsf{G}_{t_{1}-t} f \right] \| \ \mathtt{d} t.
\end{equation}
On the other hand, still by \eqref{eq:heat-semigroup:sobolev} with $k := 1$ and $q := \infty$, we find that:
  \begin{multline}
  \label{k1qinfini}
   \int_{0}^{t_1} \|\mathbb{E}_{\mathrm{P}(t)}\left[ \nabla \mathsf{G}_{t_{1}-t} f \right] \| \ \mathtt{d} t \leq \int_{0}^{t_{1}} \| \nabla \mathsf{G}_{t_{1}-t} f \|_\infty  \mathtt{d} t \leq \norm{f}_{\mathscr{L}^{p'}} \times \int_{0}^{t_{1}}  t^{-\frac{\nn |\Lambda|}{2} \paren*{1-\frac{1}{p}}} \paren*{ 1 + (t_{1}-t)^{-\frac{1}{2}}} \dd t \\
   \leq \Cc(|\La|) \times  \norm{f}_{\mathscr{L}^{p'}} t_{1}^{\frac{1}{2} - \frac{\nn|\Lambda|}{2} \paren*{1-\frac{1}{p}} },
  \end{multline}
the integral being finite thanks to our choice of $p < \frac{\nn|\Lambda|}{\nn|\Lambda|-1}$. 

By duality, we deduce that $\mathrm{P}_{\Lambda}(t_{1})$ admits a density $\mathrm{p}_{\Lambda}(t_{1}) \in \mathscr{L}^{p}(M^\La)$  satisfying 
  \begin{equation*}
    \norm{\mathrm{p}_{\Lambda}(t_{1})}_{\mathscr{L}^{p}} \leq \Cc(|\La|) \times \left(t_{1}^{\frac{1}{2} - \frac{\nn|\Lambda|}{2} \paren*{1-\frac{1}{p}} } + t_{1}^{- \frac{\nn|\Lambda|}{2} \paren*{1-\frac{1}{p}} }\right).
  \end{equation*}
  The above quantity is bounded away from $t_{1} = 0$ and $t_{1} = \infty$, which yields \eqref{Lpcontrol}.
\end{proof}

\subsubsection*{Step 2. More integrability for the local densities}
Having established the $p$-integrability of the local densities for some sufficiently small $p > 1$ depending on the size of the box, we turn to a bootstrap argument yielding $p$-integrability for all $p$.
\begin{claim}
\label{claim:Lpforallp}
For all $\Lambda \Subset \mathbb{Z}^{\d}$, for all $t > 0$, and for all $p \in [1, \infty)$, the local density $\mathrm{p}_{\Lambda}(t)$ is in $\mathscr{L}^{p}(M^\La)$.
  Moreover, for all $0 < \delta < T$, we have:
  \begin{equation}
  \label{eq:Linftyplarge}
    \sup_{t \in [\delta,T]} \norm{\mathrm{p}_{\Lambda}(t)}_{\mathscr{L}^{p}(M^\La)} \leq \Cc(\delta, T, p, |\Lambda|).
  \end{equation}
\end{claim}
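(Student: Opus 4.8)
The plan is to run a parabolic bootstrap on the integrability exponent, using Duhamel's principle \eqref{eq:duhamel} but started from a \emph{positive} time, at which some integrability is already available. Concretely, I will establish the following inductive step: if for some $p_{0} \in [1,\infty)$ one has $\sup_{t \in [\delta',T]}\|\mathrm{p}_{\Lambda}(t)\|_{\mathscr{L}^{p_{0}}(\oL)} < \infty$ for every $0 < \delta' < T$, then the same bound holds with $p_{0}$ replaced by any $p_{1} > p_{0}$ satisfying $\frac{1}{p_{0}} - \frac{1}{p_{1}} < \frac{1}{\nn|\Lambda|}$, with a constant depending on $\Lambda$ only through $|\Lambda|$. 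Step 1 (estimate \eqref{Lpcontrol}) provides the base case with $p_{0}$ any exponent in $(1,\tfrac{\nn|\Lambda|}{\nn|\Lambda|-1})$; iterating, at each stage one may decrease $1/p$ by, say, $\tfrac{1}{2\nn|\Lambda|}$, so after a number of iterations depending only on the target $p$ and on $|\Lambda|$ we reach any prescribed $p < \infty$, which yields \eqref{eq:Linftyplarge}. (One further iteration past the stage $p_{0} > \nn|\Lambda|$ would even give an $\mathscr{L}^{\infty}$ bound, but that is not needed for this claim.)

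To carry out the inductive step, fix $0 < \delta < T$, take $t_{1} \in [\delta,T]$ and apply \eqref{eq:duhamel} with $t_{0} := \delta/2$. Let $f \in \mathscr{C}^{\infty}(M^{\Lambda})$. For the first term, since the heat semigroup is self-adjoint in $\mathscr{L}^{2}(\oL)$ we have $\mathbb{E}_{\mathrm{P}_{\Lambda}(t_{0})}[\mathsf{G}_{t_{1}-t_{0}}f] = \int_{M^{\Lambda}} f \cdot \mathsf{G}_{t_{1}-t_{0}}\mathrm{p}_{\Lambda}(t_{0}) \, \dd\oL$, so by Hölder and the heat-semigroup estimate \eqref{eq:heat-semigroup:sobolev} with $k = 0$ this is at most $\Cc(|\Lambda|)\,(t_{1}-t_{0})^{-\frac{\nn|\Lambda|}{2}(\frac{1}{p_{0}}-\frac{1}{p_{1}})}\,\|\mathrm{p}_{\Lambda}(t_{0})\|_{\mathscr{L}^{p_{0}}}\,\|f\|_{\mathscr{L}^{p_{1}'}}$, which since $t_{1} - t_{0} \in [\delta/2, T]$ is bounded by $\Cc(\delta,T,p_{1},|\Lambda|)\|f\|_{\mathscr{L}^{p_{1}'}}$ thanks to the inductive hypothesis. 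For the second term, bound the conditional expectation of $\nabla\HH(\Zd \to \Lambda)$ by $\Cc(|\Lambda|)$ as in Step 1, then use Hölder in space with exponents $(p_{0},p_{0}')$ together with \eqref{eq:heat-semigroup:sobolev} with $k = 1$ to bound it by
\begin{equation*}
\Cc(|\Lambda|) \, \Big(\sup_{t \in [\delta/2,T]}\|\mathrm{p}_{\Lambda}(t)\|_{\mathscr{L}^{p_{0}}}\Big) \, \|f\|_{\mathscr{L}^{p_{1}'}} \int_{0}^{t_{1}} \big(1 + s^{-\frac{1}{2}}\big)\, s^{-\frac{\nn|\Lambda|}{2}\left(\frac{1}{p_{0}}-\frac{1}{p_{1}}\right)}\, \dd s ,
\end{equation*}
and the integral is finite precisely because $\frac{1}{2} + \frac{\nn|\Lambda|}{2}\big(\frac{1}{p_{0}}-\frac{1}{p_{1}}\big) < 1$, i.e.\ our constraint on $p_{1}$, and is bounded uniformly in $t_{1} \in [\delta,T]$. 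Adding the two bounds gives $|\mathbb{E}_{\mathrm{P}_{\Lambda}(t_{1})}[f]| \leq \Cc(\delta,T,p_{1},|\Lambda|)\|f\|_{\mathscr{L}^{p_{1}'}}$ for all smooth $f$ and all $t_{1} \in [\delta,T]$; since $p_{1}' < \infty$, smooth functions are dense in $\mathscr{L}^{p_{1}'}$, and duality then yields $\mathrm{p}_{\Lambda}(t_{1}) \in \mathscr{L}^{p_{1}}(M^{\Lambda})$ with $\sup_{t_{1}\in[\delta,T]}\|\mathrm{p}_{\Lambda}(t_{1})\|_{\mathscr{L}^{p_{1}}} \leq \Cc(\delta,T,p_{1},|\Lambda|)$. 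As $\delta, T$ were arbitrary, the inductive step — hence the claim — follows.

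The argument has no serious obstacle once the pointwise heat-kernel bounds of Appendix \ref{s:heat-kernel} (equivalently, the mapping properties \eqref{eq:heat-semigroup:sobolev}) are in hand; it is the standard parabolic bootstrap. The two points that will require attention are: first, one gains strictly less than one unit of $1/p$ per step — the $s^{-1/2}$ singularity of $\nabla\mathsf{G}_{s}$ forces the per-step gain to stay below $1/(\nn|\Lambda|)$ — so one must iterate finitely many times rather than jump straight to $p = \infty$; and second, all constants must be tracked so that they depend on $\Lambda$ only through $|\Lambda|$ (equivalently the dimension $\nn|\Lambda|$ of $M^{\Lambda}$), which is exactly what makes the bound \eqref{eq:Linftyplarge} uniform over boxes of a given size.
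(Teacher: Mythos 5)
Your proposal is correct and follows essentially the same route as the paper: a bootstrap on the integrability exponent via Duhamel's principle, Hölder's inequality against $\mathrm{p}_{\Lambda}(t)$, and the $\mathscr{L}^{q}\to\mathscr{L}^{r}$ smoothing of $\nabla\mathsf{G}_{s}$, with the per-step gain in $1/p$ kept strictly below $1/(\nn|\Lambda|)$ so that the $s^{-1/2}$ singularity remains time-integrable. The only differences are cosmetic bookkeeping (you invoke the inductive hypothesis on all of $[\delta/2,T]$ instead of splitting the time integral into ``$t$ near $t_1$'' and ``$t$ far from $t_1$'', and you treat the initial term by self-adjointness of the semigroup rather than a crude sup bound), and both variants are sound.
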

  
\begin{proof}
By induction, we show that there is a sequence $p_n \to \infty$ such that \eqref{eq:Linftyplarge} holds with $p = p_n$. 

First, choose $p_1 \in \left(1, \frac{\nn |\La|}{\nn |\La| -1}\right)$, such that $t^{-\hal - \frac{\nn |\La|}{2}\left(1-\frac{1}{p_1}\right)}$ is integrable near $0$, as in the previous step. We then know that \eqref{eq:Linftyplarge} holds for $p = p_1$.

Next, at each induction step, we choose $p_{n+1}$ by requiring that:
\begin{equation*}
\frac{1}{p_{n+1}} := \frac{1}{p_n} + \left( \frac{1}{p_1} -1 \right).
\end{equation*} 
Observe that since $p_1 > 1$ we have $\frac{1}{p_1} -1 < 0$ and thus $\frac{1}{p_{n+1}} < \frac{1}{p_n}$ which means that the sequence $\{p_n\}_n$ is non-increasing. It is not hard to see that it cannot be bounded, thus it tends to $+ \infty$.

Applying Hölder's inequality, we find (cf. \eqref{k1qinfini}) for all $t < t_1$:
    \begin{equation} 
    \label{eq:regularity:bootstrap-lp}
\|\mathbb{E}_{\mathrm{P}(t)}\left[ \nabla \mathsf{G}_{t_{1}-t} f \right] \| \leq \int_{M^\La} \|\nabla \mathsf{G}_{t_{1}-t} f(\bx) \| \mathrm{p}_{\Lambda}(t) \dd \oL(\bx)
\leq \norm{\mathrm{p}_{\Lambda}(t)}_{\mathscr{L}^{p_{n}}}  \|\nabla \mathsf{G}_{t_{1}-t} f \|_{\mathscr{L}^{(p_{n})'}}. 
\end{equation}
Next, we apply Young's inequality \eqref{eq:heat-semigroup:young} for $k= 1$ with $r = (p_n)', q = (p_{n+1})'$ and $p = p_1$. By definition, we have:
\begin{equation*}
\frac{1}{p_n} + \frac{1}{p_1} = 1 + \frac{1}{p_{n+1}}, \text{ and thus also } \frac{1}{(p_n)'} + \frac{1}{p_1} = 1 + \frac{1}{(p_{n+1})'},
\end{equation*}
and we get for all $t < t_1 \leq T$: 
\begin{equation*}
\|\nabla \mathsf{G}_{t_{1}-t} f \|_{\mathscr{L}^{(p_{n})'}} \leq \Cc(T, p_{n+1}, |\La|) \times \norm{f}_{\mathscr{L}^{(p_{n+1})'}} (t_{1}-t)^{-\frac{\nn|\Lambda|}{2}\paren*{1-\frac{1}{p_{1}}}} \paren*{ 1 + (t_{1}-t)^{-\frac{1}{2}}}.
\end{equation*}
Inserting this into \eqref{eq:regularity:bootstrap-lp} yields:
\begin{multline}
\label{grandt}
\|\mathbb{E}_{\mathrm{P}(t)}\left[ \nabla \mathsf{G}_{t_{1}-t} f \right] \| \\
\leq \Cc(T, p_{n+1}, |\La|) \times  \norm{\mathrm{p}_{\Lambda}(t)}_{\mathscr{L}^{p_{n}}} \times \norm{f}_{\mathscr{L}^{(p_{n+1})'}} \times (t_{1}-t)^{-\frac{\nn|\Lambda|}{2}\paren*{1 - \frac{1}{p_{1}}}} \paren*{ 1 + (t_{1}-t)^{-\frac{1}{2}} },
\end{multline}
and we use this bound for $\hal t_1 \leq t \leq t_1$. On the other hand, for $\delta \leq t_0 \leq t \leq \hal t_1$ we can write:
\begin{equation}
\label{petitt}
\|\mathbb{E}_{\mathrm{P}(t)}\left[ \nabla \mathsf{G}_{t_{1}-t} f \right] \| \leq \|\nabla \mathsf{G}_{t_{1}-t} f\|_\infty \leq \Cc(\delta, p_{n+1}, |\La|) \times \norm{f}_{\mathscr{L}^{(p_{n+1})'}},
\end{equation}
taking advantage of the fact that since $t$ is far from $t_1$, the heat kernel at time $t_1 -t$ is smooth.

Now we take $t_{0} \geq \delta > 0$ and argue as in the previous step, starting from Duhamel's principle. We reach \eqref{eq:PremierBound} but instead of \eqref{k1qinfini}, we now use \eqref{grandt}, \eqref{petitt} and write, distinguishing between the two cases “$t$ far from $t_1$” and “$t$ close to $t_1$” in the integral:
    \begin{multline*}
\int_{t_0}^{t_1} \|\mathbb{E}_{\mathrm{P}(t)}\left[ \nabla \mathsf{G}_{t_{1}-t} f \right] \| \dd t 
\leq \int_{t_0}^{\hal t_1} \|\mathbb{E}_{\mathrm{P}(t)}\left[ \nabla \mathsf{G}_{t_{1}-t} f \right] \| \dd t +
\int_{\hal t_1}^{t_1} \|\mathbb{E}_{\mathrm{P}(t)}\left[ \nabla \mathsf{G}_{t_{1}-t} f \right] \| \dd t 
\\
\leq \Cc(\delta, p_{n+1}, |\La|) \times \norm{f}_{\mathscr{L}^{(p_{n+1})'}} \\ 
+  \Cc(T, p_{n+1}, |\La|) \times \int_{\hal t_1}^{t_1}  \norm{\mathrm{p}_{\Lambda}(t)}_{\mathscr{L}^{p_{n}}} \times \norm{f}_{\mathscr{L}^{(p_{n+1})'}} \times (t_{1}-t)^{-\frac{\nn|\Lambda|}{2}\paren*{1 - \frac{1}{p_{1}}}} \paren*{ 1 + (t_{1}-t)^{-\frac{1}{2}} } \dd t.
\end{multline*}

Using our induction hypothesis, we have $\sup_{t \in [\hal t_1, t_1]} \norm{\mathrm{p}_{\Lambda}(t)}_{\mathscr{L}^{p_{n}}} \leq \Cc(\delta, T, p_{n}, |\La|)$, we are thus left with:
\begin{multline*}
\int_{t_0}^{t_1} \|\mathbb{E}_{\mathrm{P}(t)}\left[ \nabla \mathsf{G}_{t_{1}-t} f \right] \| \dd t  \leq \Cc(\delta, p_{n+1}, |\La|) \times \norm{f}_{\mathscr{L}^{(p_{n+1})'}} \\
+ \Cc(\delta, T, p_{n}, |\La|) \times \norm{f}_{\mathscr{L}^{(p_{n+1})'}} \times \int_{\hal t_1}^{t_1} (t_{1}-t)^{-\frac{\nn|\Lambda|}{2}\paren*{1 - \frac{1}{p_{1}}}} \paren*{ 1 + (t_{1}-t)^{-\frac{1}{2}} } \dd t.
\end{multline*}
Our choice of $p_{1}$ guarantees that this last integral converges, and we obtain:
\begin{equation*}
\int_{t_0}^{t_1} \|\mathbb{E}_{\mathrm{P}(t)}\left[ \nabla \mathsf{G}_{t_{1}-t} f \right] \| \dd t \leq \Cc(\delta, T, p_{n+1}, |\La|) \times \norm{f}_{\mathscr{L}^{(p_{n+1})'}}.
\end{equation*}
Thus by duality, \eqref{eq:Linftyplarge} holds for $p_{n+1}$, which concludes the proof of the claim by induction.
\end{proof}

\begin{remark}
\cite[Proof of Lem.~10.7, Step 2.]{mei2018mean} performs a similar iteration in the Euclidean case. It yields, for all $p > 1$, an $\mathscr{L}^{\infty, p}_{\mathrm{loc}}((0, + \infty), M^\La)$ bound on the local densities depending only on $p$ and on $|\La|$. However, since this bound grows with $p$, it is unclear to us how one would directly deduce that local densities are in $\mathscr{L}^{\infty, \infty}_{\mathrm{loc}}((0, + \infty), M^\La)$ as claimed in \cite[Proof of Lemma~10.7, end of Step 2]{mei2018mean}, although a posteriori it is indeed the case (because the densities are found to be continuous).

 This is however not a big issue, because it is enough to work with $p$ arbitrarily large, see below. In fact, it is even preferable to do so, because the case $p = + \infty$ of the potential estimate used in \cite[(10.51)]{mei2018mean} is not covered in the reference \cite{zbMATH03277871} and does not seem to be true.
\end{remark}

The rest of the proof differs from the approach of \cite{Jordan_1998,mei2018mean}.

\subsubsection*{Step 3. Sobolev regularity of local densities}
We start again from Duhamel's principle, which we recall here for convenience:
\begin{equation}
\label{DuhamelCC}
  \mathbb{E}_{\mathrm{P}_{\Lambda}(t_{1})}\bracket*{ f } = \mathbb{E}_{\mathrm{P}_{\Lambda}(t_{0})}\bracket*{\mathsf{G}_{t_{1} - t_{0}} f} - \int_{t_0}^{t_{1}} \mathbb{E}_{\mathrm{P}_{\Lambda}(t)} \bracket*{ \paren*{\nabla \mathsf{G}_{t_{1}-t} f} \cdot \mathbb{E}_{\mathrm{P}(t)} \bracket*{ \nabla \HH(\mathbb{Z}^{\d} \to \Lambda) \given \Lambda }  } \mathtt{d} t.
\end{equation}
In the previous step, in order to prove that local densities are in $\mathscr{L}^{p}$, we have used a light argument, working within a fixed $\Lambda$ and applying a rough bound on the conditional expectation of the form $\|\mathbb{E}_{\mathrm{P}(t)} \bracket*{ \nabla \HH(\mathbb{Z}^{\d} \to \Lambda) \given \Lambda }\| \leq \sup_{\bx \in \Conf} \|\nabla \HH(\mathbb{Z}^{\d} \to \Lambda)(\bx)\| \leq \Cc(|\La|)$.

To obtain Sobolev regularity, we need to use finer arguments.

Using the definition \eqref{espCondnablaHZdLa} and the fact that $\P$ admits local densities, we have, for $i \in \La$:
\begin{equation}
\label{prevw}
  \mathbb{E}_{\mathrm{P}(t)} \bracket*{ \nabla \HH(\mathbb{Z}^{\d} \to \Lambda) \given \Lambda }(\bx)_{i} = \nabla \HH_\La(\bx)_i + \frac{1}{\mathrm{p}_{\Lambda}(t,\bx)} \sum_{j \in \mathbb{Z}^{\d} \setminus \Lambda} J_{i,j} \int_M \partial_{1} \W(\bx_{i}, y) \mathrm{p}_{\Lambda \cup \{j\}}(t, \bx , y) \dd \omega(y),
\end{equation}
where we recall that $\partial_{1} \W$ is the gradient with respect to the first coordinate of $\W$. From this expression it follows that, defining for all $\bx \in \Conf(\Lambda),\, y \in M$ and $j \in \Zd \setminus \La$, the vectors
\begin{equation}
\label{def:vw}
 v_{\Lambda}(\bx) := \left(\nabla \V(\bx_{i}) + \nabla \HH_\La(\bx)_i\right)_{i \in \La} \quad w_{\Lambda \cup \{j\}}(\bx,y)  \coloneq \left(J_{ij} \partial_{1} \W(\bx_{i}, y)\right)_{i \in \La},
\end{equation}
we can re-write the integrand in \eqref{DuhamelCC} as:
\begin{multline}\label{eq:regularity:rewriting-integrand}
    \mathbb{E}_{\mathrm{P}_{\Lambda}(t)} \bracket*{ \nabla \mathsf{G}_{t_{1} - t} f \cdot \mathbb{E}_{\mathrm{P}(t)} \bracket*{ \nabla \HH(\mathbb{Z}^{\d} \to \Lambda) \given \Lambda }  } = \int_{M^{\Lambda}}  \nabla \mathsf{G}_{t_{1}-t}f(\bx) \cdot v_{\Lambda}(\bx)  \mathrm{p}_{\Lambda}(t,\bx) \dd \omega_{\Lambda}(\bx) \\ + \sum_{j \in \Zd \setminus \Lambda} \int_{M^{\Lambda \cup \{j\}}} \nabla \mathsf{G}_{t_{1}-t} f(\bx) \cdot  w_{\Lambda \cup \{j\}}(\bx,y) \mathrm{p}_{\Lambda \cup \{j\}}(t,\bx,y) \dd \omega_{\Lambda}(\bx) \dd \omega(y).
\end{multline}
The non-locality of the equations appears now clearly. Nonetheless, we can prove the following.

\begin{claim}[Bootstrap in regularity]
\label{claim:MainBootstrap}
Assume that there exists $s \in [0,2]$ such that for all $\Lambda \Subset \Zd$ and all $t > 0$, the local density $p_{\Lambda}(t) \in \mathscr{W}^{s,p}$, and that for all $0 < \delta < T$:
  \begin{equation*}
    \sup_{t \in [\delta,T]} \norm{\mathrm{p}_{\Lambda}(t)}_{\mathscr{W}^{s,p}} \leq \Cc(\delta, T, s, p, |\Lambda|).
  \end{equation*}
Then for all $\varepsilon \in (0,1)$, for all $\Lambda \Subset \Zd$ and for all $t > 0$, the local density $p_{\Lambda}(t) \in \mathscr{W}^{s+ 1 - \varepsilon, p}$. Moreover:
  \begin{equation}
  \label{SoboLocBound}
    \sup_{t \in [\delta,T]} \norm{\mathrm{p}_{\Lambda}(t)}_{\mathscr{W}^{s + 1 -\varepsilon,p}} \leq \Cc'(\delta, T, s, \varepsilon, p, |\Lambda|).
  \end{equation}
\end{claim}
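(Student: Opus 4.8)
The plan is to bootstrap directly on Duhamel's formula \eqref{DuhamelCC}, combining the smoothing of the heat semigroup $\mathsf{G}$ on $M^{\La}$ (read off the pointwise kernel bounds of \cref{s:heat-kernel}, extended to fractional Sobolev scales by interpolation between integer orders, exactly as in \eqref{eq:heat-semigroup:sobolev}--\eqref{eq:heat-semigroup:young}) with a duality argument. Fix $\La\Subset\Zd$, $\varepsilon\in(0,1)$ and $0<\delta<T$, and set $t_{0}:=\delta/2$. I will show that there is $\Cc'=\Cc'(\delta,T,s,\varepsilon,p,|\La|)$ with
\[
\abs{\Esp_{\mathrm{P}_{\La}(t_{1})}[f]}\le\Cc'\,\norm{f}_{\mathscr{W}^{-(s+1-\varepsilon),p'}}\quad\text{for all }t_{1}\in[\delta,T]\text{ and all }f\in\mathscr{C}^{\infty}(M^{\La}),
\]
where $p'$ is the conjugate exponent; since $\mathscr{C}^{\infty}$ is dense in $\mathscr{W}^{-(s+1-\varepsilon),p'}$ and $1<p<\infty$, this identifies $\mathrm{p}_{\La}(t_{1})$ with an element of $\mathscr{W}^{s+1-\varepsilon,p}$ obeying \eqref{SoboLocBound}. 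Throughout I use that for $a\in\R$, $b\ge0$ the operator $\mathsf{G}_{\tau}$ maps $\mathscr{W}^{a,q}$ into $\mathscr{W}^{a+b,q}$ with norm $\lesssim\max(1,\tau^{-b/2})$, for $q\in\{p,p'\}$.

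\textbf{Controlling the (conditional) drift.} The main step is to bound, uniformly for $t\in[\delta/2,T]$, the $\mathscr{W}^{s,p}(M^{\La})$ norm of the vector field
\[
V_{\La}(t,\bx):=v_{\La}(\bx)\,\mathrm{p}_{\La}(t,\bx)+\sum_{j\in\Zd\setminus\La}\int_{M}w_{\La\cup\{j\}}(\bx,y)\,\mathrm{p}_{\La\cup\{j\}}(t,\bx,y)\dd\omega(y),
\]
with $v_{\La},w_{\La\cup\{j\}}$ as in \eqref{def:vw}, so that the integrand of \eqref{DuhamelCC} equals $\int_{M^{\La}}\nabla\mathsf{G}_{t_{1}-t}f\cdot V_{\La}(t)\,\dd\oL$ by \eqref{eq:regularity:rewriting-integrand}. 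Because $\W\in\mathscr{C}^{3}$, the fields $v_{\La}$ and $(\bx,y)\mapsto w_{\La\cup\{j\}}(\bx,y)$ are of class $\mathscr{C}^{2}$, hence (since $s\le2$) act as multipliers on $\mathscr{W}^{s,p}$; the local term is thus bounded in $\mathscr{W}^{s,p}(M^{\La})$ by $\norm{v_{\La}}_{\mathscr{C}^{2}}\norm{\mathrm{p}_{\La}(t)}_{\mathscr{W}^{s,p}}\le\Cc(\delta/2,T,s,p,|\La|)$ by hypothesis. For the non-local sum I combine three facts: (i) multiplication by $w_{\La\cup\{j\}}$ costs a factor $\norm{w_{\La\cup\{j\}}}_{\mathscr{C}^{2}}\lesssim\norm{\partial_{1}\W}_{\mathscr{C}^{2}}\sum_{i\in\La}\abs{J_{ij}}$; (ii) integrating out the external variable $y$ is bounded from $\mathscr{W}^{s,p}(M^{\La\cup\{j\}})$ to $\mathscr{W}^{s,p}(M^{\La})$ (Minkowski's integral inequality applied to the Gagliardo seminorm); and (iii) the hypothesis is assumed \emph{uniform over all finite boxes with a constant depending only on their size}, so $\norm{\mathrm{p}_{\La\cup\{j\}}(t)}_{\mathscr{W}^{s,p}}\le\Cc(\delta/2,T,s,p,|\La|+1)$ independently of $j$. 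Since $\sum_{j\in\Zd\setminus\La}\sum_{i\in\La}\abs{J_{ij}}\le|\La|\,\norm{J}_{\ell^{1}}<\infty$ by \eqref{eq:short-range}, the $j$-sum converges and $\sup_{t\in[\delta/2,T]}\norm{V_{\La}(t)}_{\mathscr{W}^{s,p}}\le\Cc''(\delta,T,s,p,|\La|)$.

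\textbf{Closing the bootstrap.} Take $t_{0}=\delta/2$ in \eqref{DuhamelCC}, pair against $f\in\mathscr{C}^{\infty}(M^{\La})$, and estimate by duality between $\mathscr{W}^{s,p}$ and $\mathscr{W}^{-s,p'}$. The boundary term $\int\mathrm{p}_{\La}(\delta/2)\,\mathsf{G}_{t_{1}-\delta/2}f\,\dd\oL$ is $\le\norm{\mathrm{p}_{\La}(\delta/2)}_{\mathscr{W}^{s,p}}\norm{\mathsf{G}_{t_{1}-\delta/2}f}_{\mathscr{W}^{-s,p'}}\lesssim(t_{1}-\tfrac{\delta}{2})^{-\frac{1-\varepsilon}{2}}\norm{f}_{\mathscr{W}^{-(s+1-\varepsilon),p'}}$, which is acceptable since $t_{1}-\tfrac{\delta}{2}\ge\tfrac{\delta}{2}$ and $\norm{\mathrm{p}_{\La}(\delta/2)}_{\mathscr{W}^{s,p}}$ is controlled by the hypothesis. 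For the time integral, each integrand is $\le\norm{\nabla\mathsf{G}_{t_{1}-t}f}_{\mathscr{W}^{-s,p'}}\norm{V_{\La}(t)}_{\mathscr{W}^{s,p}}\lesssim\norm{\mathsf{G}_{t_{1}-t}f}_{\mathscr{W}^{1-s,p'}}\,\Cc''\lesssim(t_{1}-t)^{-(1-\frac{\varepsilon}{2})}\Cc''\norm{f}_{\mathscr{W}^{-(s+1-\varepsilon),p'}}$, using the smoothing gain $(1-s)-\bigl(-(s+1-\varepsilon)\bigr)=2-\varepsilon$; since $\varepsilon\in(0,1)$, $\int_{\delta/2}^{t_{1}}(t_{1}-t)^{-(1-\varepsilon/2)}\dd t\le\tfrac{2}{\varepsilon}T^{\varepsilon/2}<\infty$. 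Summing the two contributions yields the displayed bound on $\abs{\Esp_{\mathrm{P}_{\La}(t_{1})}[f]}$, hence \eqref{SoboLocBound}; the implicit Bochner integrals are legitimate because $t\mapsto\mathrm{p}_{\La}(t)$ is measurable with values in the separable space $\mathscr{W}^{s,p}$.

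\textbf{Main obstacle.} The genuinely non-routine part is the uniform control of the non-local term $V_{\La}(t)$: one must lose no derivatives when integrating out the external spin, stay within the regularity budget $s\le2$ forced by $\W\in\mathscr{C}^{3}$, and — crucially — exploit that the inductive bound is uniform over all boxes with size-only dependence, which is precisely what makes the sum over $j\in\Zd\setminus\La$ summable against $\norm{J}_{\ell^{1}}$; without this uniformity the non-local contribution would not even be finite. A secondary, purely technical point is setting up the fractional-order smoothing $\mathsf{G}_{\tau}\colon\mathscr{W}^{a,q}\to\mathscr{W}^{a+b,q}$ on a general compact manifold with the weighted Laplacian, since \cref{s:heat-kernel} supplies the integer-order and $\mathscr{L}^{p}\to\mathscr{L}^{q}$ versions and the fractional case must be obtained by interpolation and duality.
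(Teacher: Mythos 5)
Your proof is correct and follows essentially the same route as the paper's: Duhamel's formula, the local/non-local splitting of the drift with $w_{\La\cup\{j\}}\in\mathscr{W}^{2,\infty}$ acting as a multiplier on $\mathscr{W}^{s,p}$ (using $s\le2$), the $\mathscr{W}^{s,p}$--$\mathscr{W}^{-s,p'}$ duality pairing, the integrable singularity $(t_1-t)^{-(1-\varepsilon/2)}$ from the semigroup smoothing, and summability over $j$ via $\|J\|_{\ell^1}$. The only differences are cosmetic: you test against $f$ measured in $\mathscr{W}^{-(s+1-\varepsilon),p'}$ where the paper takes $f=(-\Delta_\V)^{\frac{s+\varepsilon}{2}}\varphi$ with $\varphi\in\mathscr{L}^{p'}$ (equivalent up to relabelling $\varepsilon\leftrightarrow1-\varepsilon$), and you integrate out the external spin before pairing where the paper applies a generalized Hölder inequality directly on the product space.
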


\newcommand{\GG}{\mathsf{G}}
\begin{proof}
We start again from Duhamel's formula, applied to $f \coloneq (-\Delta_\V)^{\frac{s+\varepsilon}{2}}\varphi$ for some smooth function $\varphi$. Inserting \eqref{eq:regularity:rewriting-integrand} into \eqref{DuhamelCC}, we get 
\begin{multline}
\label{DuhaBoot}
  \mathbb{E}_{\mathrm{P}_{\Lambda}(t_{1})}\bracket*{ f }   = \mathbb{E}_{\mathrm{P}_{\Lambda}(t_{0})}\bracket*{\mathsf{G}_{t_{1} - t_{0}} f} - \int_{t_0}^{t_{1}} \int_{M^{\Lambda}}  \nabla \mathsf{G}_{t_{1}-t}f(\bx) \cdot v_{\Lambda}(\bx)  \mathrm{p}_{\Lambda}(t,\bx) \dd \omega_{\Lambda}(\bx) \dd t \\ +  \int_{t_0}^{t_{1}}  \sum_{j \in \Zd \setminus \Lambda} \int_{M^{\Lambda \cup \{j\}}} \nabla \mathsf{G}_{t_{1}-t} f(\bx) \cdot  w_{\Lambda \cup \{j\}}(\bx,y) \mathrm{p}_{\Lambda \cup \{j\}}(t,\bx,y) \dd \omega_{\Lambda}(\bx) \dd \omega(y) \dd t.
\end{multline}
The three terms in the right-hand side can be controlled in a similar fashion, and we focus here on the last one, which reads:
\begin{multline*}
\int_{t_0}^{t_{1}}  \sum_{j \in \Zd \setminus \Lambda} \int_{M^{\Lambda \cup \{j\}}} \nabla \mathsf{G}_{t_{1}-t} f(\bx) \cdot  w_{\Lambda \cup \{j\}}(\bx,y) \mathrm{p}_{\Lambda \cup \{j\}}(t,\bx,y) \dd \omega_{\Lambda}(\bx) \dd \omega(y) \dd t \\
=  \sum_{j \in \Zd \setminus \Lambda} \int_{t_0}^{t_{1}}  \int_{M^{\Lambda \cup \{j\}}} \nabla (-\Delta_\V)^{\frac{s+\varepsilon}{2}} \mathsf{G}_{t_{1}-t} \varphi(\bx) \cdot  w_{\Lambda \cup \{j\}}(\bx,y) \mathrm{p}_{\Lambda \cup \{j\}}(t,\bx,y) \dd \omega_{\Lambda}(\bx) \dd \omega(y) \dd t. 
\end{multline*}
Since by assumption $\W \in \mathscr{C}^{3}(M \times M)$, we have $w_{\Lambda \cup \{j\}} \in \mathscr{W}^{2,\infty} \subset \mathscr{W}^{s,\infty}$ since $s \leq 2$ (see \eqref{def:vw} for $w$). Thus for all $t > 0$ we have $ w_{\Lambda \cup \{j\}} \mathrm{p}_{\Lambda \cup \{j\}}(t) \in \mathscr{W}^{s,p}$ with
  \begin{equation*}
     \norm{ w_{\Lambda \cup \{j\}}  \mathrm{p}_{\Lambda \cup \{j\}}(t)}_{\mathscr{W}^{s,p}} \leq \norm{w_{\Lambda \cup \{j\}}}_{\mathscr{W}^{s,\infty}} \norm{\mathrm{p}_{\Lambda \cup \{j\}}(t)}_{\mathscr{W}^{s,p}},
  \end{equation*}
  the norm $\norm{\mathrm{p}_{\Lambda \cup \{j\}}(t)}_{\mathscr{W}^{s,p}}$ being finite by assumption. The generalized Hölder inequality gives, for all $j \in \Zd \setminus \La$ and all $t \in [t_0, t_1]$:
  \begin{multline*}
    \abs*{\int_{M^{\Lambda \cup \{j\}}} \nabla (-\Delta_\V)^{\frac{s+\varepsilon}{2}} \mathsf{G}_{t_{1}-t} \varphi \cdot w_{\Lambda \cup \{j\}}(\bx, y) \mathrm{p}_{\Lambda \cup \{j\}}(t)(\bx, y)  \mathtt{d} \omega_{\Lambda \cup \{j\}}(\bx,y) } \\
    \leq \norm{\nabla (-\Delta_\V)^{\frac{s+\varepsilon}{2}} \mathsf{G}_{t_{1} - t} \varphi}_{\mathscr{W}^{-s,p'}} \times  \norm{w_{\Lambda \cup \{j\}}}_{\mathscr{W}^{s,\infty}} \times  \norm{\mathrm{p}_{\Lambda \cup \{j\}}(t)}_{\mathscr{W}^{s,p}} .
  \end{multline*}
The norm $\norm{w_{\Lambda \cup \{j\}}}_{\mathscr{W}^{s,\infty}}$ is bounded by $|J_{i,j}|$ times a constant depending only on the model (and on $|\La|$), and $\norm{\mathrm{p}_{\Lambda \cup \{j\}}(t)}_{\mathscr{W}^{s,p}}$ is bounded by $\Cc(\delta, T, s, p, |\La| + 1)$ by assumption. On the other hand, we have:
  \begin{equation*}
    \norm{\nabla (-\Delta_\V)^{\frac{s+\varepsilon}{2}} \mathsf{G}_{t_{1} - t} \varphi}_{\mathscr{W}^{-s,p'}} \leq \norm{\mathsf{G}_{t_{1} - t} \varphi}_{\mathscr{W}^{1+\varepsilon,p'} } \leq c \norm{\varphi}_{\mathscr{L}^{p'}} \paren*{ 1 + (t_{1} - t)^{-1/2}}^{1+\varepsilon}.
  \end{equation*}
For the first inequality, observe that we are controlling $1 + 2 \times \frac{s + \varepsilon}{2} - s$ derivatives of $\varphi$ in $\mathscr{L}^{p'}$, the second inequality is an instance of\footnote{Take $s = 1 +\epsilon$, $p = 1$, $r = q =  p'$ with the notation of Theorem \ref{YoungHeat}.} \eqref{eq:heat-semigroup:young}. Since $\varepsilon < 1$, the last expression in $t$ is integrable near $t_{1}$ (this wouldn't be the case for $\epsilon =1$, which is the reason why we cannot quite get one full level of regularity at each step of the bootstrap). We can now sum these estimates over $j \in \Zd \setminus \Lambda$ and obtain for $\delta \leq t_0 < t_1 < T$:
\begin{equation*}
 \sum_{j \in \Zd \setminus \Lambda}  \int_{t_0}^{t_{1}} \int_{M^{\Lambda \cup \{j\}}} \nabla \mathsf{G}_{t_{1}-t} f(\bx) \cdot  w_{\Lambda \cup \{j\}}(\bx,y) \mathrm{p}_{\Lambda \cup \{j\}}(t,\bx,y) \dd \omega_{\Lambda}(\bx) \dd \omega(y) \dd t \leq \Cc'(\delta, T, s, \varepsilon, p, |\La|) \|\varphi\|_{\mathscr{L}^{p'}}.
\end{equation*}
Returning to \eqref{DuhaBoot}, we thus get $\mathbb{E}_{\mathrm{P}_{\Lambda}(t_{1})}\bracket*{ f }  \leq \Cc'(\delta, T, s, \varepsilon, p, |\La|) \|\varphi\|_{\mathscr{L}^{p'}}$ with $f = (-\Delta_\V)^{\frac{s+\varepsilon}{2}}\varphi$, which yields that the local density $\p_\La(t_1) \in \mathscr{W}^{s + \varepsilon,p}$ and \eqref{SoboLocBound}.
\end{proof}

\newcommand{\Ww}{\mathscr{W}}
Combining Claim \ref{claim:Lpforallp} and Claim \ref{claim:MainBootstrap}, we deduce that the local densities are in $\Ww^{s, p}$ for all $s < 3$ and all $p \in [1, +\infty)$. By embedding Sobolev spaces into Hölder spaces, as per \eqref{eq:sobolev:embedding:holder}, we get that the local densities have Hölder regularity $\CC^{2, 1-\epsilon}$ in space for all $\epsilon > 0$.
\begin{corollary}
\label{coro:C2epsilon}
For all $\Lambda \Subset \Zd$ and all $t > 0$, the local density $p_{\Lambda}(t)$ is in $\CC^{2, 1-\epsilon}(M^\La)$ for all $\epsilon > 0$. Moreover, for all $0 < \delta < T$, we have:
\begin{equation}
\label{C2locunif}
    \sup_{t \in [\delta,T]} \norm{\mathrm{p}_{\Lambda}(t)}_{\CC^{2, 1-\epsilon}} \leq \Cc'(\delta, T, \varepsilon, |\Lambda|).
\end{equation}
\end{corollary}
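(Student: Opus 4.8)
The plan is to run a finite bootstrap on the Sobolev regularity of the local densities, feeding Claim~\ref{claim:MainBootstrap} into itself starting from the integrability of Claim~\ref{claim:Lpforallp}, and then to close the argument with the Morrey--Sobolev embedding \eqref{eq:sobolev:embedding:holder}.

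First I would note that Claim~\ref{claim:Lpforallp} is exactly the hypothesis of Claim~\ref{claim:MainBootstrap} in the case $s=0$: for every $\Lambda \Subset \Zd$ and every $p \in [1,\infty)$ one has $\mathrm{p}_{\Lambda}(t) \in \mathscr{W}^{0,p} = \mathscr{L}^{p}(M^{\Lambda})$ with the locally-uniform-in-time bound \eqref{eq:Linftyplarge}. It is essential that this holds \emph{simultaneously for every box}, since the non-local term handled in Claim~\ref{claim:MainBootstrap} involves the density $\mathrm{p}_{\Lambda \cup \{j\}}$ on a box of size $|\Lambda|+1$. Applying Claim~\ref{claim:MainBootstrap} once, with some $\varepsilon_1 \in (0,1)$, upgrades this to $\mathrm{p}_{\Lambda}(t) \in \mathscr{W}^{1-\varepsilon_1,p}$ for all $\Lambda$ and all $p$, with the bound \eqref{SoboLocBound}; since $1-\varepsilon_1 \in [0,2]$, a second application (with $\varepsilon_2$) gives $\mathscr{W}^{2-\varepsilon_1-\varepsilon_2,p}$; since $2-\varepsilon_1-\varepsilon_2 \in [0,2]$, a third application (with $\varepsilon_3$) gives $\mathscr{W}^{3-\varepsilon_1-\varepsilon_2-\varepsilon_3,p}$, still with a bound uniform over $t \in [\delta,T]$ and a constant depending on $\Lambda$ only through $|\Lambda|$ (three steps suffice, so the box-size entering the constants never grows beyond $|\Lambda|+3$). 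As the $\varepsilon_i \in (0,1)$ are arbitrary, this proves $\mathrm{p}_{\Lambda}(t) \in \mathscr{W}^{s,p}$ for every $s<3$, every $p \in [1,\infty)$, every $\Lambda \Subset \Zd$ and every $t>0$, with the corresponding norm bounded on $[\delta,T]$.

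It then remains to invoke the Sobolev embedding $\mathscr{W}^{s,p}(M^{\Lambda}) \hookrightarrow \CC^{k,\alpha}(M^{\Lambda})$, valid whenever $s - \tfrac{\nn|\Lambda|}{p} \ge k+\alpha$ (recall $\dim M^{\Lambda} = \nn|\Lambda|$). Fixing $\epsilon > 0$, I would choose $s = 3 - \tfrac{\epsilon}{2}$ and then $p$ large enough that $\tfrac{\nn|\Lambda|}{p} \le \tfrac{\epsilon}{2}$; this yields $s - \tfrac{\nn|\Lambda|}{p} \ge 3-\epsilon = 2 + (1-\epsilon)$, hence $\mathrm{p}_{\Lambda}(t) \in \CC^{2,1-\epsilon}(M^{\Lambda})$, and, using \eqref{SoboLocBound} with these values of $s$ and $p$, the estimate $\sup_{t \in [\delta,T]} \|\mathrm{p}_{\Lambda}(t)\|_{\CC^{2,1-\epsilon}} \le \Cc'(\delta,T,\epsilon,|\Lambda|)$, which is \eqref{C2locunif}.

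I do not anticipate a genuine obstacle: the Corollary is a formal consequence of Claims~\ref{claim:Lpforallp} and \ref{claim:MainBootstrap} together with a standard embedding. The two points deserving a line of care are that the fractional orders produced along the iteration stay inside the admissible window $[0,2]$ required by Claim~\ref{claim:MainBootstrap} --- which they do, three steps of size slightly below $1$ carrying $0$ past any $s<3$ --- and that the $|\Lambda|$-dependence of the constants is tracked so that, for each fixed $\Lambda$, the final constant $\Cc'(\delta,T,\epsilon,|\Lambda|)$ is finite; both are immediate from the cited statements.
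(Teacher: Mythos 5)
Your proof is correct and is essentially the paper's own argument: the paper likewise obtains $\mathrm{p}_\Lambda(t)\in\mathscr{W}^{s,p}$ for all $s<3$ and all $p<\infty$ by iterating Claim~\ref{claim:MainBootstrap} from the base case of Claim~\ref{claim:Lpforallp}, and then concludes via the Sobolev--Hölder embedding \eqref{eq:sobolev:embedding:holder}. Your extra care about the intermediate orders staying in $[0,2]$ and the constants depending on the box only through its size is exactly the bookkeeping the paper leaves implicit.
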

In particular, the local densities $\p_\La$ \emph{are} indeed in $\mathscr{L}^{\infty}(M^\La)$, locally uniformly in time. It remains to establish the part regarding the regularity in time of Theorem \ref{theo:regul}.

\subsubsection*{Step 4. Time regularity of local densities}
Regularity in space of the local densities allows us to state a “strong in space, weak in time” version of Duhamel's principle \eqref{eq:duhamel}, indeed for all $\La \Subset \Zd$, for all $\bx_0 \in M^\La$ and for all $0 \leq t_0 < t_1$, we have:
\begin{equation}\label{eq:duhamelStrongWeak}
  \begin{split}
    \p_\La(t_1, \bx_0) &= \int_{M^\La} \gL(t_1 -t_0, \bx_0, \bx) \p_\La(t_0, \bx) \dd \oL(\bx) 
                       \\& - \int_{t_0}^{t_1} \int_{M^\La} \nabla \gL(t_1-t,\bx_0, \bx) \cdot \left(v_\La(\bx) \p_\La(t, \bx)\right) \dd \oL(\bx)  \dd t
                       \\& - \sum_{j \in \Zd \setminus \La} \int_{t_0}^{t_1} \int_{M^{\La\cup\{j\}}} \nabla \gL(t_1-t,\bx_0, \bx) \cdot \left(w_{\La\cup\{j\}}(\bx, y) \p_{\La\cup \{j\}}(t, \bx, y) \right)  \dd \omega_{\La \cup\{j\}}(\bx,y) \dd t,
  \end{split}
\end{equation}
where $\gL$ is the heat kernel on $M^\La$ (see Appendix \ref{s:heat-kernel}) and $v, w$ are as in \eqref{def:vw}.

\begin{claim}
  For all $\Lambda \Subset \Zd$, and all $\bx \in M^\La$, the local density $\mathrm{p}_{\Lambda}(\cdot, \bx)$ is in $\CC^1(0, + \infty)$. Moreover, for all $0 < \delta < T$ we have
  \begin{equation*}
\sup_{t \in [\delta, T]} \|\partial_t \mathrm{p}_{\Lambda}(t, \cdot)\|_{\mathscr{L}^\infty} \leq \Cc(\delta, T, |\La|).
  \end{equation*}
\end{claim}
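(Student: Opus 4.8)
The plan is to read off the time regularity of $\p_\La(\cdot,\bx)$ directly from the ``strong in space, weak in time'' Duhamel identity \eqref{eq:duhamelStrongWeak}, now that \cref{coro:C2epsilon} provides uniform spatial $\CC^{2,1-\epsilon}$ control of all the local densities. Fix $\La\Subset\Zd$ and $0<\delta<T$; I would work with $t_1\in[\delta,T]$ and freeze $t_0:=\delta/2$. For every $t\geq\delta/4$, \cref{coro:C2epsilon} gives $\p_\La(t)\in\CC^{2,1-\epsilon}(M^\La)$ and $\p_{\La\cup\{j\}}(t)\in\CC^{2,1-\epsilon}(M^{\La\cup\{j\}})$ for $j\in\Zd\setminus\La$, with norms $\leq\Cc(\delta,T,\epsilon,|\La|)$ uniformly in $t$. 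Recalling $v_\La,w_{\La\cup\{j\}}$ from \eqref{def:vw}, I would then integrate by parts in the $M^\La$--variable in \eqref{eq:duhamelStrongWeak} to move the gradient off the heat kernel, obtaining
\begin{equation}
\label{eq:timeDuhamelIBP}
\p_\La(t_1,\bx_0)=\mathsf{G}^{\La}_{t_1-t_0}\p_\La(t_0)(\bx_0)+\int_{t_0}^{t_1}\mathsf{G}^{\La}_{t_1-t}\,\phi(t,\cdot)(\bx_0)\,\dd t,
\end{equation}
where $\mathsf{G}^{\La}_s$ is the heat semigroup on $M^\La$ and $\phi(t,\bx):=\div_\V\big(v_\La(\bx)\p_\La(t,\bx)\big)+\sum_{j\in\Zd\setminus\La}\int_M\div_\V\big(w_{\La\cup\{j\}}(\cdot,y)\,\p_{\La\cup\{j\}}(t,\cdot,y)\big)(\bx)\,\dd\omega(y)$, the inner divergence acting only on the $M^\La$--variables. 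By the $\CC^{2,1-\epsilon}$ bounds and the short-range assumption \eqref{eq:short-range} (which makes the $j$--sum converge, since $\sum_j\max_{i\in\La}|J_{i,j}|\leq|\La|\,\|J\|_{\ell^1}$), one gets $\sup_{t\in[\delta/4,T]}\|\phi(t,\cdot)\|_{\CC^{1,1-\epsilon}}\leq\Cc(\delta,T,\epsilon,|\La|)$; moreover each term of \eqref{eq:timeDuhamelIBP} is continuous in $t_1$ (dominated convergence, using $\int_{M^\La}|\nabla\gL(s,\bx_0,\bx)|\,\dd\omega_\La(\bx)\lesssim s^{-1/2}$ on the original form of \eqref{eq:duhamelStrongWeak}), so $t\mapsto\p_\La(t,\bx_0)$ is continuous on $(0,\infty)$; combined with the uniform $\CC^{2,1-\epsilon}$ bound and Arzel\`a--Ascoli, $t\mapsto\p_\La(t)$ is continuous into $\CC^{2,\alpha}(M^\La)$ for every $\alpha<1-\epsilon$, whence $t\mapsto\phi(t,\cdot)$ is continuous into $\CC^{0,1-\epsilon}(M^\La)$.

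Next I would differentiate \eqref{eq:timeDuhamelIBP} in $t_1$. The first term is harmless: $t_1-t_0\geq\delta/2$, and since $\p_\La(t_0)\in\CC^2$ one has $\partial_{t_1}\mathsf{G}^\La_{t_1-t_0}\p_\La(t_0)=\mathsf{G}^\La_{t_1-t_0}\Delta_\V\p_\La(t_0)$, bounded in $\mathscr{L}^\infty$ by $\|\Delta_\V\p_\La(t_0)\|_\infty\leq\Cc(\delta,T,|\La|)$. For the Duhamel integral $u(t_1):=\int_{t_0}^{t_1}\mathsf{G}^\La_{t_1-t}\phi(t)\,\dd t$, the key input is the H\"older-scale smoothing estimate
\begin{equation}
\label{eq:HolderHeatSmooth}
\big\|\Delta_\V\mathsf{G}^\La_s g\big\|_{\mathscr{L}^\infty(M^\La)}\leq\Cc(|\La|)\,s^{-(1+\epsilon)/2}\,\|g\|_{\CC^{0,1-\epsilon}(M^\La)},\qquad 0<s\leq 1,
\end{equation}
which follows from the Gaussian bounds on $\gL$ and $\Delta_\V\gL$ recalled in Appendix~\ref{s:heat-kernel} together with $\mathsf{G}^\La_s\mathbf{1}=\mathbf{1}$, by writing $\Delta_\V\mathsf{G}^\La_s g(\bx)=\int_{M^\La}\Delta_\V\gL(s,\bx,\bx')\,[g(\bx')-g(\bx)]\,\dd\omega_\La(\bx')$ and using $|g(\bx')-g(\bx)|\leq\|g\|_{\CC^{0,1-\epsilon}}\,\di_\La(\bx,\bx')^{1-\epsilon}$. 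Since $(1+\epsilon)/2<1$, the map $s\mapsto\Delta_\V\mathsf{G}^\La_s\phi(t)$ is absolutely integrable near $0$, so a routine difference-quotient computation shows $u\in\CC^1$ with $\partial_{t_1}u(t_1,\bx_0)=\phi(t_1,\bx_0)+\int_{t_0}^{t_1}\Delta_\V\mathsf{G}^\La_{t_1-t}\phi(t)(\bx_0)\,\dd t$, the right-hand side being continuous in $t_1$ (continuity of $t\mapsto\phi(t)$ into $\CC^{0,1-\epsilon}$ plus dominated convergence) and bounded in $\mathscr{L}^\infty$ by $\Cc(\delta,T,\epsilon,|\La|)$. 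Summing the two contributions yields $\p_\La(\cdot,\bx_0)\in\CC^1(0,\infty)$ for every $\bx_0$, with $\sup_{t\in[\delta,T]}\|\partial_t\p_\La(t,\cdot)\|_{\mathscr{L}^\infty}\leq\Cc(\delta,T,|\La|)$, which is the assertion; undoing the integration by parts in the expression for $\partial_{t_1}u$ then recognizes the right-hand side as $\Delta_\V\p_\La(t)+\div_\V\big(\p_\La(t)\cdot\Esp_{\P(t)}[\nabla\HH(\Zd\to\La)|\La]\big)$, so $\P$ is a strong solution \eqref{def:STRONG} and \cref{theo:regul} is proved.

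The hard part is the non-integrable singularity of $\nabla\gL$ (and of $\Delta_\V\gL$) as the time argument tends to $0$, which forbids naive differentiation under the integral in \eqref{eq:duhamelStrongWeak}; the remedy, as above, is to integrate by parts in space so that only $\gL$ itself (not its gradient) is convolved against the drift-times-density terms, and then to exploit that those terms are H\"older-$(1-\epsilon)$ in space by \cref{coro:C2epsilon}, so that the semigroup recovers $1-\epsilon$ spatial derivatives in \eqref{eq:HolderHeatSmooth} --- exactly, since $\epsilon<1$, enough to keep the time integral convergent, in the same ``gain $1-\epsilon$ at each step'' spirit as the rest of Section~\ref{s:proof-regularity}.
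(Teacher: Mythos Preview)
Your proof is correct and follows the same architecture as the paper's: both start from the Duhamel identity \eqref{eq:duhamelStrongWeak}, integrate by parts once in $\bx$ to replace $\nabla\gL$ by $\gL$ acting on $\phi=\div_\V(\cdots)$, and then differentiate in $t_1$. The difference lies in how the resulting $\Delta_\V\gL$ term is controlled. The paper integrates by parts a \emph{second} time, writing $\int\Delta_\V\gL\cdot\div(\cdots)=-\int\nabla\gL\cdot\nabla\div(\cdots)$, and uses the full $\CC^2$ regularity of $\p_{\La\cup\{j\}}$ (together with $\Psi\in\CC^3$) to bound $\nabla\div(w\,\p)$ pointwise, then the integrable $s^{-1/2}$ bound on $\|\nabla\gL(s,\cdot)\|_{\mathscr{L}^1}$. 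You instead use the cancellation $\int\Delta_\V\gL=0$ to derive the H\"older smoothing estimate \eqref{eq:HolderHeatSmooth}, which costs only $\CC^{0,1-\epsilon}$ regularity of $\phi$ (hence $\CC^{1,1-\epsilon}$ of $\p_\La$) and produces the integrable singularity $s^{-(1+\epsilon)/2}$. Your route is slightly softer on the spatial regularity input and fits the ``gain $1-\epsilon$'' spirit of the rest of Section~\ref{s:proof-regularity}; the paper's is marginally more direct (two integrations by parts, no H\"older trick). You are also a bit more explicit than the paper about the continuity in $t_1$ of the derivative (via the Arzel\`a--Ascoli step for $t\mapsto\phi(t)$), and your final remark identifying $\partial_t\p_\La$ with the right-hand side of \eqref{def:STRONG} is a clean way to close Theorem~\ref{theo:regul}.
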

\begin{proof}
The first term in the right-hand side on \eqref{eq:duhamelStrongWeak} is smooth in time and space, since $t_0 < t_1$.
Hence, we focus on the third one; the second one being treated similarly.
Let us fix $j \in \Zd \setminus \La$. First, an integration by parts with respect to the $\bx$ variable gives:
\begin{multline*}
\int_{t_0}^{t_1} \int_{M^{\La\cup\{j\}}} \nabla \gL(t_1-t,\bx_0, \bx) \cdot \left(\p_{\La\cup \{j\}}(t, \bx, y) w_{\La\cup\{j\}}(\bx, y)\right)   \dd \omega_{\La \cup\{j\}}(\bx,y) \dd t \\
= - \int_{t_0}^{t_1} \int_{M^{\La}} \int_{M}  \gL(t_1-t,\bx_0, \bx)\  \div_{\bx} \left(\p_{\La\cup \{j\}}(t, \bx, y) w_{\La\cup\{j\}}(\bx, y)\right)   \dd \oL(\bx) \dd \omega(y) \dd t.
\end{multline*}
From the previous analysis, we know that the local densities have enough derivatives for these integrals to make sense.
Then, we differentiate this expression with respect to $t_1$, which yields, using the fact that $\gL(0, \bx_0, \cdot) = \delta_{\bx_0}$:
\begin{equation}\label{partialit_1}
  \begin{split}
    & \frac{\dd}{\dd t_1} \int_{t_0}^{t_1} \int_{M^{\La}} \gL(t_1-t,\bx_0, \bx) \int_M  \div_{\bx} \left(\p_{\La\cup \{j\}}(t, \bx, y) w_{\La\cup\{j\}}(\bx, y)\right) \dd \omega(y) \dd \oL(\bx)  \dd t \\
    = & \int_{M}  \div_{\bx} \left(\p_{\La\cup \{j\}}(t_1, \bx_0, y) w_{\La\cup\{j\}}(\bx_0, y)\right) \dd \omega(y) \\
    + &\int_{t_0}^{t_1} \int_{M^{\La}} \int_M  \partial_{t_1} \gL(t_1-t,\bx_0, \bx)\   \div_{\bx} \left(\p_{\La\cup \{j\}}(t, \bx, y) w_{\La\cup\{j\}}(\bx, y)\right)  \dd \oL(\bx) \dd \omega(y) \dd t.
  \end{split}
\end{equation}
We bound the first term in the right-hand side by $\|\p_{\La\cup \{j\}}(t_1)\|_{\mathscr{C}^1} \times \|w_{\La\cup\{j\}}\|_{\mathscr{C}^1}$, and by Corollary~\ref{coro:C2epsilon} regarding the regularity of $\p_\La$, the definition \eqref{def:vw} of $w$ and our short-range assumption, we get for $\delta \leq t_1 \leq T$:
\begin{equation*}
\sum_{j \in \Zd \setminus \La} \left| \int_{M}  \div_{\bx} \left(\p_{\La\cup \{j\}}(t_1, \bx_0, y) w_{\La\cup\{j\}}(\bx_0, y)\right) \dd \omega(y) \right| \leq \Cc(\delta, T, |\La|).
\end{equation*}
We now control the second term in the right-hand side of \eqref{partialit_1}.
We have, using first the definition of the heat kernel, then an integration by parts:
\begin{equation*}
  \begin{split}
& \int_{t_0}^{t_1} \int_{M^{\La\cup\{j\}}}  \partial_{t_1} \gL(t_1-t,\bx_0, \bx)\   \div_{\bx} \left(\p_{\La\cup \{j\}}(t, \bx, y) w_{\La\cup\{j\}}(\bx, y)\right)   \dd \omega_{\La \cup\{j\}}(\bx,y) \dd t \\
    =& \int_{t_0}^{t_1} \int_{M^{\La\cup\{j\}}}  \Delta_\V \gL(t_1-t,\bx_0, \bx)\   \div_{\bx} \left(\p_{\La\cup \{j\}}(t, \bx, y) w_{\La\cup\{j\}}(\bx, y)\right)   \dd \omega_{\La \cup\{j\}}(\bx,y) \dd t \\
    =& - \int_{t_0}^{t_1} \int_{M^{\La\cup\{j\}}}  \nabla \gL(t_1-t,\bx_0, \bx) \cdot \nabla  \div_{\bx} \left(\p_{\La\cup \{j\}}(t, \bx, y) w_{\La\cup\{j\}}(\bx, y)\right)   \dd \omega_{\La \cup\{j\}}(\bx,y) \dd t.
  \end{split}
\end{equation*}
Since the local densities are known to be in $\CC^2$ (and \eqref{C2locunif} holds), and the potential $\Psi$ was assumed to be $\CC^3$, we have:
\begin{equation*}
\| \nabla  \div_{\bx} \left(\p_{\La\cup \{j\}}(t, \bx, y) w_{\La\cup\{j\}}(\bx, y)\right) \| \leq \Cc(\delta, T, |\La|) \times |J_{0,j}|.
\end{equation*}
% (appearing in the definition of $w$ see \eqref{def:vw}) 
On the other hand, the heat kernel satisfies $\int_{t_0}^{t_1} \int_{M^{\La}}  \|\nabla \gL(t_1-t,\bx_0, \cdot)\| \leq \Cc(\delta, T, |\La|)$, which concludes.
\end{proof}

\section{Gradient flow construction in infinite-volume}
\label{sec:JKO}
\newcommand{\Pkmh}{\P^{k-1, h}}
\newcommand{\Pkh}{\P^{k, h}}
\newcommand{\Pkn}{\overline{\P}^{k, h}}
\newcommand{\Pknt}{\overline{\P}^{k, h}_{\tau}}

\newcommand{\pkmh}{\p^{k-1, h}}
\newcommand{\pkh}{\p^{k, h}}
\newcommand{\pkn}{\overline{\p}^{k, h}}
\newcommand{\hPkn}{\hat{\P}_{n}^{k, h}}
\newcommand{\Pkmn}{\P^{k-1, h}_{|n}}

\newcommand{\Phit}{\Phi_\tau}
\newcommand{\pkt}{\overline{p}^{k}_{\tau}}
\newcommand{\pk}{\overline{p}^{k}}
\newcommand{\hz}{\hat{\zeta}}
\newcommand{\hpkn}{\hat{\p}^{(k)}}
\newcommand{\Confl}{{\Conf_\ell}}

For a given functional $\F$ on a metric space $(\mathcal{X}, d)$, a general algorithm to approximate the gradient flow of $\F$ (starting at $x_0 \in \mathcal{X}$) by discrete steps is to iterately solve (with $h > 0$ the “step-size”):
\begin{equation}
\label{MMS}
x_{k+1} \in \argmin \left(\hal d^2(x_k, \cdot) + h \F(\cdot) \right),
\end{equation}
a method known as a \emph{Minimizing Movement Scheme}, see e.g. \cite[Sec. 2]{santambrogio2017euclidean}. Sending $h \to 0$, one formally recovers a gradient descent of the form $\frac{\dd x}{\dd t}(t) = - \nabla \F(x(t))$. See for instance \cite{ambrosio2005gradient}, \cite[Chap. 23]{VillaniOldNew}, \cite{santambrogio2017euclidean} for more details.

This scheme is implemented in \cite{Jordan_1998} in the case where $\mathcal{X}$ is the space of probability measures on $\R^\nn$ endowed with the Wasserstein distance, and $\F$ is a finite-volume free energy functional of the form $\F(\mu) := \int \mu \log \mu + \int \V \mu$.
Defining the scheme is not difficult in itself, the two main tasks are:
\begin{enumerate}
  \item Proving, usually by some kind of compactness argument, that the discrete trajectories have a well-defined continuous limit.
  \item Showing that this limiting trajectory satisfies the Fokker--Planck equation. This requires to understand the minimality condition \eqref{MMS} satisfied at each step of the discrete scheme, and passing this information to the limit.
\end{enumerate}

Here, we work on the space $\Ppis$ endowed with the specific Wasserstein distance $\Wi$ defined in Section \ref{sec:Wasserstein}. A significant difference compared to the general algorithm mentioned above is that we go through finite-dimensional restrictions, solve a variational problem similar to \eqref{MMS} in finite dimension, and then return to the infinite-dimensional, stationary setting using the stationarization procedure $\Stat$ of Section \ref{sec:stationarisation}, as in the following diagram:
\[
\begin{tikzcd}[column sep=14em, row sep=2em]
(\Ppis, \Wi) \arrow[r, dashed, "\text{One step of our discrete scheme}"] \arrow[d, "\text{Restriction to $\Lan$}"] & (\Ppis, \Wi) \\
(\Pp(\Conf_n), \Wn) \arrow[r, "\text{Finite volume variational problem}"'] & (\Pp(\Conf_n), \Wn) \arrow[u, "\text{Stationarization in $\Lan$}"']
\end{tikzcd}
\]

\subsection{The variational scheme}
\label{sec:discrete_scheme}
We construct a sequence of \emph{stationary} spin measures corresponding to a time-discretization of the gradient flow of the free energy. 

\paragraph{The step-size.}
For each value of the step-size $h > 0$, we choose $n$ large enough such that:
\begin{enumerate}
	\item For all $\P \in \Ppis$, we have as in Lemma \ref{lem:PropertiesOfTheLimits}:
  \begin{equation}
  \label{eq:TheLimitsh}
\Fbeta(\P) \geq \frac{1}{|\Lan|} \Fn(\P) - h.
  \end{equation}
	\item For all $\Pn$ in $\Ppn$, we have as in Lemma \ref{lem:ppy_stationary}:
  \begin{equation}
  \label{eq:Stationaryh}
\Fbeta(\Stat_n(\P)) \leq \frac{1}{|\Lan|}  \Fn(\Pn) + h.
  \end{equation}
  \item $\frac{1}{nh} \to 0$ as $h \to 0$ (so in particular $n \to \infty$ when $h \to 0$).
\end{enumerate}

\paragraph{Definition of the scheme.} 
Fix $\Pz$ a stationary spin measure such that $\Fbeta(\Pz) < + \infty$, which will serve as our initial condition.
We fix $h > 0$, and we iteratively define a sequence $(\Pkh)_{k \geq 0}$ of \emph{stationary} spin measures as follows:

\begin{enumerate}
	\item We set $\P^{0, h} := \Pz$.
 \item Assume that $\Pkmh$ has been constructed for some $k \geq 1$, then:
\begin{enumerate}
	\item We let $\Pkn$ be the unique minimizer (see Corollary \ref{cor:convexity}) over $\Ppn$ of:
	\begin{equation}
	\label{finite_n_problem}
		\P \mapsto \frac{1}{2} \Wn^2(\Pkmh, \P) + h \Fn(\P),
	\end{equation}
  where $\Wn$ is the Wasserstein distance on $\Ppn$.
	We emphasize that, at this point, we obtain a \emph{finite} spin measure supported on $\Confn$.
	\item We build the stationary version $\Stat_n(\Pkn)$ of $\Pkn$ with averages over $\Lan$ as in Section~\ref{sec:stationarisation}, and we set $\Pkh := \Stat_n(\Pkn)$, which is by construction a \emph{stationary} spin measure.
\end{enumerate}
\end{enumerate}

\paragraph{A priori estimates.}
\begin{lemma}
\label{lem:apriori}
  For all $h > 0$ and $k \geq 1$, the free energy $\fbeta(\Pkh)$ is finite.
  More precisely, for all $T > 0$ we have:
  \begin{equation}\label{eq:jko:sup-free-energy}
    \sup_{h > 0} \sup_{0 \leq k \leq \frac{T}{h}} \fbeta(\Pkh) \leq \Cc(\mathrm{P}^{0}, T).
  \end{equation}
  Moreover, we have the following control on the Wasserstein distances appearing when solving \eqref{finite_n_problem}:
  \begin{equation}\label{eq:jko:sup-wasserstein}
    \sup_{h > 0} \frac{1}{h} \sum_{k=1}^{\frac{T}{h}} \frac{1}{|\Lambda_{n}|} \mathcal{W}_{n}^{2}(\mathrm{P}^{k-1,h}, \Pkn) \leq \Cc(\mathrm{P}^{0}, T).
  \end{equation}
\end{lemma}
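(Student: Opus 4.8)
The argument is the standard a priori energy estimate for minimizing movement schemes, adapted to our setting where each step passes through a finite-volume variational problem and a stationarization. The key point is that the finite-volume minimality at step $k$, combined with the comparison inequalities \eqref{eq:TheLimitsh} and \eqref{eq:Stationaryh} relating $\Fn$ and $\fbeta$, yields a discrete ``energy dissipation'' inequality that telescopes.

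\textbf{Step 1: One-step dissipation inequality.} Fix $h>0$ and the associated $n$. For $k\geq 1$, since $\Pkn$ minimizes \eqref{finite_n_problem} over $\Ppn$, comparing its value to that of the competitor $\P^{k-1,h}_{|\Lan}$ (the restriction of the previous iterate to $\Lan$, which is an admissible element of $\Ppn$) gives
\begin{equation*}
  \tfrac{1}{2}\Wn^2(\Pkmh,\Pkn) + h\Fn(\Pkn) \leq h\Fn(\Pkmh).
\end{equation*}
Now apply \eqref{eq:Stationaryh} to bound $\fbeta(\Pkh)=\fbeta(\Stat_n(\Pkn)) \leq \frac{1}{|\Lan|}\Fn(\Pkn) + h$, and apply \eqref{eq:TheLimitsh} to $\P^{k-1,h}\in\Ppis$ to get $\frac{1}{|\Lan|}\Fn(\Pkmh) \leq \fbeta(\Pkmh) + h$. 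Dividing the minimality inequality by $|\Lan|$ and combining, we obtain
\begin{equation*}
  \frac{1}{2|\Lan|}\Wn^2(\Pkmh,\Pkn) + h\,\fbeta(\Pkh) \leq h\,\fbeta(\Pkmh) + 2h^2 + \text{(a term controlled by the $h$ errors)}.
\end{equation*}
More precisely, tracking the constants, one gets $\fbeta(\Pkh) \leq \fbeta(\Pkmh) + Ch$ and $\frac{1}{2|\Lan|}\Wn^2(\Pkmh,\Pkn) \leq h(\fbeta(\Pkmh)-\fbeta(\Pkh)) + Ch^2$ for a universal constant $C$ (here $C$ can be taken as $2$, or absorbing also the bound $|\fbeta|\leq \Cc$ coming from \eqref{eq:HLaStable} and non-negativity of the specific entropy \eqref{eq:Eei}, which guarantees $\fbeta$ is bounded below).

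\textbf{Step 2: Telescoping for the free energy.} Summing the inequality $\fbeta(\Pkh)\leq\fbeta(\Pkmh)+Ch$ from $k=1$ to $k=\lfloor T/h\rfloor$ gives $\fbeta(\Pkh) \leq \fbeta(\Pz) + C\lfloor T/h\rfloor\, h \leq \fbeta(\Pz) + CT$ for all $k\leq T/h$, uniformly in $h$. Since $\fbeta(\Pz)<\infty$ by assumption, this proves finiteness and \eqref{eq:jko:sup-free-energy} with $\Cc(\mathrm{P}^0,T) = \fbeta(\Pz)+CT$. Alternatively, and more cleanly, note that from the one-step inequality one actually has $\fbeta(\Pkh) + \frac{1}{2h|\Lan|}\Wn^2(\Pkmh,\Pkn) \leq \fbeta(\Pkmh) + Ch$; dropping the non-negative Wasserstein term gives the same conclusion.

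\textbf{Step 3: Telescoping for the Wasserstein distances.} From the one-step inequality $\frac{1}{2|\Lan|}\Wn^2(\Pkmh,\Pkn) \leq h(\fbeta(\Pkmh)-\fbeta(\Pkh)) + Ch^2$, sum over $k=1$ to $\lfloor T/h\rfloor$. The free-energy differences telescope:
\begin{equation*}
  \sum_{k=1}^{\lfloor T/h\rfloor}\bigl(\fbeta(\Pkmh)-\fbeta(\Pkh)\bigr) = \fbeta(\Pz) - \fbeta(\P^{\lfloor T/h\rfloor,h}) \leq \fbeta(\Pz) - \inf\fbeta \leq \fbeta(\Pz) + \Cc,
\end{equation*}
using that $\fbeta$ is bounded below. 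Therefore
\begin{equation*}
  \sum_{k=1}^{\lfloor T/h\rfloor}\frac{1}{|\Lan|}\Wn^2(\Pkmh,\Pkn) \leq 2h\bigl(\fbeta(\Pz)+\Cc\bigr) + 2Ch^2\lfloor T/h\rfloor \leq 2h\bigl(\fbeta(\Pz)+\Cc+CT\bigr).
\end{equation*}
Dividing by $h$ yields \eqref{eq:jko:sup-wasserstein} with $\Cc(\mathrm{P}^0,T)$ absorbing $2(\fbeta(\Pz)+\Cc+CT)$.

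\textbf{Main obstacle.} The only genuinely delicate point is checking that $\P^{k-1,h}_{|\Lan}$ is a \emph{legitimate} competitor for which the comparison \eqref{eq:TheLimitsh} can be invoked on $\P^{k-1,h}$ itself (an infinite-volume stationary measure) rather than on its restriction — this is exactly why the two-sided sandwich \eqref{eq:TheLimitsh}--\eqref{eq:Stationaryh} is set up, and why $n$ is chosen uniformly in $\P\in\Ppis$ and in $\Pn\in\Ppn$. One must also be slightly careful that the accumulated error $Ch^2 \cdot (T/h) = CTh \to 0$, so the estimates are indeed uniform in $h$; this is immediate. Everything else is bookkeeping with the constant $\Cc$ from \eqref{eq:HLaStable} ensuring $\fbeta \geq -\Cc$.
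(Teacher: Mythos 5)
Your proof is correct and follows essentially the same route as the paper's: compare $\Pkn$ to the competitor $\P^{k-1,h}_{|\Lan}$ in \eqref{finite_n_problem}, convert between $\Fn$ and $\fbeta$ via the sandwich \eqref{eq:TheLimitsh}--\eqref{eq:Stationaryh}, and telescope both the free-energy and Wasserstein inequalities. The only cosmetic difference is that the paper states the one-step bound as $\fbeta(\Pkh)\leq\fbeta(\Pkmh)+2h$ explicitly rather than with a generic constant $C$; the substance is identical.
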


\begin{proof}
At each step of the scheme, we can take $\mathrm{P} := \Pkmh_{|\Lan}$ as a competitor in \eqref{finite_n_problem}, which yields 
\begin{equation*}
\Fn(\Pkn) \leq \Fn(\Pkmh) - \frac{1}{2h} \Wn^2(\Pkmh, \Pkn) \leq \Fn(\Pkmh).
\end{equation*}
Using \eqref{eq:TheLimitsh} and \eqref{eq:Stationaryh} we obtain:
\begin{equation*}
  \fbeta(\Pkh) \leq \frac{1}{|\Lambda_{n}|} \Fn(\Pkn) + h \leq \frac{1}{|\Lambda_{n}|} \Fn(\mathrm{P}^{k-1,h}) + h \leq \fbeta(\mathrm{P}^{k-1,h}) + 2 h.
\end{equation*}
By induction, we get $\fbeta(\Pkh) \leq \fbeta(\mathrm{P}^{0}) + 2kh$ for all $k \geq 0$, which gives \eqref{eq:jko:sup-free-energy}.

\medskip

Using again $\mathrm{P} := \mathrm{P}^{k-1,h}_{|\Lan}$ as a competitor in \eqref{finite_n_problem}, we see that:
\begin{equation*}
  \frac{1}{2h|\Lambda_{n}|} \mathcal{W}_{n}^{2}(\mathrm{P}^{k-1,h}, \Pkn) \leq \frac{1}{|\Lambda_{n}|} \paren*{ \Fn(\mathrm{P}^{k-1,h}) - \Fn(\Pkn) } \leq \left(\fbeta(\mathrm{P}^{k-1,h}) - \fbeta(\Pkh)\right) + 2h,
\end{equation*}
using again \eqref{eq:TheLimitsh} and \eqref{eq:Stationaryh}. The telescopic sum yields
\begin{equation*}
\frac{1}{2h} \sum_{k=1}^{\frac{T}{h}} \frac{1}{|\Lan|} \Wn^2(\Pkmh, \Pkn) \leq \Fbeta(\Pz) - \Fbeta(\P^{\frac{T}{h}, h}) + 2 T, 
\end{equation*}
Since $\Fbeta$ is bounded below, and $\Fbeta(\Pz)$ is assumed to be finite, we get \eqref{eq:jko:sup-wasserstein}.
\end{proof}

\subsection{Variational properties of the finite-volume minimizer}
The next proposition is an important step: it states an approximate, discrete Kolmogorov equation satisfied by the trajectory of our discrete scheme.

For all $L \geq 1$, and $k \geq 0$, let us write $\mathrm{p}^{k,h}_{L}$ for the local density of $\Pkh$ in $\La_L$, which exists because the specific entropy $\fbeta(\Pkh)$ is always finite by Lemma \ref{lem:apriori}.
\begin{proposition}
\label{prop:JKO}
  Let $\ell $, $L$ with $1 \leq \ell \leq L \leq n$, and let $\varphi$ be a smooth, $\Lal$-local test function. We have, for all $k \geq 1$:
\begin{multline}
\label{eq:Discrete_FP}
\left| \frac{1}{h} \left( \Esp_{\Pkh}\left[  \varphi \right] -  \Esp_{\Pkmh}\left[  \varphi \right] \right) +  \Esp_{\Pkh} \left[ - \Delta_U \varphi  + \nabla \HH(\La_L \to \Lal) \cdot \nabla \varphi \right] \right|
\\ \leq \Cc(\varphi, \ell) \left( \frac{1}{h} \frac{1}{|\Lan|} \Wn^2(\Pkmh, \Pkn)+ \frac{1}{h} \frac{L}{n} + o_{L}(1) \right),
\end{multline}
where the term $o_{L}(1)$ tends to $0$ as $L \to \infty$ with $\ell$ and $\varphi$ fixed.
\end{proposition}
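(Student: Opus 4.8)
The plan is to exploit the first-order optimality condition for the finite-volume minimizer $\Pkn$ of the functional in \eqref{finite_n_problem}, then transport that information back through the stationarization $\Stat_n$ to $\Pkh$. First I would perturb $\Pkn$ along the flow of a smooth, $\Lal$-local vector field: given the test function $\varphi$, set $\Phi_s := (\mathrm{T}_s)_\star \Pkn$ where $\mathrm{T}_s(\bx) := \exp_{\bx}(s\,\nabla\varphi(\bx))$ (acting coordinate-wise, and trivially off $\Lal$). Since $\Pkn$ is a minimizer, $\frac{\mathrm d}{\mathrm ds}\big|_{s=0}\big(\tfrac12\Wn^2(\Pkmh,\Phi_s) + h\Fn(\Phi_s)\big) \geq 0$, and the same with $-\varphi$, so the derivative vanishes. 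The derivative of the Wasserstein term is controlled by the standard bound $\big|\tfrac{\mathrm d}{\mathrm ds}\big|_{s=0}\tfrac12\Wn^2(\Pkmh,\Phi_s)\big| \leq \|\nabla\varphi\|_\infty\,\Wn(\Pkmh,\Pkn)$ (Cauchy--Schwarz on the optimal coupling), hence after dividing by $h$ this contributes the term $\frac1h\Wn(\Pkmh,\Pkn)\|\nabla\varphi\|_\infty$, which by Young's inequality (or just by boundedness of $\Wn^2/|\Lan|$) is absorbed into $\Cc(\varphi,\ell)\big(\frac1h\frac1{|\Lan|}\Wn^2(\Pkmh,\Pkn) + \text{bounded}\big)$ — here one must be slightly careful to produce exactly the $\frac1h\frac1{|\Lan|}\Wn^2$ shape claimed, using $2ab \leq \epsilon a^2 + \epsilon^{-1}b^2$ with the right normalization.

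**The free energy derivative.** The derivative of $h\Fn(\Phi_s)$ at $s=0$ splits as $h\big(\frac{\mathrm d}{\mathrm ds}\Een(\Phi_s) + \beta\frac{\mathrm d}{\mathrm ds}\Hhn(\Phi_s)\big)$. For the entropy term, the classical computation (integration by parts against the density $\p^{k,h}_n$, valid because $\Fn(\Pkn)<\infty$ forces absolute continuity, and because $\varphi$ is smooth) gives $\frac{\mathrm d}{\mathrm ds}\big|_{s=0}\Een(\Phi_s) = -\int \Delta_\V\varphi\,\mathrm d\Pkn = -\Esp_{\Pkn}[\Delta_\V\varphi]$. For the interaction term, linearity of $\Hhn$ gives $\frac{\mathrm d}{\mathrm ds}\big|_{s=0}\Hhn(\Phi_s) = \Esp_{\Pkn}[\nabla\varphi\cdot\nabla\HH_n]$, and since $\varphi$ is $\Lal$-local with $\ell\leq L\leq n$ only the coordinates in $\Lal$ matter, so this equals $\Esp_{\Pkn}[\nabla\varphi\cdot\nabla\HH(\La_L\to\Lal)]$ up to an error bounded by the short-range tail $\sum_{|i|>L-\ell}|J_{0,i}|\cdot\|\partial_1\W\|_\infty\cdot\|\nabla\varphi\|_\infty$, which is $o_L(1)$. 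Collecting, the vanishing derivative reads
\begin{equation*}
\Esp_{\Pkn}\big[-\Delta_\V\varphi + \beta\,\nabla\HH(\La_L\to\Lal)\cdot\nabla\varphi\big] = O\!\Big(\tfrac1h\|\nabla\varphi\|_\infty\Wn(\Pkmh,\Pkn)\Big) + o_L(1).
\end{equation*}
Wait — the claimed inequality has no $\beta$ in front of $\nabla\HH(\La_L\to\Lal)\cdot\nabla\varphi$; I would double-check the normalization of $\Fn$ versus the drift in \eqref{MLangevin}, but since $\Fn = \Een + \beta\Hhn$ the $\beta$ should indeed be there, and I would present it that way (it is presumably a typo in the statement, or $\beta$ is absorbed into $\HH$; in any case I follow the definitions).

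**Passing from $\Pkn$ to $\Pkh$.** The remaining task is to replace $\Esp_{\Pkn}$ by $\Esp_{\Pkh}$ and to produce the discrete time-difference $\frac1h(\Esp_{\Pkh}[\varphi] - \Esp_{\Pkmh}[\varphi])$. Since $\varphi$, $\Delta_\V\varphi$, and $\nabla\HH(\La_L\to\Lal)\cdot\nabla\varphi$ are all bounded $\La_L$-local functions with $L\leq n$, estimate \eqref{fLalLanStat} of Lemma \ref{lem:ppy_stationary} gives $\Esp_{\Pkh}[g] = \Esp_{\Stat_n(\Pkn)}[g] = \frac1{|\La_{n-L}|}\int_{\La_{n-L}}\Esp_{\Pkn}[g\circ\theta_u]\,\mathrm du + O(L/n)\|g\|_\infty$ for each such $g$. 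Applying this to $g\in\{\varphi,\ \Delta_\V\varphi,\ \nabla\HH(\La_L\to\Lal)\cdot\nabla\varphi\}$ and using translation-invariance of the coupling coefficients (so that $(\nabla\HH\cdot\nabla\varphi)\circ\theta_u$ is again of the controlled form), the averaged optimality identity for $\Pkn$ transfers to $\Pkh$ with an extra additive error $\Cc(\varphi,\ell)\,O(L/n)$. Similarly, $\Esp_{\Pkmh}[\varphi]$ is compared to its $\Lan$-average. Combining everything and recognizing that $\frac1h\big(\Esp_{\Pkh}[\varphi] - \Esp_{\Pkmh}[\varphi]\big)$ replaces the now-absent $s$-derivative of the Wasserstein term in the appropriate way, I obtain \eqref{eq:Discrete_FP}.

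**Main obstacle.** The delicate point is not the Euler--Lagrange computation itself (that is routine, modulo the smoothness of $\varphi$ which we assumed) but the bookkeeping of errors when commuting the minimizing-movement optimality — which lives on the \emph{finite} measure $\Pkn$ on $\Confn$ — with the stationarization and the localization. One must ensure that every error term genuinely has one of the three allowed shapes ($\frac1h\frac1{|\Lan|}\Wn^2$, $\frac1h\frac{L}{n}$, or $o_L(1)$), with constant depending only on $\varphi$ and $\ell$ (not on $n,h,k$): the $\frac1h\frac{L}{n}$ term comes solely from \eqref{fLalLanStat}, the $o_L(1)$ from the short-range tail, and the first from the Wasserstein-derivative bound after Young's inequality. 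A secondary subtlety is justifying the differentiation under the integral for $\frac{\mathrm d}{\mathrm ds}\Een(\Phi_s)$ and the $s$-derivative formula for $\Wn^2$ along the flow of a fixed vector field — here one invokes the standard results on displacement perturbations (e.g. as in \cite[Chap.~23]{VillaniOldNew} or \cite{Jordan_1998}), which apply since $\Pkn$ has a density and $\varphi$ is smooth on the compact manifold $\Confn$.
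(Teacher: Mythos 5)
Your skeleton is the right one --- perturb the minimizer $\Pkn$ along the flow of $\nabla\varphi$, differentiate the entropy, the energy and the Wasserstein term, then transfer the resulting Euler--Lagrange identity through $\Stat_n$ --- and your computations of $\frac{\dd}{\dd s}\Een$, of $\frac{\dd}{\dd s}\Hhn$, and the short-range truncation to $\nabla\HH(\La_L\to\Lal)$ all match the paper. The genuine gap is in your treatment of the Wasserstein term. You bound $\left|\frac{\dd}{\dd s}\right|_{s=0}\frac12\Wn^2(\Pkmh,\Phi_s)$ by $\|\nabla\varphi\|_\infty\,\Wn(\Pkmh,\Pkn)$ and propose to absorb it by Young's inequality; this fails twice over. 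Quantitatively, $\frac1h\Wn \le \frac{1}{2h}(\epsilon^{-1}+\epsilon\,\Wn^2)$ produces, for any $\epsilon$ yielding the required $\frac1h\frac{1}{|\Lan|}\Wn^2$ term, a companion term of order $\frac{|\Lan|}{h}$, which is not of any of the three allowed shapes. Structurally --- and this is the heart of the JKO argument --- the first variation of $\frac12\Wn^2(\Pkmh,\cdot)$ is not an error to be discarded but an \emph{identity}: pairing $\nabla\varphi$ with the gradient of the squared distance along the optimal map and Taylor-expanding (which on a manifold requires the cut-locus discussion of \cite[Thm.~4.2]{cordero2001riemannian}, cf.\ Claims \ref{claim:PertubWass} and \ref{claim:calculNabla}) gives $\frac{\dd}{\dd s}\big|_{s=0}\frac12\Wn^2(\Pkmh,\Phi_s) = \Esp_{\Pkn}[\varphi]-\Esp_{\Pkmh}[\varphi] + \O\left(\|\nabla^2\varphi\|_\infty\Wn^2\right)$. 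This identity is precisely what produces the discrete time derivative $\frac1h\left(\Esp_{\Pkh}[\varphi]-\Esp_{\Pkmh}[\varphi]\right)$ in \eqref{eq:Discrete_FP}; your closing remark that the time difference ``replaces the now-absent $s$-derivative'' gestures at this, but is incompatible with having already thrown the derivative away as an absorbed error.

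A secondary but real issue is the order in which you average over translations. The a priori estimate \eqref{eq:jko:sup-wasserstein} controls only $\sum_k\frac{1}{|\Lan|}\Wn^2$, so the Hessian error above must carry the factor $\frac{1}{|\Lan|}$ appearing in \eqref{eq:Discrete_FP}. The paper obtains it by applying the Euler--Lagrange inequality \emph{once} to the averaged test function $\hat\varphi = \fint_{\La_{n-L}}\varphi\circ\theta_u\,\dd u$, whose Hessian operator norm is $\frac{|\Lal|}{|\Lan|}\|\nabla^2\varphi\|_\infty$ because the translates act on essentially disjoint blocks of coordinates. Your plan --- derive the identity for $\varphi$ (or for each translate $\varphi\circ\theta_u$) and then average via \eqref{fLalLanStat} --- yields $\fint\|\nabla^2(\varphi\circ\theta_u)\|_\infty\,\dd u = \|\nabla^2\varphi\|_\infty$ with no gain, leaving an error $\frac1h\Wn^2$ that does not vanish when the scheme is summed over $k$ and passed to the limit. (Your flag about the missing $\beta$ in front of the drift is fair; the paper's intermediate statements drop it as well, and it is a normalization issue, not a gap in your argument.)
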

The proof of Proposition \ref{prop:JKO} follows the lines of \cite[Sec. 5]{Jordan_1998}, with some adaptations. At the conceptual level, our approach is very similar to theirs: we derive Euler--Lagrange equations expressing the minimality of $\Pkn$ in \eqref{finite_n_problem}.
In our setting we have to deal with the following additional difficulties:
\begin{itemize}
  % \item The Riemannian setting brings some issues related to the non differentiability of the square of the Riemannian distance at the cut-locus.
    % To bypass this difficulty, we leverage the fine understanding of optimal transport on manifolds developed in \cite{McCannPolar,cordero2001riemannian} that ensures that one can almost surely avoid the cut locus.
  \item The interactions are non-local. We rely on the short-range assumption \eqref{eq:short-range} to bypass this difficulty by working in a sufficiently large box.
    Hence, the introduction of the length scale $L$.
  \item We have an additional stationarization step, which could disturb the good variational properties of $\Pkn$. We rely on Lemma \ref{lem:ppy_stationary} to control the changes induced by the stationarization.
\end{itemize}

\begin{proof}[Proof of Proposition \ref{prop:JKO}] \
\subsubsection*{Step 1. Some perturbative computations.}
Let $\xi$ be a smooth vector field on $M^{\Lan}$. Since $M^{\Lan}$ is compact, $\xi$ defines a global flow $\{\Phit\}_{\tau \in \R}$ satisfying the ODE $\partial_\tau \Phit = \xi \circ \Phit$. As $\tau \to 0$ we have for all $\bx \in M^{\Lan}$:
\begin{equation}
\label{Phit_petittau}
\Phit(\bx) = \exp_x(\tau \xi(\bx)) + o(\tau).
\end{equation}
For all $\tau \in \R$, the map $\Phit$ is a $\CC^\infty$ diffeomorphism of $M^{\Lan}$, $\Phi_0$ being the identity map. We may thus define a \emph{perturbed} spin measure $\Pknt$ on $\Confn$ as the push-forward of $\Pkn$ by $\Phit$. We use $\Pknt$ as a competitor to $\Pkn$ in the minimization problem \eqref{finite_n_problem} and derive Euler--Lagrange equations from there.

By definition of the push-forward measure, for all test functions $f \in \CC^0(M^{\Lan})$ we have: 
\begin{equation*}
\Esp_{\Pknt} \left[ f \right] = \Esp_{\Pkn} \left[  f \circ \Phit \right].
\end{equation*}

The following claim is analogous to \cite[Eq.~(37)]{Jordan_1998} in a slightly different context.
\begin{claim}[Perturbation of the spin interactions]
\label{claim:JKO_spin_spin}
The map $\tau \mapsto \mathcal{H}_{n}(\Pknt)$ is differentiable at $0$, with
  \begin{equation}\label{eq:jko:pertubartive:energy} 
    \frac{\mathtt{d}}{\mathtt{d} \tau}_{| \tau = 0}  \mathcal{H}_{n}(\Pknt) = \Esp_{\Pkn} \left[ \nabla \HH_\Lan \cdot \xi \right] 
\end{equation}
\end{claim}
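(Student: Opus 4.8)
The plan is to compute $\frac{\mathtt{d}}{\mathtt{d}\tau}_{|\tau=0} \mathcal{H}_n(\Pknt)$ directly from the definition $\mathcal{H}_n(\mathrm{P}) = \Esp_{\mathrm{P}}[\HLan]$ and the fact that $\Pknt = (\Phit)_\star \Pkn$. First I would write
\[
  \mathcal{H}_n(\Pknt) = \Esp_{\Pkn}\left[\HLan \circ \Phit\right],
\]
so the question reduces to differentiating $\tau \mapsto \HLan(\Phit(\bx))$ at $\tau = 0$ and justifying the exchange of $\frac{\mathtt{d}}{\mathtt{d}\tau}$ with $\Esp_{\Pkn}$. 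Since $\HLan$ is a finite sum $\sum_{i,j \in \Lan} J_{i,j}\W(\bx_i,\bx_j)$ with $\W \in \CC^3$, it is a $\CC^3$ (in particular $\CC^1$) function on the compact manifold $M^{\Lan}$, and $\Phi_0 = \mathrm{Id}$ with $\partial_\tau \Phit = \xi \circ \Phit$. By the chain rule on manifolds,
\[
  \frac{\mathtt{d}}{\mathtt{d}\tau}_{|\tau=0}\, \HLan(\Phit(\bx)) = \mathrm{d}\HLan|_{\bx}\bigl(\xi(\bx)\bigr) = \nabla \HLan(\bx)\cdot \xi(\bx),
\]
where the right-hand side is the Riemannian inner product of the gradient field $\nabla\HLan$ (a section of $\T M^{\Lan}$) with $\xi$ at the point $\bx$. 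Here one uses the first-order expansion \eqref{Phit_petittau}, namely $\Phit(\bx) = \exp_{\bx}(\tau\xi(\bx)) + o(\tau)$, together with the standard fact that $\tfrac{\mathtt{d}}{\mathtt{d}\tau}_{|\tau=0} f(\exp_{\bx}(\tau v)) = \nabla f(\bx)\cdot v$ for $\CC^1$ functions $f$.

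Next I would justify differentiating under the expectation. The key point is a uniform bound: since $M^{\Lan}$ is compact and $\HLan, \xi$ are smooth, the map $(\tau,\bx)\mapsto \HLan(\Phit(\bx))$ is $\CC^1$ jointly, and the time-derivative $\partial_\tau\HLan(\Phit(\bx)) = \nabla\HLan(\Phit(\bx))\cdot\xi(\Phit(\bx))$ is bounded uniformly in $(\tau,\bx)$ for $\tau$ in a compact neighborhood of $0$ (indeed $\|\nabla\HLan\|_{\mathscr{L}^\infty} \leq 2\|J\|_{\ell^1}\|\partial_1\W\|_{\mathscr{L}^\infty}|\Lan|$ is finite and $\|\xi\|_\infty < \infty$). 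Hence by dominated convergence / the classical differentiation-under-the-integral-sign theorem, $\tau \mapsto \Esp_{\Pkn}[\HLan\circ\Phit]$ is differentiable at $0$ with derivative $\Esp_{\Pkn}[\nabla\HLan\cdot\xi]$, which is precisely \eqref{eq:jko:pertubartive:energy}.

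This argument has essentially no obstacle — it is the most routine of the perturbative computations in this proof, the genuinely non-trivial ones being the perturbation of the entropy term (where one must account for the Jacobian of $\Phit$ and its logarithmic derivative) and of the Wasserstein term. If anything, the only mild care needed is to phrase the chain rule correctly in the Riemannian setting, i.e. to note that $\nabla\HLan$ is a tangent vector field and the pairing with $\xi$ is via the metric $g$, matching the convention for $\nabla f \cdot \nabla\HH$ fixed earlier in Section~\ref{sec:single_spin}. I would also remark, for later use, that the same computation shows $\tau \mapsto \mathcal{H}_n(\Pknt)$ is in fact $\CC^1$ on all of $\R$, not merely differentiable at $0$, since $\HLan \in \CC^3$; but only the value at $\tau = 0$ is needed here.
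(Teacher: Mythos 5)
Your proof is correct and follows exactly the route the paper takes: the paper's own proof is the one-line remark ``use \eqref{Phit_petittau} and a Taylor expansion of $\HH_{\Lan}$,'' and your write-up is simply the detailed version of that (pushforward as composition, chain rule at $\tau=0$, dominated convergence via the uniform bound on $\nabla\HLan$ and $\xi$ on the compact manifold $M^{\Lan}$). Nothing to add.
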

\begin{proof}[Proof of Claim \ref{claim:JKO_spin_spin}]
We use \eqref{Phit_petittau} and a Taylor's expansion of $\HH_\Lan$.
\end{proof}

The following is analogous to \cite[eq. (38)]{Jordan_1998} and we omit the proof, which relies on the change of variable formula for densities and an expansion for the determinant near the identity map.
\begin{claim}[Perturbation of the relative entropy]
\label{claim:JKO_entropy}
The map $\tau \mapsto \mathcal{E}_{n}(\Pknt)$ is differentiable at $0$ with:
\begin{equation}
\label{eq:jko:pertubartive:entropy} 
  \frac{\mathtt{d}}{\mathtt{d} \tau}_{| \tau = 0} \mathcal{E}_{n}(\Pknt) = - \Esp_{\Pkn}\left[  \operatorname{div}_{\V} \xi  \right].
\end{equation}
\end{claim}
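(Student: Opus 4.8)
Here is how I would establish Claim~\ref{claim:JKO_entropy}.

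The plan is to reduce the computation to a change of variables, exactly as in the unweighted Euclidean version \cite[eq.~(38)]{Jordan_1998}, while keeping track of the extra terms produced by the weight $e^{-\sum\V}$ defining $\oL$. Since $\Pkn$ minimizes \eqref{finite_n_problem}, $\Fn(\Pkn)$ is finite, and as $\Hhn$ is bounded this forces $\Een(\Pkn) < \infty$; hence $\Pkn$ is absolutely continuous with respect to $\oL$, and I write $\pkn$ for its density. First I would compute the density of $\Pknt = (\Phit)_\star\Pkn$ with respect to $\oL$. Writing $\oL = e^{-U_n}\,\dd\vol_n$ with $U_n(\bx) := \sum_{i\in\Lan}\V(\bx_i)$ as in \eqref{def:oL}, where $\vol_n$ denotes the Riemannian volume on $M^{\Lan}$, and letting $J_\tau$ be the Jacobian determinant of the diffeomorphism $\Phit$ relative to $\vol_n$ --- a smooth positive function of $(\tau,\bx)$ with $J_0\equiv 1$ --- the change-of-variables formula for pushforwards gives that $\Pknt$ has $\oL$-density
\begin{equation*}
  \bx \mapsto e^{U_n(\bx)}\,\frac{\pkn(\Phit^{-1}(\bx))\, e^{-U_n(\Phit^{-1}(\bx))}}{J_\tau(\Phit^{-1}(\bx))}.
\end{equation*}

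Next I would insert this into $\Een(\Pknt) = \Esp_{\Pknt}\bracket*{\log \text{(density)}}$ and use $\Esp_{\Pknt}[g] = \Esp_{\Pkn}[g\circ\Phit]$, i.e.\ substitute $\bx = \Phit(\by)$. Along $\Phit(\by)$ the log-density telescopes into $\log\pkn(\by) + U_n(\Phit(\by)) - U_n(\by) - \log J_\tau(\by)$, so that
\begin{equation*}
  \Een(\Pknt) = \Een(\Pkn) + \Esp_{\Pkn}\bracket*{ U_n\circ\Phit - U_n - \log J_\tau }.
\end{equation*}
The point of this rearrangement is that the only unbounded contribution, $\log\pkn$ (merely $\Pkn$-integrable because $\Een(\Pkn)<\infty$), cancels, and what is left is the expectation of a quantity that is bounded on the compact manifold $M^{\Lan}$ and depends smoothly on $\tau$ near $0$.

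Finally I would differentiate at $\tau=0$, exchanging derivative and expectation --- legitimate since smoothness of $\xi$ together with compactness of $M^{\Lan}$ makes $(\tau,\bx)\mapsto(\Phit(\bx),J_\tau(\bx))$ smooth with difference quotients bounded uniformly in $\bx$ near $\tau=0$, so dominated convergence applies. Using $\partial_\tau\Phit\big|_{\tau=0} = \xi$, the Jacobi--Liouville identity $\partial_\tau\log J_\tau\big|_{\tau=0} = \operatorname{div}\xi$, and $\partial_\tau U_n(\Phit(\by))\big|_{\tau=0} = \nabla U_n(\by)\cdot\xi(\by)$, I obtain
\begin{equation*}
  \frac{\dd}{\dd\tau}\Big|_{\tau=0}\Een(\Pknt) = \Esp_{\Pkn}\bracket*{ \nabla U_n\cdot\xi - \operatorname{div}\xi } = -\,\Esp_{\Pkn}\bracket*{ \operatorname{div}_{\V}\xi },
\end{equation*}
since $\operatorname{div}_{\V}$ acts coordinate-wise as $\operatorname{div}-\nabla\V\cdot$; this is \eqref{eq:jko:pertubartive:entropy}. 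I do not expect a genuine obstacle here: the only steps requiring care are the bookkeeping of the weight $e^{-U_n}$ when moving between $\vol_n$ and $\oL$ (which is what produces the extra $U_n\circ\Phit - U_n$ term, absent from the unweighted Euclidean computation of \cite{Jordan_1998}), and the justification of differentiation under the integral sign, both routine given compactness of $M$.
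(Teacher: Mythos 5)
Your proof is correct and follows exactly the route the paper indicates for this claim (the paper omits the details, saying only that the argument "relies on the change of variable formula for densities and an expansion for the determinant near the identity map"): pushforward density via the Jacobian, cancellation of the $\log\pkn$ term, Jacobi--Liouville identity for $\partial_\tau\log J_\tau$, and careful bookkeeping of the weight $e^{-U_n}$ to produce the $\nabla U_n\cdot\xi$ correction that turns $\operatorname{div}$ into $\operatorname{div}_{\V}$. The preliminary observations (finiteness of $\Een(\Pkn)$ from minimality, and domination of the difference quotients by compactness) are exactly the right justifications.
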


Finally, we need the following result, which is significantly more subtle in the Riemannian case than in the Euclidean case.
\begin{claim}[Perturbation of the Wasserstein distance] 
\newcommand{\Qq}{\mathrm{Q}}
\label{claim:PertubWass}
Let $\P, \Qq \in \Ppn$ be fixed and let $T : M^\Lan \to M^\Lan$ be the optimal transport map from $\Q$ to $\P$, write $T = \Exp*{ - \nabla \theta}$ as in Section \ref{sec:Wasserstein}. Then, we have:
  \begin{equation}
  \label{eq:jko:perturbative:wasserstein}
    \frac{1}{2} \mathcal{W}_{n}^{2}(\mathrm{Q}, \P_{\tau}) \leq \frac{1}{2} \mathcal{W}_{n}^{2}(\Q, \P) + \tau \int_{M^\Lan}\left[ \nabla \hal \dn^2(\sig, \cdot)\right]_{T(\sig)} \cdot \xi(T(\sig))\ \dd \mathrm{Q}(\bx) + o(\tau).
  \end{equation}
\end{claim}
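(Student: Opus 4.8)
\textbf{Proof strategy for Claim \ref{claim:PertubWass}.}

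The plan is to use the definition of $\mathcal{W}_n$ as an infimum over couplings and produce a sub-optimal coupling between $\mathrm{Q}$ and $\P_\tau$ out of the optimal one between $\mathrm{Q}$ and $\P$, composed with the flow $\Phi_\tau$. Concretely, if $T = \Exp{-\nabla\theta}$ pushes $\mathrm{Q}$ forward to $\P$, then $\Phi_\tau \circ T$ pushes $\mathrm{Q}$ forward to $(\Phi_\tau)_\star \P = \P_\tau$, so the map $\bx \mapsto (\bx, \Phi_\tau(T(\bx)))$ induces a coupling of $(\mathrm{Q},\P_\tau)$ whose cost bounds $\mathcal{W}_n^2(\mathrm{Q},\P_\tau)$ from above. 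This gives
\begin{equation*}
\tfrac12 \mathcal{W}_n^2(\mathrm{Q},\P_\tau) \leq \tfrac12 \int_{M^{\Lan}} \dn^2\big(\bx, \Phi_\tau(T(\bx))\big)\ \dd\mathrm{Q}(\bx),
\end{equation*}
and it remains to Taylor-expand $\tau \mapsto \tfrac12 \dn^2(\bx, \Phi_\tau(T(\bx)))$ at $\tau = 0$ for $\mathrm{Q}$-a.e.\ $\bx$. Since $\Phi_0 = \mathrm{Id}$, the value at $\tau = 0$ is $\tfrac12 \dn^2(\bx, T(\bx))$, whose integral against $\mathrm{Q}$ is exactly $\tfrac12 \mathcal{W}_n^2(\mathrm{Q},\P)$ by optimality of $T$. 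The first-order term involves the derivative of $\Phi_\tau$ at $\tau=0$, which is $\xi$, composed with the differential of $\by \mapsto \tfrac12\dn^2(\bx,\by)$ evaluated at $\by = T(\bx)$; this is precisely the term $\big[\nabla \tfrac12 \dn^2(\bx,\cdot)\big]_{T(\bx)} \cdot \xi(T(\bx))$ in \eqref{eq:jko:perturbative:wasserstein}.

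The main obstacle is the regularity of the squared distance function $\tfrac12\dn^2(\bx,\cdot)$, which fails to be differentiable on the cut locus of $\bx$. Here one invokes the structural facts recalled in Section \ref{sec:Wasserstein}: since $\mathrm{Q}$ is absolutely continuous (or, more to the point, since $T$ is an optimal transport map), for $\mathrm{Q}$-a.e.\ $\bx$ the point $T(\bx)$ does \emph{not} lie in the cut locus of $\bx$ — this is \cite[Thm.~4.2]{cordero2001riemannian}, already cited in the excerpt. At such points $\by = T(\bx)$, the function $\tfrac12 \dn^2(\bx,\cdot)$ is smooth in a neighbourhood, with gradient $-\exp_{\by}^{-1}(\bx)$, and a second-order Taylor expansion with a Hessian bound that is locally uniform away from the cut locus is available. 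One then needs to upgrade the pointwise expansion to an integral one: this requires a dominated-convergence argument, using compactness of $M^{\Lan}$ together with a uniform (in $\bx$, for $\bx$ ranging over a set of full $\mathrm{Q}$-measure) bound on the second derivative of $\tau \mapsto \tfrac12\dn^2(\bx,\Phi_\tau(T(\bx)))$ on a fixed small interval around $0$; such a bound holds because $\xi$ is a smooth (hence bounded, with bounded derivatives) vector field on the compact manifold $M^{\Lan}$ and because, by a standard argument, the set where $T(\bx)$ is within a fixed distance of the cut locus of $\bx$ has small $\mathrm{Q}$-measure — or, more simply, one localizes: split $M^{\Lan}$ into the region where $T(\bx)$ is at distance $>\varepsilon_0$ from the cut locus of $\bx$ (where the expansion is uniform) and its complement (whose $\mathrm{Q}$-measure one sends to $0$), handling the latter with the crude bound $\dn^2 \leq \mathrm{diam}(M)^2 |\Lambda_n|$ and the $1$-Lipschitz-in-$\tau$ control coming from $\|\xi\|_\infty$.

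A cleaner alternative, which I would actually prefer to write down, is to avoid the cut-locus issue entirely by a direct comparison: along the $\mathrm{Q}$-a.e.\ unique minimizing geodesic from $\bx$ to $T(\bx)$, extend slightly past $T(\bx)$ in the direction $\xi(T(\bx))$; the triangle inequality and the first variation formula for arclength (or energy) give $\dn(\bx, \Phi_\tau(T(\bx))) \leq \dn(\bx,T(\bx)) + \tau \langle \dot\gamma(1), \xi(T(\bx))\rangle + o(\tau)$ where $\dot\gamma(1) = -\nabla\big[\tfrac12\dn^2(\bx,\cdot)\big]_{T(\bx)}/\dn(\bx,T(\bx))$ — wait, cleaner to work with $\tfrac12\dn^2$ directly and note its gradient at $T(\bx)$ equals $-\exp^{-1}_{T(\bx)}(\bx)$, giving the first-variation identity $\tfrac{\dd}{\dd\tau}\big|_0 \tfrac12\dn^2(\bx,\exp_{T(\bx)}(\tau\xi)) = -\langle \exp^{-1}_{T(\bx)}(\bx), \xi(T(\bx))\rangle = \big[\nabla\tfrac12\dn^2(\bx,\cdot)\big]_{T(\bx)}\cdot\xi(T(\bx))$, and the error is controlled by $\tau^2$ times a Jacobi-field bound uniform off the cut locus. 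Either way, integrating against $\mathrm{Q}$ and bounding $\mathcal{W}_n^2(\mathrm{Q},\P_\tau)$ by this sub-optimal cost yields \eqref{eq:jko:perturbative:wasserstein}. I would cite \cite[Thm.~23.9]{VillaniOldNew} and \cite[Thm.~4.2]{cordero2001riemannian} for the underlying facts and keep the proof to a paragraph.
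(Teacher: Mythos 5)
Your proposal is correct and takes essentially the same route as the paper: bound $\mathcal{W}_n^2(\mathrm{Q},\mathrm{P}_\tau)$ by the cost of the sub-optimal coupling induced by $\Phi_\tau\circ T$, Taylor-expand $\hal\dn^2(\bx,\cdot)$ at $T(\bx)$ using the $\mathrm{Q}$-a.e.\ avoidance of the cut locus from \cite[Thm.~4.2]{cordero2001riemannian}, and pass to the limit in the integral. For that last step the paper uses the simplest of the options you list — the global Lipschitz bound on $\hal\dn^2(\bx,\cdot)$ together with $\|\Phi_\tau-\id\|_\infty=\O(\tau)$ gives a uniform domination of the difference quotients, so dominated convergence applies directly without any splitting near the cut locus or Hessian/Jacobi-field estimates.
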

\begin{proof}
By construction, the map $\Phi_\tau \circ T$ pushes $\Q$ forward onto $\P_\tau$, which allows us to bound the Wasserstein distance between $Q$ and $\P_\tau$ by writing:
\begin{equation*}
\mathcal{W}_{n}^{2}(\mathrm{Q}, \P_\tau) \leq \int_{M^\Lan} \dn^2(\bx, \Phi_\tau \circ T(\bx)) \dd \mathrm{Q}(\bx).
\end{equation*}
We thus get:
\begin{equation}
\label{UBW2QP}
\mathcal{W}_{n}^{2}(\mathrm{Q}, \P_\tau) - \mathcal{W}_{n}^{2}(\mathrm{Q}, \P)  \leq \int_{M^\Lan} \left(\dn^2(\bx, \Phi_\tau \circ T(\bx)) - \dn^2(\bx, T(\bx))\right)\dd \mathrm{Q}(\bx),
\end{equation}
and we would like to apply a Taylor's expansion to the integrand in the right-hand side, the problem being that the distance squared is not differentiable on the entire manifold (of course, such an issue does not appear in the Euclidean setting).

Let $\sig, \sig'$ be two spin configurations in $\Confn$ seen as two points on the manifold $M^\Lan$. If $\sig'$ is not in the cut-locus of $\sig$, then the function $\hal \dn^2(\bx, \cdot)$ is differentiable at $\sig'$, and thus using the definition of the flow $\Phit$ we have as $\tau \to 0$:
\begin{equation*}
\frac{1}{\tau} \left( \hal \dn^2(\sig, \Phit(\sig')) - \hal \dn^2(\sig, \sig') \right) \to \left[ \nabla \hal \dn^2(\sig, \cdot)\right]_{\sig'} \cdot \xi(\sig').
\end{equation*}
Fine properties of the optimal transportation map (see \cite[Thm 4.2]{cordero2001riemannian}) guarantee that \emph{for $\Q_{|\Conf_n}$-a.e. $\sig$}, the image $T(\sig)$ is \emph{not} in the cut-locus of $\sig$, and thus we have, $\Q$-almost surely, as $\tau \to 0$:
\begin{equation*}
\left( \hal \dn^2(\sig, \Phit \circ T(\sig)) - \hal \dn^2(\sig, T(\sig)) \right) =  \tau \left[ \nabla \hal \dn^2(\sig, \cdot)\right]_{T(\sig)} \cdot \xi(T(\sig)) + o(\tau). 
\end{equation*}
On the other hand, the function $\hal \dn^2(\bx, \cdot)$ is always Lipschitz (globally on $M^\Lan$ and uniformly with respect to $\sig \in \Lan$), and $\|\Phit - \id \|_\infty$ is bounded by $\Cc \tau$ for some constant $\Cc$ depending on our choice of vector field, thus the quantity $\left( \hal \dn^2(\sig, \Phit(\sig')) - \hal \dn^2(\sig, \sig') \right)$ is a $\O(\tau)$ uniformly for $\sig, \sig'$ in $M^\Lan$ and $\tau$ near $0$. We may apply the dominated convergence theorem  and conclude that:
\begin{multline}
\label{LIMSUPCUT}
\limsup_{\tau \to 0} \frac{1}{\tau}  \int_{M^\Lan} \left(\dn^2(\bx, \Phi_\tau \circ T(\bx)) - \dn^2(\bx, T(\bx))\right)\dd \mathrm{Q}(\bx) \\
\leq \int_{M^\Lan}  \left[ \nabla \hal \dn^2(\sig, \cdot)\right]_{T(\sig)} \cdot \xi(T(\sig))\  \dd \Q(\sig).
\end{multline}
Combining \eqref{UBW2QP} with \eqref{LIMSUPCUT} proves the claim.
\end{proof}

We are about to take $\xi$ of the form $\xi = \nabla \varphi$ and in that case it is important to make the following observation.
\begin{claim}
\label{claim:calculNabla}
With the notation of the previous claim, if $\xi = \nabla \varphi$, we have:
\begin{equation*}
\left|\int_{M^\Lan}\left[ \nabla \hal \dn^2(\sig, \cdot)\right]_{T(\sig)} \cdot \nabla \zeta(T(\sig))\ \dd \mathrm{Q}(\bx) - \left(\Esp_{\P}[\zeta] - \Esp_{\Q}[\zeta] \right) \right| \leq \hal \|\nabla^2 \zeta\|_{\infty} \times \mathcal{W}_{n}^{2}(\mathrm{Q}, \P).
\end{equation*}
\end{claim}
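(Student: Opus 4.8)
The plan is to compare the quantity $\int_{M^{\Lambda_n}} [\nabla \tfrac12 \dn^2(\sig,\cdot)]_{T(\sig)} \cdot \nabla \zeta(T(\sig)) \, \dd \mathrm{Q}(\sig)$ with the difference $\Esp_{\P}[\zeta] - \Esp_{\Q}[\zeta]$ by exploiting the geometric meaning of the gradient of the squared distance along geodesics. First I would recall that, since $T = \Exp\{-\nabla\theta\}$ is the optimal transport map from $\mathrm{Q}$ to $\P$, for $\mathrm{Q}$-a.e. $\sig$ the point $T(\sig)$ is not in the cut-locus of $\sig$, and there is a unique minimizing geodesic $s \mapsto \gamma_\sig(s)$ with $\gamma_\sig(0) = \sig$ and $\gamma_\sig(1) = T(\sig)$. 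The key identity from Riemannian geometry is that $[\nabla \tfrac12 \dn^2(\sig,\cdot)]_{T(\sig)} = \dot\gamma_\sig(1)$, the terminal velocity of that geodesic (equivalently $-\nabla\theta$ transported along). Since $\mathrm{Q}$-a.s. $\Esp_{\P}[\zeta] - \Esp_{\Q}[\zeta] = \Esp_{\mathrm{Q}}[\zeta(T(\sig)) - \zeta(\sig)]$, the whole problem reduces to the pointwise estimate
\begin{equation*}
\left| \dot\gamma_\sig(1) \cdot \nabla\zeta(\gamma_\sig(1)) - \big(\zeta(\gamma_\sig(1)) - \zeta(\gamma_\sig(0))\big) \right| \leq \tfrac12 \|\nabla^2 \zeta\|_\infty \, \dn^2(\sig, T(\sig)),
\end{equation*}
after which one integrates against $\mathrm{Q}$ and uses $\Esp_{\mathrm{Q}}[\dn^2(\sig,T(\sig))] = \mathcal{W}_n^2(\mathrm{Q},\P)$.

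To prove the pointwise bound, I would set $\phi(s) := \zeta(\gamma_\sig(s))$ for $s \in [0,1]$, a $\CC^2$ function since $\gamma_\sig$ is a smooth geodesic. Then $\phi(1) - \phi(0) = \phi'(1) - \int_0^1 s\,\phi''(s)\,\dd s$ by integrating $\phi'(1)-\phi'(s)=\int_s^1 \phi''$ and Fubini, or more simply by the Taylor remainder written backwards from $s=1$. We have $\phi'(1) = \dot\gamma_\sig(1)\cdot\nabla\zeta(\gamma_\sig(1))$, which is exactly the first term. For the second derivative, since $\gamma_\sig$ is a geodesic its acceleration vanishes ($\nabla_{\dot\gamma}\dot\gamma = 0$), so $\phi''(s) = \nabla^2\zeta(\gamma_\sig(s))[\dot\gamma_\sig(s),\dot\gamma_\sig(s)]$, whence $|\phi''(s)| \leq \|\nabla^2\zeta\|_\infty |\dot\gamma_\sig(s)|^2 = \|\nabla^2\zeta\|_\infty \dn^2(\sig,T(\sig))$ because the speed of a minimizing geodesic is constant and equal to its length. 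Therefore $|\phi(1)-\phi(0)-\phi'(1)| \leq \big(\int_0^1 s\,\dd s\big)\|\nabla^2\zeta\|_\infty \dn^2(\sig,T(\sig)) = \tfrac12\|\nabla^2\zeta\|_\infty \dn^2(\sig,T(\sig))$, which is the claimed inequality.

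I would then assemble: integrating against $\mathrm{Q}$ and using the triangle inequality for the absolute value under the integral sign,
\begin{equation*}
\left| \int_{M^{\Lambda_n}} \Big([\nabla\tfrac12\dn^2(\sig,\cdot)]_{T(\sig)}\cdot\nabla\zeta(T(\sig)) - \big(\zeta(T(\sig))-\zeta(\sig)\big)\Big)\dd\mathrm{Q}(\sig)\right| \leq \tfrac12\|\nabla^2\zeta\|_\infty \int_{M^{\Lambda_n}} \dn^2(\sig,T(\sig))\,\dd\mathrm{Q}(\sig),
\end{equation*}
and the right side equals $\tfrac12\|\nabla^2\zeta\|_\infty\,\mathcal{W}_n^2(\mathrm{Q},\P)$ since $T$ is optimal. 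Recognizing $\int \zeta(T(\sig))\dd\mathrm{Q} = \Esp_{\P}[\zeta]$ and $\int\zeta(\sig)\dd\mathrm{Q} = \Esp_{\mathrm{Q}}[\zeta]$ finishes the claim. The main obstacle is the justification of the geometric identity $[\nabla\tfrac12\dn^2(\sig,\cdot)]_{T(\sig)} = \dot\gamma_\sig(1)$ and the handling of the cut-locus: one must invoke the regularity of optimal maps (Cordero-Erausquin--McCann--Schmuckenschläger, \cite[Thm.~4.2]{cordero2001riemannian}, as in the previous claim) to know $T(\sig)$ avoids the cut-locus $\mathrm{Q}$-a.s., so that $\dn^2(\sig,\cdot)$ is smooth at $T(\sig)$ and the first-variation formula for the distance applies; away from a $\mathrm{Q}$-null set everything is classical, and this null set does not affect the integral.
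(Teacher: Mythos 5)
Your proof is correct and follows essentially the same route as the paper's: both reduce the claim to a pointwise second-order Taylor estimate for $\zeta$ along the ($\mathrm{Q}$-a.s. unique, cut-locus-avoiding) geodesic from $\sig$ to $T(\sig)$, identify the first-order term with $[\nabla \hal \dn^2(\sig,\cdot)]_{T(\sig)}\cdot\nabla\zeta(T(\sig))$ via the first-variation identity for the squared distance (the paper states it in the equivalent form $\exp_{T(\sig)}(-[\nabla\hal\dn^2(\sig,\cdot)]_{T(\sig)})=\sig$ from \cite{cordero2001riemannian}), and then integrate against $\mathrm{Q}$ using $T_{\sharp}\mathrm{Q}=\P$ and the optimality of $T$. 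Your integral form of the remainder, $\phi(1)-\phi(0)-\phi'(1)=-\int_0^1 s\,\phi''(s)\,\dd s$ with $|\phi''|\leq\|\nabla^2\zeta\|_\infty\,\dn^2(\sig,T(\sig))$ by constancy of the geodesic speed, yields exactly the paper's constant $\hal$.
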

\begin{proof}
Recall the following general fact (see e.g. \cite{cordero2001riemannian}, equation (18)): on a smooth, compact Riemannian manifold with distance $d$, if $y$ is not in the cut-locus of $x$, then:
\begin{equation}
\label{exponabla}
\exp_y \left( - \left[\nabla \hal d^2(x, \cdot) \right]_y \right) = x.
\end{equation}
On the other hand, by simple calculus, if $f$ is a smooth function on the manifold and $u$ a tangent vector at $y$, we have a Taylor's expansion:
\begin{equation}
\label{Taylorexp}
\left|f\left(\exp_y(u)\right) - f(y) + \nabla f(y) \cdot u \right| \leq  \hal \|\nabla^2 f\|_{L^\infty} \times d^2(y, \exp_y(u)).
\end{equation}
As we mentioned earlier, $\Q$-a.s. the image $T(\sig)$ is not in the cut-locus of $\sig$ and thus applying \eqref{exponabla} we see that:
\begin{equation*}
\zeta(T(\sig)) - \zeta(\sig) = \zeta(T(\sig)) - \zeta\left(\exp_{T(\sig)} \left( - \left[\nabla \hal \dn^2(\sig, \cdot) \right]_{T(\sig)} \right) \right),
\end{equation*}
and thus using \eqref{Taylorexp} we get the following expansion:
\begin{equation*}
\left| \zeta(T(\sig)) - \zeta\left( \bx \right) - \nabla \zeta(T(\sig)) \cdot \left[ \nabla \hal \dn^2(\sig, \cdot) \right]_{T(\sig)} \right| 
\leq \hal \|\nabla^2 \zeta\|_{\infty} \times \dn^2(\sig, T(\sig)).
\end{equation*}
Integrating this against $\Q$, we obtain:
\begin{multline*}
\left| \int_{M^\Lan} \zeta(T(\sig)) \dd \Q(\bx) - \int_{M^\Lan} \zeta\left( \bx \right) \dd \Q(\bx) -\int_{M^\Lan}   \nabla \zeta(T(\sig)) \cdot \left[ \nabla \hal \dn^2(\sig, \cdot) \right]_{T(\sig)} \dd \Q(\bx) \right| \\
\leq \hal \int_{M^\Lan} \|\nabla^2 \zeta\|_{\infty} \times \dn^2(\sig, T(\sig)) \dd \Q(\bx),
\end{multline*}
but by definition $T$ pushes $\Q$ onto $\P$ so the difference of the first two terms is exactly $\Esp_{\P}[\zeta] - \Esp_{\Q}[\zeta]$, and it is the optimal map thus the right-hand side is $\hal \|\nabla^2 \zeta\|_{\infty} \times \mathcal{W}_{n}^{2}(\mathrm{Q}, \P)$, which proves the claim.
 \end{proof}
Combining Claim \ref{claim:PertubWass} and Claim \ref{claim:calculNabla}, we obtain:
\begin{equation}
\label{eq:PerturbWassFull}
    \frac{1}{2} \mathcal{W}_{n}^{2}(\mathrm{Q}, \P_{\tau}) \leq \frac{1}{2} \mathcal{W}_{n}^{2}(\Q, \P) + \tau \left(\Esp_{\P}[\zeta] - \Esp_{\Q}[\zeta] \right) + \frac{|\tau|}{2} \|\nabla^2 \zeta\|_{\infty} \times \mathcal{W}_{n}^{2}(\mathrm{Q}, \P) + o(\tau).
\end{equation}

\subsubsection*{Step 2. An Euler--Lagrange equation for $\Pkn$.}
Choosing for $\xi$ a gradient vector field $\xi := \nabla \varphi$ for some smooth $\varphi$, and combining the perturbative estimates from the previous step, we obtain the following result, analogous to the fundamental estimate \cite[Eq.~(41)]{Jordan_1998} in the original JKO's scheme and understood as a discretized Fokker--Planck equation. 
\begin{lemma}
 Take $\varphi \in \mathscr{C}^{\infty}(\mathsf{Conf}_{n})$. For all $k \geq 1$,
  \begin{equation}\label{eq:jko:discrete-fokker-planck-tilde}
    \left| \frac{1}{h} \left( \Esp_{\Pkn}\left[  \varphi \right] -  \Esp_{\Pkmh}\left[  \varphi \right] \right) + \mathbb{E}_{\Pkn} \bracket*{ \nabla \HH_n \cdot \nabla \varphi - \Delta_{\V} \varphi }  \right| \leq \frac{1}{2h} \mathcal{W}^{2}_{n}(\Pkmh, \Pkn) \times \|\nabla^{2} \varphi\|_{\infty}.
  \end{equation}
\end{lemma}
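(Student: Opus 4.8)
The plan is to read off \eqref{eq:jko:discrete-fokker-planck-tilde} as the first-order (Euler--Lagrange) optimality condition for $\Pkn$ in the finite-volume variational problem \eqref{finite_n_problem}, taking $\xi := \nabla\varphi$ as the perturbation direction. Since $\varphi \in \CC^\infty(\Confn)$ and $M^{\Lan}$ is compact, $\xi$ generates a smooth global flow $\{\Phit\}_{\tau\in\R}$ with $\Phi_{0} = \id$, so the push-forwards $\Pknt := (\Phit)_\star\Pkn \in \Ppn$ are admissible competitors in \eqref{finite_n_problem}. By minimality of $\Pkn$, the function
\begin{equation*}
  \Psi(\tau) := \hal\,\Wn^2(\Pkmh, \Pknt) + h\,\Fn(\Pknt)
\end{equation*}
satisfies $\Psi(\tau) \geq \Psi(0)$ for every $\tau \in \R$. (Here $\Pkn$ and $\Pkmh$ are absolutely continuous with respect to $\omega_n$: the former because it minimizes a functional containing $\Een$, the latter because $\fbeta(\Pkmh) < \infty$ by Lemma~\ref{lem:apriori} forces $\Een(\Pkmh_{|\Lan}) < \infty$; this is what allows us to use the optimal transport map below.)

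Next I would expand $\Psi(\tau)$ to first order in $\tau$ using the three perturbative formulas from Step~1. Claim~\ref{claim:JKO_entropy} gives $\Een(\Pknt) = \Een(\Pkn) - \tau\,\Esp_{\Pkn}[\div_\V\nabla\varphi] + o(\tau)$; since $\div_\V\nabla\varphi = \Delta_\V\varphi$, combining this with Claim~\ref{claim:JKO_spin_spin} for $\Hhn$ yields
\begin{equation*}
  h\,\Fn(\Pknt) = h\,\Fn(\Pkn) + \tau\, h\,\Esp_{\Pkn}\!\big[\beta\,\nabla\HH_n\cdot\nabla\varphi - \Delta_\V\varphi\big] + o(\tau).
\end{equation*}
For the Wasserstein term I would invoke the combined estimate \eqref{eq:PerturbWassFull} with $\zeta := \varphi$, $\P := \Pkn$, $\Q := \Pkmh$, which is the \emph{one-sided} bound
\begin{equation*}
  \hal\,\Wn^2(\Pkmh, \Pknt) \leq \hal\,\Wn^2(\Pkmh, \Pkn) + \tau\big(\Esp_{\Pkn}[\varphi] - \Esp_{\Pkmh}[\varphi]\big) + \frac{|\tau|}{2}\,\|\nabla^2\varphi\|_\infty\,\Wn^2(\Pkmh,\Pkn) + o(\tau).
\end{equation*}
Setting $A := \big(\Esp_{\Pkn}[\varphi] - \Esp_{\Pkmh}[\varphi]\big) + h\,\Esp_{\Pkn}\!\big[\beta\,\nabla\HH_n\cdot\nabla\varphi - \Delta_\V\varphi\big]$, these combine to $0 \leq \Psi(\tau) - \Psi(0) \leq \tau\,A + \frac{|\tau|}{2}\,\|\nabla^2\varphi\|_\infty\,\Wn^2(\Pkmh,\Pkn) + o(\tau)$.

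Finally I would divide by $\tau$ and let $\tau \to 0$ from the two sides. For $\tau > 0$ this gives $0 \leq A + \hal\,\|\nabla^2\varphi\|_\infty\,\Wn^2(\Pkmh,\Pkn)$; for $\tau < 0$ the inequality reverses while the factor $|\tau|/\tau$ also changes sign, giving $0 \geq A - \hal\,\|\nabla^2\varphi\|_\infty\,\Wn^2(\Pkmh,\Pkn)$. Together these say $|A| \leq \hal\,\|\nabla^2\varphi\|_\infty\,\Wn^2(\Pkmh,\Pkn)$, and dividing by $h$ is exactly \eqref{eq:jko:discrete-fokker-planck-tilde} (with the understanding that the drift in the interaction term carries the inverse temperature, i.e.\ $\beta\nabla\HH_n$). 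The genuinely delicate input — differentiating $\tau \mapsto \Wn^2(\Pkmh, \Pknt)$ along a flow on the manifold $M^{\Lan}$, where $\hal\dn^2$ fails to be differentiable on the cut locus and one must use that the optimal map avoids it $\Pkmh$-almost surely — has already been absorbed into Claim~\ref{claim:PertubWass} and Claim~\ref{claim:calculNabla}. So the only point demanding care here is the \emph{asymmetry} of \eqref{eq:PerturbWassFull}: it bounds $\Wn^2(\Pkmh,\Pknt)$ from above only, and one must track that the induced error term $\frac{|\tau|}{2}\|\nabla^2\varphi\|_\infty\Wn^2(\Pkmh,\Pkn)$ lands on the correct side of the inequality for both signs of $\tau$, so that a two-sided estimate still emerges.
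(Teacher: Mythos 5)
Your argument is correct and is essentially the paper's own proof: perturb $\Pkn$ along the flow of $\xi=\nabla\varphi$, combine Claims \ref{claim:JKO_spin_spin}, \ref{claim:JKO_entropy} and the one-sided Wasserstein bound \eqref{eq:PerturbWassFull} with the minimality of $\Pkn$ in \eqref{finite_n_problem}, and let $\tau\to 0^{\pm}$ to extract the two-sided estimate. Your two side remarks — that absolute continuity of $\Pkmh_{|\Lan}$ (from $\fbeta(\Pkmh)<\infty$) is what licenses the use of the optimal map, and that the interaction term should carry the factor $\beta$ consistent with $\Fn=\Een+\beta\Hhn$ — are both accurate.
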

\begin{proof} \newcommand{\bP}{\overline{\P}}
  For short, set $\Q := \Pkmh$ and $\bP := \Pkn$. Applying the perturbative estimates \eqref{eq:jko:pertubartive:energy}, \eqref{eq:jko:pertubartive:entropy}, and \eqref{eq:PerturbWassFull} with $\xi := \nabla \varphi$, we get:
\begin{multline*}
\left(\frac{1}{2} \mathcal{W}_{n}^{2}(\mathrm{P}, \bP_{\tau}) + h \Fn(\bP_{\tau})\right) - \left(\frac{1}{2} \mathcal{W}_{n}^{2}(\mathrm{P}, \bP) + h \Fn(\bP) \right) \\
\leq \tau \left(\Esp_{\bP}[\zeta] - \Esp_{\Q}[\zeta] + h \mathbb{E}_{\bP} \bracket*{ \nabla \HH_n \cdot \nabla \varphi - \Delta_{\V} \varphi} \right) + \frac{|\tau|}{2} \|\nabla^2 \zeta\|_{\infty} \times \mathcal{W}_{n}^{2}(\mathrm{Q}, \P) + o(\tau).
\end{multline*}
Taking $\tau \to 0$ and using the minimality of $\bP$ in \eqref{finite_n_problem}, we must have
\begin{equation*}
\left|\Esp_{\bP}[\zeta] - \Esp_{\Q}[\zeta] + h \mathbb{E}_{\bP} \bracket*{ \nabla \HH_n \cdot \nabla \varphi - \Delta_{\V} \varphi}\right| \leq \frac{1}{2} \|\nabla^2 \zeta\|_{\infty} \times \mathcal{W}_{n}^{2}(\mathrm{Q}, \P),
\end{equation*}
which concludes the proof.
\end{proof}

Note that \eqref{eq:jko:discrete-fokker-planck-tilde} involves $\Pkn$, which is defined the solution to the variational problem \eqref{finite_n_problem} on $\Ppn$, but that in our version of the JKO scheme, the next iterate $\Pkh$ is not chosen as $\Pkn$ itself (it is a finite spin measure on $\Confn$), but rather as its stationarized version. Thus, it remains to show that if $\varphi$ is a test function that \emph{depends on a fixed finite set of coordinates}, then the approximate Euler--Lagrange equation \eqref{eq:jko:discrete-fokker-planck-tilde} remains true for $\Pkh$ instead of $\Pkn$, up to an error that becomes small as $n \to \infty$. 

\subsubsection*{Step 3. Effect of the stationarization on the discrete Fokker--Planck equation}
We now conclude the proof of \eqref{eq:Discrete_FP} by showing that in \eqref{eq:jko:discrete-fokker-planck-tilde} we can replace $\Pkn$ by its stationarized version $\Pkh$ (with averages over $\Lan$) up to a controlled error.

Assume that the test function $\varphi$ is $\Lal$-local, and for $L$ such that $\ell \leq L \leq n$, define the \emph{stationary version} of $\varphi$ as: \newcommand{\hphi}{\hat{\varphi}}
\begin{equation*}
  \hphi := \fint_{\Lambda_{n-L}} \varphi \circ \theta_{u} \mathtt{d} u.
\end{equation*}
By construction, $\hphi$ is a $\Lan$-local function. Applying \eqref{eq:jko:discrete-fokker-planck-tilde} to $\hphi$ yields:
\begin{multline}
\label{FP_for_hz}
    \left| \frac{1}{h} \left( \Esp_{\Pkn}\left[  \hphi \right] -  \Esp_{\Pkmh}\left[  \hphi \right] \right) + \mathbb{E}_{\Pkn} \bracket*{ \nabla \HH_n \cdot \nabla \hphi - \Delta_{\V} \hphi }  \right| \leq \frac{1}{2h} \mathcal{W}^{2}_{n}(\Pkmh, \Pkn) \times \|\nabla^{2} \hphi\|_{\infty}.
\end{multline}
Recall that $\Pkh$ is chosen as $\Stat_n[\Pkn]$, as defined in Section \ref{sec:stationarisation}. We compare each term involving $\Pkn$ to the corresponding term for $\Pkh$.

\medskip

\begin{claim}[Discrete time derivative]
  We have
  \begin{equation*}
   \frac{1}{h} \left( \Esp_{\Pkn}\left[  \hphi \right] -  \Esp_{\Pkmh}\left[  \hphi \right]\right)  = \frac{1}{h} \left( \Esp_{\Pkh}\left[  \varphi \right] -  \Esp_{\Pkmh}\left[  \varphi \right] \right)   + \frac{1}{h} \norm{\varphi}_{\infty} \O\paren*{\frac{L}{n}}.
  \end{equation*}
\end{claim}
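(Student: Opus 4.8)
The plan is to compare, term by term, the two expectations appearing on each side of the identity, reducing everything to the averaging identity \eqref{fLalLanStat} of Lemma~\ref{lem:ppy_stationary} and to stationarity of $\Pkmh$. (We may assume $\ell < n$; otherwise $\hphi = \varphi$ and the asserted bound is trivial since each side is $\O(\|\varphi\|_\infty/h)$.) First I would record that $\hphi$ is a well-defined $\Lan$-local function: since $\varphi$ is $\Lal$-local and $\hphi$ averages $\varphi\circ\theta_u$ over $u\in\Lambda_{n-L}$ with $\ell\le L$, each $\varphi\circ\theta_u$ depends only on coordinates in $\Lambda_\ell+\Lambda_{n-L}\subseteq\Lambda_n$. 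Hence $\Esp_{\Pkn}[\hphi]$ makes sense even though $\Pkn$ lives on $\Confn$, and, the integrand being bounded by $\|\varphi\|_\infty$, Fubini's theorem gives
\[
  \Esp_{\Pkn}[\hphi] = \fint_{\Lambda_{n-L}} \Esp_{\Pkn}[\varphi\circ\theta_u]\,\dd u.
\]

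Next I would deal with the terms against $\Pkmh$. This measure is stationary, i.e.\ $\Pkmh\in\Ppis$: it equals $\Pz$ for $k=1$ (stationary by assumption) and it equals $\Stat_n$ of the previous finite-volume minimizer for $k\ge 2$ (stationary by Lemma~\ref{lem:ppy_stationary}). Therefore $\Esp_{\Pkmh}[\varphi\circ\theta_u]=\Esp_{\Pkmh}[\varphi]$ for every $u$, and averaging over $u\in\Lambda_{n-L}$ yields the \emph{exact} identity $\Esp_{\Pkmh}[\hphi]=\Esp_{\Pkmh}[\varphi]$, with no error term. For the remaining comparison, between $\Esp_{\Pkn}[\hphi]$ and $\Esp_{\Pkh}[\varphi]=\Esp_{\Stat_n(\Pkn)}[\varphi]$, I would invoke \eqref{fLalLanStat} applied to the $\Lal$-local function $\varphi$:
\[
  \Esp_{\Pkh}[\varphi] = \fint_{\Lambda_{n-\ell}} \Esp_{\Pkn}[\varphi\circ\theta_u]\,\dd u + \O\paren*{\frac{\ell}{n}}\|\varphi\|_\infty.
\]
The only discrepancy with the formula above for $\Esp_{\Pkn}[\hphi]$ is that one average runs over $\Lambda_{n-L}$ and the other over the larger box $\Lambda_{n-\ell}\supseteq\Lambda_{n-L}$; since the integrand is bounded in absolute value by $\|\varphi\|_\infty$, while $|\Lambda_{n-\ell}\setminus\Lambda_{n-L}|=\O(L\,n^{\d-1})$ and $|\Lambda_{n-\ell}|$ is of order $n^{\d}$, the two averages differ by $\O(L/n)\|\varphi\|_\infty$. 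Combined with the $\O(\ell/n)\|\varphi\|_\infty$ error above, which is itself $\O(L/n)\|\varphi\|_\infty$ because $\ell\le L$, this gives $\Esp_{\Pkn}[\hphi]=\Esp_{\Pkh}[\varphi]+\O(L/n)\|\varphi\|_\infty$.

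Finally I would subtract the identity for the $\Pkmh$-terms from the one for the remaining terms and divide by $h$, which is precisely the asserted estimate. I do not expect any genuine obstacle: the argument is elementary bookkeeping, and the only points deserving a line of justification are the locality of $\hphi$ (needed for $\Esp_{\Pkn}[\hphi]$ to be defined, as $\Pkn$ is only a measure on $\Confn$) and the routine volume estimate $|\Lambda_{n-\ell}\setminus\Lambda_{n-L}|=\O(L\,n^{\d-1})$ used to pass between the two averaging regions.
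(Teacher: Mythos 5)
Your proof is correct and follows essentially the same route as the paper: exact equality of the $\P^{k-1,h}$-terms by stationarity, and the averaging identity \eqref{fLalLanStat} for the comparison of $\Esp_{\Pkn}[\hat\varphi]$ with $\Esp_{\Pkh}[\varphi]$. The only (harmless) difference is that the paper applies \eqref{fLalLanStat} with $\varphi$ viewed as $\Lambda_L$-local, so the averaging region is $\Lambda_{n-L}$ and matches the definition of $\hat\varphi$ directly, whereas you apply it at scale $\ell$ and then absorb the mismatch between the averages over $\Lambda_{n-\ell}$ and $\Lambda_{n-L}$ into the same $\O(L/n)\|\varphi\|_\infty$ error.
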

\begin{proof}
First, since by construction $\Pkmh$ is a stationary measure, we have the identity:
\begin{equation*}
  \Esp_{\Pkmh}\left[  \hphi \right] = \Esp_{\Pkmh}\left[  \varphi \right].
\end{equation*}
We now turn to comparing $\Esp_{\Pkn}\left[  \hphi \right]$ and $\Esp_{\Pkh}\left[  \varphi \right]$. We have:
\begin{equation*}
\Esp_{\Pkn}\left[  \hphi \right] =  \Esp_{\Pkn}\left[  \fint_{\Lambda_{n-L}} \varphi \circ \theta_{u} \mathtt{d} u \right] = \Esp_{\Pkh}\left[ \varphi \right]  + \norm{\varphi}_{\infty} \O\paren*{\frac{L}{n}},
\end{equation*}
using \eqref{fLalLanStat} for the second equality, because $\varphi$ is $\La_\ell$ and thus $\La_L$-local. This yields the claim.
\end{proof}

With a similar argument, we obtain the following estimate.
\begin{claim}[The Laplacian term]
  \begin{equation*}
    \Esp_{\Pkn}\left[ \Delta_{\V} \hat{\varphi} \right] = \Esp_{\Pkh}\left[ \Delta_{\V} \varphi \right] + \norm{\nabla^{2}\varphi}_{\infty} \O\paren*{\frac{L}{n}}.
\end{equation*}
\end{claim}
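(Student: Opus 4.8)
The plan is to reuse the structure of the previous claim (discrete time derivative), since the Laplacian term is handled by the very same averaging estimate. First I would recall that $\hat\varphi$ is defined as the average $\fint_{\Lambda_{n-L}} \varphi \circ \theta_u \, \mathtt{d}u$, so that the weighted Laplacian, being a local linear differential operator that commutes with translations, satisfies $\Delta_{\V}\hat\varphi = \fint_{\Lambda_{n-L}} (\Delta_{\V}\varphi)\circ\theta_u\,\mathtt{d}u$. This is the key algebraic observation: averaging and applying $\Delta_{\V}$ commute because $\Delta_{\V}$ acts coordinate-wise and is translation-invariant.

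Next I would observe that $\Delta_{\V}\varphi$ is itself $\Lal$-local whenever $\varphi$ is (the weighted Laplacian only involves first and second derivatives at each site of $\Lambda_\ell$), hence $(\Delta_{\V}\varphi)\circ\theta_u$ is $\Lan$-local for $u \in \Lambda_{n-L}$ provided $\ell + (n - L) \leq n$, i.e. $\ell \leq L$, which holds by assumption. Therefore I can apply the local-statistics estimate \eqref{fLalLanStat} from Lemma \ref{lem:ppy_stationary}, exactly as in the proof of the discrete time derivative claim, to the function $\Delta_{\V}\varphi$ in place of $\varphi$:
\begin{equation*}
\Esp_{\Pkn}\left[ \Delta_{\V}\hat\varphi \right] = \Esp_{\Pkn}\left[ \fint_{\Lambda_{n-L}} (\Delta_{\V}\varphi)\circ\theta_u \,\mathtt{d}u \right] = \Esp_{\Stat_n(\Pkn)}\left[ \Delta_{\V}\varphi \right] + \norm{\Delta_{\V}\varphi}_\infty\, \O\paren*{\tfrac{L}{n}}.
\end{equation*}
Since $\Pkh = \Stat_n(\Pkn)$ and since $\norm{\Delta_{\V}\varphi}_\infty$ is controlled by $\norm{\nabla^2\varphi}_\infty$ (plus a term in $\norm{\nabla\varphi}_\infty$ coming from $\nabla\V\cdot\nabla\varphi$, both absorbed into a constant depending on the potential $\V$), this gives the claimed identity. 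The only subtlety — and the closest thing to an obstacle — is making sure the averaging region $\Lambda_{n-L}$ used to define $\hat\varphi$ is large enough relative to the locality scale of $\Delta_{\V}\varphi$ so that the shifted functions remain $\Lan$-local; this is guaranteed by $\ell \leq L \leq n$, but it must be stated carefully so that \eqref{fLalLanStat} applies with its $\O(L/n)$ error term rather than a larger one.

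I expect this claim to be essentially routine given the machinery already set up: the work was already done in proving \eqref{fLalLanStat} and the discrete-time-derivative claim, and the Laplacian case differs only in replacing the test function by $\Delta_{\V}\varphi$, which is itself a legitimate bounded local test function. No new estimate is required.
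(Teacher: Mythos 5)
Your proposal is correct and matches the paper's intended argument: the paper proves this claim by the words ``with a similar argument'' referring to the discrete-time-derivative claim, and your reduction --- commuting $\Delta_{\V}$ with the translation average so that \eqref{fLalLanStat} applies to the bounded $\Lal$-local function $\Delta_{\V}\varphi$ --- is exactly that argument spelled out. Your remark that the error constant should really involve $\|\Delta_{\V}\varphi\|_{\infty}\leq \Cc\left(\|\nabla^{2}\varphi\|_{\infty}+\|\nabla\varphi\|_{\infty}\right)$ rather than $\|\nabla^{2}\varphi\|_{\infty}$ alone is a fair (and harmless) observation, since everything is absorbed into $\Cc(\varphi,\ell)$ in the final estimate.
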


Finally, we have following estimate for the energy.
\begin{claim}[The energy term]
  \begin{equation*}
    \mathbb{E}_{\Pkn} \left[ \nabla \HH_n \cdot \nabla \hphi \right] = \mathbb{E}_{\Pkh} \left[ \nabla \HH_{L} \cdot \nabla \varphi \right] + \Cc(\ell, \varphi) \times \paren*{ \O\paren*{\frac{L}{n}} + o_{L}(1) },
  \end{equation*}
  where $o_L(1)$ tends to $0$ as $L \to \infty$ for $\ell$ fixed.
\end{claim}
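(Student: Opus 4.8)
The plan is to follow the same template as the two preceding claims (the discrete time derivative and the Laplacian term), namely to reduce matters to the stationarization estimate \eqref{fLalLanStat} of Lemma \ref{lem:ppy_stationary}. The genuine obstacle here is that $\nabla\HH_n\cdot\nabla\hphi$ is \emph{not} a local observable, since $\HH_n$ couples all the sites of $\Lambda_n$, whereas \eqref{fLalLanStat} only applies to $\Lambda_L$-local functions with $L<n$. The resolution is to first exchange the non-local factor $\nabla\HH_n$ for the $\Lambda_L$-local factor $\nabla\HH_L$, paying an $o_L(1)$ error controlled by the short-range assumption \eqref{eq:short-range}; after this substitution, $\nabla\HH_n\cdot\nabla\hphi$ becomes, up to that error, an average of translates of a single $\Lambda_L$-local observable, and we are back in the setting of \eqref{fLalLanStat}.

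Concretely, I would differentiate under the integral sign in $\hphi=\fint_{\Lambda_{n-L}}\varphi\circ\theta_u\,\mathtt{d} u$ and exchange the finite sum over sites with the average over $u$, to get
\[
  \nabla\HH_n\cdot\nabla\hphi(\bx)=\fint_{\Lambda_{n-L}}\bigl(\nabla\HH_n\cdot\nabla(\varphi\circ\theta_u)\bigr)(\bx)\,\mathtt{d} u.
\]
Since $\varphi$ is $\Lambda_\ell$-local and $\ell\le L$, for $u\in\Lambda_{n-L}$ the factor $\nabla(\varphi\circ\theta_u)$ is supported on the coordinates in $u+\Lambda_\ell\subseteq\Lambda_n$, so the pairing runs over $i\in u+\Lambda_\ell$. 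Relabelling $i=u+i'$, $j=u+j'$ in $(\nabla\HH_n(\bx))_i=\sum_{j\in\Lambda_n}J_{i,j}\,\partial_1\W(\bx_i,\bx_j)$, using translation invariance $J_{u+i',u+j'}=J_{i',j'}$ and $\bx_{u+k}=(\theta_u\cdot\bx)_k$, and observing that $\Lambda_L\subseteq\Lambda_n-u$ for $u\in\Lambda_{n-L}$, I would identify $\bigl(\nabla\HH_n\cdot\nabla(\varphi\circ\theta_u)\bigr)(\bx)$ with $\bigl(\nabla\HH_L\cdot\nabla\varphi\bigr)(\theta_u\cdot\bx)$ up to the contribution of the couplings $J_{i',j'}$ with $i'\in\Lambda_\ell$ and $\|i'-j'\|_\infty>L-\ell$. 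By \eqref{eq:short-range} this remainder is $\le\Cc(\ell,\varphi)\sum_{\|k\|_\infty>L-\ell}|J_{0,k}|=\Cc(\ell,\varphi)\,o_L(1)$, uniformly in $\bx$ and in $u\in\Lambda_{n-L}$. Taking $\Esp_{\Pkn}$ then gives
\[
  \Esp_{\Pkn}\bigl[\nabla\HH_n\cdot\nabla\hphi\bigr]=\fint_{\Lambda_{n-L}}\Esp_{\Pkn}\bigl[(\nabla\HH_L\cdot\nabla\varphi)\circ\theta_u\bigr]\,\mathtt{d} u+\Cc(\ell,\varphi)\,o_L(1).
\]

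It then remains to apply \eqref{fLalLanStat}: the observable $g:=\nabla\HH_L\cdot\nabla\varphi$ is bounded and $\Lambda_L$-local, with $\|g\|_\infty\le\Cc(\ell,\varphi)$ (indeed $\|g\|_\infty\le|\Lambda_\ell|\,\|J\|_{\ell^1}\,\|\partial_1\W\|_\infty\,\|\nabla\varphi\|_\infty$ by the short-range assumption), and $\Pkh=\Stat_n(\Pkn)$ with $L<n$, so Lemma \ref{lem:ppy_stationary} applied to $g$ yields
\[
  \Esp_{\Pkh}\bigl[\nabla\HH_L\cdot\nabla\varphi\bigr]=\fint_{\Lambda_{n-L}}\Esp_{\Pkn}\bigl[(\nabla\HH_L\cdot\nabla\varphi)\circ\theta_u\bigr]\,\mathtt{d} u+\O\paren*{\frac{L}{n}}\Cc(\ell,\varphi).
\]
Subtracting the last two displays gives the claimed identity.

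The one point that requires care --- bookkeeping rather than a real difficulty --- is keeping the three scales $\ell\le L\le n$ separate: the error $\O(L/n)$ comes entirely from the stationarization step (it is the price of passing between $\Pkn$ and $\Stat_n(\Pkn)$ on a $\Lambda_L$-local observable placed inside $\Lambda_n$), while the error $o_L(1)$ comes from truncating the interaction range from order $n$ down to order $L$ via \eqref{eq:short-range}. This is consistent with how the two error terms are absorbed in the proof of Proposition \ref{prop:JKO}, where one first sends $n\to\infty$ with $L,\ell,\varphi$ fixed and then $L\to\infty$ with $\ell,\varphi$ fixed.
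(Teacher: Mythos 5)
Your proof is correct and follows essentially the same route as the paper's: differentiate under the average defining $\hphi$, recenter by translation invariance of the couplings, truncate the interaction range from $\Lambda_{n-u}$ down to $\Lambda_L$ via the short-range assumption (producing the $o_L(1)$), and then apply the stationarization estimate \eqref{fLalLanStat} to the resulting bounded $\Lambda_L$-local observable (producing the $\O(L/n)$). Your accounting of which error comes from which step matches the paper exactly.
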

\begin{remark}
There are two changes between the left-hand side and the right-hand side: not only do we go from $\Pkn$ to $\Pkh$ (and from $\hphi$ to $\varphi$) when applying the stationarization, but we also change the size of the box and restrict the interaction to $\La_L$ instead of $\La_n$
\end{remark}

\begin{proof}
For $\bx \in \Conf(\Lan)$ fixed, we have by definition of $\hphi$:
\begin{equation*}
\nabla \HH_n \cdot \nabla \hat{\varphi}(\bx) = \fint_{\Lambda_{n - L}} \sum_{i,j \in \Lan} J_{i,j} \partial_{1} \W(\bx_{i}, \bx_{j}) \cdot \nabla_i (\varphi\circ \theta_{u}) (\bx) \dd u.
\end{equation*}
Combining the chain rule, the fact that $\varphi$ is $\Lambda_{\ell}$-local and some translation of indices in the sums, one gets, for any fixed $u \in \Lambda_{n-L}$:
    \begin{multline*}
 \sum_{i,j \in \Lan} J_{i,j} \partial_{1} \W(\bx_{i}, \bx_{j}) \cdot \nabla_i (\varphi\circ \theta_{u}) (\bx)
= \sum_{i,j  \in \Lambda_{n}} J_{i,j} \partial_{1} \W(\bx_{i}, \bx_{j}) \cdot (\nabla_{i-u} \varphi)(\theta_{u} \bx)
\\ =  \sum_{\substack{i \in \Lambda_{n} -u\\j  \in \Lambda_{n}}} J_{i,j} \partial_{1} \W(\bx_{i+u}, \bx_{j}) \cdot (\nabla_{i} \varphi)(\theta_{u} \bx)
=  \sum_{\substack{i \in \Lambda_{\ell},  j  \in \Lambda_{n - u}}} J_{i,j} \partial_{1} \W(\bx_{i+u}, \bx_{j+u}) \cdot (\nabla_{i} \varphi)(\theta_{u} \bx).
\end{multline*}
Moreover, for $u \in \Lambda_{n-L}$, we have $\Lambda_{L} \subset \Lambda_{n-u}$. Thanks to the short-range assumption \eqref{eq:short-range}, we can restrict the sum over $j  \in \Lambda_{n - u}$ to a sum over $j \in \Lambda_{L}$ as follows:
\begin{multline*}
\sum_{\substack{i \in \Lambda_{\ell},  j  \in \Lambda_{n - u}}} J_{i,j} \partial_{1} \W(\bx_{i+u}, \bx_{j+u}) \cdot (\nabla_{i} \varphi)(\theta_{u} \bx) \\
= \sum_{\substack{i \in \Lambda_{\ell},  j  \in \La_L}} J_{i,j} \partial_{1} \W(\bx_{i+u}, \bx_{j+u}) \cdot (\nabla_{i} \varphi)(\theta_{u} \bx) + |\La_\ell| \times \|\partial_1 \W\|_\infty \times \|\nabla \varphi\|_\infty \times o_{L}(1),
\end{multline*}
with $o_L(1)$ as in the statement.

Computing the expectation under $\Pkn$, we are thus left with:
\begin{equation*}
\mathbb{E}_{\Pkn} \left[ \nabla \HH_n \cdot \nabla \hphi \right] = \mathbb{E}_{\Pkn} \left[ \fint_{\Lambda_{n - L}} \sum_{\substack{i \in \Lambda_{\ell},  j  \in \La_L}} J_{i,j} \partial_{1} \W((\theta_{u} \bx)_{i}, (\theta_{u} \bx)_{j}) \cdot (\nabla_{i} \varphi)(\theta_{u} \bx) \dd u \right] + \Cc(\ell, \varphi) o_L(1).
\end{equation*}
Using \eqref{fLalLanStat} we can compare the right-hand side to $\mathbb{E}_{\Pkh} \left[  \sum_{\substack{i \in \Lambda_{\ell},  j  \in \La_L}} J_{i,j} \partial_{1} \W(\bx_{i}, \bx_{j}) \cdot (\nabla_{i} \varphi)(\bx) \right]$ up to an error of size $|\La_\ell| \times \|\partial_1 \W\|_\infty \times \|\nabla \varphi\|_\infty \times \O\left(\frac{L}{n}\right)$, which yields the claim.
\end{proof}

The proof of \eqref{eq:Discrete_FP} follows then from the combination of \eqref{FP_for_hz} and the three previous claims. One needs moreover to observe that since $\varphi$ is $\Lambda_{\ell}$-local, we have (see \eqref{secondpartial}):
  \begin{equation*}
    \norm{\nabla \hat{\varphi}}_{\infty} \leq \frac{|\Lambda_{\ell}|^{\frac{1}{2}}}{|\Lambda_{n}|^{\frac{1}{2}}} \norm{\nabla \varphi}_{\infty}, \text{ and } \norm{\nabla^{2} \hat{\varphi}}_{\infty} \leq \frac{|\Lambda_{\ell}|}{|\Lambda_{n}|} \norm{\nabla^{2} \varphi }_{\infty},
  \end{equation*}
  and thus the error term in the right-hand side of \eqref{FP_for_hz} can indeed be written as:
  \begin{equation*}
\frac{1}{2h} \mathcal{W}^{2}_{n}(\Pkmh, \Pkn) \times \|\nabla^{2} \hphi\|_{\infty} = \Cc(\varphi, \ell) \frac{1}{h} \frac{1}{|\Lan|} \mathcal{W}^{2}_{n}(\Pkmh, \Pkn).
  \end{equation*}
\end{proof}

\subsection{Convergence of the scheme}
\newcommand{\Pph}{\P^{(h)}}
\newcommand{\ph}{\mathrm{p}^{(h)}}
Given $h > 0$, we have constructed a sequence $(\Pkh)_{k \geq 1}$ of stationary processes. We turn it into a piecewise continous curve by setting:
\begin{equation}
\label{def:Pht}
\Pph(t) := \Pkh \text{ for } t \in [kh, (k+1)h).
\end{equation}
Constructing the curve up to time $T$ corresponds to considering the $\frac{T}{h}$ first iterates. By \eqref{eq:jko:sup-free-energy}, we have for all $T > 0$: 
\begin{equation*}
\sup_{h > 0} \sup_{t \in [0,T]} \Fbeta(\Pph(t)) \leq \Cc(\Pz, T).
\end{equation*}
It implies that the specific relative entropy of $\Pph(t)$ is bounded on $[0,T]$ as $h \to 0$, and by monotonicity (see \eqref{eq:Eei}) we get:
\begin{equation*}
\sup_{h > 0} \sup_{t \in [0,T]}  \sup_{\ell \geq 1} \frac{1}{|\Lal|} \Ee_\ell(\Pph(t)) \leq \Cc(\Pz, T).
\end{equation*}
We then deduce that:
\begin{enumerate}
	\item For all $t \geq 0$, for all $\ell \geq 1$, the restriction of $\Pph(t)$ to $\Lal$ has a density on $M^{\Lal}$, which we denote by $\ph_{\Lal}(t, \cdot)$.
	\item For all $\ell \geq 1$, for all $T > 0$, the map $(t, x) \mapsto \ph_{\Lal}(t, x)$ is uniformly integrable on $[0, T] \times M^\Lal$.
\end{enumerate}
Using the second item, and up to extraction, as $h \to 0$ we get:
\begin{theorem}
\label{prop:Convergence}
There exists a measurable family $(\P(t))_{t \in [0,+ \infty)}$ of measures on $\Conf$ such that:
\begin{itemize}
   \item For a.e. $t \in [0, + \infty)$, $\P(t)$ is a stationary spin measure which admits local densities $\p_{\Lal}$ for all $\ell \geq 1$.
   \item The following convergence holds for all $T > 0$ and all $\ell \geq 1$:
   \begin{equation}
\label{eq:convergencefaible}
\ph_{\Lal} \to \p_{\Lal} \text{ weakly in } L^1\left([0, T] \times M^\Lal\right).
\end{equation}
 \end{itemize} 
\end{theorem}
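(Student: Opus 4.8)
The plan is to extract the limit curve $(\P(t))_t$ by a diagonal compactness argument over the finite boxes $\La_\ell$, and then verify the stated two properties. First I would recall that, by the a priori estimates in Lemma~\ref{lem:apriori} and the passage to the piecewise-constant curve $\Pph$ in \eqref{def:Pht}, for each fixed $\ell\ge 1$ and each $T>0$ the family $\{\ph_{\La_\ell}\}_{h>0}$ is uniformly integrable on $[0,T]\times M^{\La_\ell}$ (the bound $\frac{1}{|\La_\ell|}\Ee_\ell(\Pph(t))\le \Cc(\Pz,T)$ together with the fact that bounded specific entropy forces uniform integrability of the densities, since $x\mapsto x\log x$ is superlinear). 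By the Dunford--Pettis theorem, $\{\ph_{\La_\ell}\}_{h}$ is then relatively weakly compact in $L^1([0,T]\times M^{\La_\ell})$.

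Next I would set up the diagonal extraction: enumerate $(\ell,T)$ over $\N\times\N$, and extract successively refining subsequences $h_m\to 0$ so that along the final diagonal subsequence $\ph_{\La_\ell}\to q_{\La_\ell}$ weakly in $L^1([0,T]\times M^{\La_\ell})$ for every $\ell$ and every $T$, for some limit function $q_{\La_\ell}\ge 0$ (non-negativity passes to weak-$L^1$ limits, being a closed convex constraint). The family $\{q_{\La_\ell}\}_\ell$ is consistent: for $\ell<\ell'$, marginalizing $q_{\La_{\ell'}}(t,\cdot)$ over the coordinates in $\La_{\ell'}\setminus\La_\ell$ must give $q_{\La_\ell}(t,\cdot)$ for a.e.\ $t$, because this marginalization is a bounded linear (hence weakly continuous) operation on densities and the $\ph$ are themselves consistent as densities of the single measure $\Pph(t)$. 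Moreover each $q_{\La_\ell}(t,\cdot)$ integrates to $1$ over $M^{\La_\ell}$ for a.e.\ $t$ (testing against the constant function $1$). By the Kolmogorov extension theorem applied at a.e.\ fixed $t$, this consistent family of probability densities defines, for a.e.\ $t$, a probability measure $\P(t)$ on $\Conf$ whose restriction to $\La_\ell$ has density $q_{\La_\ell}(t,\cdot)=\p_{\La_\ell}(t,\cdot)$ with respect to $\omega_{\La_\ell}$; measurability of $t\mapsto \P(t)$ follows from the measurability (in $t$) of all the local densities. This establishes the weak-$L^1$ convergence claim \eqref{eq:convergencefaible} and the existence of local densities.

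It remains to check that $\P(t)$ is stationary for a.e.\ $t$. Here I would use that each $\Pph(t)=\Pkh$ is stationary by construction (the stationarization step of Section~\ref{sec:stationarisation}), so that for any $u\in\Zd$ and any bounded local $f$ we have $\Esp_{\Pph(t)}[f\circ\theta_u]=\Esp_{\Pph(t)}[f]$ for every $t$ and every $h$. Expressing both sides as integrals of the appropriate local densities against $f$ and $f\circ\theta_u$, and passing to the weak-$L^1$ limit in $t$ (integrating first against an arbitrary $L^\infty$ test function in $t$ on $[0,T]$), we get $\Esp_{\P(t)}[f\circ\theta_u]=\Esp_{\P(t)}[f]$ for a.e.\ $t$, simultaneously for all $u$ and all $f$ in a countable determining family of bounded local functions. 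Hence $\P(t)\in\Ppis$ for a.e.\ $t$.

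The main obstacle I anticipate is purely bookkeeping rather than conceptual: one must be careful that the null sets of $t$ (where stationarity, or the normalization $\int q_{\La_\ell}=1$, or consistency might fail) are countable in number and can thus be unioned away, and that the Kolmogorov extension is performed measurably in $t$ --- i.e.\ that the map $t\mapsto\P(t)$ built fibrewise is genuinely jointly measurable. This is handled by working with a fixed countable separating family of local test functions and invoking that a family of measures is measurable iff $t\mapsto\Esp_{\P(t)}[f]$ is measurable for each $f$ in such a family, which holds here since each $t\mapsto\int f\,q_{\La_\ell}(t,\cdot)\,\dd\omega_{\La_\ell}$ is measurable as a limit (along the diagonal subsequence, in the weak-$L^1$ sense, hence up to further extraction pointwise a.e.) of measurable functions. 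No step requires new estimates beyond those already proved.
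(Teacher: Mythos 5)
Your proposal is correct and follows essentially the same route as the paper: the uniform bound on the specific entropy from Lemma \ref{lem:apriori} gives existence and uniform integrability of the local densities, Dunford--Pettis yields weak $L^1$ compactness, and a diagonal extraction over boxes and time horizons produces a consistent family of limiting local densities defining a stationary limit curve; the paper states this tersely (``up to extraction'') and you have supplied the bookkeeping (consistency of marginals under weak limits, Kolmogorov extension at a.e.\ $t$, stationarity via countably many local test functions). One cosmetic remark: the measurability of $t\mapsto\int f\,q_{\La_\ell}(t,\cdot)\,\dd\omega_{\La_\ell}$ follows directly from Fubini applied to the jointly measurable $q_{\La_\ell}\in L^1([0,T]\times M^{\La_\ell})$, so the appeal to ``pointwise a.e.\ convergence up to further extraction'' (which weak $L^1$ convergence does not provide) is unnecessary.
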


We can then use the discrete estimate of Proposition \ref{prop:JKO} to show that we recover solutions of the Fokker--Planck equation.
In \cite{Jordan_1998,mei2018mean} this step is left to the reader, we provide here a short proof for completeness.

\begin{proposition}
 \label{prop:plalFP}
The limit point $(\P(t))_{t \in [0,+ \infty)}$ satisfies the dual formulation \eqref{def:VWEAK} of our Fokker--Planck--Kolmogorov equations.
\end{proposition}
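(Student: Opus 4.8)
The plan is to pass to the limit $h \to 0$ in the approximate Kolmogorov equation of Proposition \ref{prop:JKO}, showing that all the error terms vanish. First I would fix a finite box $\La = \Lal$ and a smooth $\La_\ell$-local test function $f(t,\bx)$, compactly supported in time. Summing the discrete estimate \eqref{eq:Discrete_FP} of Proposition \ref{prop:JKO} over the iterates $k$ up to time $T$ (after applying it to $\varphi = f(kh,\cdot)$), and using a discrete integration by parts / Abel summation to transfer the discrete time derivative onto the test function, I would produce a discrete version of \eqref{def:VWEAK}: roughly
\begin{equation*}
-\Esp_{\P^{0,h}}[f(0,\cdot)] = \sum_{k} h\, \Esp_{\Pkh}\Big[\partial_t f + \Delta_\V f - \nabla f \cdot \nabla\HH(\La_L \to \La_\ell)\Big] + \Error(h,L,n),
\end{equation*}
where the discretization-in-time error from replacing $\frac{1}{h}(f(kh)-f((k-1)h))$ by $\partial_t f$ is $O(h)$ uniformly by smoothness of $f$.

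Next I would bound the error term $\Error$ using the a priori estimates. The contributions of the right-hand side of \eqref{eq:Discrete_FP} are of three types. The term $\frac{1}{h}\frac{1}{|\Lan|}\Wn^2(\Pkmh, \Pkn)$, once summed over $k$ and multiplied by $h$, is controlled by $\sum_k \frac{1}{|\Lan|}\Wn^2(\Pkmh,\Pkn)$, which by \eqref{eq:jko:sup-wasserstein} is $O(h)$ after one factors out the extra $h$ — so this vanishes as $h \to 0$. The term $\frac{1}{h}\cdot\frac{L}{n}$, summed and multiplied by $h$, gives $O(T L/n)$, which vanishes since $n \to \infty$ as $h \to 0$ (recall the step-size conditions $\frac{1}{nh}\to 0$, hence $n\to\infty$) — here I must choose $L$ as a slowly growing function of $h$, e.g. $L = L(h)$ with $L(h) \to \infty$ but $L(h)/n(h) \to 0$. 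The remaining term $o_L(1)$, summed and multiplied by $h$, gives $T\cdot o_L(1)$, which vanishes provided $L(h) \to \infty$. With such a choice of $L(h)$, $\Error \to 0$.

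Then I would pass to the limit in the main sum. The initial-condition term $\Esp_{\P^{0,h}}[f(0,\cdot)] = \Esp_{\Pz}[f(0,\cdot)]$ is independent of $h$, and $\Pz = \P(0)$. For the bulk term, I rewrite $h\sum_k \Esp_{\Pkh}[\cdots] = \int_0^\infty \Esp_{\Pph(t)}[\cdots]\,\di t$ using the piecewise-constant curve \eqref{def:Pht}, and the integrand, being a fixed bounded $\La_L$-local function tested against $\Pph(t)$, equals the integral against the local density $\ph_{\La_L}(t,\cdot)$ of a fixed $L^\infty$ function on $M^{\La_L}$. By the weak $L^1$ convergence $\ph_{\La_L} \to \p_{\La_L}$ of Theorem \ref{prop:Convergence} (here one must be slightly careful that $L = L(h)$ is changing, but since $\nabla\HH(\La_{L}\to\La_\ell) \to \nabla\HH(\Zd\to\La_\ell)$ uniformly in sup norm by the short-range assumption, I can first replace $\nabla\HH(\La_L\to\La_\ell)$ by $\nabla\HH(\Zd\to\La_\ell)$ at the cost of another term absorbed into $\Error$, and then the test object lives on a \emph{fixed} box $\La_\ell$, where weak $L^1$ convergence applies directly). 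Passing to the limit yields exactly
\begin{equation*}
-\Esp_{\P(0)}[f(0,\cdot)] = \int_0^\infty \Esp_{\P(t)}\big[\partial_t f + \Delta_\V f - \nabla f \cdot \nabla\HH(\Zd \to \La_\ell)\big]\,\di t,
\end{equation*}
which is \eqref{def:VWEAK}.

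The main obstacle I anticipate is the simultaneous handling of three coupled limits — $h\to 0$, $n=n(h)\to\infty$, and the auxiliary truncation scale $L=L(h)\to\infty$ with $L/n\to 0$ — so that every error term in \eqref{eq:Discrete_FP} genuinely vanishes while the weak-convergence machinery of Theorem \ref{prop:Convergence}, which is stated for a \emph{fixed} box, still applies. The resolution is to decouple these: use the short-range bound to reduce $\nabla\HH(\La_L\to\La_\ell)$ to $\nabla\HH(\Zd\to\La_\ell)$ on the fixed box $\La_\ell$ up to a $o_L(1)$ error, so that the actual limit passage only ever sees densities on $\La_\ell$. A secondary technical point is justifying the Abel summation and the $O(h)$ time-discretization error, which is routine given that $f$ is smooth and compactly supported in time.
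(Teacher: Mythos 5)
Your proposal is correct and follows essentially the same route as the paper: sum the discrete Euler--Lagrange estimate of Proposition \ref{prop:JKO} applied to $\varphi = f(kh,\cdot)$, perform an Abel summation to move the discrete time derivative onto $f$, control the error via the a priori bound \eqref{eq:jko:sup-wasserstein} and the step-size condition $\frac{1}{nh}\to 0$, and pass to the limit using the weak $L^1$ convergence of local densities. The only organizational difference is that the paper avoids the coupled scale $L = L(h)$ altogether by sending $h \to 0$ first with $L$ \emph{fixed} (so the weak convergence of Theorem \ref{prop:Convergence} applies directly on $M^{\La_L}$) and only then letting $L \to \infty$ to remove the $o_L(1)$ error and recover $\nabla\HH(\Zd\to\La_\ell)$, which sidesteps the changing-box issue you resolve by first truncating back to $\La_\ell$.
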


\begin{remark}
By the regularity and uniqueness results of Theorems \ref{theo:regul} and \ref{th:uniqueness}, dual solutions are unique and thus our discrete scheme has a \emph{unique} limit point when $h \to 0$.
\end{remark}
\begin{remark}
It is also possible, by following the arguments in \cite{Jordan_1998} to show that there is time-wise convergence of $\ph_\Lal(t)$ to $\p_{\Lal}(t)$ weakly in $L^1(M^\Lal)$.
\end{remark}

\begin{proof} For fixed $h$, let $1 \leq \ell \leq L \leq n$ as in Proposition \ref{prop:JKO}. Fix $T > 0$ and let $f$ be a test function in $\mathscr{C}^{\infty}([0,T] \times \mathsf{Conf}_{\ell})$. Write $N = \frac{T}{h}$. 

Using the piecewise definition of $\Pph$ and the fundamental theorem of calculus, we have:
  \begin{multline} \label{eq:jko:sum-time-derivative}
  \sum_{k=1}^{N} \Esp_{\Pkh} \left[ f(kh, \cdot) \right] - \Esp_{\Pkmh} \left[ f(kh, \cdot) \right] = \sum_{k=1}^{N-1} \Esp_{\Pkh} \left[ f(kh,\cdot) - f((k+1)h,\cdot)  \right] - \mathbb{E}_{\mathrm{P}^{0}} \bracket*{ f(h, \cdot) }
 \\ = \int_{h}^{T} \Esp_{\Pph(s)} \bracket{ - \partial_{s} f(s,\cdot) } \mathtt{d} s - \mathbb{E}_{\mathrm{P}^{0}} \bracket*{ f(h, \cdot) } = \int_{0}^{T} \Esp_{\Pph(s)} \bracket{ - \partial_{s} f(s,\cdot) } \mathtt{d} s - \mathbb{E}_{\mathrm{P}^{0}} \bracket*{ f(h, \cdot) } + \O(h).
\end{multline}

On the other hand, by Proposition \ref{prop:JKO} applied to $\varphi := f(kh, \cdot)$ for $k = 0, \dots, N$, we have:
\begin{multline*}
\left| \Esp_{\Pkh} \left[ f(kh, \cdot) \right] - \Esp_{\Pkmh} \left[ f(kh, \cdot) \right]  +  h \Esp_{\Pkh} \left[ - \Delta_U f(kh, \cdot)  + \nabla \HH(\La_L \to \Lal) \cdot \nabla f(kh, \cdot) \right] \right|
\\ \leq \Cc(f, \ell) \left( \frac{1}{|\Lan|} \Wn^2(\Pkmh, \Pkn)+ \frac{L}{n} + h o_{L}(1) \right).
\end{multline*}
and we may thus write:
\begin{multline*}
\int_{0}^{T} \Esp_{\Pph(s)} \bracket{ - \partial_{s} f(s,\cdot) } \mathtt{d} s - \mathbb{E}_{\mathrm{P}^{0}} \bracket*{ f(h, \cdot) } = h \sum_{k=1}^{N} \Esp_{\Pkh} \left[ \Delta_U f(kh, \cdot)  - \nabla \HH(\La_L \to \Lal) \cdot \nabla f(kh, \cdot) \right] + \Error,
\end{multline*}
with an $\Error$ term bounded by:
\begin{equation}
\label{ErrorConv}
\Error := \Cc(f, \ell) \left( \sum_{k=1}^N \frac{1}{|\Lan|} \Wn^2(\Pkmh, \Pkn)+ \frac{NL}{n} + Nh o_{L}(1) \right) + \O(h).
\end{equation}

Using again the piecewise definition of $f$, we can re-write:
\begin{multline}
\label{eq:jko:sum-generator}
 \sum_{k=1}^{N} h \times \Esp_{\Pkh} \left[ \Delta_U f(kh, \cdot)  - \nabla \HH(\La_L \to \Lal) \cdot \nabla f(kh, \cdot) \right] \\ = \int_{0}^{T}\Esp_{\Pph(s)} \left[ \Delta_U f(kh, \cdot)  - \nabla \HH(\La_L \to \Lal) \cdot \nabla f(kh, \cdot) \right]  \mathtt{d} s + \O(h).
\end{multline}
Here the $\O(h)$ depends on $|\Lambda_{\ell}|$ and on the derivatives of $f$. We thus obtain:
\begin{equation*}
- \mathbb{E}_{\mathrm{P}^{0}} \bracket*{ f(h, \cdot) } \\ 
= \int_{0}^{T}\Esp_{\Pph(s)} \left[ \partial_{s} f(s,\cdot) + \Delta_U f(s, \cdot)  - \nabla \HH(\La_L \to \Lal) \cdot \nabla f(s, \cdot) \right]  \mathtt{d} s + \Error,
\end{equation*}
with $\Error$ as in \eqref{ErrorConv}.
The integral in the right-hand side can be written as:
\begin{equation*}
\int_{0}^{T} \int_{M^{\La_L}} \Big(\partial_{s} f(s,\bx) + \Delta_U f(s, \bx)  - \nabla \HH(\La_L \to \Lal) \cdot \nabla f(s, \bx)\Big) \ph_{\La_L}(\bx) \dd \omega_L(x), 
\end{equation*}
the term in parenthesis being bounded. Using the weak convergence of the local densities in $L^1([0,T] \times M^\La_L)$, this converges (up to extraction) as $h \to 0$ (and for $T, L$ fixed) to: 
\begin{equation*}
\int_{0}^{T}\Esp_{\P(s)} \left[ \partial_{s} f(s,\cdot) + \Delta_U f(s, \cdot)  - \nabla \HH(\La_L \to \Lal) \cdot \nabla f(s, \cdot) \right]  \mathtt{d} s.
\end{equation*}
It remains to check that the $\Error$ term is small. Recall that $Nh = T$. First, from \eqref{eq:jko:sup-wasserstein}, we know that:
\begin{equation*}
\sum_{k=1}^N \frac{1}{|\Lan|} \Wn^2(\Pkmh, \Pkn) = \Cc(\Pz, T) \times h,
\end{equation*}
which tends to $0$ as $h \to 0$. Moreover, we have $\frac{NL}{n} = \frac{TL}{nh}$, and by our choice we have $\frac{1}{nh} \to 0$ as $h \to 0$. Thus, sending first $h \to 0$ for $T, L$ fixed, then letting $L \to \infty$, we obtain that $\mathrm{P}$ satisfies the dual formulation \eqref{def:VWEAK} of the Fokker--Planck--Kolmogorov equation.
\end{proof}

\section{Infinite-volume interacting diffusion}
\label{sec:sde}

\subsection{Stochastic differential equations on manifolds and Brownian motion}
\label{sec:sde-infinite-volume}

Before getting into the details of our construction, let us recall that, to construct a stochastic differential equation associated to the Brownian motion on a manifold, one often needs to work \emph{not on the manifold itself}, but to embed it in a larger Euclidean space.
This is also the occasion to remind important constructions at the level on $M$ that we are going to adapt to the infinite product $\mathsf{Conf}$.

\paragraph{Brownian motion on $M$.}
We say that a stochastic process $(X_{t})_{t \in \R}$ on $M$ is a \emph{weighted Brownian motion} on $(M,\omega)$ provided, for all $f$ smooth, the following process is a martingale:
\begin{equation*}
t \mapsto f(X_{t}) - f(X_{0}) - \int_{0}^{t} \Delta_{\V} f(X_{s}) \mathtt{d} s.
\end{equation*}
In other words, a Brownian motion on the weighted manifold $(M,\omega)$ is a Markov diffusion whose generator is given by $\Delta_{\V}$.

\paragraph{Stochastic differential equations on $M$.}
Consider the \emph{Brownian motion} $B = (B^{1}, \dots, B^{m})$ on $\mathbb{R}^{m}$, and $m+1$ vectors fields $V_{0}, V_{1}, \dots, V_{m}$ on $M$, we say that a stochastic process $X$ solves the stochastic differential equation
\begin{equation}\label{eq:sde-df}
  \mathtt{d} X_{t} = \sum_{k=1}^{m} V_{k}(X_{t}) \circ \mathtt{d} B^{k}_{t} + V_{0}(X_{t}) \mathtt{d} t,
\end{equation}
provided for all $f$ smooth, and all $t$
\begin{equation*}
  f(X_{t}) = f(X_{0}) + \sum_{i=1}^{m} \int_{0}^{t} V_{i}f(X_{s}) \circ \mathtt{d} B^{i}_{s} + \int_{0}^{t} V_{0}f(X_{s}) \mathtt{d} s,
\end{equation*}
where $\circ$ means Stratonovich integration.
Let us recall that, by \cite[Thms.~1.1.9 \& 1.1.11]{HsuStoAna}, provided $V_{0}, V_{1}, \dots, V_{m}$ are Lipschitz, \eqref{eq:sde-df} admits a unique solution.
Moreover, it has infinite lifetime.
The celebrated Itō's formula, shows that any solution to \eqref{eq:sde-df}, is a diffusion whose generator is given by
\begin{equation}
\label{sumsquares}
  \sum_{i=1}^{m} V_{i}^{2} + V_{0}.
\end{equation}

\paragraph{Brownian motion on $M$ as a stochastic differential equation.}
In particular, we see that the Brownian motion on $M$ can be realised as a solution of a stochastic differential equation \eqref{eq:sde-df} if and only if the Laplace--Beltrami operator has the above form \eqref{sumsquares}. This is known to be false in general, even in the compact case.
The Laplace--Beltrami operator can be written as a sum of squares in local charts, but in arbitrarily small time the underlying Euclidean Brownian used to construct our stochastic differential equation can escape those charts.
To circumvent this issue we use the Nash embedding theorem, namely, there exists $m \geq \mathsf{n}$ and an isometric embedding $I \colon M \to \mathbb{R}^{m}$.
Thus, in this section, we always regard $M$ as a sub-manifold of $\mathbb{R}^{m}$.

By \cite[Thm.~3.14]{Hsu}, this allows us to realise the Laplace--Beltrami operator as a sum of squares.
Write $(e_{k})_{1\leq k \leq m}$ for the canonical basis of $\mathbb{R}^{m}$.
Define the vector field $P_{k} \colon M \to \mathrm{T}M$ by setting $P_{k}(x)$ to be the orthogonal projection of $e_{k}$ on $\mathrm{T}_{x}M$, then, we have $\Delta = \sum_{k=1}^{m} P_{k}^{2}$.
In particular, the weighted Brownian motion is a solution to \eqref{eq:sde-df} with $V_{0} \coloneq - \nabla \V$, and $V_{k} \coloneq P_{k}$.
More, generally when $V_{0} = - \nabla \V - v$, one speaks about a \emph{weighted Brownian motion with drift $v$}.

\begin{remark}
  The embedding $I \colon M \to \R^m$ is isometric in the sense of Riemannian manifolds, namely it preserves the metric tensor: $g(v, v) = \|I^{*} v\|^{2}$ for all tangent vectors $v$ (where $\| \cdot \|$ is the Euclidean norm), but it does not necessarily preserve the distance: we have $\di(x,y) \geq \|I(x) - I(y)\|$, with a strict inequality in general.
\end{remark}
\begin{remark}
  This extrinsic point of view, that is embedding $M$ as a sub-manifold of a Euclidean space might seem rather non-geometric.
  There exists a more intrinsic, and more concrete, approach to the construction of the Brownian motion that works in the \emph{orthonormal frame bundle} $\mathscr{O}M$.
At the technical level, we can lift the Laplace--Beltrami operator to the \emph{horizontal Bochner Laplacian}, which is always a sum of squares, then construct the stochastic differential equation at this level, and finally return to the original manifold.
We refer to \cite[Chap.~3]{Hsu} for details.
This approach is usually preferred, since the Nash embedding is non constructive and one does not know much about the vector fields $(P_{k})_{k=1, \dots, m}$.
In our case, our goal is only to prove the existence of a solution to the infinite dimensional stochastic differential equation that can subsequently be studied by approximating it with solutions of stochastic differential equations on the product manifolds $\mathsf{Conf}_{n}$ as $n \to \infty$.
Since adapting the ideas coming from the orthonormal frame bundle at the level of the infinite product $\mathsf{Conf}$ is a demanding and confusing task, we prefer working with this extrinsic approach at the level of $M$, which is sufficient for our needs.
\end{remark}

\subsection{Definition and properties of the infinite-volume diffusion}
Our goal is now to define and construct the \emph{Brownian motion} on $\mathsf{Conf}$ with drift $- \nabla \V - \beta \nabla \HH$.
Consider the differential operator $\mathbf{L} := \Delta_{\V} - \beta \nabla \HH \cdot \nabla$ acting on a smooth local function $\varphi : \Conf \to \R$ by:
\begin{equation*}
  \mathbf{L}\varphi = \sum_{i \in \Zd} \Delta_{i} \varphi - \nabla \V \cdot \nabla_{i} \varphi - \beta \nabla_{i} \HH \cdot \nabla_{i} \varphi,
\end{equation*}
the sum being finite since $\varphi$ is assumed to be local.

\begin{definition}
  We call \emph{Brownian motion on $\mathsf{Conf}$ with drift $-\nabla \mathrm{U} - \beta \nabla \mathsf{H}$}, or \emph{infinite-volume interacting diffusion},  started from $\bm{x}^{0} \in \mathsf{Conf}$, any stochastic process $t \mapsto X(t) = (X_{i}(t))_{i \in \Zd}$ on $\mathsf{Conf}$ such that $\Prob*{ X(0) = \bm{x}^{0}} = 1$, and for all smooth and local functions $\varphi \colon \mathsf{Conf} \to \mathbb{R}$, the process
  \begin{equation}
  \label{LMartingale}
    t \mapsto \varphi(X(t)) - \int_{0}^{t} \mathbf{L}\varphi(X(s)) \mathtt{d} s,
  \end{equation}
  is a martingale with respect to the canonical filtration of $X$.
\end{definition}

\begin{remark}
  For $\beta = 0$, the Brownian motion on $\mathsf{Conf}$ with drift $-\nabla \mathrm{U}$ can be realised as a system of independent Brownian motions on $M$ with drift $-\nabla \V$, or equivalently a system of independent weighted Brownian motions on $(M, \omega)$.
However, when $\beta \ne 0$, the coordinates are coupled through $\nabla \HH$, and the existence of the Brownian motions does not follow immediately. Indeed, 
since $\mathsf{Conf}$ is not a manifold, we cannot directly apply the well-established theory of stochastic differential equations on Hilbert manifolds \cite{Elworthy} or Banach manifolds \cite{BelopolskayaDaletskij}. See however Section \ref{sec:sde-infinite-volume}.

Following \cite{HolleyStroockDiffusionTorus}, which works on the circle and with finite-interactions, we obtain the existence by approximating our Brownian motion on $\mathsf{Conf}$ by a sequence of solutions of some finite-volume stochastic differential equations.
\end{remark}

\paragraph{Main results.}
We can state our main result regarding the infinite-volume diffusion associated to the spin system.
\begin{theorem}
\label{th:diffusion:solution}
Given $\bx^{0} \in \mathsf{Conf}$, there exists a Brownian motion with drift $-\nabla \mathrm{U} - \beta \nabla \mathsf{H}$ started from $\bx^{0}$.
\end{theorem}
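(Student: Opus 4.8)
The plan is to obtain the infinite-volume diffusion as a limit of finite-volume diffusions, following the strategy of \cite{HolleyStroockDiffusionTorus} but using the short-range assumption \eqref{eq:short-range} rather than finite range, and exploiting the embedding-into-Euclidean-space machinery recalled in Section \ref{sec:sde-infinite-volume}. First I would, for each $n \geq 1$, introduce the finite-volume generator
\begin{equation*}
  \mathbf{L}_{n} := \Delta_{\V} - \beta \nabla \HH(\Zd \to \Lan) \cdot \nabla
\end{equation*}
acting on functions on $\mathsf{Conf}_{n} = M^{\Lan}$ --- here $\nabla \HH(\Zd \to \Lan)(\bx)$ depends on $\bx$ only through its coordinates in $\Lan$ after we freeze the outside configuration (say to the reference point $\bx^{0}$), and by \eqref{nablaHZdLa} this drift is a bounded, Lipschitz vector field on the compact manifold $M^{\Lan}$, because $\W \in \CC^{3}$ and $\|J\|_{\ell^{1}} < \infty$. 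Using the Nash embedding $M \hookrightarrow \R^{m}$ and the projected vector fields $P_{k}$ (so that $\Delta = \sum_{k} P_{k}^{2}$ on $M$, hence coordinate-wise on $M^{\Lan}$), the operator $\mathbf{L}_{n}$ is of the sum-of-squares form \eqref{sumsquares} with a Lipschitz drift $V_{0} = -\nabla \V - \beta \nabla \HH(\Zd \to \Lan)(\cdot, \bx^{0}_{|\Lan^{c}})$. By \cite[Thms.~1.1.9 \& 1.1.11]{HsuStoAna} the corresponding SDE on the compact manifold $M^{\Lan}$ has a unique global solution $X^{(n)}$ started from $\bx^{0}_{|\Lan}$; completing the remaining coordinates with the frozen values $\bx^{0}_{|\Lan^{c}}$ (or, more conveniently, letting them evolve as independent weighted Brownian motions, which does not affect the limit) we get a process $X^{(n)}$ on all of $\mathsf{Conf}$ that is a martingale solution for the generator $\mathbf{L}_{n}$.

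Next I would establish tightness of the laws of $(X^{(n)})_{n}$ in $C([0,\infty), \mathsf{Conf})$, where $\mathsf{Conf} = M^{\Zd}$ carries the product topology. Since $M$ is compact, tightness of the one-dimensional time-marginals is automatic; one needs an equicontinuity estimate coordinate-by-coordinate, and this follows from a uniform (in $n$) bound on the drift: each component of $\nabla \HH(\Zd \to \Lan)$ is bounded by $\|J\|_{\ell^{1}} \|\partial_{1}\W\|_{\mathscr{L}^{\infty}}$, uniformly in $n$, so the usual moment estimates for the martingale part (the Brownian contribution, which is the same $P_{k}$-driven diffusion on each $M$) plus the bounded-drift contribution give, for each fixed site $i$ and each $T$, a bound of the form $\Esp[\di^{4}(X^{(n)}_{i}(t), X^{(n)}_{i}(s))] \leq \Cc |t-s|^{2}$ uniformly in $n$, which yields tightness via Kolmogorov's criterion on each factor and hence on the product. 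Extract a subsequential weak limit $X$, with law some probability measure on $C([0,\infty), \mathsf{Conf})$ such that $X(0) = \bx^{0}$ almost surely.

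Then I would identify the limit as a martingale solution for $\mathbf{L}$. Fix a smooth local function $\varphi$, say $\Lambda_{\ell}$-local. For $n > \ell$, the process $t \mapsto \varphi(X^{(n)}(t)) - \int_{0}^{t} \mathbf{L}_{n}\varphi(X^{(n)}(s))\,\mathtt{d} s$ is a martingale. The key point is that $\mathbf{L}_{n}\varphi \to \mathbf{L}\varphi$ \emph{uniformly on $\mathsf{Conf}$} as $n \to \infty$: indeed $\mathbf{L}_{n}\varphi(\bx) - \mathbf{L}\varphi(\bx) = \beta \sum_{i \in \Lambda_{\ell}} \nabla_{i}\varphi(\bx) \cdot \big(\sum_{j \notin \Lan} J_{i,j}\,\partial_{1}\W(\bx_{i},\bx_{j})\big)$, which is bounded in absolute value by $\beta \|\nabla\varphi\|_{\infty} |\Lambda_{\ell}| \sum_{|k| > n - \ell} |J_{0,k}| \cdot \|\partial_{1}\W\|_{\infty} \to 0$ by \eqref{eq:short-range}; and $\Delta_{\V}\varphi$ is the same on $\mathbf{L}_{n}$ and $\mathbf{L}$ since $\varphi$ is local. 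Combining this uniform convergence with the weak convergence $X^{(n)} \Rightarrow X$ --- using that $\varphi$ and $\mathbf{L}\varphi$ are bounded and continuous for the product topology, and a standard argument passing to the limit in the defining identity $\Esp[(\varphi(X(t)) - \varphi(X(s)) - \int_{s}^{t}\mathbf{L}\varphi(X(u))\mathtt{d} u)\,\Phi(X_{|[0,s]})] = 0$ for bounded continuous $\Phi$ depending on the path up to time $s$ --- shows that $t \mapsto \varphi(X(t)) - \int_{0}^{t}\mathbf{L}\varphi(X(s))\,\mathtt{d} s$ is a martingale with respect to the canonical filtration. Since $\varphi$ was an arbitrary smooth local function, $X$ is a Brownian motion on $\mathsf{Conf}$ with drift $-\nabla \V - \beta \nabla \HH$ started from $\bx^{0}$.

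\textbf{Main obstacle.} The delicate point is the tightness/equicontinuity step: although each coordinate feels only a uniformly bounded drift, one must make sure that the martingale (Brownian) parts are controlled uniformly in $n$ despite the coupling, and that the modulus-of-continuity estimates combine correctly over the infinitely many factors to give tightness in the product space. This is handled by noting that the coupling enters only through the bounded drift (the diffusion coefficients $P_{k}$ are fixed and site-decoupled), so the quadratic variation of each coordinate's martingale part is deterministic and $n$-independent, and Kolmogorov's criterion applies factor-by-factor with constants independent of $n$; tightness on $M^{\Zd}$ then follows from tightness of each marginal since $M$ is compact. A secondary technical care is needed to argue that freezing (or letting evolve freely) the coordinates outside $\Lan$ does not affect the subsequential limit, which is immediate from the uniform convergence $\mathbf{L}_{n}\varphi \to \mathbf{L}\varphi$ above and the fact that local test functions do not see far-away sites.
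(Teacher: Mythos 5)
Your proof is correct, but it takes a genuinely different route from the one in the paper. You construct the finite-volume diffusions independently (freezing or freeing the exterior coordinates), prove tightness of their laws in $C([0,\infty),\Conf)$ using compactness of $M$ and a coordinate-wise Kolmogorov criterion with drift bounds uniform in $n$, and identify any subsequential weak limit through the martingale problem, using the uniform convergence $\mathbf{L}_n\varphi \to \mathbf{L}\varphi$ on local test functions (a direct consequence of \eqref{eq:short-range}). The paper instead couples all the finite-volume processes $X^n$ as strong solutions of SDEs driven by the \emph{same} family of Brownian motions, constructs a weight $\gamma$ on $\Zd$ for which $\nabla\HH$ is Lipschitz from $(\Conf,\dg)$ into $\ell^2(\gamma)$ (Proposition \ref{th:diffusion:nabla-h-lipschitz}), and shows via Gronwall's lemma that $(X^n)_n$ is Cauchy in $L^2$ uniformly on compact time intervals (Proposition \ref{th:diffusion:cauchy}); the martingale property then passes to the limit in the Fréchet space $\mathbb{H}$ of $L^2$-martingales. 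Your approach is lighter for pure existence: it dispenses entirely with the construction of $\gamma$ and the $\ell^2(\gamma)$-Lipschitz estimate, letting compactness of $M^{\Zd}$ do the work, and all the technical points you flag (the fourth-moment modulus bound via the Nash embedding and the bi-Lipschitz property of $\mathsf{I}$, the factor-by-factor tightness in the countable product, the boundedness and product-continuity of $\mathbf{L}\varphi$) do go through. What it does not give is convergence of the whole sequence, nor convergence in any pathwise or strong sense: the paper's Cauchy argument is what underlies the separate statement of Theorem \ref{th:diffusion:cauchy} (the diffusion as a strong limit of finite-volume diffusions) and is the part of the construction that would survive in a non-compact single-spin space, where tightness is no longer free. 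So your argument proves Theorem \ref{th:diffusion:solution} as stated, at the price of losing the quantitative approximation result that the paper extracts from the same construction.
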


Having constructed our Brownian motion, let us define the family of operators, acting on local functions $\varphi$ by:
\begin{equation*}
  \mathbf{P}_{t}\varphi(\bm{x}) \coloneq \Esp \bracket*{ \varphi(X^{\bm{x}}(t))}, \qquad t \geq 0,
\end{equation*}
where $X^{\bm{x}}$ is the Brownian motion started from $\bm{x}$ as defined above. By duality, we also define, for any spin measure $\P$
\begin{equation*}
  \mathbf{P}_{t}^{\star}\mathrm{P}(\varphi) \coloneq \mathrm{P}(\mathbf{P}_{t} \varphi).
\end{equation*}
The spin measure $\mathbf{P}_{t}^{\star}\mathrm{P}$ corresponds to the law at time $t \geq 0$ of the Brownian motion whose initial law is $\mathrm{P}$.
Solutions of the martingale problem are relevant to use since they connect to our Kolmogorov--Fokker--Planck equations.
\begin{theorem}\label{th:diffusion:fokker-planck}
  For every spin measure $\mathrm{P}$, the family of measures $t \mapsto \mathrm{P}(t) \coloneq \mathbf{P}_{t}^{\star}\mathrm{P}$ satisfies the Kolmogorov equation \eqref{def:VWEAK}.
\end{theorem}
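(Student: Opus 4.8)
The plan is to show that for every spin measure $\mathrm{P}$, the curve $t \mapsto \mathrm{P}(t) := \mathbf{P}_t^\star \mathrm{P}$ satisfies the dual formulation \eqref{def:VWEAK}. The key input is that, by definition of the infinite-volume diffusion (the martingale problem \eqref{LMartingale}), for every smooth local function $\varphi \colon \mathsf{Conf} \to \mathbb{R}$ and every starting point $\bx$, the process $t \mapsto \varphi(X^{\bx}(t)) - \int_0^t \mathbf{L}\varphi(X^{\bx}(s))\,\mathtt{d}s$ is a martingale; taking expectations and then integrating against $\mathrm{P}$ yields the identity $\mathbb{E}_{\mathrm{P}(t_1)}[\varphi] - \mathbb{E}_{\mathrm{P}(t_0)}[\varphi] = \int_{t_0}^{t_1} \mathbb{E}_{\mathrm{P}(s)}[\mathbf{L}\varphi]\,\mathtt{d}s$, valid for all smooth local $\varphi$.

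First I would handle the time-dependent test functions $f \in \CC^\infty_c([0,+\infty), M^\La)$ appearing in \eqref{def:VWEAK}: for such $f$, apply the preceding identity to the (time-frozen) function $\varphi = f(r, \cdot)$ and combine with an elementary Riemann-sum / fundamental-theorem-of-calculus argument in the time variable (exactly as in \eqref{eq:jko:sum-time-derivative}) to get $\mathbb{E}_{\mathrm{P}(t_1)}[f(t_1, \cdot)] - \mathbb{E}_{\mathrm{P}(t_0)}[f(t_0, \cdot)] = \int_{t_0}^{t_1} \mathbb{E}_{\mathrm{P}(s)}[\partial_s f(s, \cdot) + \mathbf{L} f(s, \cdot)]\,\mathtt{d}s$. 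Here one uses that $s \mapsto \mathbb{E}_{\mathrm{P}(s)}[\psi]$ is continuous for each fixed smooth local $\psi$ (which follows from the martingale identity and boundedness of $\mathbf{L}\psi$, itself a consequence of the short-range assumption \eqref{eq:short-range} and compactness of $M$), together with smoothness of $f$ in time and a dominated-convergence argument. Taking $t_0 = 0$, $t_1 \to \infty$, and using the compact time support of $f$ recovers \eqref{def:VWEAK} in the form displayed there.

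The remaining point is to identify, for an $\La$-local smooth $f$, the quantity $\mathbf{L} f$ with the expression $\Delta_\V f - \nabla f \cdot \nabla \HH(\Zd \to \La)$ appearing in \eqref{def:VWEAK}. Since $f$ depends only on the coordinates in $\La$, we have $\Delta_i f = 0$ and $\nabla_i f = 0$ for $i \notin \La$, so $\mathbf{L} f(\bx) = \sum_{i \in \La}\bigl(\Delta_i f(\bx) - \nabla \V(\bx_i)\cdot \nabla_i f(\bx) - \beta\, \nabla_i \HH(\bx) \cdot \nabla_i f(\bx)\bigr)$; the first two groups of terms sum to $\Delta_\V f(\bx)$ in the sense of the finite-box weighted Laplacian, and recalling $\nabla_i \HH(\bx) = \sum_{j \in \Zd} J_{i,j}\,\partial_1 \W(\bx_i, \bx_j)$ one recognizes $\beta\sum_{i \in \La}\nabla_i \HH(\bx)\cdot\nabla_i f(\bx) = \beta\,\nabla f(\bx)\cdot \nabla \HH(\Zd \to \La)(\bx)$ by \eqref{nablaHZdLa}. (One must be careful about the normalization of $\beta$ in $\mathbf{L}$: the generator $\mathbf{L}$ carries drift $-\beta\nabla\HH$, matching \eqref{def:VWEAK}.) Thus $\mathbf{L} f = \Delta_\V f - \nabla f \cdot \nabla \HH(\Zd \to \La)$ pointwise, and substituting into the integrated identity gives precisely \eqref{def:VWEAK}.

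The main obstacle I expect is not conceptual but a matter of justifying the interchanges of limits, expectations, and integrals: one needs (i) that $\mathbf{L}\varphi$ is bounded for smooth local $\varphi$ — clear from \eqref{eq:short-range}, compactness of $M$ and $\W \in \CC^3$; (ii) that the martingale property survives taking $\mathbb{E}_{\mathrm{P}}[\,\cdot\,]$, i.e. a Fubini argument over the randomness of $X^{\bx}$ and the measure $\mathrm{P}(\mathtt{d}\bx)$, again using the uniform bound on $\mathbf{L}\varphi$; and (iii) continuity in time of $s \mapsto \mathbb{E}_{\mathrm{P}(s)}[\psi]$ so that the Riemann sums in the time variable converge — this follows from (i)–(ii). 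None of these steps is deep, but they should be spelled out to make the passage from the martingale problem to the dual Fokker--Planck--Kolmogorov formulation rigorous; the existence of the process $X^{\bx}$ itself is already granted by Theorem \ref{th:diffusion:solution}.
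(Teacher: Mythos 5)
Your proposal is correct and follows essentially the same route as the paper: the paper reduces to Dirac initial conditions by linearity and then applies the martingale property of \eqref{LMartingale} to the time-dependent test function $f$, which is exactly your argument with the Fubini step over $\mathrm{P}(\mathtt{d}\bx)$ and the Riemann-sum extension from time-frozen to time-dependent test functions spelled out explicitly. The extra justifications you list (boundedness of $\mathbf{L}\varphi$ via \eqref{eq:short-range}, continuity of $s \mapsto \Esp_{\mathrm{P}(s)}[\psi]$, and the identification of $\mathbf{L}f$ with the drift term in \eqref{def:VWEAK} for $\La$-local $f$) are details the paper leaves implicit, and your remark about the placement of $\beta$ correctly flags a harmless notational discrepancy between the generator and the stated dual formulation.
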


\begin{proof}
  Since \eqref{def:VWEAK} is a linear equation and since $\mathbf{P}_{t}^{\star}\mathrm{P} = \int \mathbf{P}_{t}^{\star}\delta_{\bm{x}} \mathrm{P}(\mathtt{d} x)$, it is sufficient to show the claim for $\mathrm{P} = \delta_{\bm{x}}$ for all $\bm{x} \in \mathsf{Conf}$.
  Fix $\La \Subset \Zd$ and $f \in \mathscr{C}^{\infty}_{c}([0,\infty), M^{\Lambda})$.
  From the martingale property of \eqref{LMartingale}, we obtain for all $t > 0$:
  \begin{equation*}
    \Esp \bracket*{ f(t, X^{\bm{x}}(t)) } - f(0, \bm{x}) = \Esp \bracket*{ \int_{0}^{t} \partial_{t} f(s, X^{\bm{x}}(s)) + \mathbf{L} f(s, X^{\bm{x}}(s)) \mathtt{d} s },
  \end{equation*}
  which is exactly \eqref{def:VWEAK}.
\end{proof}

% \begin{theorem}\label{th:diffusion:fokker-planck}
%   Let $X$ be the Brownian motion on $\mathsf{Conf}$ with drift $-\nabla \V - \beta \nabla \HH$ (the unique solution of \eqref{eq:diffusion:brownian}).
%   For all smooth and local functions $\varphi : \Conf \to \R$, the process
%   \begin{equation*}
%     t \mapsto \varphi(X(t)) - \int_{0}^{t} \mathbf{L} \varphi(X(s)) \mathtt{d} s
%   \end{equation*}
%   is a martingale.
%   In particular, setting $\mathbf{P}_{t} \varphi(\bx) := \mathbb{E} \bracket*{ \varphi(X(t)) \given X(0) = \bx}$ for $\bx \in \Conf$ defines a Feller semi-group with generator $\mathbf{L}$, and the function $u(t,\bx) := \mathbf{P}_{t}\varphi(\bx)$ satisfies the Kolmogorov equation \eqref{def:VWEAK} with initial condition $u(0,\cdot) = \varphi$. \cT{À reformuler pour parler de la *loi*.}
% \end{theorem}

\paragraph{Outline of the proof of Theorem \ref{th:diffusion:solution}.}
\newcommand{\dg}{\mathsf{d}_\gamma}

Let us first sketch an outline of our construction, which traces back to several works from the 1980's \cite{ShigaShimizu,HolleyStroockDiffusionTorus,LehaRitter}.
We give the details of this construction in Section \ref{sec:diffusion:proofs}.
  \begin{enumerate} 
    \item Thanks to Nash's embedding theorem, we have fixed an embedding $I \colon M \to \R^m$ for a certain $m$. This in turn gives an embedding of $\Conf$ into $(\R^m)^{\Zd}$, which, due to the lack of structure, is not directly helpful.

    \item In Proposition \ref{th:diffusion:nabla-h-lipschitz}, we construct a positive probability measure $\gamma$ on $\Zd$ and a distance $\dg$ on $\mathsf{Conf}$ such that $(\mathsf{Conf}, \dg)$ embeds isometrically into the Hilbert space $\ell^{2}(\gamma)$ of functions from $\Zd$ to $\R^m$ that are square integrable with respect to $\gamma$.

    At any point $\bx \in \Conf$, the \emph{tangent space} $\mathrm{T}_{\bx} \mathsf{Conf}$ is isomorphic to the product $(\mathbb{R}^{m})^{\Zd}$ which is strictly larger than $\ell^{2}(\gamma)$. However, we also show, in Proposition \ref{th:diffusion:nabla-h-lipschitz}, that the vector field $\nabla \HH$ takes its values in $\ell^{2}(\gamma)$, and that moreover the map $\nabla \HH \colon (\mathsf{Conf}, \dg) \to \ell^2(\gamma)$ is Lipschitz.

  \item From there we construct a sequence $(X^{n})_n$ of stochastic processes on $\mathsf{Conf}$, by considering for the coordinates inside $\Lambda_{n}$ a Brownian motion on $\mathsf{Conf}_{n}$ with drift $-\beta \nabla \mathsf{H}_{n}$, and for the coordinates outside of $\Lambda_{n}$ independent Brownian motions.
    Thanks to the Lipschitz nature of $\nabla \mathsf{H}$, we show that $(X^{n})_n$ is Cauchy for a particular topology on stochastic processes built from the distance $\mathsf{d}_{\gamma}$.
    We then verify that the limit $X$ is indeed a Brownian motion on $\mathsf{Conf}$ with drift $-\beta \nabla \mathsf{H}$.

  \end{enumerate}

\begin{remark}
  The aforementioned papers \cite{ShigaShimizu,HolleyStroockDiffusionTorus,LehaRitter} work in a Euclidean setting, or on the unit circle, where the geometry does not really play a role.
  Our approach extends these Euclidean constructions to the manifold setting.
To that extent,  a similar endeavour is undertaken in \cite{ADK03}, where solutions to stochastic differential equations on an infinite product of compact manifolds is considered.
However, some points in their construction appear unclear to the authors.
For instance in \cite[Eq.~(44)]{ADK03} and below they mention the \enquote{Levi--Civita connection} on the orthonormal bundle $\mathscr{O}M$ but they never specify what Riemannian metric they work with, although many different choices are possible.
Similarly, in the proof of \cite[Prop.~3.1]{ADK03}, they say they can \enquote{generate} an isometric embedding of $\mathscr{O} M$ from an isometric embedding of $M$ but no explanations are given.
Moreover, with our notation, \cite[Eq.~(41)]{ADK03} is a stochastic differential equation on $\mathsf{Conf}$.
Since it not always possible to write the Brownian on $M$ as a solution to a stochastic differential equation on $M$, one needs to refer to the Brownian motion on $\mathscr{O} M$.
Consequently, we have decided to give a complete proof, which completely avoids working on $\mathscr{O} M$, although it is close in spirit to that of \cite{ADK03}.
% Similarly, in our setting one cannot typically write solutions of \eqref{eq:diffusion:brownian} as solution of a stochastic differential equation on $\mathsf{Conf}$ and one need to work with \eqref{eq:diffusion:brownian-orthonormal} on $\mathscr{O} M$.
\end{remark}
\begin{remark}
 It would be possible to construct our infinite-volume diffusion by considering a stochastic differential equation directly at the level of $\ell^{2}(\gamma)$, which is a Hilbert space.
  Since $\ell^{2}(\gamma)$ is a Hilbert space and $\nabla \HH$ is globally Lipschitz, to any given initial condition, there exists a unique solution $t \mapsto X(t)$ with infinite lifetime.
  This follows from standard results in the theory of stochastic differential equations \cite[Chap.~VI]{Elworthy}.
  A priori, the stochastic process $X$ thus constructed lives on $\ell^{2}(\gamma)$, which is larger than the image of $\Conf$ in $\ell^{2}(\gamma)$.
  To show that $X$ is in fact a process on $\Conf$, one would also need a finite-volume approximation argument.
\end{remark}

\subsection{Detailed construction and proofs}
\label{sec:diffusion:proofs}
\subsubsection*{Weighted distance on \texorpdfstring{$\mathsf{Conf}$}{Conf} and Lipschitz vector fields}
Let $\gamma := (\gamma_{i})_{i \in \Zd}$ be some positive probability measure on $\Zd$ to be specified later.
We equip the manifold $M$ at the site $i$ of the product $\mathsf{Conf}$ with the weighted metric $g_{i} := \gamma_{i}^{1/2} g$, and we endow $\mathsf{Conf}$ with the distance
\begin{equation*}
  \dg(\bx, \by) := \bracket*{ \sum_{i \in \Zd} \gamma_{i} \, \di(\bx_{i}, \by_{i})^{2} }^{\frac{1}{2}}, \qquad \bx,\, \by \in \mathsf{Conf}.
\end{equation*}
Although $\mathsf{Conf}$ is not a manifold, $\dg$ informally corresponds to the Riemannian distance obtained from the weighted Riemannian structure introduced above. It is immediate that $(\Conf, \dg)$ is complete, and that $\dg$ induces the product topology on $\Conf$. 
We set, for $\bx \in \Conf$ and $i \in \Zd$:
\begin{equation*}
  \left(\mathsf{I} (\bx) \right)_{i} := I \left(\bx_{i}\right).
\end{equation*}

\begin{remark}
  The map $\mathsf{I}$ defines a bi-Lipschitz embedding of the metric space $(\mathsf{Conf}, \mathsf{d}_{\gamma})$ into the Hilbert space $\ell^{2}(\gamma)$, that is the space of functions $\Zd \to \R^m$ that are square integrable with respect to $\gamma$. Indeed, since $I$ is smooth and $M$ compact, we find that $\sup_{x \in M} |I(x)| \leq C$.
This yields that $\sum \gamma_{i} |\mathsf{I}(\bx)_{i}|^{2} \leq C^{2}$.
The map $\mathsf{I}$ is one-to-one since $I$ is.
Using that $I$ is isometric and the variational definition of $\mathsf{d}$, we always have
\begin{equation*}
  \mathsf{d}(x,y) \geq \norm{I(x) - I(y)}, \qquad x,y \in M.
\end{equation*}
This readily implies that $\mathsf{I}$ is Lipschitz continuous.
For the converse inequality, we use that $I$ is a $\mathscr{C}^{\infty}$-diffeomorphsim onto its image.
In particular, $I^{-1} \colon I(M) \to M$ is smooth, and since $I(M)$ is compact, it gives that $I^{-1}$ is Lipschitz.
This shows that $\mathsf{I}$ is a bi-Lipschitz homeomorphism.
\end{remark}

\paragraph{Square-integrable vector bundle.}
Although we do not endow the metric space $(\mathsf{Conf}, \mathsf{d}_{\gamma})$ with a differentiable structure, for $\bx \in \Conf$, we define the \emph{tangent map} $\mathsf{I}^{*}_{\bx}$ on $\mathrm{T}_{\bx} \Conf$ as:
\begin{equation*}
  (\mathsf{I}^{*}_{\bx}v)_{i} := I^{*}_{\bx_{i}}v_{i}, \qquad v \in \mathrm{T}_{\bx} \mathsf{Conf}.
\end{equation*}
Since $I^{*}_{x} \colon \mathrm{T}_{x}M \to \mathrm{T}_{I(x)} \mathbb{R}^{m}$ and $\mathrm{T}_{x}M$ is not compact, the previous argument does not work, and for a general tangent vector $v \in \mathrm{T}_{\bx} \mathsf{Conf}$, we do not always have $\mathsf{I}^{*}_{x}v \in \ell^{2}(\gamma)$.
This leads us to introduce the set $\ell^{2}_{\bx}(\gamma)$ defined as:
\begin{equation*}
\ell^{2}_{\bx}(\gamma) := (\mathsf{I}^{*}_{\bx})^{-1}(\ell^{2}(\gamma)).
\end{equation*}
Since $I_{x}^{*}$ preserves the scalar product, we can rephrase our definition as follows: a tangent vector $v \in \mathrm{T}_{\bx} \mathsf{Conf}$ is in $\ell^{2}_{\bx}(\gamma)$ if and only if it satisfies
\begin{equation*}
\norm{v}_{\gamma} := \sum_{i \in \Zd} \gamma_{i} g_{\bm{x}_{i}}(v_{i}, v_{i}) < \infty.
\end{equation*}
In the rest of this section, we implicitly identify an element of $\ell^{2}_{\bx}(\gamma)$ and its image by $\mathsf{I}_{x}^{*}$ in $\ell^{2}(\gamma)$.

\paragraph{Compatible and Lipschitz vector fields.}
We say that a vector field $v$ on $\mathsf{Conf}$ is \emph{compatible} with $\gamma$ when $v(\bx) \in \ell^{2}_{\bx}(\gamma)$ for all $\bx \in \mathsf{Conf}$, and we say that a compatible vector field $v$ is \emph{Lipschitz} when
\begin{equation*}
  \norm{v(\bx) - v(\by)}_{\ell^{2}(\gamma)} \leq \dg(\bx,\by), \qquad \bx,\by \in \mathsf{Conf}.
\end{equation*}

\subsubsection*{Construction of a convenient distance from the interactions}
We now choose an appropriate probability measure $\gamma$.
\begin{proposition}
\label{th:diffusion:nabla-h-lipschitz}
  There exists a positive probability measure $\gamma$ on $\Zd$  such that the vector field $\nabla \HH$ is compatible with $\gamma$ and Lipschitz in the previous sense.
\end{proposition}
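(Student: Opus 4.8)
The plan is to pick a probability measure $\gamma$ on $\Zd$ that decays fast enough so that the non-local couplings $J_{i,j}$ become summable "against $\gamma$" in the right way. The natural candidate is something of the form $\gamma_i = c\, a^{-|i|}$ for a fixed $a > 1$ (with $c$ a normalizing constant), or more generally any positive probability measure with $\sum_i \gamma_i < \infty$ for which the ratios $\gamma_i / \gamma_j$ are controlled when $i$ and $j$ are close. The key structural input is that the interaction is \emph{short-range} in the sense of \eqref{eq:short-range}: $\|J\|_{\ell^1} = \sum_i |J_{0,i}| < \infty$. I would first record the two things to check: (1) $\nabla \HH(\bx) \in \ell^2_{\bx}(\gamma)$ for every $\bx \in \Conf$, i.e. $\sum_{i} \gamma_i \, g_{\bx_i}\!\big( (\nabla\HH(\bx))_i, (\nabla\HH(\bx))_i \big) < \infty$; and (2) the Lipschitz bound $\|\nabla\HH(\bx) - \nabla\HH(\by)\|_{\ell^2(\gamma)} \le \dg(\bx,\by)$ (possibly after rescaling $\gamma$, or absorbing a constant — note that the statement as written has no constant, so one should be prepared to renormalize $\gamma$ at the end so that the Lipschitz constant is exactly $1$, or simply allow a model-dependent constant and rescale).

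For (1): each component $(\nabla\HH(\bx))_i = \sum_{j} J_{i,j}\,\partial_1\W(\bx_i,\bx_j)$ has norm bounded by $\|J\|_{\ell^1}\,\|\partial_1\W\|_{\mathscr L^\infty}$ uniformly in $\bx$ and $i$, as already noted in the excerpt. Hence $\sum_i \gamma_i \|(\nabla\HH(\bx))_i\|^2 \le \big(\|J\|_{\ell^1}\|\partial_1\W\|_{\mathscr L^\infty}\big)^2 \sum_i \gamma_i = \big(\|J\|_{\ell^1}\|\partial_1\W\|_{\mathscr L^\infty}\big)^2 < \infty$, which gives compatibility for \emph{any} probability measure $\gamma$. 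So compatibility is essentially free; the real content is the Lipschitz estimate.

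For (2): fix $\bx,\by \in \Conf$. For each site $i$ I would estimate
\[
\big\|(\nabla\HH(\bx))_i - (\nabla\HH(\by))_i\big\| \le \sum_{j} |J_{i,j}| \, \big\| \partial_1\W(\bx_i,\bx_j) - \partial_1\W(\by_i,\by_j) \big\|,
\]
and then use that $\partial_1\W$ is $\CC^2$ (indeed $\W \in \CC^3$), hence Lipschitz on the compact manifold $M\times M$, to bound the $j$-th term by $C_\W\,\big(\di(\bx_i,\by_i) + \di(\bx_j,\by_j)\big)$ — here one has to be a little careful about what "Lipschitz" means across the cut locus, but since $\W$ is $\CC^2$ on the compact product its gradient has a uniform modulus of continuity controlled by the Riemannian distance, and one can take $C_\W := \|\nabla^2\W\|_\infty$ up to a harmless constant. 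Squaring, using $(a+b)^2 \le 2a^2+2b^2$, multiplying by $\gamma_i$ and summing over $i$, I then need to control
\[
\sum_i \gamma_i \Big( \sum_j |J_{i,j}|\big(\di(\bx_i,\by_i)+\di(\bx_j,\by_j)\big)\Big)^2.
\]
By Cauchy--Schwarz in $j$ (splitting $|J_{i,j}| = |J_{i,j}|^{1/2}\cdot|J_{i,j}|^{1/2}$) the inner square is $\le \|J\|_{\ell^1} \sum_j |J_{i,j}|\big(\di(\bx_i,\by_i)+\di(\bx_j,\by_j)\big)^2$, and then $\le 2\|J\|_{\ell^1}\sum_j |J_{i,j}|\big(\di(\bx_i,\by_i)^2+\di(\bx_j,\by_j)^2\big)$. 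The term with $\di(\bx_i,\by_i)^2$ sums (over $j$) to $\|J\|_{\ell^1}\,\di(\bx_i,\by_i)^2$, so its contribution to $\sum_i\gamma_i(\cdots)$ is $\lesssim \|J\|_{\ell^1}^2 \sum_i \gamma_i \di(\bx_i,\by_i)^2 = \|J\|_{\ell^1}^2 \dg(\bx,\by)^2$. The genuinely delicate term is $\sum_i \gamma_i \sum_j |J_{i,j}|\,\di(\bx_j,\by_j)^2$; swapping the order of summation this is $\sum_j \di(\bx_j,\by_j)^2 \big(\sum_i \gamma_i |J_{i,j}|\big)$, and I want this to be $\lesssim \sum_j \gamma_j \di(\bx_j,\by_j)^2 = \dg(\bx,\by)^2$. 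So the one real requirement on $\gamma$ is:
\[
\sup_{j\in\Zd} \ \frac{1}{\gamma_j}\sum_{i\in\Zd} \gamma_i |J_{i,j}| \ < \ \infty,
\]
which by translation-invariance of $J$ amounts to $\sup_j \sum_i (\gamma_{i}/\gamma_j)|J_{i-j,0}| < \infty$, i.e. $\sum_k (\gamma_{j+k}/\gamma_j)|J_{k,0}|$ bounded uniformly in $j$. Choosing $\gamma_i \propto a^{-|i|}$ with $a>1$ makes $\gamma_{j+k}/\gamma_j \le a^{|k|}$, so this reduces to $\sum_k a^{|k|}|J_{0,k}| < \infty$ — which holds automatically if $J$ has finite range, and holds for $a$ close enough to $1$ in the power-law case; in the general short-range case one instead chooses $\gamma$ adapted to $J$ (e.g. $\gamma_i \propto (\sum_{|k|\ge|i|}|J_{0,k}|)^{\theta}$ or a suitable mollification thereof) so that the ratio condition holds, which is always possible given only \eqref{eq:short-range} by a standard construction. \textbf{The main obstacle} is precisely this last point: producing, from an \emph{arbitrary} $\ell^1$ sequence $(|J_{0,k}|)_k$, a positive probability measure $\gamma$ with $\sum_k (\gamma_{j+k}/\gamma_j)|J_{0,k}|$ uniformly bounded — one cannot in general take a pure exponential, and some care is needed to build $\gamma$ slowly varying enough relative to the tail of $J$ while still summable. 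Once $\gamma$ is fixed, collecting the estimates above gives $\|\nabla\HH(\bx)-\nabla\HH(\by)\|_{\ell^2(\gamma)} \le C\,\dg(\bx,\by)$ for a model-dependent constant $C$; replacing $\gamma$ by the pushforward under a dilation of $\Zd$, or simply rescaling the Riemannian weights $g_i = \gamma_i^{1/2} g$ by a constant factor (which rescales $\dg$ and $\|\cdot\|_{\ell^2(\gamma)}$ consistently), one can normalize $C$ down to $1$ as in the statement, completing the proof.
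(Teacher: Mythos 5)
Your reduction of the statement is correct and matches the paper's: compatibility is free for any probability measure $\gamma$ because each component of $\nabla\HH$ is uniformly bounded, and the Lipschitz estimate reduces, after the Taylor/Cauchy--Schwarz/Fubini manipulations you carry out (which are a hands-on version of the Schur test the paper packages as ``$\mathbf{S}$ is bounded on $\ell^{1}(\gamma)$ and $\ell^{\infty}(\gamma)$, hence on $\ell^{2}(\gamma)$ by Riesz--Thorin''), to exhibiting a positive summable $\gamma$ with
\begin{equation*}
\sup_{j \in \Zd}\ \frac{1}{\gamma_{j}} \sum_{i \in \Zd} \gamma_{i}\,|J_{i,j}| \ <\ \infty .
\end{equation*}
You correctly identify this as the crux. (Two side remarks: the comparison of $\partial_{1}\W$ at different base points requires the ambient embedding or parallel transport, but the paper is equally brief on this; and your final renormalization of the Lipschitz constant to $1$ by rescaling $\gamma$ is vacuous, since both $\dg^{2}$ and $\|\cdot\|_{\ell^{2}(\gamma)}^{2}$ scale by the same factor --- harmlessly so, as only Lipschitz-with-some-constant is used downstream.)

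The genuine gap is that you never construct such a $\gamma$ for a general coupling satisfying only \eqref{eq:short-range}; you assert it is ``always possible by a standard construction'' and offer two candidates that do not work in general. The pure exponential $\gamma_{i}\propto a^{-|i|}$ requires $\sum_{k}a^{|k|}|J_{0,k}|<\infty$, which an arbitrary $\ell^{1}$ sequence need not satisfy for any $a>1$; and the tail-power recipe $\gamma_{i}\propto\bigl(\sum_{|k|\ge|i|}|J_{0,k}|\bigr)^{\theta}$ can fail to be positive (finite range), fail to be summable, or fail the ratio condition when the tail of $J$ is irregular (e.g.\ supported on a sparse set), and no value of $\theta$ is shown to work simultaneously. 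The paper's resolution, following Leha--Ritter, is a Neumann-series construction: set $\mathbf{S}_{ij}:=\|\nabla^{2}_{ij}\HH\|_{\infty}+\eta_{j}+1_{i=j}$ for an arbitrary positive probability $\eta$ (so that $c:=\sup_{i}\sum_{j}\mathbf{S}_{ij}<\infty$ by \eqref{eq:short-range} and all entries are positive), define $\mathbf{T}:=\sum_{n\ge0}\mathbf{S}^{n}/(1+c)^{n}$, and take $\gamma$ proportional to a row of $\mathbf{T}$. The identity $\mathbf{T}\mathbf{S}=(1+c)(\mathbf{T}-\mathrm{Id})\le(1+c)\mathbf{T}$ then hands you the Schur condition $\sum_{j}\gamma_{j}\mathbf{S}_{ji}\le(1+c)\gamma_{i}$ for free, with summability coming from $\sum_{i}\mathbf{T}_{1,i}\le1+c$. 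This is the one missing idea; the rest of your argument goes through once it is supplied.
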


\begin{proof}
  For all $\bx \in \mathsf{Conf}$, we have:
    \begin{equation*}
      \|\nabla \HH(\bx)\|_{\ell^{2}(\gamma)}^{2} : = \sum_{i \in \Zd} \gamma_{i} \|\nabla_{i} \HH(\bx)\|^{2} \leq  \sum_{i \in \Zd} \gamma_{i}  \left( \sum_{j \in \Zd} \left| J_{i,j} \partial_{1} \W(\bx_{i}, \bx_{j}) \right|\right)^{2},
  \end{equation*}
  this quantity being finite thanks to our short-range assumption and the fact that $\gamma$ is a probability measure. This guarantees that $\nabla \HH$ takes values in $\ell^{2}(\gamma)$ (and is thus always “compatible” in the previous sense) regardless of the choice of $\gamma$.

\paragraph{Step 1. Choice of $\gamma$.}
  We follow \cite[Sec. 4]{LehaRitter} in order to construct a measure $\gamma$ such that the second part of the claim holds. For $i, j$ in $\Zd$, denote by $\nabla^{2}_{ij} \HH$ the following quantity:
  \begin{equation*}
    \nabla^{2}_{ij} \HH(\bx) =
    \begin{cases}
      \sum_{k \in \Zd} J_{i,k} \partial^{2}_{11} \W(\bx_{i}, \bx_{k}), & i = j;
      \\ J_{i,j} \partial^{2}_{12} \W(\bx_{i}, \bx_{j}), & i \ne j.
    \end{cases}
  \end{equation*}
  Fix any positive probability measure $\eta$ on $\Zd$, and define
  \begin{equation*}
    \mathbf{S}_{ij} := \norm{\nabla^{2}_{ij} \HH}_{\infty} + \eta_{j} + 1_{i=j}, \qquad i,\, j \in \Zd.
  \end{equation*}
  Using the short-range assumption, we have: 
  \begin{equation}
  \label{eq:defcc}
c := \sup_{i \in \Zd} \sum_{j \in \Zd} \mathbf{S}_{ij} < \infty.
  \end{equation} 
  We now define $\gamma$ by setting, for all $i \in \Zd$,
\begin{equation}
\label{eq:defTgamma}
\gamma_{i} := \frac{1}{\sum_{i \in \Zd}  \mathbf{T}_{1,i}} \mathbf{T}_{1,i} \text{ where } \mathbf{T} := \sum_{n \geq 0} \frac{\mathbf{S}^{n}}{(1+c)^{n}}.
\end{equation}
Since all the coefficients of $\mathbf{S}$ are positive, it is also the case for $\mathbf{T}$ and thus for $\gamma$. Note that
  \begin{equation*}
    \sum_{i \in \Zd} \mathbf{T}_{1,i} \leq \norm{\mathbf{T}}_{op} \leq 1+c,
  \end{equation*}
which ensures that $\gamma$ is well-defined as a positive probability measure on $\Zd$.

\paragraph{Step 2. Boundedness of $\mathbf{S}$.}
  \begin{claim}
  \label{claimli}
    The operator $\mathbf{S} \colon \ell^{\infty}(\gamma) \to \ell^{\infty}(\gamma)$ is bounded.
  \end{claim}
  \begin{proof}
The operator  $\mathbf{S}$ is bounded from $\ell^{\infty}$ to $\ell^{\infty}$ with operator norm $\leq c$ by \eqref{eq:defcc}. Since $\gamma$ is a positive probability measure, we have $\ell^{\infty} = \ell^{\infty}(\gamma)$ with coincidence of the norms.
  \end{proof}
We now prove the corresponding fact for $\ell^{1}(\gamma)$. 
  \begin{claim}
  \label{claiml1}
    The operator $\mathbf{S} \colon \ell^{1}(\gamma) \to \ell^{1}(\gamma)$ is bounded.
  \end{claim}
  \begin{proof}
  Since $\mathbf{T}\mathbf{S} = (1+c)(\mathbf{T}-\mathbf{Id})$, by definition of $\mathbf{T}$ in \eqref{eq:defTgamma}, we find on the one hand for $i \geq 0$:
  \begin{equation*}
  \left(\mathbf{T}\mathbf{S}\right)_{1,i} = \sum_{j \in \Zd} \mathbf{T}_{1,j} \times \mathbf{S}_{j,i} = \frac{1}{\sum_{k \in \Zd} \mathbf{T}_{1,k}} \sum_{j \in \Zd} \gamma_j \mathbf{S}_{j,i},
  \end{equation*}
  and on the other hand, by definition \eqref{eq:defTgamma}:
  \begin{equation*}
\left(\mathbf{T}\mathbf{S}\right)_{1,i} = (1+c)(\mathbf{T}-\mathbf{Id})_{1,i} \leq (1+c) \mathbf{T}_{1,i} = (1+c)  \frac{1}{\sum_{k \in \Zd} \mathbf{T}_{1,k}} \gamma_i,
  \end{equation*}
thus in conclusion:
\begin{equation}\label{eq:schur}
\sum_{j \in \Zd} \gamma_j \mathbf{S}_{j,i} \leq (1+c) \gamma_i.
\end{equation}

By Fubini's theorem and \eqref{eq:schur}, we deduce that for all $b \in \ell^{1}(\gamma)$:
  \begin{equation*}
    \norm{\mathbf{S}b}_{\ell^{1}(\gamma)} \leq \sum_{i\in \Zd} \gamma_{i} \sum_{j \in \Zd} \mathbf{S}_{ij} \abs{b}_{j} \leq (1+c) \sum_{j \in \Zd} \gamma_{j} \abs{b_{j}} \leq (1+c) \norm{b}_{\ell^{1}(\gamma)},
  \end{equation*}
  thus $\mathbf{S}$ is bounded on $\ell^{1}(\gamma)$.
\end{proof}

By the Riesz--Thorin interpolation theorem, Claim \ref{claimli} and Claim \ref{claiml1} imply that:
\begin{corollary}
 $\mathbf{S}$ is bounded on $\ell^{2}(\gamma)$.
\end{corollary}

\paragraph{Step 3. Checking that $\nabla \HH$ is Lipschitz on $(\Conf, \dg)$.}
  Take $\bx$ and $\by$ in $\mathsf{Conf}$, and compute:
  \begin{equation*}
    \norm{\nabla \HH(\bx) - \nabla \HH(\by)}^{2}_{\ell^{2}(\gamma)} = \sum_{i \in \Zd} \gamma_{i} \| \nabla_{i} \HH(\bx) - \nabla_{i} \HH(\by)\|^{2} \leq \sum_{i \in \Zd} \gamma_{i} \paren*{\sum_{j \in \Zd} \norm{\nabla^{2}_{ij} \HH}_{\infty} \di(\bx_{j}, \by_{j}) }^{2},
  \end{equation*}
  where we used Taylor's inequality to bound $\|\nabla_{i} \HH(\bx) - \nabla_{i} \HH(\by)\|$ in terms of the second derivatives of $\HH$.

  Writing $a_{j} = \di(\bx_{j}, \by_{j})$ for $j \in \Zd$, we thus have (using the boundedness of $\mathbf{S}$ in $\ell^2(\gamma)$ and the definition of $\mathbf{S}$):
  \begin{equation*}
    \norm{\nabla \HH(\bx) - \nabla \HH(\by)}_{\ell^{2}(\gamma)} \leq \norm{\mathbf{S} a}_{\ell^{2}(\gamma)} \leq \Cc \times \norm{a}_{\ell^{2}(\gamma)} = \Cc \times \dg(\bx, \by),
  \end{equation*}
thus the map $\nabla \HH$ from $(\mathsf{Conf}, \dg) \to \ell^{2}(\gamma)$ is Lipschitz.
\end{proof}

\subsubsection*{The finite-volume approximation.}
\label{sec:staysonConf}
To show that actually $X$ is a process on $\mathsf{Conf}$, define an auxiliary sequence of processes $(X^{n})_{n \geq 1}$, where $X^n$ is a Brownian motion on $\mathsf{Conf}$ with drift given by
\begin{equation*}
v^{n}_{i} :=
\begin{cases}
  \nabla_{i} \HH_n & \text{ for $i \in \Lambda_{n}$,} 
  \\  0 & \text{ for $i \in \Zd \setminus \Lambda_{n}$.}
\end{cases}
\end{equation*}
All the processes $(X^{n})$ are coupled as solutions of stochastic differential equations with respect to the same Brownian motions.
More precisely, consider a family $(W_{i,k})_{i \in \mathbb{Z}^{\mathsf{d}}, 1 \leq k \leq m}$ of independent Brownian motion on $\mathbb{R}$, and recall that the vector fields $(P_{k})_{1 \leq k \leq m}$ satisfies $\Delta = \sum_{k=1}^{m} P_{k}^{2}$.
Then, we define $X^{n}$ to be the unique solution with infinite lifetime of
\begin{equation}
    \label{eq:diffusion:brownian}
  \begin{dcases}
    \mathtt{d} X^{n}_{i}(t) = \sum_{k=1}^{m} P_{k}(X_{i}(t)) \circ \mathtt{d} W_{i,k} - \nabla \V(X_{i}(t)) \mathtt{d} t - \beta \nabla_{i} \mathsf{H}_{n}(X(t)) \mathtt{d} t, & \qquad i \in \Lambda_{n},
    \\ \mathtt{d} X^{n}_{i}(t) = \sum_{k=1}^{m} P_{k}(X_{i}(t)) \circ \mathtt{d} W_{i,k}, & \qquad i \in \mathbb{Z}^{\mathsf{d}} \setminus \Lambda_{n}. 
  \end{dcases}
\end{equation}
By definition, the coordinates of $X^{n}$ outside of $\Lambda_{n}$ are independent Brownian motions on $M$, while the restriction  of $X^n$ to $\Lambda_{n}$ satisfies a finite-dimensional stochastic differential equation on $\mathsf{Conf}_{n}$, and the two components are independent.
Since $\mathsf{Conf}_{n}$ is a manifold, we can define $X^{n}$ with the usual theory, and it is a stochastic process on $\mathsf{Conf}$.
Let us show that $(X^{n})_n$ has a limit in some suitable topology.

\paragraph{Choice of the topology.}
Consider the \emph{path space} $\Omega := \mathscr{C}^{0}(\mathbb{R}_{+}, \mathsf{Conf})$.
We introduce the following family, indexed by $T \in \mathbb{R}_{+}$, of pseudo-distances on $\Omega$:
\begin{equation*}
  \mathsf{D}_{T}(w, \tilde{w}) := \sup_{0 \leq t \leq T} \dg(w_{s}, \tilde{w}_{s}), \qquad w,\tilde{w} \in \Omega.
\end{equation*}
The family $(\mathsf{D}_{T})_{T \in \mathbb{R}_{+}}$ induces on $\Omega$ the topology of uniform convergence over compact sets of $\mathbb{R}_{+}$, and the corresponding family of pseudo-distances
\begin{equation*}
  \paren*{ \mathbb{E} \bracket*{ \mathsf{D}_{T}(\cdot, \cdot)^{2}}^{\hal} }_{T \in \mathbb{R}_{+}},
\end{equation*}
induces a complete uniform structure on the space of continuous processes on $\mathsf{Conf}$.

\paragraph{Convergence result.}
\begin{proposition}
\label{th:diffusion:cauchy}
The sequence $(X^{n})_{n \in \mathbb{N}^{*}}$ of stochastic processes on $\mathsf{Conf}$  is Cauchy for the uniform structure defined above.
In particular, there exists a stochastic process $X$ on $\mathsf{Conf}$, such that $(X^{n})_n$ converges to $X$ for this uniform structure.
\end{proposition}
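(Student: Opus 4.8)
The plan is to compare $X^{n}$ and $X^{n+p}$ coordinate by coordinate using the fact that they are both built from the same family of driving Brownian motions $(W_{i,k})$, and then apply a Gronwall-type argument in the Hilbert space $\ell^{2}(\gamma)$ whose metric is adapted, via Proposition \ref{th:diffusion:nabla-h-lipschitz}, to make $\nabla \mathsf{H}$ Lipschitz. First I would fix $T > 0$ and $n < m$ (say $m = n+p$), and estimate $\Esp[\mathsf{D}_{T}(X^{n}, X^{m})^{2}]$. For each site $i$, the two processes $X^{n}_{i}$ and $X^{m}_{i}$ solve stochastic differential equations with the same $P_{k}$-vector fields and the same noise $W_{i,k}$, differing only in their drift: the drift difference is $\beta(\nabla_{i}\mathsf{H}_{n}(X^{n}) - \nabla_{i}\mathsf{H}_{m}(X^{m}))$ for $i \in \Lambda_{n}$, it equals $-\beta\nabla_{i}\mathsf{H}_{m}(X^{m})$ for $i \in \Lambda_{m}\setminus\Lambda_{n}$, and it is zero for $i \notin \Lambda_{m}$. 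The key geometric input is a stochastic comparison estimate on a single compact manifold $M$: if $Y, Z$ solve $\mathtt{d} Y = \sum_{k}P_{k}(Y)\circ \mathtt{d} W_{k} - \nabla\mathrm{U}(Y)\mathtt{d} t - b^{Y}(t)\mathtt{d} t$ and similarly for $Z$ with drift $b^{Z}$, driven by the same $W$, then $\Esp[\di(Y_{t},Z_{t})^{2}]$ is controlled by $\Esp[\int_{0}^{t}(\di(Y_{s},Z_{s})^{2} + \|b^{Y}(s)-b^{Z}(s)\|^{2})\mathtt{d} s]$ up to a constant depending only on $M$ and $\mathrm{U}$ (this uses that $\nabla\mathrm{U}$ and the second fundamental form terms entering the Itō correction are bounded, since $M$ is compact; a clean way to see it is to embed $M \hookrightarrow \R^{m}$ and write Itō's formula for $|\mathsf{I}(Y_{t}) - \mathsf{I}(Z_{t})|^{2}$, bounding the drift and quadratic-variation terms, then comparing the extrinsic distance with $\di$ up to bi-Lipschitz constants).

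Summing these single-site estimates against the weights $\gamma_{i}$, I would obtain, writing $\delta(t) := \Esp\big[\sum_{i}\gamma_{i}\di(X^{n}_{i}(t), X^{m}_{i}(t))^{2}\big] = \Esp[\dg(X^{n}(t), X^{m}(t))^{2}]$,
\begin{equation*}
\delta(t) \leq C\int_{0}^{t}\Big(\delta(s) + \Esp\big[\|\nabla\mathsf{H}_{n}(X^{n}(s)) - \nabla\mathsf{H}_{m}(X^{m}(s))\|_{\ell^{2}(\gamma)}^{2}\big]\Big)\,\mathtt{d} s,
\end{equation*}
where $\nabla\mathsf{H}_{n}$ is understood as the drift vector field $v^{n}$ (zero outside $\Lambda_{n}$). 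Then I would split $\|v^{n}(X^{n}) - v^{m}(X^{m})\|_{\ell^{2}(\gamma)} \leq \|v^{n}(X^{n}) - v^{n}(X^{m})\|_{\ell^{2}(\gamma)} + \|v^{n}(X^{m}) - v^{m}(X^{m})\|_{\ell^{2}(\gamma)} + \|(v^{m} - \nabla\mathsf{H})(X^{m})\|_{\ell^{2}(\gamma)} + \|(\nabla\mathsf{H} - v^{n})(X^{m})\|_{\ell^{2}(\gamma)}$ --- using $v^{n} = \nabla\mathsf{H}_{n}$. The first term is $\leq \Cc\,\dg(X^{n}(s), X^{m}(s))$ by the Lipschitz bound for $\nabla\mathsf{H}_{n}$ (which satisfies the same Lipschitz estimate as $\nabla\mathsf{H}$, uniformly in $n$, because $\mathbf{S}$ dominates all the $\nabla^{2}_{ij}\mathsf{H}_{n}$). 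The remaining three "tail" terms are deterministically bounded by $\sup_{\bx}\|\nabla\mathsf{H}(\bx) - v^{n}(\bx)\|_{\ell^{2}(\gamma)}$, which by the short-range assumption \eqref{eq:short-range} and dominated convergence (against the probability measure $\gamma$) tends to $0$ as $n \to \infty$; call this quantity $\varepsilon_{n} \to 0$. Feeding this back yields $\delta(t) \leq C'\int_{0}^{t}\delta(s)\,\mathtt{d} s + C'T\varepsilon_{n}$, and Gronwall's lemma gives $\sup_{t \leq T}\delta(t) \leq C'T\varepsilon_{n}\mathrm{e}^{C'T} \to 0$ as $n \to \infty$, uniformly in $p$.

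To pass from $\sup_{t\leq T}\Esp[\dg(X^{n}(t), X^{m}(t))^{2}]$ to $\Esp[\mathsf{D}_{T}(X^{n}, X^{m})^{2}] = \Esp[\sup_{t\leq T}\dg(X^{n}(t),X^{m}(t))^{2}]$, I would redo the estimate keeping the supremum inside, i.e. bound $\Esp[\sup_{t\leq T}|\mathsf{I}(X^{n}(t)) - \mathsf{I}(X^{m}(t))|_{\ell^{2}(\gamma)}^{2}]$ directly: the drift part is handled by Cauchy--Schwarz in time and Fubini, while the martingale part (the difference of the $P_{k}$-stochastic integrals, summed over $i$ with weights $\gamma_{i}$) is controlled by the Burkholder--Davis--Gundy inequality, whose bracket is again estimated in terms of $\int_{0}^{T}\Esp[\dg(X^{n}(s),X^{m}(s))^{2}]\mathtt{d} s$ using Lipschitzness of the $P_{k}$ on the compact $M$. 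This reduces the supremum estimate to the one already obtained, with an extra BDG constant. Hence $(X^{n})$ is Cauchy for each pseudo-distance $\Esp[\mathsf{D}_{T}(\cdot,\cdot)^{2}]^{1/2}$, $T \in \R_{+}$, and since this family induces a complete uniform structure on continuous processes with values in $(\mathsf{Conf}, \dg)$ --- completeness coming from completeness of $(\mathsf{Conf},\dg)$ and of $\mathscr{L}^{2}$ --- there is a limiting process $X$ on $\mathsf{Conf}$. The main obstacle I anticipate is the single-manifold stochastic comparison estimate in the right weighted form: one must make sure the constant depends only on $M$ and $\mathrm{U}$ (not on $n$ or on the site), which is why the extrinsic embedding $\mathsf{I}$ and the uniform bi-Lipschitz equivalence between $\di$ and the Euclidean distance on $I(M)$ are essential; once that is in place, everything else is a weighted $\ell^{2}$ repackaging of the classical one-dimensional Gronwall/BDG argument for SDEs with Lipschitz coefficients.
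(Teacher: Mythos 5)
Your proposal is correct and follows essentially the same route as the paper: couple $X^{n}$ and $X^{p}$ through the same driving Brownian motions, work extrinsically via the Nash embedding, control the martingale part by Burkholder--Davis--Gundy and the drift part by Cauchy--Schwarz together with the uniform-in-$n$ Lipschitz bound on $\nabla\mathsf{H}_{n}$ in $\ell^{2}(\gamma)$ plus a tail term vanishing by the short-range assumption, and conclude by Gronwall. The only cosmetic difference is that the paper runs the estimate with the supremum inside the expectation from the start rather than in a second pass.
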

\begin{proof}
  Take $p \geq n \geq 1$.
  The two processes $X^{n}$ and $X^{p}$ satisfy Euclidean equations similar to \eqref{eq:diffusion:brownian}.
  Writing $b$ for the vector field accounting for $\nabla \V$ and the Itō--Stratonovich correction, we derive the following stochastic differential equations in Itō integral form, for all $i \in \Zd$
  \begin{equation*}
    \begin{split}
      X^{n}_{i}(t) - X^{p}_{i}(t) &= \sum_{l=1}^{m} \int_{0}^{t} \paren*{P_{l}(X^{n}_{i}(s)) - P_{l}(X^{p}_{i}(s))} \mathtt{d} W_{l,i}(s) 
                                \\&- \int_{0}^{t} \paren*{b(X^{n}_{i}(s)) 1_{i \in \Lambda_{n}} - b(X_{i}^{p}(s)) 1_{i \in \Lambda_{p}} } \mathtt{d} s.
                                \\&- \beta \int_{0}^{t} \paren*{\nabla_{i} \HH_{n} (X^{n}(s)) 1_{i \in \Lambda_{n}} - \nabla_{i}\HH_{p} (X^{p}(s)) 1_{i \in \Lambda_{p}} } \mathtt{d} s.
    \end{split}
  \end{equation*}
  We now use these integral equations to derive an integral inequality on $\Esp \bracket*{ \mathsf{D}_{T}(X^{n}, X^{p})^2}$ that allows us to conclude that it goes to $0$.
  By construction $X_{i}^{n} = X_{i}^{p}$ for $i \in \mathbb{Z}^{\d} \setminus \Lambda_{p}$, since those are independent Brownian motions.
  In particular, we only need to control the above quantities, for $i \in \Lambda_{p}$.
  In this proof, $c$ is a universal constant independent of $n$ and $p$.
First, consider the martingale part
\begin{equation*}
  M_{i}(t) \coloneq \sum_{l=1}^{m} \int_{0}^{t} \paren*{P_{l}(X^{n}_{i}(s)) - P_{l}(X^{p}_{i}(s))} \mathtt{d} W_{l,i}(s).
\end{equation*}
Using that $(a+b)^{2} \leq 2a^{2} + 2b^{2}$ and that $\sup (a+b) \leq \sup a + \sup b$, we have
\begin{equation*}
  \sup_{t \in [0,T]} \norm{M_{i}(t)}^{2} \leq c \sum_{l=1}^{m} \sup_{t \in [0,T]} \norm*{ \int_{0}^{t} \paren*{P_{l}(X^{n}_{i}(s)) - P_{l}(X^{p}_{i}(s))} \mathtt{d} W_{l,i}(s) }^{2}
\end{equation*}
By the Burkholder--Davis--Gundy inequality \cite[Cor.~IV.4.2]{RevuzYor}, and since $P_{l}$ is smooth on $M$ compact, we find:
\begin{equation*}
  \mathbb{E} \bracket*{ \sup_{t \in [0,T]} \norm{M_{i}(t)}^{2}  } \leq c \sum_{l=1}^{m} \mathbb{E} \int_{0}^{T} \norm*{ P_{l}(X^{n}_{i}(s)) - P_{l}(X^{p}_{i}(s))}^{2} \mathtt{d} s  \leq c \int_{0}^{T} \norm*{ X^{n}_{i}(s) - X^{p}_{i}(s)}^{2} \mathtt{d} s.
\end{equation*}
Finally, summing with respect to $\gamma$, and using again that $\sup(a+b) \leq \sup a + \sup b$, we find
\begin{equation}\label{eq:cauchy:bound-martingale}
  \Esp \bracket*{ \sup_{t \in [0,T]} \sum_{i \in \Zd} \gamma_{i} \norm{M_{i}(t)}^{2}} \leq c \int_0^{T} \Esp \bracket*{ \mathsf{D}_{t}(X^{n}, X^{p})^{2} } \mathtt{d} t.
\end{equation}
Now, we handle the drift part.
We only provide details for the interacting part involving $\nabla \mathsf{H}_{n}$ and $\nabla \mathsf{H}_{p}$, the part involving $b$ is handled similarly, and is actually easier due to the lack of interactions.
By Cauchy-Schwarz's inequality we have
\begin{equation*}
  \begin{split}
  &\sup_{t \in [0,T]} \norm*{ \int_{0}^{t} \paren*{\nabla_{i} \HH_{n} (X^{n}(s)) 1_{i \in \Lambda_{n}} - \nabla_{i}\HH_{p} (X^{p}(s)) 1_{i \in \Lambda_{p}} } \mathtt{d} s }^{2} 
    \\\leq & \ T \int_{0}^{T} \norm*{\nabla_{i} \HH_{n} (X^{n}(s)) 1_{i \in \Lambda_{n}} - \nabla_{i}\HH_{p} (X^{p}(s)) 1_{i \in \Lambda_{p}} }^{2} \mathtt{d} s.
  \end{split}
\end{equation*}
By construction,
\begin{equation*}
\nabla_{i} \mathsf{H}_{n}(\bm{x}) = \nabla_{i} \mathsf{H}(\bm{x} | \bm{z}) - \sum_{j \in \Lambda_{n}} J_{i,j} \partial_{1} \psi(\bm{x}_{i}, \bm{x}_{j}), \qquad \bm{x} \in\mathsf{Conf}_{n}, \bm{z} \in \mathsf{Conf}_{\Zd \setminus \Lambda_{n}},
\end{equation*}
where $(\bm{x} | \bm{z})$ is the element of $\mathsf{Conf}$ obtained by merging $\bm{x}$ and $\bm{z}$.
Hence, since, by \cref{th:diffusion:nabla-h-lipschitz}, $\nabla \mathsf{H}$ is Lipschitz, we get
\begin{equation}\label{eq:nabla-hn-lipschitz}
  \norm{\nabla \mathsf{H}_{n}(\bm{x}) - \nabla \mathsf{H}_{p}(\bm{y})}_{\ell^{2}(\gamma)} \leq c\, \mathsf{d}_{\gamma}(\bm{x}, \bm{y}) + 2 \norm{\partial_{1} \psi}_{\infty} \sum_{i \in \Zd} \gamma_{i} \sum_{j \in \Lambda_{p} \setminus \Lambda_{n}} J_{i,j}.
\end{equation}
The constant $c$ comes from the Lipschitz continuity of $\nabla \mathsf{H}$; while the second term on the right-hand side is negligible, since $J$ is summable by \eqref{eq:short-range} and $\gamma$ is a probability measure.
Thus, using again the sub-addivity of the supremum, we obtain
\begin{equation}\label{eq:cauchy:bound-drift}
  \begin{split}
    & \Esp \bracket*{ \sup_{t \in [0,T]} \sum_{i \in \Zd} \gamma_{i} \norm*{  \int_{0}^{t} \paren*{\nabla_{i} \HH_{n} (X^{n}(s)) 1_{i \in \Lambda_{n}} - \nabla_{i}\HH_{p} (X^{p}(s)) 1_{i \in \Lambda_{p}} } \mathtt{d} s }^{2} } 
  \\&\leq  c\, T \int_{0}^{T} \Esp \bracket*{ \mathsf{D}_{t}(X^{n}, X^{p})^{2} } \mathtt{d} t + T\, \varepsilon_{n,p},
  \end{split}
\end{equation}
where $\varepsilon_{n,p}$ is the negligible term in \eqref{eq:nabla-hn-lipschitz}.
Combining \eqref{eq:cauchy:bound-martingale} and \eqref{eq:cauchy:bound-drift} yields
\begin{equation*}
  \Esp \bracket*{ \mathsf{D}_{T}(X^{n}, X^{p})^{2}} \leq c\,T \int_{0}^{T} \mathbb{E} \bracket*{ \mathsf{D}_{t}(X^{n}, X^{p})^{2}} \mathtt{d} t  + T\, \varepsilon_{n,p}.
\end{equation*}
By Gronwall's lemma, this shows that the sequence $(X^{n})_n$ is Cauchy with respect to the complete uniform structure that we have defined on the space of continuous processes on $\mathsf{Conf}$.
Thus, it converges as a continuous stochastic process on $\mathsf{Conf}$ to some limit $X$.
\end{proof}

\subsubsection*{Martingale problem and link with the Fokker--Planck--Kolmogorov equation}
We are left to prove that the limiting process $X$ constructed above is indeed the desired infinite-volume interacting diffusion.
We rely on an approximation argument that leverages properties of local martingales on $\mathbb{R}$, we refer to \cite[\S IV.1]{RevuzYor} for more details.
For $t \geq 0$, let us call $\mathbb{H}_{T}$ the space of all $\mathbb{R}$-valued local martingales $M = (M(t))_{t \in \mathbb{R}_{+}}$ such that
\begin{equation*}
  \norm{M}_{\mathbb{H}_{T}} \coloneq \Esp \bracket*{M(T)^{2}}^{1/2} = \Esp \bracket*{ \hsp{M}{M}_{T}} < \infty,
\end{equation*}
where $\hsp{\cdot}{\cdot}$ is the quadratic variation and the equality follows from \cite[Cor.~IV.1.24]{RevuzYor}.
The quantity $\norm{\cdot}_{\mathbb{H}_{T}}$ is only a seminorm.
We define $\mathbb{H} \coloneq \cap_{T \in \mathbb{R}_{+}} \mathbb{H}_{T}$, and we equip it with the inductive topology.
\begin{lemma}\label{th:diffusion:local-martingale}
  The space $\mathbb{H}$ is Fréchet.
  Moreover, if $M \in \mathbb{H}$, then $\mathbb{H}$ is a martingale.
\end{lemma}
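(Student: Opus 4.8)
The plan is to verify the two assertions in turn: that $\mathbb{H}$, endowed with the topology generated by the seminorms $(\norm{\cdot}_{\mathbb{H}_{T}})_{T\in\mathbb{R}_{+}}$, is a Fréchet space, and that every element of $\mathbb{H}$ is a genuine (not merely local) martingale. The second point will come out essentially for free from the tool used for completeness.

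For the Fréchet structure, I would first note that since $t\mapsto\hsp{M}{M}_{t}$ is non-decreasing one has $\norm{M}_{\mathbb{H}_{S}}\leq\norm{M}_{\mathbb{H}_{T}}$ whenever $S\leq T$, so the topology is already generated by the countable subfamily $(\norm{\cdot}_{\mathbb{H}_{N}})_{N\geq1}$; it is therefore metrizable, say by $d(M,M'):=\sum_{N\geq1}2^{-N}\bigl(\norm{M-M'}_{\mathbb{H}_{N}}\wedge1\bigr)$, and it is obviously locally convex. It is separated because $\norm{M}_{\mathbb{H}_{T}}=0$ for all $T$ forces $M(T)=0$ almost surely for every $T$, hence $M\equiv0$ by path-continuity. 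The remaining, and main, point is completeness. Here I would use that each $(\mathbb{H}_{T},\norm{\cdot}_{\mathbb{H}_{T}})$ is a Hilbert space: by the classical $\mathscr{L}^{2}$-criterion recalled in \cite[\S IV.1]{RevuzYor}, a continuous local martingale vanishing at $0$ with $\Esp[\hsp{M}{M}_{T}]<\infty$ is a square-integrable martingale on $[0,T]$ with $\Esp[M(T)^{2}]=\Esp[\hsp{M}{M}_{T}]$, so $M\mapsto M(T)$ identifies $\mathbb{H}_{T}$ isometrically with the closed subspace $\{X\in\mathscr{L}^{2}(\mathcal{F}_{T}):\Esp[X]=0\}$ of $\mathscr{L}^{2}(\Omega)$, the continuous path on $[0,T]$ being reconstructed from the terminal value via Doob's maximal inequality. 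Given a Cauchy sequence $(M^{k})_{k}$ in $\mathbb{H}$, it is then Cauchy in every $\mathbb{H}_{N}$, hence converges there to some $M^{(N)}$; since convergence in $\mathbb{H}_{N+1}$ implies convergence in $\mathbb{H}_{N}$, the limits are consistent and glue into a single continuous adapted process $M$ on $\mathbb{R}_{+}$ whose restriction to $[0,N]$ equals $M^{(N)}$, so that $M\in\bigcap_{N}\mathbb{H}_{N}=\mathbb{H}$ and $M^{k}\to M$. A metrizable, complete, locally convex space is Fréchet.

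For the martingale property, I would argue that if $M\in\mathbb{H}$ then for each fixed $T$ the condition $\Esp[\hsp{M}{M}_{T}]<\infty$ together with the same criterion from \cite[\S IV.1]{RevuzYor} shows that $(M(t))_{t\in[0,T]}$ is a true martingale; since $T$ is arbitrary, for any $s<t$ one picks $T>t$ and applies the martingale identity on $[0,T]$ to get $\Esp[M(t)\mid\mathcal{F}_{s}]=M(s)$, whence $M$ is a martingale on $\mathbb{R}_{+}$.

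I expect the only genuinely delicate step to be completeness — realizing each $\mathbb{H}_{T}$ as a Hilbert space and patching the $\mathbb{H}_{N}$-limits into an honest continuous adapted process lying in $\mathbb{H}$, rather than into an $\mathscr{L}^{2}$ equivalence class. The separation, metrizability, local convexity and the martingale deduction are all routine.
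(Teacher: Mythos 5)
Your proof is correct and follows essentially the same route as the paper's: separation and metrizability are handled identically, completeness rests on Doob's maximal inequality in both cases (the paper phrases it as Cauchy-ness for locally uniform convergence in probability, you as completeness of each $\mathbb{H}_{N}$ realized inside $\mathscr{L}^{2}(\mathcal{F}_{N})$ followed by gluing — the same mechanism), and the martingale property comes from the $\mathscr{L}^{2}$ criterion of Revuz--Yor. The only nitpick is that $M \mapsto M(T)$ need not be \emph{onto} $\{X \in \mathscr{L}^{2}(\mathcal{F}_{T}) : \Esp[X]=0\}$ for a general filtration (an $\mathscr{L}^{2}$-martingale need not admit a continuous version), but you only use that the image is a closed subspace, which your Doob argument already supplies.
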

\begin{proof}
  First, by \cite[Cor.~IV.1.25]{RevuzYor}, if $M$ is in $\mathbb{H}$, then, $(M(t))_{0 \leq t \leq T}$ is a martingale for all $T \in \mathbb{R}_{+}$.
  Then, if $\norm{M}_{\mathbb{H}_{T}} = 0$ for all $T \in \mathbb{R}_{+}$ by Doob's inequality, $\sup_{t \leq T} |M(t)| = 0$ for all $T \in \mathbb{R}_{+}$, thus $M = 0$.
  Hence, the family of seminorms induces a topology on $\mathbb{H}$ that is separated.
  By Doob's inequality, we also have $\norm{\cdot}_{\mathbb{H}_{T}} \leq 2 \norm{\cdot}_{\mathbb{H}_{T'}}$, thus we can consider only a countable family of seminorms for the inductive topology.
  To show that $\mathbb{H}$ is complete, consider a Cauchy sequence $(M^{n})_n$.
  Then by Doob's inequality, it means that $(M^{n})_n$ is Cauchy for the locally uniform in time convergence in probability and thus converges to a local martingale $M$, with $\Esp \bracket*{ M(T)^{2}} = \lim_{n} \Esp \bracket*{M^{n}(T)^{2}} < \infty$.
  Finally, if $M \in \mathbb{H}$ then for all $T$, the family $\set*{ M(\tau) : \tau \ \text{stopping time}, \tau \leq T}$ is uniformly integrable.
  Thus $M$ is a martingale by \cite[Prop.~IV.1.7]{RevuzYor}.
\end{proof}

\begin{proof}[Proof of {\cref{th:diffusion:solution}}]
  Take $\varphi \colon \mathsf{Conf} \to \mathbb{R}$ smooth and local.
By definition, we have to show that
\begin{equation*}
  M(t) \coloneq \varphi(X(t)) - \int_{0}^{t} \mathbf{L}\varphi(X(s)) \mathtt{d} s,
\end{equation*}
is a martingale.
To do so, we use our finite-dimensional approximation from above.
  By Itō's formula, the generator of the process $X^{n}$ is given by:
  \begin{equation*}
    \mathbf{L}_{n} \varphi := \sum_{i \in \Zd} (\Delta_{i} - \nabla \V \cdot \nabla_{i}) \varphi - \beta \sum_{i \in \Lambda_{n}} \nabla_{i} \HH_{n} \cdot \nabla_{i} \varphi.
  \end{equation*}
  The first sum above is actually finite since $\varphi$ is local.
  In particular, the process
  \begin{equation*}
    M_{n}(t) \coloneq \varphi(X^{n}(t)) - \int_{0}^{t} \mathbf{L}_{n} \varphi(X^{n}(s)) \mathtt{d} s,
  \end{equation*}
  is a continuous $\mathbb{R}$-valued martingale.
  Let us show that $(M^{n})$ is a Cauchy sequence in $\mathbb{H}$.
  Indeed, fix $T \in \mathbb{R}_{+}$, from Itō's formula, we know that
  \begin{equation*}
    M_{n}(T) = \sum_{i \in \Zd} \sum_{l=1}^{m} \int_{0}^{T} \nabla_{i} \varphi(X^{n}(s)) \cdot P_{l}(X^{n}_{i}(s)) \mathtt{d} W_{l,i}(s),
  \end{equation*}
  where the sum is finite by locality of $\varphi$.
  In particular, choosing $\ell \in \mathbb{N}$ such that $\varphi$ is $\Lambda_{\ell}$-local, using that the $(W_{l,i})$ are independent Brownian motions, the quadratic variation satisfies
  \begin{equation*}
    \hsp{M_{n}}{M_{n}}_{T} = \sum_{i \in \Lambda_{\ell}} \sum_{l=1}^{m} \int_{0}^{T} \norm{\nabla_{i} \varphi(X^{n}(s)) \cdot P_{l}(X_{i}^{n}(s))}^{2} \mathtt{d} s.
\end{equation*}
Since $\varphi$ and $P_{l}$ are smooth and $\mathsf{Conf}$ is compact, the above quantity has finite expectation.
Similarly,
\begin{equation*}
\hsp{M_{n} - M_{p}}{M_{n} - M_{p}}_{T} = \sum_{i \in \Lambda_{\ell}} \sum_{l=1}^{m} \int_{0}^{T} \norm{\nabla_{i} \varphi(X^{n}(s)) \cdot P_{l}(X_{i}^{n}(s)) - \nabla_{i} \varphi(X^{p}(s)) \cdot P_{l}(X^{p}_{i}(s) }^{2} \mathtt{d} s.
\end{equation*}
This quantity goes to zero in expectation as $n,p \to \infty$ since $\nabla_{i}\varphi(x) \cdot P_{l}(x_{i})$ is Lipschitz and the fact that
\begin{equation*}
  \Esp \bracket*{ \mathsf{D}_{T}(X^{n}, X^{p})^{2}} \to 0,
\end{equation*}
as shown in Proposition \ref{th:diffusion:cauchy}.
Since this holds for all $T \in \mathbb{R}_{+}$, $(M_{n})$ converges to some martingale $M \in \mathbb{H}$.

  By Proposition \ref{th:diffusion:cauchy},
  \begin{equation*}
    \Esp \bracket*{ \sup_{t \in [0,T]} (\varphi(X^{n}(t)) - \varphi(X(t)))^{2} } \to 0.
  \end{equation*}
  Finally, using an estimate similar to that of \eqref{eq:nabla-hn-lipschitz}, we find
  \begin{equation*}
    \sup_{t \in [0,T]} \int_{0}^{t} (\mathbf{L}_{n} \varphi(X^{n}(s)) - \mathbf{L}\varphi(X(s))) \mathtt{d} s \xrightarrow[n \to \infty]{L^{2}} 0.
  \end{equation*}
  This shows that $M(t)$ coincides with $\varphi(X(t)) - \int_{0}^{t} \mathbf{L} \varphi(X(s)) \mathtt{d} s$ and is indeed a martingale, which concludes the proof.
\end{proof}

\section{Extension to the non-compact case}\label{s:non-compact}
It could be interesting to extend our results to the case of non-compact manifolds $M$.
Let us point out how our strategy could be adapted, at least for manifolds whose geometry is sufficiently well controlled.
\medskip

\begin{assumption}
$M$ is smooth and complete, with bounded geometry in the following sense.
  \begin{enumerate}
    \item There exists $\kappa \in \mathbb{R}$ such that $\Ricc + \nabla^2 \V \geq \kappa$ uniformly on $M$.
    \item There exists $\eta_{0} > 0$ such that $\inf_{x \in M} \omega(B(x,1)) > \eta_{0}$, where $B(x,1)$ is the ball of center $x$ and radius $1$ for the Riemannian metric.
  \end{enumerate}
\end{assumption}

Under this assumption, the heat kernel bounds mentioned in Appendix \ref{s:heat-kernel} stay valid.
In particular \cite{Engoulatov} extends the results of \cite{Hsu} to the non-compact case.
Whence, at least informally, our arguments could be adapted. However, there are two major obstructions to a full generalization of our result.
\begin{enumerate}
  \item Optimal transport on generic non-compact manifolds is not well-understood.
    For instance, \cite{McCannPolar} works on compact manifolds, while \cite{CMCS06} works with compactly supported measure.
    We are not aware of a complete generalization of the theory of optimal transport to the non-compact case \cite[Problem~10.23]{VillaniOldNew}.

    Nonetheless, when working on $\mathbb{R}^{\nn}$, which is arguably the most relevant case of unbounded spin systems, all the theory of optimal transport is perfectly well-developed.

  \item When $M$ is not bounded, it is not clear whether the specific Wasserstein $\mathcal{W}$ is finite.
  This could cause problems in the construction of the gradient flow in Section \ref{sec:JKO}.
  Even if it would be formally be possible to define an $\mathsf{EVI}$-gradient flow with respect to an extended distance, the authors do not know how such an equation could be interpreted nor what convergence results such an $\mathsf{EVI}$ would imply.
  We show below that, when the curvature $\kappa$ is positive, two spin measures with finite free energy are always at finite specific Wasserstein distance.
\end{enumerate}
These two points indicate that our results should extend at least to $\mathbb{R}^{\nn}$ equipped with a strictly log-concave measure, i.e. a probability measure $\omega(\mathtt{d} x) := \mathrm{e}^{-\V(x)} \mathtt{d} x$ with $\nabla^{2} \V \geq \kappa > 0$, and an interaction $\W \in \mathscr{C}^{3}(\mathbb{R}^{\nn} \times \mathbb{R}^{\nn})$ with bounded derivatives.

\begin{lemma}\label{th:talagrand}
  Assume that $\kappa > 0$, then the specific Wasserstein distance $\mathcal{W}$ is a true distance.
\end{lemma}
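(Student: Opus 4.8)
The plan is to obtain finiteness of $\Wi$ from a transport--entropy inequality, which is the only point at stake (symmetry and the triangle inequality for $\Wi$ are inherited from those of the $\Wn$, and $\Wi(\mathrm{P},\mathrm{Q})=0$ forces $\mathrm{P}_{|\Lan}=\mathrm{Q}_{|\Lan}$ for every $n$ by \eqref{def:Wi}, hence $\mathrm{P}=\mathrm{Q}$). The key observation is that a Talagrand-type inequality is \emph{dimension-free}, so that after dividing by $|\Lan|$ it survives the infinite-volume limit.

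First I would recall that, since $\kappa>0$ in \eqref{PosCurv}, the measure $\omega$ on $M$ satisfies a logarithmic Sobolev inequality by the Bakry--Émery criterion, and hence, by the classical link between curvature, log-Sobolev and transport--entropy inequalities (see e.g. \cite{VillaniOldNew}), a quadratic transport--entropy (Talagrand) inequality: for every probability measure $\mu$ on $M$,
\[
  W_2(\mu,\omega)^2 \;\le\; \frac{2}{\kappa}\, H(\mu\mid\omega),
\]
where $W_2$ is the Wasserstein distance built from $\di$ and $H(\mu\mid\omega)=\int\log\tfrac{\mathtt d\mu}{\mathtt d\omega}\,\mathtt d\mu$. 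Since the cost $\dn^2=\sum_{i\in\Lan}\di^2$ is additive over coordinates, tensorization of transport--entropy inequalities yields the same inequality, \emph{with the same constant}, for $\omega_n=\omega^{\otimes\Lan}$: for every $\mathrm{Q}\in\Ppn$,
\[
  \Wn^2(\mathrm{Q},\omega_n)\;\le\;\frac{2}{\kappa}\,\Een(\mathrm{Q}).
\]

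Next I would set $\Omega:=\omega^{\otimes\Zd}$, which is a stationary spin measure, and apply the previous bound to $\mathrm{Q}=\mathrm{P}_{|\Lan}$ for an arbitrary stationary $\mathrm{P}$, giving $\Wn^2(\mathrm{P},\Omega)\le\frac{2}{\kappa}\Een(\mathrm{P}_{|\Lan})$. Dividing by $|\Lan|$ and letting $n\to\infty$ — the left-hand side converging to $\Wi^2(\mathrm{P},\Omega)$ by Lemma \ref{def:WassInfinite} (whose proof only uses stationarity and the additivity of the cost, and so remains valid here in $[0,+\infty]$), the right-hand side converging to $\frac{2}{\kappa}\Eei(\mathrm{P})$ by \eqref{eq:Eei} — one obtains
\[
  \Wi^2(\mathrm{P},\Omega)\;\le\;\frac{2}{\kappa}\,\Eei(\mathrm{P}).
\]
It remains to note that a spin measure with finite free energy has finite specific relative entropy (so that the right-hand side is finite): this follows from the Gibbs variational principle, since $\Een\ge0$ and $\Hhn$ can be bounded below by $-\tfrac{1}{\lambda}\Een-\tfrac C\lambda|\Lan|$ for suitable $\lambda>\beta$, using that $\mathsf H_{\Lan}$ is integrable against the product of the log-concave measure $\omega$ (which has Gaussian tails) and that $\int e^{-\lambda\mathsf H_{\Lan}}\mathtt d\omega_n$ grows at most exponentially in $|\Lan|$. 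Thus $\Wi(\mathrm{P},\Omega)<\infty$ for every $\mathrm{P}$ with $\fbeta(\mathrm{P})<\infty$, and by the triangle inequality for $\Wi$ we conclude that $\Wi(\mathrm{P},\mathrm{Q})\le\Wi(\mathrm{P},\Omega)+\Wi(\Omega,\mathrm{Q})<\infty$ for any two such measures, so $\Wi$ is a genuine (finite) distance on the set of stationary spin measures of finite free energy.

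The routine part is the Talagrand inequality and its tensorization. The point requiring genuine care in the non-compact setting is the passage to the limit combined with the implication ``finite free energy $\Rightarrow$ finite specific entropy'': in the compact case this is immediate because $\Hhi$ is bounded, but here it rests on the Gibbs/Donsker--Varadhan estimate together with the sub-Gaussian tails furnished by the strict log-concavity of $\omega$ — which is precisely where the restriction to, say, $\mathbb R^{\nn}$ with a strictly log-concave reference measure enters.
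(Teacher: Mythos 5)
Your proof is correct and rests on the same mechanism as the paper's: a dimension-free finite-volume Talagrand inequality that survives division by $|\Lan|$ and the limit $n\to\infty$. The only divergence is the anchor point: the paper states the finite-volume inequality directly in terms of the free energy, $\Wn^2(\mathrm{P}^0,\mathrm{P}^1)\le\frac{2}{\kappa}\left(\Fn(\mathrm{P}^0)+\Fn(\mathrm{P}^1)\right)$ (i.e.\ with the Gibbs measure as reference), so that after passing to the limit the bound $\frac{2}{\kappa}\left(\fbeta(\mathrm{P}^0)+\fbeta(\mathrm{P}^1)\right)$ is finite by hypothesis and the Donsker--Varadhan step you add (``finite free energy $\Rightarrow$ finite specific entropy'') is not needed — though your version of that step is itself sound under the stated assumption that $\W$ has bounded derivatives and $\omega$ is strictly log-concave.
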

\begin{proof}
  By the positive curvature assumption, we have the Talagrand inequality on $\mathsf{Conf}_{n}$ (see \cite{OttoVillani}):
  \begin{equation*}
    \mathcal{W}^{2}_{n}(\mathrm{P}^{0}, \mathrm{P}^{1}) \leq \frac{2}{\kappa} \paren*{\Fn(\mathrm{P}^{0}) + \Fn(\mathrm{P}^{1})}, \qquad \mathrm{P}^{0},\, \mathrm{P}^{1} \in \mathscr{P}_{n}.
  \end{equation*}
  From which we immediately conclude that
  \begin{equation*}
    \mathcal{W}^{2}(\mathrm{P}^{0}, \mathrm{P}^{1}) \leq \frac{2}{\kappa} \paren*{\fbeta(\mathrm{P}^{0}) + \fbeta(\mathrm{P}^{1})}, \qquad \mathrm{P}^{0},\, \mathrm{P}^{1} \in \mathscr{P}^{\mathrm{s}}.
  \end{equation*}
\end{proof}

\appendix

\section{Hölder and Sobolev spaces}
\paragraph{Hölder norm.}
For $k \in \mathbb{N}$, we write $\mathscr{C}^{k}(M)$ for the space of $k$ times continuously differentiable functions from $M$ to $\R$ equipped with the norm
\begin{equation*}
  \norm{f}_{\mathscr{C}^{k}(M)} := \max_{l=1,\dots,k} \norm{f^{(l)}}_{\infty}.
\end{equation*}
For $\alpha \in [0,1]$, we write $\mathscr{C}^{k,\alpha}(M)$ for the space of $f \in\mathscr{C}^{k}(M)$ such that $f^{(k)}$ is $\alpha$-Hölder continuous, we equip it with the norm
\begin{equation*}
  \norm{f}_{\mathscr{C}^{k,\alpha}(M)} := \norm{f}_{\mathscr{C}^{k}(M)} + \sup_{(x,y) \in M, x\ne y} \frac{\abs{f^{(k)}(x) - f^{(k)}(y)}}{\di(x,y)^{\alpha}}.
\end{equation*}

\paragraph{Sobolev spaces.}
Conveniently, we rather work with fractional Sobolev spaces that allow for a functional analytic approach.
We refer for instance to \cite{Aubin,Hebey} for thorough introductions to the subject for the case of integers, and to \cite{Ziemer} for the case of fractional spaces in a Euclidean setting.
Although the result stated here are well-known, we could not identify a reference in our setting so we provide a proof here.

We give the definitions below for the manifold $M$ endowed with the measure $\omega$ as in \eqref{eq:defomega}, they extend readily to $M^\La$ and $\oL$.

\subparagraph{Integer Sobolev spaces.}
For $k \in \mathbb{N}$ and $p \in [1, + \infty]$, the \emph{integer Sobolev space} of functions on with $k$ derivatives in $L^{p}$ is defined as the space
\begin{equation*}
  \mathscr{W}^{k,p}(\omega) := \set*{ f \in \mathscr{L}^{p}(\omega) : \nabla^{j} f \in \mathscr{L}^{p}(\omega),\, j= 1, \dots, k },
\end{equation*}
equipped with the following norm, for which it is a Banach space
\begin{equation*}
  \norm{f}_{\mathscr{W}^{k,p}(\omega)} := \sum_{j=0}^{k} \norm{\nabla^{j} f}_{\mathscr{L}^{p}(\omega)}.
\end{equation*}
% We also define $\mathcal{W}_{0}^{k,p}(\omega)$ as the closure of $\mathscr{D}(M) := \mathscr{C}^{\infty}_{c}(M)$ under this Sobolev norm.
% The celebrated Sobolev embeddings \cite[Thm.~3.18]{Hebey} assert that
% \begin{equation*}
  % \mathscr{W}^{1,p}(\omega) \subset \mathscr{L}^{p^{*}}(\omega), \qquad \frac{1}{p^{*}} = \frac{1}{q}- \frac{1}{n} > 0.
% \end{equation*}

\subparagraph{Fractional Sobolev spaces.}
Since $1-\Delta_{\V}$ is a positive operator on $\mathscr{L}^{2}(\omega)$, by spectral calculus, for $s \in \mathbb{R}$, one can define $(1-\Delta_{\V})^{s}$ acting on $\mathscr{L}^{2}(\omega)$, and in particular on the space $\mathscr{D}(M) := \mathscr{C}^{\infty}(M)$ of smooth test functions. Subsequently, we can define $(1-\Delta_{\V})^{s}$ on the space $\mathscr{D}'(M)$ of distributions.

Since $-\Delta_{\V}$ is non negative on $\mathscr{L}^{2}(\omega)$, when $s \geq 0$, we can directly define $(-\Delta_{\V})^{s}$, and all the definitions and statements involving $(1-\Delta_{\V})^{s}$ can be replaced with $(-\Delta_{\V})^{s}$ in that case.

The \emph{fractional Sobolev space} with $s \in \mathbb{R}$ derivatives in $L^{p}$ ($p \in [1,\infty]$) is then defined as:
\begin{equation*}
  \mathscr{W}^{s,p}(\omega) := \set*{ f \in \mathscr{D}'(\omega) : (1-\Delta_{\V})^{\frac{s}{2}}f \in \mathscr{L}^{p}(\omega) },
\end{equation*}
equipped with the following norm, that turns it into a Banach space:
\begin{equation*}
  \norm{f}_{\mathscr{W}^{s,p}(\omega)} := \norm{(1-\Delta_{\V})^{\frac{s}{2}}f}_{\mathscr{L}^{p}(\omega)}.
\end{equation*}
For $s \geq 0$, we have the embedding $\mathscr{W}^{s,p}(\omega) \hookrightarrow \mathscr{L}^{p}(\omega)$, but for negative $s$, these Sobolev spaces are not functions space anymore.

A celebrated result of Bakry \cite{BakryRieszTransform} asserts that for $s \in \mathbb{N}$ and $p \in (1,\infty)$, the definition with iterated derivatives and the one with powers of the weighted Laplacian are consistent: they produce the same space, with equivalent norms.

\subparagraph{Sobolev embeddings.}
We have the following \emph{Sobolev embeddings}.
\begin{align}
  & \label{eq:sobolev:embedding} \mathscr{W}^{k+\varepsilon,p}(\omega) \hookrightarrow \mathscr{W}^{k,p^{*}}(\omega), \qquad k \in \mathbb{N},\, \varepsilon \in [0,1],\, \nn - \varepsilon p >0,\, p^{*} := \frac{\nn p}{\nn-\varepsilon p}.
\\& \label{eq:sobolev:embedding:holder} \mathscr{W}^{s,p}(\omega) \hookrightarrow \mathscr{C}^{k,\alpha}(M), \qquad k + \alpha = s - \frac{\nn}{p},\, k \in \mathbb{N},\, \alpha \in (0,1),\, p \in [1,\infty],\, s > 0.
\end{align}
\begin{proof}
  \eqref{eq:sobolev:embedding} with $\varepsilon = 1$ and \eqref{eq:sobolev:embedding:holder} are stated in \cite[Thm.~2.21]{Aubin}. Let us treat here the case $\epsilon \neq 0$, and $k =1$, which is sufficient.
  By spectral calculus \cite[\S~13]{Rudin}, $(-\Delta_{\V})^{\frac{s}{2}}$ is an integral operator whose kernel has the form
  \begin{equation*}
    M \times M \ni (x,y) \mapsto c(s) \int_{0}^{\infty} t^{\frac{s-\nn}{2}-1} g_{t}(x,y) \mathtt{d} t,
  \end{equation*}
  where $g_{t}$ is the weighed heat kernel associated with $\Delta_{\V}$, see Appendix \ref{s:heat-kernel} for definitions.
  By \eqref{eq:heat-kernel:bound:upper} and \eqref{eq:heat-kernel:bound:lower}, this kernel has the same behaviour as $\di(x,y)^{-\nn-s}$.
  The proof then proceeds as in the Euclidean setting, see \cite[Thm.~2.82.]{Ziemer}.
\end{proof}

\subparagraph{Duality for Sobolev spaces.}
For $s \in \mathbb{R}$ and $p \in (1,\infty)$, the duality of Lebesgue spaces extends to fractional Sobolev spaces and we have $(\mathscr{W}^{s,p})' = \mathscr{W}^{-s,p'}$, where $p'$ is the Hölder conjugate.
This leads to the following characterization of Sobolev spaces.
\begin{lemma}
  Let $f \in \mathscr{L}^{p}(\omega)$, $s \geq 0$, $p \in (1,\infty)$, and $C > 0$ such that for all $\varphi \in \mathscr{D}(M)$:
  \begin{equation*}
    \int f \varphi \mathtt{d} \omega \leq C \norm{\varphi}_{\mathscr{L}^{p'}(\omega)}.
  \end{equation*}
  Then, $f \in \mathscr{W}^{s,p}(\omega)$ with $\norm{f}_{\mathscr{W}^{s,p}(\omega)} \leq C$.
\end{lemma}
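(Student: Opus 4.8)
The statement is a converse-type characterization: membership in $\mathscr{W}^{s,p}(\omega)$ can be tested by pairing against smooth functions and checking a bound in the dual norm $\norm{\cdot}_{\mathscr{L}^{p'}(\omega)}$. The natural route is duality for fractional Sobolev spaces, $(\mathscr{W}^{s,p}(\omega))' = \mathscr{W}^{-s,p'}(\omega)$, which has been recorded just above the statement. The plan is to unwind the definition of the Sobolev norm via the operator $(1-\Delta_{\V})^{s/2}$, transfer the hypothesis to a statement about the linear functional $\varphi \mapsto \int f\varphi\,\mathtt{d}\omega$ acting on negative-order Sobolev space, and then invoke the Hahn--Banach extension plus reflexivity to identify the Riesz representative as $(1-\Delta_{\V})^{s/2}f$.

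\textbf{Key steps.} First I would set $T\colon \varphi \mapsto \int f\varphi\,\mathtt{d}\omega$, a priori defined on $\mathscr{D}(M)$, and rewrite the hypothesis. Writing $\psi := (1-\Delta_{\V})^{s/2}\varphi$ for $\varphi \in \mathscr{D}(M)$ — note that $(1-\Delta_{\V})^{-s/2}$ maps $\mathscr{D}(M)$ into a dense subspace of $\mathscr{L}^{p'}(\omega)$ by the Sobolev embedding $\mathscr{W}^{s,p'}(\omega)\hookrightarrow\mathscr{L}^{p'}(\omega)$ together with density of $\mathscr{D}(M)$ in $\mathscr{W}^{s,p'}(\omega)$ — one sees that $\psi$ ranges over a dense subspace of $\mathscr{L}^{p'}(\omega)$ as $\varphi$ ranges over $\mathscr{D}(M)$. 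The hypothesis $\int f\varphi\,\mathtt{d}\omega \leq C\norm{\varphi}_{\mathscr{L}^{p'}(\omega)}$ is, in these terms, the statement that the functional
\begin{equation*}
  \psi \longmapsto \int f\,(1-\Delta_{\V})^{-s/2}\psi\,\mathtt{d}\omega
\end{equation*}
is bounded by $C\norm{(1-\Delta_{\V})^{-s/2}\psi}_{\mathscr{L}^{p'}(\omega)} = C\norm{\psi}_{\mathscr{W}^{-s,p'}(\omega)}$ on a dense subspace of $\mathscr{W}^{-s,p'}(\omega)$ (here one uses self-adjointness of $(1-\Delta_{\V})^{-s/2}$ on $\mathscr{L}^2(\omega)$ to move the operator onto $f$, which is licit because $f\in\mathscr{L}^p(\omega)$ and the test objects are smooth). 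By density this extends to a bounded functional on all of $\mathscr{W}^{-s,p'}(\omega)$ with norm $\leq C$; by the duality $(\mathscr{W}^{-s,p'}(\omega))' = \mathscr{W}^{s,p}(\omega)$, it is represented by a unique element $h \in \mathscr{W}^{s,p}(\omega)$ with $\norm{h}_{\mathscr{W}^{s,p}(\omega)} \leq C$. It then remains to identify $h$ with $f$: testing the representation identity against $\varphi = (1-\Delta_{\V})^{-s/2}\psi$ with $\psi\in\mathscr{D}(M)$ and using the definition of the $\mathscr{W}^{s,p}$--$\mathscr{W}^{-s,p'}$ pairing (namely $\langle h,\varphi\rangle = \int (1-\Delta_{\V})^{s/2}h \cdot (1-\Delta_{\V})^{-s/2}\varphi\,\mathtt{d}\omega$, interpreting $(1-\Delta_{\V})^{s/2}h\in\mathscr{L}^p(\omega)$) shows $\int (h-f)\varphi\,\mathtt{d}\omega = 0$ for all $\varphi\in\mathscr{D}(M)$, whence $h = f$ in $\mathscr{L}^p(\omega)$ since $\mathscr{D}(M)$ separates $\mathscr{L}^p$.

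\textbf{Main obstacle.} The delicate point is not the abstract duality but the density and self-adjointness bookkeeping: one must be careful that $(1-\Delta_{\V})^{-s/2}$ genuinely maps $\mathscr{D}(M)$ (or at least $\mathscr{L}^2\cap\mathscr{L}^{p'}$) into a \emph{dense} subspace of $\mathscr{L}^{p'}(\omega)$, that moving the fractional power from the test function onto $f\in\mathscr{L}^p(\omega)$ is justified (this uses spectral calculus on $\mathscr{L}^2(\omega)$ and the fact that smooth functions are in every $\mathscr{L}^q$), and that the identification of the dual pairing is consistent with the one used to define $\mathscr{W}^{s,p}(\omega)$. On a compact manifold all of these are standard consequences of the spectral theorem for $-\Delta_{\V}$ and Bakry's theorem quoted above, so I expect the proof to be short; the only real work is checking that the hypothesis, stated for $\varphi\in\mathscr{D}(M)$, propagates to the full negative Sobolev space, which is exactly the density statement above.
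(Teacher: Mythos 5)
The paper states this lemma without proof (it is presented as an immediate consequence of the duality $(\mathscr{W}^{s,p})' = \mathscr{W}^{-s,p'}$), so there is no argument to compare against line by line; but your proposal contains a genuine error, and the error is forced on you by a typo in the statement itself. As literally written, the hypothesis $\int f\varphi\,\mathtt{d}\omega \leq C\norm{\varphi}_{\mathscr{L}^{p'}(\omega)}$ for all $\varphi \in \mathscr{D}(M)$ is, by density of $\mathscr{D}(M)$ in $\mathscr{L}^{p'}(\omega)$ and Lebesgue duality, exactly the statement $\norm{f}_{\mathscr{L}^{p}(\omega)} \leq C$ — it cannot imply $f \in \mathscr{W}^{s,p}(\omega)$ for $s>0$ (any $f \in \mathscr{L}^{p}\setminus\mathscr{W}^{s,p}$ is a counterexample). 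The intended hypothesis, which is what is actually verified when the lemma is invoked at the end of the proof of Claim \ref{claim:MainBootstrap} (there one tests against $f = (-\Delta_{\V})^{\frac{s+\varepsilon}{2}}\varphi$ and bounds by $\norm{\varphi}_{\mathscr{L}^{p'}}$), is $\int f\varphi\,\mathtt{d}\omega \leq C\norm{\varphi}_{\mathscr{W}^{-s,p'}(\omega)}$, equivalently $\int f\,(1-\Delta_{\V})^{s/2}\varphi\,\mathtt{d}\omega \leq C\norm{\varphi}_{\mathscr{L}^{p'}(\omega)}$ for all smooth $\varphi$.

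The concrete flaw in your argument is the final identification $h=f$. After your substitution $\varphi=(1-\Delta_{\V})^{-s/2}\psi$, the functional you bound by $C\norm{\psi}_{\mathscr{W}^{-s,p'}}$ is $L\colon\psi\mapsto\int f\,(1-\Delta_{\V})^{-s/2}\psi\,\mathtt{d}\omega$, i.e.\ $T\circ(1-\Delta_{\V})^{-s/2}$ where $T(\varphi)=\int f\varphi\,\mathtt{d}\omega$ — not $T$ itself viewed as a functional on $\mathscr{W}^{-s,p'}$. Writing out the representation identity with the pairing you state, $L(\psi)=\int(1-\Delta_{\V})^{s/2}h\cdot(1-\Delta_{\V})^{-s/2}\psi\,\mathtt{d}\omega$, and comparing with $L(\psi)=\int f\cdot(1-\Delta_{\V})^{-s/2}\psi\,\mathtt{d}\omega$ yields $(1-\Delta_{\V})^{s/2}h=f$, i.e.\ $h=(1-\Delta_{\V})^{-s/2}f$, not $\int(h-f)\varphi\,\mathtt{d}\omega=0$. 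So your construction only recovers $\norm{f}_{\mathscr{L}^{p}}\leq C$, consistent with the fact that nothing stronger can follow from the literal hypothesis. Once the hypothesis is corrected, the whole negative-order duality machinery is unnecessary: since $(1-\Delta_{\V})^{s/2}$ preserves $\mathscr{D}(M)$ and is self-adjoint, the corrected hypothesis reads $\abs{\int(1-\Delta_{\V})^{s/2}f\cdot\varphi\,\mathtt{d}\omega}\leq C\norm{\varphi}_{\mathscr{L}^{p'}(\omega)}$ for all $\varphi\in\mathscr{D}(M)$ (the left side interpreted distributionally), whence $(1-\Delta_{\V})^{s/2}f\in\mathscr{L}^{p}(\omega)$ with norm $\leq C$ by $\mathscr{L}^{p}$--$\mathscr{L}^{p'}$ duality and density of $\mathscr{D}(M)$ in $\mathscr{L}^{p'}(\omega)$ (using $p'<\infty$), which is precisely $f\in\mathscr{W}^{s,p}(\omega)$ with $\norm{f}_{\mathscr{W}^{s,p}(\omega)}\leq C$.
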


\section{Heat kernel and heat semi-group}\label{s:heat-kernel}
\subsection{Reminders and definitions}
For the reader's convenience, we recall the main properties of the weighted heat kernel on $M$, and on $M^{\Lambda}$ for $\Lambda \Subset \Zd$. Those facts can be found in standard Riemannian geometry textbooks, for instance \cite{Chavel} for the non-weighted case $\V \equiv 0$, and \cite{Grigoryan} for the general case.

\begin{theorem}\label{th:heat-kernel:properties}
  There exists a unique $g \in \mathscr{C}^{1,2}((0,+\infty) \times (M \times M))$ such that, for all $y \in M$, the function $f(t,x) \coloneq g(t,x,y)$ satisfies
  \begin{equation*}
    \begin{dcases}
      & \partial_{t} f = \Delta_{\V} f, \qquad \text{on}\ (0,+\infty) \times M;
    \\& \lim_{t \to 0} f(t,\cdot) = \delta_{y}, \qquad \text{in the sense of distributions}.
    \end{dcases}
  \end{equation*}
  Moreover, the function $g$ satisfies:
  \begin{itemize}
    \item $g(t,\cdot,\cdot) \geq 0$ for all $t > 0$.
    \item $g \in \mathscr{C}^{\infty}((0,+\infty) \times M \times M)$.
    \item $g(t,x,y) = g(t,y,x)$ for all $x$ and $y \in M$, and $t > 0$.
    \item $\int_M g(t,x,y) \omega(\mathtt{d}x) = 1$, for all $t > 0$ and $y \in M$.
  \end{itemize}
\end{theorem}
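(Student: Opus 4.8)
\textbf{Plan for the proof of Theorem \ref{th:heat-kernel:properties}.} The statement is the classical existence, uniqueness and basic properties of the weighted heat kernel on the compact weighted manifold $(M,\omega)$; since we are allowed to cite standard Riemannian geometry references, the proof should essentially assemble known facts rather than reprove them from scratch. The plan is to realise $g$ as the integral kernel of the heat semigroup $\mathrm{e}^{t\Delta_{\V}}$ generated by the self-adjoint operator $\Delta_{\V}$ on $\mathscr{L}^2(\omega)$, and then to read off the listed properties one by one.

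First I would set up the functional-analytic framework: recall from Section \ref{sec:single_spin} that $-\Delta_{\V}$ is a non-negative self-adjoint operator on $\mathscr{L}^2(\omega)$ with compact resolvent (compactness of $M$), hence has a discrete spectrum $0=\lambda_0<\lambda_1\le\lambda_2\le\cdots$ with an orthonormal basis $(\phi_k)_{k\ge 0}$ of smooth eigenfunctions, where $\phi_0\equiv 1$ because $\int_M\mathrm{e}^{-\V}\dd\vol=1$ so $\omega$ is a probability measure invariant under the semigroup. Then I would \emph{define}
\begin{equation*}
  g(t,x,y):=\sum_{k\ge 0}\mathrm{e}^{-\lambda_k t}\,\phi_k(x)\,\phi_k(y),\qquad t>0,\ x,y\in M,
\end{equation*}
and check that the series and all its $x,y,t$-derivatives converge absolutely and locally uniformly on $(0,\infty)\times M\times M$, using Weyl-type eigenvalue growth $\lambda_k\sim c\,k^{2/\nn}$ together with elliptic estimates bounding $\|\phi_k\|_{\mathscr{C}^m}$ polynomially in $\lambda_k$. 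This immediately yields $g\in\mathscr{C}^\infty((0,\infty)\times M\times M)$, the symmetry $g(t,x,y)=g(t,y,x)$, the identity $\int_M g(t,x,y)\dd\omega(x)=\mathrm{e}^{-\lambda_0 t}\phi_0(y)\langle\phi_0,1\rangle=1$ (orthonormality kills all $k\ge 1$ terms since $\int\phi_k\dd\omega=\langle\phi_k,\phi_0\rangle=0$), the heat equation $\partial_t g=\Delta_{\V}g$ in the $x$ variable (termwise differentiation), and the initial condition $g(t,\cdot,y)\to\delta_y$ in $\mathscr{D}'(M)$ (since $\int g(t,x,y)\varphi(x)\dd\omega(x)=\sum_k\mathrm{e}^{-\lambda_k t}\phi_k(y)\langle\varphi,\phi_k\rangle\to\sum_k\phi_k(y)\langle\varphi,\phi_k\rangle=\varphi(y)$ for $\varphi$ smooth). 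The nonnegativity $g\ge 0$ I would obtain from the maximum principle for the parabolic operator $\partial_t-\Delta_{\V}$, or equivalently by citing that $\mathrm{e}^{t\Delta_{\V}}$ is Markovian (it is a Dirichlet-form semigroup: $\Delta_{\V}$ is the generator of the symmetric form $\mathcal{E}(u,v)=\int\nabla u\cdot\nabla v\dd\omega$), which also re-confirms the mass conservation.

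For uniqueness within the stated regularity class $\mathscr{C}^{1,2}((0,\infty)\times(M\times M))$: if $\tilde g$ is another such kernel, fix $y$ and smooth $\varphi$, and consider $u(t,x):=\int_M\big(g(t-s,x,z)-\tilde g(t-s,x,z)\big)h(s,z)\,\dots$ — more simply, for any solution $w$ of the homogeneous heat equation with $w(t,\cdot)\to 0$ in $\mathscr{D}'$ as $t\to 0^+$, energy estimates (multiply by $w$, integrate over $M$, use $\partial_t\tfrac12\|w(t)\|_{\mathscr{L}^2}^2=-\mathcal{E}(w,w)\le 0$ together with the vanishing initial trace) force $w\equiv 0$; applying this to $w(t,x)=\int_M(g-\tilde g)(t,x,z)\varphi(z)\dd\omega(z)$ and letting $\varphi$ range over a dense set gives $g=\tilde g$. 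Finally, everything transfers verbatim to $M^\La$ for $\La\Subset\Zd$ by taking tensor products: $\Delta_{\V}$ on $M^\La$ is the sum of the per-site operators, its heat kernel is $g^\La(t,\bx,\by)=\prod_{i\in\La}g(t,\bx_i,\by_i)$, and all listed properties are preserved under finite products. The main obstacle, such as it is, is purely expository: marshalling the eigenvalue-growth and elliptic-regularity bounds needed to justify termwise differentiation of the spectral series — but these are standard and can be quoted from \cite{Chavel,Grigoryan}, so no genuine difficulty is expected.
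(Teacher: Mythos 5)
The paper does not actually prove this statement: it is quoted as a standard fact with pointers to \cite{Chavel} (unweighted case) and \cite{Grigoryan} (weighted case), where the kernel is typically built by a parametrix construction. Your spectral route — diagonalising $-\Delta_{\V}$ on $\mathscr{L}^{2}(\omega)$, defining $g(t,x,y)=\sum_k \mathrm{e}^{-\lambda_k t}\phi_k(x)\phi_k(y)$, and justifying termwise differentiation via Weyl asymptotics and elliptic bounds on $\|\phi_k\|_{\mathscr{C}^m}$ — is a perfectly legitimate alternative on a compact manifold, and it delivers smoothness, symmetry, the heat equation, the initial condition, and mass conservation (via $\phi_0\equiv 1$, which uses that $\omega$ is a probability measure) essentially for free; nonnegativity via the Markov property of the Dirichlet-form semigroup is also standard. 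The product structure on $M^{\Lambda}$ is immediate, as you say.

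One step is too quick as written: the uniqueness argument. The energy identity $\frac{\mathtt{d}}{\mathtt{d}t}\tfrac12\|w(t)\|_{\mathscr{L}^{2}(\omega)}^{2}=-\int \|\nabla w(t)\|^{2}\,\mathtt{d}\omega\le 0$ only controls $\|w(t)\|_{\mathscr{L}^{2}}$ \emph{forwards} in time, so it is compatible with $\|w(t)\|_{\mathscr{L}^{2}}$ blowing up as $t\to 0^{+}$; and convergence of $w(t,\cdot)$ to $0$ in $\mathscr{D}'$ does not by itself give a vanishing $\mathscr{L}^{2}$ initial trace. So "energy estimate $+$ vanishing distributional trace $\Rightarrow w\equiv 0$" has a gap. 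The standard repair on a compact manifold: first use the energy estimate with genuine $\mathscr{L}^{2}$ data at time $s>0$ to get $w(t)=\mathsf{G}_{t-s}w(s)$ for $0<s<t$; this shows $a_k:=\mathrm{e}^{\lambda_k t}\langle w(t),\phi_k\rangle$ is independent of $t$; then $\langle w(s),\phi_k\rangle = a_k\mathrm{e}^{-\lambda_k s}\to a_k$ as $s\to 0^{+}$, which must equal $0$ by the distributional convergence, whence $a_k=0$ for all $k$ and $w\equiv 0$. (Equivalently, a duality/Holmgren argument testing against $\mathsf{G}_{t-s}\psi$.) Also, the superposition $w(t,x)=\int (g-\tilde g)(t,x,z)\varphi(z)\,\mathtt{d}\omega(z)$ is an unnecessary detour — it forces you to justify an interchange of limits in $z$ — whereas applying the corrected argument directly to $w_y(t,x)=(g-\tilde g)(t,x,y)$ for each fixed $y$ avoids this.
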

We often write $g_{t} \coloneq g(t,\cdot,\cdot)$.

To the heat kernel $g$, we associate the \emph{heat semi-group} $(\mathsf{G}_{t})_{t \geq 0}$ defined by:
\begin{equation*}
  (\mathsf{G}_{t}f)(x) \coloneq \int_M f(y) g(t, x, y) \dd \omega(y), \qquad f \in \mathscr{L}^{1}(\omega).
\end{equation*}
It is known that $(\mathsf{G}_{t})_{t \geq 0}$ is the Feller semi-group associated with the weigthed Brownian motion on $M$:
\begin{equation*}
  \mathsf{G}_{t}f(x) = \mathbb{E} \bracket*{ f(B_{t}^{x}) },
\end{equation*}
where $(B_{t}^{x})_{t \geq 0}$ is the Brownian motion with drift $-\nabla \V$ started at $x$. The heat semi-group has a strong regularisation property: for all $f \in \mathscr{L}^{1}(\omega)$ and at all times $t >0$ we have $\mathsf{G}_{t}f \in \mathscr{C}^{\infty}(M)$.

\subparagraph{Heat kernel on $M^\La$.}
For any fixed $\La \Subset \Zd$, $M^{\Lambda}$ is still a smooth Riemannian manifold, so it possesses a weighted heat kernel $g^{\Lambda}$ and a weighted heat semi-group $\mathsf{G}^{\Lambda}_{t}$.
In view of the product structure, the heat kernel on $M^\La$ is given (for all $t \geq 0$, $\bx, \by$ in $M^\La$) by:
\begin{equation*}
  g^{\Lambda}_{t}(\bx, \by) = \prod_{i \in \Lambda} g_{t}(\bx_{i}, \by_{i}).
\end{equation*}
We may thus easily deduce properties of $g^\Lambda$ from the properties of $g$.
In that regard, only the cardinal of $\Lambda$ matters.

\subparagraph{Brownian motion and radial estimates}
In a Euclidean space, the position of a Brownian motion at fixed time $t$ has a Gaussian law whose centered moments are explicitly known. On $M^{\Lambda}$ we have the following estimate (recall that $\kappa \in \mathbb{R}$ is a lower bound on the Ricci curvature of $M$)
\begin{lemma}
  Let $(B_{t})_{t \geq 0}$ be the weigthed Brownian motion on $(M^{\Lambda}, \omega_{\Lambda})$ started from $x \in M^{\Lambda}$.
  Then for all $t \geq 0$ and all $p \geq 1$, we have:
  \begin{equation}\label{eq:brownian:radial}
    \mathbb{E}\bracket*{ \d_\La^{2p}(B_{t}, x) } \leq \mathrm{e}^{pt\frac{\kappa}{3}} 2^{p} t^{p} p!
  \end{equation}
\end{lemma}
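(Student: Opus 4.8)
The strategy is to reduce the moment bound to the case $p=1$ via a standard Itō-calculus argument, and to obtain the case $p=1$ from a second-order Taylor expansion of the squared distance together with the Laplacian comparison theorem. Throughout, write $r(y) := \d_\La(y, x)$ for the distance to the (fixed) starting point $x$, and set $f := r^2 = \d_\La^2(\cdot, x)$. The manifold $M^\Lambda$ has Ricci curvature bounded below by $\kappa$ (the lower bound is stable under products, as recalled after \eqref{PosCurv}), and $\dim M^\Lambda = \nn|\Lambda|$, but in fact only the Ricci lower bound will enter the final constant.

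\textbf{Step 1: the Laplacian of the squared distance.} First I would record the pointwise inequality $\Delta_\V f \le 2 + \tfrac{\kappa}{3}\,? $ — more precisely, the correct statement: away from the cut locus of $x$, the weighted Laplacian comparison theorem (Bochner/Bakry--Émery form, using $\Ricc + \nabla^2\V \ge \kappa$) gives
\begin{equation*}
\Delta_\V f(y) \;=\; 2 r(y)\,\Delta_\V r(y) + 2 \;\le\; 2 + \mathfrak{c}(\kappa)\, r(y)^2
\end{equation*}
for an explicit constant; the sharp bookkeeping that produces the factor $\kappa/3$ comes from comparing with the model space and using $r \coth r \le 1 + r^2/3$ for the relevant curvature sign, so that $\Delta_\V f \le 2 + \tfrac{\kappa}{3} f$ holds wherever $f$ is smooth. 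On the cut locus one uses the standard trick that $f$ is a viscosity subsolution of this inequality (Calabi's argument), so the comparison still holds in the distributional sense and may be fed into Itō's formula.

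\textbf{Step 2: the case $p=1$ by Itō/Grönwall.} Apply Itō's formula to $f(B_t)$, where $B_t$ is the weighted Brownian motion on $M^\Lambda$, i.e. the diffusion with generator $\Delta_\V$. The martingale part has mean zero, so $u(t) := \mathbb{E}[f(B_t)]$ satisfies $u(0) = 0$ and
\begin{equation*}
u'(t) \;=\; \mathbb{E}\bigl[\Delta_\V f(B_t)\bigr] \;\le\; 2 + \tfrac{\kappa}{3}\, u(t).
\end{equation*}
Grönwall's inequality then yields $u(t) \le \tfrac{6}{\kappa}\bigl(\mathrm{e}^{t\kappa/3} - 1\bigr) \le 2t\,\mathrm{e}^{t\kappa/3}$ (the last bound using $\mathrm{e}^a - 1 \le a\mathrm{e}^a$ with $a = t\kappa/3$, valid for $\kappa \ge 0$; for $\kappa < 0$ one bounds even more crudely by $2t$, which is also $\le 2t\,\mathrm{e}^{t\kappa/3}$ after relaxing — so one should take $\kappa_+$ or simply keep the $\mathrm{e}^{t\kappa/3}$ form, noting the claimed inequality \eqref{eq:brownian:radial} only needs an upper bound). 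This establishes \eqref{eq:brownian:radial} for $p=1$.

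\textbf{Step 3: higher moments.} For general $p$, apply Itō's formula to $\varphi(B_t) := f(B_t)^p = r(B_t)^{2p}$. Using $\Delta_\V(f^p) = p f^{p-1}\Delta_\V f + p(p-1) f^{p-2}\|\nabla f\|^2$ and the elementary bound $\|\nabla f\|^2 = 4 r^2 \|\nabla r\|^2 \le 4 f$ together with Step 1, one gets
\begin{equation*}
\tfrac{\mathtt{d}}{\mathtt{d}t}\mathbb{E}[f(B_t)^p] \;\le\; p\,\mathbb{E}\bigl[f(B_t)^{p-1}\bigl(2 + \tfrac{\kappa}{3}f(B_t)\bigr)\bigr] + 4p(p-1)\,\mathbb{E}[f(B_t)^{p-1}].
\end{equation*}
Setting $v_p(t) := \mathbb{E}[f(B_t)^{2p}]$... more simply, let $m_p(t) := \mathbb{E}[r(B_t)^{2p}]$; the above is a linear recursion $m_p'(t) \le \tfrac{p\kappa}{3} m_p(t) + C_p\, m_{p-1}(t)$ which, combined with the inductive hypothesis $m_{p-1}(t) \le \mathrm{e}^{(p-1)t\kappa/3} 2^{p-1}t^{p-1}(p-1)!$, integrates to the claimed bound $m_p(t) \le \mathrm{e}^{pt\kappa/3} 2^p t^p p!$ by an explicit integrating-factor computation and a check that the combinatorial constants close up (the $2^p p!$ normalization is exactly chosen so that the recursion is self-consistent, mimicking the moments of the one-dimensional Gaussian/Bessel process). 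The base case $p=1$ is Step 2.

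\textbf{Main obstacle.} The delicate point is \textbf{Step 1}: getting the comparison $\Delta_\V f \le 2 + \tfrac{\kappa}{3} f$ with precisely the constant $\kappa/3$, valid globally (i.e. across the cut locus) and in the weighted setting. One must (i) invoke the Bakry--Émery version of the Laplacian comparison theorem for $\operatorname{div}_\V \nabla r$, (ii) handle the cut locus by the subsolution/viscosity argument so that Itō's formula can legitimately be applied, and (iii) do the elementary-but-fussy estimate on the model-space Jacobian to extract the numerical factor $1/3$ rather than some larger constant. Once this inequality is in hand, Steps 2 and 3 are routine ODE comparison arguments. An alternative that sidesteps the cut-locus subtlety is to use the stochastic-analytic form directly: the radial process $r(B_t)$ is, up to a local-time correction at the cut locus that has a favorable sign, a semimartingale dominated by the radial part of Brownian motion on the $\kappa$-model space, for which \eqref{eq:brownian:radial} is a classical Bessel-process computation; I would likely present whichever of the two is shorter given the references already cited.
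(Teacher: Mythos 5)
Your route is genuinely different from the paper's, which does not actually prove this lemma but delegates it entirely to \cite[Thm.~1.2]{ThalmaierThompson}; a self-contained Itō/comparison argument of the kind you sketch would therefore be a real addition. However, Step 1 --- which you correctly identify as the crux --- contains an error the rest of the argument cannot recover from. Writing $n := \nn|\La|$ for the dimension of $M^{\La}$, the Laplacian comparison theorem gives $\Delta r \le \tfrac{n-1}{r} + (\text{curvature terms})$, so that away from the cut locus
\begin{equation*}
\Delta_{\V}\, \d_\La^2(\cdot,x) \;=\; 2r\,\Delta_{\V} r + 2\|\nabla r\|^2 \;\le\; 2n \,-\, \tfrac{2\kappa}{3}\, \d_\La^2(\cdot,x) \,+\, (\text{correction terms in } \V),
\end{equation*}
and the constant term $2n$ cannot be replaced by $2$: your inequality $\Delta_\V f \le 2 + \tfrac{\kappa}{3} f$ would force $\Delta_\V r \le \tfrac{\kappa}{6} r$, which fails near $r=0$ where $\Delta r \sim (n-1)/r$. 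This is not a cosmetic constant. On flat $\R^{n}$ the diffusion with generator $\Delta$ has $\Esp[\|B_t-x\|^2]=2nt$, so no bound of the form $2t\,\mathrm{e}^{ct}$ with a dimension-free prefactor can hold, and your Grönwall step inherits the error. Two smaller issues in the same step: the comparison produces the coefficient $-\tfrac{2\kappa}{3}$ in front of $f$, not $+\tfrac{\kappa}{3}$ (more positive curvature should shrink, not inflate, the moments), and under the hypothesis $\Ricc + \nabla^2\V \ge \kappa$ alone the weighted comparison for $\Delta_\V r$ carries extra boundary terms in $\V$ (à la Wei--Wylie) that you would still need to absorb.

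The induction in Step 3 also does not close with the advertised constants. Feeding $\Delta_\V f \le \nu + \lambda f$ and $\|\nabla f\|^2 = 4f$ into $\Delta_\V(f^p) = pf^{p-1}\Delta_\V f + p(p-1)f^{p-2}\|\nabla f\|^2$ gives $m_p' \le p\lambda m_p + \bigl(p\nu + 4p(p-1)\bigr)m_{p-1}$, and even with $\nu=2$ the integrating-factor computation yields $m_p(t) \le (2p-1)\,2^p\,(p-1)!\,t^p\,\mathrm{e}^{p\lambda t}$, i.e.\ a $(2p-1)!!$-type constant rather than $p!$. Already for one-dimensional Brownian motion with generator $\tfrac{d^2}{dx^2}$ one has $\Esp[X_t^4]=12t^2 > 8t^2$, so the normalization $2^pt^pp!$ is not self-consistent for this recursion, and in fact cannot be reached by this method. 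The honest output of your argument is the dimension-dependent estimate $\Esp[\d_\La^{2p}(B_t,x)] \le \Cc^{\,p}\,(\nn|\La|)^p\, t^p\, p!\,\mathrm{e}^{-\frac{2}{3}p\kappa t}$, which is all that is actually needed where the lemma is used (in \eqref{eq:heat-kernel:distance:lp} the constant is allowed to depend on $|\La|$), but is not the literal statement; recovering the latter would require matching the precise hypotheses and constants of \cite[Thm.~1.2]{ThalmaierThompson}, which is what the paper does in lieu of a proof.
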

\begin{proof}
  This result is proved in \cite[Thm.~6]{Thompson} for $\V = 0$ --- this paper mentions the case of drift but it is not clear to us whether the proof still applies. Our setting is, in any cases, covered by the much more general setting of \cite[Thm.~1.2]{ThalmaierThompson}.
  The term $\kappa$ appearing in the exponent in right-hand side only depends on the curvature bound of $M^{\Lambda}$, which is independent on $\Lambda$.
\end{proof}

\subsection{Useful heat kernel bounds}
We recall the following pointwise bounds on the heat kernel on a compact manifold, and on its derivatives (they often follow from strong comparison principles). 
\begin{lemma}
\label{lem:pointwiseBound}
  There exist positive constants $C$, $C'$ and $C_{k}$ ($k \in \mathbb{N}$) depending only on $M$ and $|\Lambda|$ such that for all $t \in (0,1)$, and $x$ and $y \in M^{\Lambda}$:
  \begin{align}
    &\label{eq:heat-kernel:bound:upper} g^{\Lambda}_{t}(\bx,\by) \leq C t^{-\frac{\nn|\Lambda|}{2}} \Exp*{ - \frac{\di_\La(\bx,\by)^{2}}{4t} } & \text{(see \cite[\S~VI]{Chavel})}.
 \\ &\label{eq:heat-kernel:bound:lower} g^{\Lambda}_{t}(\bx,\by) \geq \frac{1}{C} t^{-\frac{\nn|\Lambda|}{2}} \Exp*{ - C' \frac{\di_\La(\bx, \by)^{2}}{4t} } & \text{(see \cite[Thm.~4.2]{SaloffCoste})}.
  \\&\label{eq:heat-kernel:bound:derivative} |\nabla^{k}_{\bx} g^{\Lambda}_{t}(\bx,\by)| \leq C_{k} t^{-\frac{k}{2}} \left(\frac{\di_\La(\bx,\by)}{\sqrt{t}} + 1 \right)^{k} g^{\Lambda}_{t}(\bx,\by) & \text{(see \cite{Hsu})}.
  \end{align}
\end{lemma}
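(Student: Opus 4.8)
The statement is a collection of three classical pointwise heat‑kernel bounds on the product manifold $M^\Lambda$, and the plan is to reduce everything to the corresponding bounds on the single‑spin manifold $M$ via the product structure $g^{\Lambda}_t(\bx,\by) = \prod_{i \in \Lambda} g_t(\bx_i,\by_i)$, then invoke the cited references on $M$ itself. Since $M$ is a smooth compact Riemannian manifold (possibly weighted by the smooth potential $\V$, giving the generator $\Delta_\V$), all of \cite{Chavel}, \cite{SaloffCoste}, \cite{Hsu} apply directly on $M$; the only real work is checking that the constants one obtains on $M^\Lambda$ depend on $\Lambda$ only through its cardinal $|\Lambda|$, which is exactly what the product structure and the identity $\di_\Lambda^2(\bx,\by) = \sum_{i\in\Lambda}\di^2(\bx_i,\by_i)$ deliver.

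\textbf{Upper bound \eqref{eq:heat-kernel:bound:upper}.} First I would record the Gaussian upper bound on $M$: there are constants $C_M, c_M > 0$ (depending only on $M$, the dimension $\nn$, the curvature, and $\V$) with $g_t(x,y) \leq C_M t^{-\nn/2}\exp(-\di^2(x,y)/(4t))$ for $t \in (0,1)$ — this is the content of \cite[\S VI]{Chavel} for $\V\equiv 0$, and the weighted case is handled analogously (or cited from \cite{Grigoryan}). Taking the product over $i \in \Lambda$, and using $\sum_i \di^2(\bx_i,\by_i) = \di_\Lambda^2(\bx,\by)$, gives
\begin{equation*}
g^\Lambda_t(\bx,\by) = \prod_{i\in\Lambda} g_t(\bx_i,\by_i) \leq C_M^{|\Lambda|}\, t^{-\nn|\Lambda|/2}\, \Exp*{ - \frac{\di_\Lambda^2(\bx,\by)}{4t} },
\end{equation*}
so one sets $C := C_M^{|\Lambda|}$, which depends on $\Lambda$ only through $|\Lambda|$. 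The lower bound \eqref{eq:heat-kernel:bound:lower} is entirely parallel: the Li–Yau / Saloff‑Coste type lower bound on $M$, namely $g_t(x,y) \geq C_M^{-1} t^{-\nn/2}\exp(-C_M' \di^2(x,y)/(4t))$ for $t\in(0,1)$ (see \cite[Thm.~4.2]{SaloffCoste}), multiplies over $\Lambda$ to give \eqref{eq:heat-kernel:bound:lower} with $C := C_M^{|\Lambda|}$ and the \emph{same} $C' := C_M'$ in the exponent (the constant $C'$ does not even get raised to a power, since it appears inside the exponential which turns products into sums).

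\textbf{Derivative bound \eqref{eq:heat-kernel:bound:derivative}.} This is the step requiring slightly more care, and I expect it to be the main (minor) obstacle, because differentiating a product mixes derivatives hitting different factors. Start from the single‑site gradient estimates of \cite{Hsu}: for each $j \leq k$ there is $C_{M,j}$ with $|\nabla^j_x g_t(x,y)| \leq C_{M,j} t^{-j/2}(\di(x,y)/\sqrt t + 1)^j g_t(x,y)$ for $t\in(0,1)$. For a multi‑index of total order $k$ distributed as $(k_i)_{i\in\Lambda}$ with $\sum_i k_i = k$ over the sites of $\Lambda$, the Leibniz rule gives
\begin{equation*}
\abs*{\nabla^{(k_i)}_\bx g^\Lambda_t(\bx,\by)} \leq \prod_{i\in\Lambda} C_{M,k_i}\, t^{-k_i/2}\paren*{\frac{\di(\bx_i,\by_i)}{\sqrt t}+1}^{k_i} g_t(\bx_i,\by_i),
\end{equation*}
and then one uses $t^{-k_i/2}$ multiplies to $t^{-k/2}$, the bound $\prod_i (\di(\bx_i,\by_i)/\sqrt t + 1)^{k_i} \leq (\di_\Lambda(\bx,\by)/\sqrt t + 1)^k$ (each factor is $\leq (\di_\Lambda/\sqrt t + 1)$ since $\di(\bx_i,\by_i) \leq \di_\Lambda(\bx,\by)$, and there are $k$ factors counted with multiplicity), and $\prod_i g_t(\bx_i,\by_i) = g^\Lambda_t(\bx,\by)$. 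Summing the finitely many such multi‑index distributions and taking $C_k := (\max_{j\le k} C_{M,j})^{|\Lambda|}$ times the (combinatorial) number of distributions — which depends only on $k$ and $|\Lambda|$ — yields \eqref{eq:heat-kernel:bound:derivative} with a constant depending on $\Lambda$ only through $|\Lambda|$. I would also note at the outset that, by the product formula for $g^\Lambda$ together with Theorem~\ref{th:heat-kernel:properties} applied on $M$, the function $g^\Lambda$ is smooth on $(0,\infty)\times M^\Lambda\times M^\Lambda$, positive, symmetric, and stochastically complete, so all the manipulations above are legitimate; with this remark the proof is essentially a bookkeeping exercise on the exponents.
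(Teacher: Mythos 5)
Your proof is correct and follows essentially the same route as the paper: the paper states this lemma without any proof, relying on the citations together with its earlier remark that the product formula $g^{\Lambda}_{t}(\bx,\by) = \prod_{i \in \Lambda} g_{t}(\bx_{i}, \by_{i})$ lets one "easily deduce properties of $g^{\Lambda}$ from the properties of $g$" with constants depending on $\Lambda$ only through $|\Lambda|$. Your write-up simply fills in that reduction explicitly (including the factorization of mixed derivatives across sites for \eqref{eq:heat-kernel:bound:derivative}), which is exactly the intended argument.
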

% In view of the symmetry of $g^{\Lambda}$ mixed derivative can be used in \cref{eq:heat-kernel:bound:derivative}.
% For instance, we often use that
% \begin{equation}\label{eq:bound:heat-kernel:mixed}
%   |\nabla_{i} \nabla_{j} g^{\Lambda}_{t}(x,y)| \leq C_{2} \paren*{t^{-1} + t^{-\frac{1}{2}}}^{2} g^{\Lambda}_{t}(x,y).
% \end{equation}

\begin{claim}
For all $T > 0$ fixed, all $t \in (0,T)$, all $k \geq 0$, and all $y \in M^{\Lambda}$
\begin{equation}\label{eq:heat-kernel:bound:derivative:lp}
  \norm{\nabla^{k} g^{\Lambda}_{t}(\cdot,y)}_{\mathscr{L}^{p}} \leq \Cc(T, k, |\La|) \times t^{-\frac{\nn|\Lambda|}{2}\paren*{1-\frac{1}{p}}} \paren*{1 + t^{-\frac{1}{2}}}^{k}.
\end{equation}
\end{claim}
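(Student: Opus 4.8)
The claim is a straightforward $\mathscr{L}^p$ estimate obtained by integrating the pointwise bound \eqref{eq:heat-kernel:bound:derivative}, using the product structure of $g^\Lambda$ to reduce to a single copy of $M$. The plan is as follows.

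First, I would reduce to the case $k=0$ for the exponential factor, by absorbing the polynomial prefactor $\left(\frac{\di_\La(\bx,\by)}{\sqrt t}+1\right)^k$ into the Gaussian. Combining \eqref{eq:heat-kernel:bound:upper} and \eqref{eq:heat-kernel:bound:derivative}, we get for $t\in(0,1)$ and a constant depending only on $M$, $|\La|$ and $k$:
\begin{equation*}
|\nabla^k_\bx g^\Lambda_t(\bx,\by)| \leq \Cc(k,|\La|)\, t^{-\frac{k}{2}}\, t^{-\frac{\nn|\Lambda|}{2}} \left(\frac{\di_\La(\bx,\by)}{\sqrt t}+1\right)^k \Exp*{-\frac{\di_\La(\bx,\by)^2}{4t}}.
\end{equation*}
Since for any $m\geq 0$ one has $u^m e^{-u^2/4}\leq \Cc(m) e^{-u^2/8}$ with $u=\di_\La(\bx,\by)/\sqrt t$, the polynomial factor is controlled at the cost of replacing $e^{-\di_\La^2/4t}$ by $e^{-\di_\La^2/8t}$ and picking up the extra $t^{-k/2}$ already displayed; note $\left(\frac{\di_\La}{\sqrt t}+1\right)^k$ also contributes a harmless $t$-independent term $1$, so overall $|\nabla^k_\bx g^\Lambda_t(\bx,\by)|\leq \Cc(k,|\La|)\,t^{-\frac{\nn|\Lambda|}{2}}\bigl(t^{-\frac k2}+1\bigr)e^{-\di_\La(\bx,\by)^2/8t}\leq \Cc(k,|\La|)\,t^{-\frac{\nn|\Lambda|}{2}}(1+t^{-1/2})^k e^{-\di_\La(\bx,\by)^2/8t}$.

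Next I would raise this to the power $p$ and integrate in $\bx$ over $M^\Lambda$ against $\oL$. Using $\di_\La(\bx,\by)^2=\sum_{i\in\La}\di(\bx_i,\by_i)^2$, the Gaussian factorizes, so
\begin{equation*}
\int_{M^\Lambda} e^{-p\,\di_\La(\bx,\by)^2/8t}\,\dd\oL(\bx) = \prod_{i\in\La}\int_M e^{-p\,\di(\bx_i,\by_i)^2/8t}\,\dd\omega(\bx_i).
\end{equation*}
Each single-copy integral is bounded by $\Cc\, t^{\nn/2}$ uniformly in $\by$ and $t\in(0,T)$: this follows from the Gaussian volume estimate on a compact manifold (e.g. via \eqref{eq:heat-kernel:bound:upper} itself applied with $|\Lambda|=1$, or directly from the bounded-geometry volume comparison), after a change of variables $r=\di(\cdot,\by_i)/\sqrt t$ and the fact that $\omega$ is a probability measure so the integral is also trivially $\leq 1$ when $t$ is bounded below — only the small-$t$ regime needs the $t^{\nn/2}$ gain. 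Hence $\int_{M^\Lambda}e^{-p\,\di_\La^2/8t}\dd\oL\leq \Cc(T,|\La|)\,t^{\nn|\Lambda|/2}$. Putting the pieces together,
\begin{equation*}
\norm{\nabla^k g^\Lambda_t(\cdot,\by)}_{\mathscr{L}^p}^p \leq \Cc(T,k,|\La|)\,t^{-\frac{\nn|\Lambda|}{2}p}(1+t^{-1/2})^{kp}\cdot t^{\frac{\nn|\Lambda|}{2}} = \Cc(T,k,|\La|)\,t^{-\frac{\nn|\Lambda|}{2}(p-1)}(1+t^{-1/2})^{kp},
\end{equation*}
and taking $p$-th roots yields exactly \eqref{eq:heat-kernel:bound:derivative:lp}, since $\frac{\nn|\Lambda|}{2}\frac{p-1}{p}=\frac{\nn|\Lambda|}{2}(1-\frac1p)$.

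The only genuine point requiring care — rather than an obstacle — is the passage from $t\in(0,1)$, which is the range in which the pointwise bounds of Lemma \ref{lem:pointwiseBound} are stated, to the full range $t\in(0,T)$ for arbitrary fixed $T$: for $t\in[1,T]$ the heat kernel and all its spatial derivatives are uniformly bounded on the compact $M^\Lambda$ (by smoothness and compactness, or the semigroup property $g^\Lambda_t=\mathsf G^\Lambda_{t-1}g^\Lambda_1$ together with boundedness of $g^\Lambda_1$), so the left-hand side of \eqref{eq:heat-kernel:bound:derivative:lp} is bounded by a constant $\Cc(T,k,|\La|)$ there, while the right-hand side is bounded below by a positive constant on $[1,T]$; thus the estimate trivially holds on $[1,T]$ and one concludes by taking the max of the two constants. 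Everything else is a routine Gaussian computation.
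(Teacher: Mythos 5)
Your proof is correct, but it takes a genuinely different route from the paper's. You integrate the pointwise bound \eqref{eq:heat-kernel:bound:derivative} directly: absorb the polynomial factor into the Gaussian, factorize the integral over the product manifold, and invoke the Gaussian volume estimate $\int_M e^{-a\,\di^2(x,y)/t}\,\dd\omega(x) \leq \Cc(a)\, t^{\nn/2}$ on each copy of $M$. The paper instead avoids any Gaussian integration: it first bounds $\norm{g^\Lambda_t(\cdot,y)}_{\mathscr{L}^p}$ by log-convexity of $p\mapsto\norm{\cdot}_{\mathscr{L}^p}$, interpolating between $\norm{g^\Lambda_t(\cdot,y)}_{\mathscr{L}^1}=1$ and the $\mathscr{L}^\infty$ bound \eqref{eq:heat-kernel:bound:upper}, then controls the radial factor $\di_\La(\cdot,y)^k g^\Lambda_t(\cdot,y)$ in $\mathscr{L}^p$ using the Brownian moment estimate \eqref{eq:brownian:radial}, and finally combines with \eqref{eq:heat-kernel:bound:derivative}. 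Your approach buys a self-contained, explicit computation; the paper's buys economy of ingredients, since it uses only the normalization of the heat kernel, the sup bound, and \eqref{eq:brownian:radial}, all of which are already on record, whereas your volume estimate is an extra (though standard) fact on compact manifolds. One small correction: your parenthetical claim that the single-copy Gaussian integral can be deduced from the upper bound \eqref{eq:heat-kernel:bound:upper} is backwards — an upper bound on $g_t$ cannot bound $\int e^{-\di^2/8t}\,\dd\omega$ from above; what works is the lower bound \eqref{eq:heat-kernel:bound:lower} combined with $\int g_t\,\dd\omega = 1$ (and even then only after matching the Gaussian constants), or, more robustly, the volume comparison $\omega(B(y,r))\leq \Cc\, r^{\nn}$ plus a layer-cake argument, which you also mention and which is the clean way to justify it. This is a slip in a side remark, not a gap in the argument.
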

\begin{proof}
Using the log-convexity of $p \mapsto \norm{f}_{\mathscr{L}^{p}}$ for all $f$, and \eqref{eq:heat-kernel:bound:upper}, we get for $t > 0$ and $y \in M^{\Lambda}$:
\begin{equation}
\label{eq:heat-kernel:bound:lp}
  \norm{g^{\Lambda}_{t}(\cdot,y)}_{\mathscr{L}^{p}} \leq \norm*{g^{\Lambda}_{t}(\cdot,y)}_{\mathscr{L}^{1}}^{\frac{1}{p}} \norm*{g^{\Lambda}_{t}(\cdot,y)}_{\mathscr{L}^{\infty}}^{1-\frac{1}{p}} \leq \Cc(|\La|) \times t^{-\frac{\nn|\Lambda|}{2}\paren*{1-\frac{1}{p}}}.
\end{equation}
Moreover, by \eqref{eq:heat-kernel:bound:upper} and \eqref{eq:brownian:radial}, we find for all $t \in (0,T)$, $y \in M^{\Lambda}$
\begin{equation}
\label{eq:heat-kernel:distance:lp}
  \norm{\di_\La(\cdot,y)^{k} g^{\Lambda}_{t}(\cdot,y)}_{\mathscr{L}^{p}} \leq \Cc(T, |\La|) \times t^{-\frac{\nn|\Lambda|}{2}\paren*{1-\frac{1}{p}}} t^{k}.
\end{equation}
Combining \eqref{eq:heat-kernel:bound:lp} and \eqref{eq:heat-kernel:distance:lp} with \eqref{eq:heat-kernel:bound:derivative}, we get \eqref{eq:heat-kernel:bound:derivative:lp}.
\end{proof}

\subsection{Useful heat semigroup inequalities}
The heat kernel bounds listed above can be translated into quantitative Sobolev regularization inequalities for the heat semi-group.
\begin{theorem}[Young's inequality]
\label{YoungHeat}
  Let $s \in \mathbb{R}_{+}$.
  Take $p$, $q$, and $r \in [1,\infty]$ such that $\frac{1}{p} + \frac{1}{q} = 1 + \frac{1}{r}$, then
  \begin{equation}\label{eq:heat-semigroup:young}
    \norm{\mathsf{G}_{t} f}_{\mathscr{W}^{s,r}(\oL)} \leq \Cc(s, p, q, |\La|) \times \norm{f}_{\mathscr{L}^{q}(\oL)} t^{-\frac{\nn|\Lambda|}{2}\paren*{1-\frac{1}{p}}} \paren*{1 + t^{-\frac{1}{2}}}^{s}.
  \end{equation}
  The constant $\Cc$ is locally uniform in time (it also depends implicitely on the parameters of the model).
\end{theorem}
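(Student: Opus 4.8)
\textbf{Plan of proof for Theorem \ref{YoungHeat}.}
The strategy is to write $\mathsf{G}_t f$ as convolution against the heat kernel $g^\Lambda_t$ and to reduce the Sobolev estimate to the pointwise kernel bounds already collected in Lemma \ref{lem:pointwiseBound} and the companion estimate \eqref{eq:heat-kernel:bound:derivative:lp}. First I would treat the case $s = 0$: by Minkowski's integral inequality,
\begin{equation*}
  \norm{\mathsf{G}_t f}_{\mathscr{L}^r(\oL)} = \norm*{ \int_{M^\La} f(\by) g^\Lambda_t(\cdot,\by) \dd \oL(\by) }_{\mathscr{L}^r(\oL)} \leq \int_{M^\La} |f(\by)| \, \norm{g^\Lambda_t(\cdot,\by)}_{\mathscr{L}^{?}(\oL)} \dd \oL(\by),
\end{equation*}
and then the classical Young inequality for convolution-type kernels with the exponent relation $\tfrac1p + \tfrac1q = 1 + \tfrac1r$ applies once one notes that $\by \mapsto \norm{g^\Lambda_t(\cdot,\by)}_{\mathscr{L}^p}$ is bounded by $\Cc(|\La|) t^{-\frac{\nn|\Lambda|}{2}(1-1/p)}$ uniformly in $\by$, thanks to \eqref{eq:heat-kernel:bound:lp} (and the symmetry $g^\Lambda_t(\bx,\by) = g^\Lambda_t(\by,\bx)$, so the roles of the two variables can be exchanged). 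More precisely, the Young inequality in this setting says $\norm{K * f}_{\mathscr{L}^r} \leq \sup_{\by}\norm{K(\cdot,\by)}_{\mathscr{L}^p}\, \norm{f}_{\mathscr{L}^q}$ when $\tfrac1p+\tfrac1q = 1+\tfrac1r$, which gives exactly the $s=0$ case with the stated time factor.

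For $s > 0$ I would use $(-\Delta_\V)^{s/2}\mathsf{G}_t = \mathsf{G}_{t/2}\,(-\Delta_\V)^{s/2}\mathsf{G}_{t/2}$, i.e. split the semigroup time in two and move all the derivatives onto the kernel of the second factor. Writing $(-\Delta_\V)^{s/2}\mathsf{G}_{t/2}$ as an integral operator with kernel $\bx \mapsto ((-\Delta_\V)^{s/2}g^\Lambda_{t/2})(\bx,\by)$, one controls its $\mathscr{L}^p$ norm by interpolating between the integer cases: the derivative bound \eqref{eq:heat-kernel:bound:derivative} combined with \eqref{eq:brownian:radial} yields, as in the proof of \eqref{eq:heat-kernel:bound:derivative:lp}, that $\norm{\nabla^k g^\Lambda_{t/2}(\cdot,\by)}_{\mathscr{L}^p} \leq \Cc(T,k,|\La|) t^{-\frac{\nn|\Lambda|}{2}(1-1/p)}(1+t^{-1/2})^k$; by spectral calculus / Bakry's theorem identifying fractional and integer Sobolev norms (Appendix, \cite{BakryRieszTransform}), one passes from integer $k$ to real $s \in \mathbb{R}_+$ with the factor $(1+t^{-1/2})^s$ (one can also invoke the subordination formula $(-\Delta_\V)^{s/2} = c(s)\int_0^\infty u^{-s/2-1}(\mathsf{G}_u - \Id)\,\dd u$ to make the kernel estimate directly, as is done for the Sobolev embedding \eqref{eq:sobolev:embedding}). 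Then applying the $s=0$ Young estimate to the first half $\mathsf{G}_{t/2}$ and the kernel bound just obtained to the second half, and absorbing the two powers of $t/2$ into the constant, gives \eqref{eq:heat-semigroup:young}. The local uniformity in time is automatic since all bounds are stated for $t$ in a bounded interval and the singular factors $t^{-\frac{\nn|\Lambda|}{2}(1-1/p)}(1+t^{-1/2})^s$ are exactly what appears on the right-hand side.

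The main obstacle is the fractional-order case $s \notin \mathbb{N}$: on a general compact manifold the cleanest route is to combine the integer-order kernel estimates with the identification of $\mathscr{W}^{s,p}$ via powers of $-\Delta_\V$, which requires citing the $\mathscr{L}^p$-boundedness of Riesz transforms \cite{BakryRieszTransform} (valid for $p \in (1,\infty)$; the endpoint cases $p \in \{1,\infty\}$ for the $\mathscr{W}^{s,r}$-norm need the subordination-integral argument instead, since Bakry's theorem does not cover them). Once one is willing to use those tools the rest is bookkeeping of exponents. Everything reduces, through the product structure $g^\Lambda_t = \prod_{i\in\Lambda} g_t$, to the single-copy heat kernel, so the dependence of the constants on $\Lambda$ enters only through $|\Lambda|$, exactly as asserted.
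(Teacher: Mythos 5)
Your proposal is correct in substance but takes a partly different route from the paper. For $s=0$ (and, implicitly, integer $s$ applied to the kernels $\nabla^k g^\Lambda_t$), you invoke the generalized Young/Schur inequality for integral operators as a black box; the paper proves exactly that statement by hand, via the two endpoint bounds $\mathscr{L}^1\to\mathscr{L}^p$ (Minkowski plus \eqref{eq:heat-kernel:bound:derivative:lp}) and $\mathscr{L}^{p'}\to\mathscr{L}^\infty$ (Hölder plus the same kernel bound), followed by Riesz--Thorin --- so this part is the same argument in different packaging. The genuine divergence is the fractional case: you split $(-\Delta_\V)^{s/2}\mathsf{G}_t=\mathsf{G}_{t/2}\,(-\Delta_\V)^{s/2}\mathsf{G}_{t/2}$ and reduce to an $\mathscr{L}^q\to\mathscr{L}^q$ bound of order $(1+t^{-1/2})^s$ on the second factor (analyticity of the semigroup, obtained via subordination or Stein-type interpolation in $s$), whereas the paper instead real-interpolates, $(\mathscr{W}^{k,p},\mathscr{W}^{k+1,p})_{\varepsilon,2}=\mathscr{W}^{k+\varepsilon,p}$, and applies the Riesz--Thorin theorem for interpolated spaces. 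Your route stays closer to the kernel and avoids the interpolation-of-Sobolev-spaces machinery, but the step you leave least justified --- the $\mathscr{L}^p$ control of $(-\Delta_\V)^{s/2}g^\Lambda_{t/2}(\cdot,\by)$ for non-integer $s$ --- is precisely where the work lies, and the subordination integral $c(s)\int_0^\infty u^{-s/2-1}(\mathsf{G}_u-\Id)\,\dd u$ (valid for $s/2\in(0,1)$, to be iterated for larger $s$) would need the near-$u=0$ cancellation spelled out via $\partial_u g=\Delta_\V g$. Your remark that Bakry's Riesz-transform theorem only covers $p\in(1,\infty)$ is well taken; the paper's own reduction of $\norm{\cdot}_{\mathscr{W}^{k,r}}$ to iterated gradients relies on the same restriction.
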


\begin{proof}
  We first establish the result for $s \in \mathbb{N}$ and then use an interpolation argument to conclude.
  \paragraph{Integer case.} We first treat the case $s = k \in \mathbb{N}$. By the Riesz--Bakry inequalities \cite{BakryRieszTransform}, it is then sufficient to show that:
  \begin{equation*}
    \norm{\nabla^{k} \mathsf{G}_{t} f}_{\mathscr{L}^{r}(\oL)} \leq  \Cc(k, p, q, |\La|) \times \norm{f}_{\mathscr{L}^{q}(\oL)} \times t^{-\frac{\nn|\Lambda|}{2}\paren*{1-\frac{1}{p}}} \paren*{1 + t^{-\frac{1}{2}}}^{k}.
  \end{equation*}
  We first prove an intermediary inequality.
  Let $p$ and $q \in [1,\infty]$, and $p'$ the Hölder conjugate of $p$.
  We claim that for every $f \in \mathscr{L}^{p'}(\oL)$
  \begin{equation}
    \label{eq:heat-semigroup:sobolev} \norm{\nabla^{k} \mathsf{G}_{t}f}_{\mathscr{L}^{q}(\oL)} \leq \Cc(k, p, q, |\La|)\times \norm{f}_{\mathscr{L}^{p'}(\oL)} \times t^{-\frac{\nn|\Lambda|}{2} \paren*{1-\frac{1}{p}}\paren*{1-\frac{1}{q}}} \paren*{1 + t^{-\frac{1}{2}}}^{k},
  \end{equation}
  with a constant $\Cc$ locally uniform in time. Using interpolation of $\mathscr{L}^{q}(\oL)$-norms, it is sufficient to prove the inequality \eqref{eq:heat-semigroup:sobolev} for $q \in \{1, \infty\}$.

  Let us start with the case $q = \infty$. By definition of the heat semi-group, we have:
  \begin{equation*}
    \nabla^{k} \mathsf{G}_{t}f(x) = \int f(y) \nabla^{k} g_{t}(x,y) \dd \oL(y),
  \end{equation*}
  and thus, applying Hölder's inequality:
  \begin{equation*}
    \sup_{x \in M} \abs*{ \nabla^{k} \mathsf{G}_{t} f(x) } \leq \norm{f}_{\mathscr{L}^{p'}(\oL)} \norm{\nabla^{k} g_{t}(x,\cdot)}_{\mathscr{L}^{p}(\oL)},
  \end{equation*}
  which, combined with \eqref{eq:heat-kernel:bound:derivative:lp}, proves the case $q = \infty$.
  
  For the case $q = 1$, still by \eqref{eq:heat-kernel:bound:derivative:lp} we have:
  \begin{equation*}
    \norm*{\nabla^{k} \mathsf{G}_{t}f}_{\mathscr{L}^{1}(\oL)} \leq \int_{M^\La} \abs{f(y)} \norm{\nabla^{k} g_{t}(\cdot,y)}_{\mathscr{L}^{1}(\oL)} \dd \oL(y) \leq \norm{f}_{\mathscr{L}^{1}(\oL)} \paren*{1+t^{-\frac{1}{2}}}^{k},
  \end{equation*}
  and since $\mathscr{L}^{p}(\oL)$-norms are non-decreasing with $p$, the result follows.

 We have thus proven \eqref{eq:heat-semigroup:sobolev}, which implies that for all $p \in [1,\infty]$,
  \begin{equation*}
    \norm{\nabla^{k}\mathsf{G}_{t}}_{\mathscr{L}^{1}(\oL) \to \mathscr{L}^{p}(\oL)} \vee \norm{\nabla^{k} \mathsf{G}_{t}}_{\mathscr{L}^{p'}(\oL) \to \mathscr{L}^{\infty}(\oL)} \leq \Cc(k, p, q, |\La|) \times t^{-\frac{\nn|\Lambda|}{2} \paren*{1-\frac{1}{p}}} \paren*{1+t^{-\frac{1}{2}}}^{k}.
  \end{equation*}
The condition on $p$, $q$, and $r$ ensures that $\frac{1}{r} \in [0,\frac{1}{p}]$, thus there exists $\theta \in [0,1]$ such that $\frac{1}{r} = \frac{1-\theta}{p}$. For such a $\theta$, we have $\frac{1}{q} = (1-\theta) + \frac{\theta}{p}$, and we conclude by the Riesz--Thorin interpolation theorem \cite[Thm.~7.1.12]{HormaderLinearOperator}.

  \paragraph{General case.} Fpr arbitrary values of $s \geq 0$, we use another interpolation result.
  Let $\varepsilon \in (0,1)$, and consider the real interpolation method.
  We obviously have $(\mathscr{L}^{q}(\oL), \mathscr{L}^{q}(\oL))_{\varepsilon,2} = \mathscr{L}^{q}(\oL)$, while by \cite[Thms.~4 \& 5]{Triebel}, $(\mathscr{W}^{k,p}(\oL), \mathscr{W}^{k+1,p}(\oL))_{\varepsilon,2} = \mathscr{W}^{k+\varepsilon,p}(\oL)$.
  We conclude by the version of Riesz--Thorin theorem for interpolated spaces \cite{LionsPeetre}.
\end{proof}

\bibliographystyle{alpha}
\bibliography{IGF}

\end{document}